\documentclass{amsart}
\usepackage{amssymb,amsfonts,amsmath,graphicx,mathtools}
\usepackage[alphabetic,y2k,lite]{amsrefs}
\usepackage{fullpage, mystyle}
\usepackage{MnSymbol}
%\usepackage{cite}
%\usepackage[english]{babel}
%%%%%%%%%%%%%%%%%% Tikz %%%%%%%%%
\usepackage{tikz}
\usetikzlibrary{shapes.geometric}
\usetikzlibrary{calc}
\usetikzlibrary{scopes}
\usetikzlibrary{math}
\usetikzlibrary{decorations.markings,decorations.pathreplacing}
\usepackage{tikz-cd}

\tikzset{
every picture/.style={line width=0.8pt, >=stealth,
                       baseline=-3pt,label distance=-3pt},
%%%%%%%%%%  Node styles
dotnode/.style={fill=black,circle,minimum size=2.5pt, inner sep=1pt, outer
sep=0},
morphism/.style={fill=white,circle,draw,thin, inner sep=1pt, minimum size=15pt,
                 scale=0.8},
small_morphism/.style={fill=white,circle,draw,thin,inner sep=1pt,
                       minimum size=10pt, scale=0.8},
coupon/.style={draw,thin, inner sep=1pt, minimum size=18pt,scale=0.8},
semi_morphism/.style args={#1,#2}{
                  fill=white,semicircle,draw,thin, inner sep=1pt, scale=0.8,
                  shape border rotate=#1,
                  label={#1-90:#2}},
%%%% different line styles:
regular/.style={densely dashed}, %% for the regular color, i.e. sum d_i
edge/.style={very thick, draw=green, text=black},
overline/.style={preaction={draw,line width=2mm,white,-}},
thick_overline/.style={preaction={draw,line width=3mm,white,-}},
really_thick/.style={line width=3mm, gray},
%drinfeld center/.style={>=stealth,green!60!black, double
%distance=1pt,text=black},
boundary/.style={thick,  draw=blue, text=black},
%arrow_decoration={markings, mark=at position 0.5 with {\arrow{>}}}
%%%%%%% Fill styles %%%%%%%%%%%%%%%
cell/.style={fill=black!10},
subgraph/.style={fill=black!30},
%%%%%%% Mid-path arrows
midarrow/.style={postaction={decorate},
                 decoration={
                    markings,% switch on markings
                    mark=at position #1 with {\arrow{>}},
                 }},
midarrow/.default=0.5,
%%%%% Mid-path arrow but reverse
midarrow_rev/.style={postaction={decorate},
                 decoration={
                    markings,% switch on markings
                    mark=at position #1 with {\arrow{<}},
                 }},
midarrow_rev/.default=0.5
}

%%%%%%%%%%%%%%%%%%%%%%%%%%%
\newcommand{\al}{\alpha}
\newcommand{\albar}{{\overline{\alpha}}}
\newcommand{\be}{\beta}
\newcommand{\ga}{\gamma}
\newcommand{\Ga}{\Gamma}

\newcommand{\ph}{\varphi}

\newcommand{\Si}{\Sigma}

\newcommand{\R}{\mathbb R}

\newcommand{\lmb}{\lambda}

\newcommand{\Z}{\mathcal{Z}}      % Drinfeld center
% also use cZ for mathcal{Z}
\newcommand{\ZMu}{\mathcal{Z}^{\text{M\"u}}}
\newcommand{\VV}{\mathbf{V}}
\newcommand{\Graph}{\text{Graph}}
\newcommand{\VGraph}{\text{VGraph}}

\newcommand{\ZTV}{Z_\text{TV}}    %Turaev-Viro theory
\newcommand{\ZRT}{Z_\text{RT}}    %Reshetikhin-Turaev theory
\newcommand{\ZCY}{Z_\text{CY}}
\newcommand{\hZ}{{\hat{Z}}}
\newcommand{\hatZCY}{\hat{Z}_\text{CY}}
\newcommand{\hatZTV}{\hat{Z}_\text{TV}}

\newcommand{\Zel}{{\mathcal{Z}^\text{el}}}
\newcommand{\ZA}{{\cZ(\cA)}}
\newcommand{\ZZA}{{\Zel(\cA)}}
\newcommand{\Iel}{\mathcal{I}^\text{el}}
\newcommand{\torus}{\mathbf{T}^2}
\newcommand{\punctorus}{\mathbf{T}_0^2}
\newcommand{\disk}{\DD^2}
\newcommand{\Disk}{\mathcal{D}isk_n^{or}}
\newcommand{\PI}{{P\times I}}
\newcommand{\clI}{{[0,1]}}
\newcommand{\Rex}{{\mathcal{R}ex}}
\newcommand{\cB}{{\mathcal{B}}}
\newcommand{\cD}{{\mathcal{D}}}
\newcommand{\cV}{{\mathcal{V}}}
\newcommand{\cN}{{\mathcal{N}}}
\newcommand{\Ann}{\text{Ann}}
\newcommand{\hA}{{\hat{\cA}}}
\newcommand{\hB}{{\hat{\mathcal{B}}}}

\newcommand{\lact}{\vartriangleright}
\newcommand{\ract}{\vartriangleleft}
\newcommand{\ihom}[2]{\Hom_{#1}^{#2}}

\newcommand{\cM}{\mathcal{M}}      % category 

\newcommand{\hatbox}[1]{{\hat{\boxtimes}_{#1}}}
\newcommand{\ZSig}[1]{{\hatZCY(\Sigma_{#1})}}

\newcommand{\injto}{\hookrightarrow}          %  (--> injection

\DeclareMathOperator{\Kar}{Kar} % Karoubi envelope
\DeclareMathOperator{\htr}{hTr} % horizontal trace
 % horizontal trace

\newcommand{\Vect}{\mathcal{V}ec}  % vector spaces 
\newcommand{\cc}[1]{\underset{\scriptstyle #1}{\circ}}

\newcommand{\st}{\; | \;}                               %%  such that

\newcommand{\ldim}[1]{d_{#1}^L}
\newcommand{\rdim}[1]{d_{#1}^R}
\newcommand{\rdual}[1]{\prescript{*}{}#1}

\begin{document}

\title{Factorization Homology and 4D TQFT}
\author{Alexander Kirillov, Jr.}
   \address{Department of Mathematics, Stony Brook University,
            Stony Brook, NY 11794, USA}
    \email{kirillov@math.stonybrook.edu}
    \urladdr{http://www.math.sunysb.edu/\textasciitilde kirillov/}
\author{Ying Hong Tham}
   \address{Department of Mathematics, Stony Brook University, 
            Stony Brook, NY 11794, USA}
    \email{yinghong.tham@stonybrook.edu}
    \urladdr{http://www.math.sunysb.edu/\textasciitilde yinghong/}

\begin{abstract}
In \ocite{balsam-kirillov}, it is shown that the Turaev-Viro invariants defined for a 
spherical fusion category $\cA$ extends to invariants of 3-manifolds with corners.
In \cite{kirillov-stringnet}, an equivalent formulation for the 2-1 part of the theory 
(2-manifolds with boundary) is described using the space of ``stringnets with boundary 
conditions" as the vector spaces associated to 2-manifolds with boundary.
Here we construct a similar theory for the 3-2 part of the 4-3-2 theory in \cite{CY}.\\
\end{abstract}
\maketitle
%\comment{
\section{Introduction}\label{s:intro}
The notion of factorization homology for topological manifolds was introduced 
by Ayala and Francis (see \ocite{AF19, AFR}) following earlier work of 
Beilinson and Drinfeld and many others. The main idea of this construction is quite natural: 
it allows one to construct invariants of $n$-dimensional manifolds by ``gluing'' local data
associated to balls embedded in $M$. A simple example of such a construction is 
the usual homology $H_*(M,A)$, where $A$ is an abelian group. More general form of 
factorization homology uses as input the following algebraic data:

\begin{itemize}
\item An object $\cA$ in a symmetric monoidal  $\infty$-category $\mathcal{V}$ (this is the object assigned to the ball) 
\item A structure of an algebra over the operad of (framed) $n$-disks  on $\cA$; this is used to define the gluing of local data
\end{itemize}

As an output, the factorization homology of an $n$-dimensional manifold $M$ with coefficients in $\cA$ 
(denoted $\int_M\cA$) gives again an object of $\mathcal{V}$. 

Unfortunately, the precise definition of factorization homology has some drawbacks. 
First, it is given in the language of $\infty$-categories, so it is rather technical. 
More importantly, factorization homology is defined by suitable universality properties, 
so this definition is not very explicit; in fact, even existence of such an object is non-trivial. 

The main goal of our note is giving an explicit construction of factorization 
homology in two special cases:

\begin{enumerate}
\item $n=1$, $\cA$ is a spherical fusion category. 

\item $n=2$, $\cA$ is a premodular category. 
\end{enumerate}

In both cases, we take the  target category $\mathcal{V}$ to be the $(2,1)$
category $\Rex$ of essentially small finitely co-complete $\kk$-linear  
categories and right exact functors, as defined in \ocite{BBJ1}; the symmetric monoidal
structure on $\Rex$ is given by Kelly product. In particular, it contains
the $(2,1)$ category of small $\kk$-linear abelian categories as a full subcategory. We 
will discuss category $\Rex$ in more detail in \seref{s:fact_homology}.

The $n=1$ case is rather simple:  the only non-trivial 1-manifold is $S^1$,
and it is easy to show that for a spherical fusion category $\cA$,
$\int_{S^1}\cA=Z(\cA)$ is the Drinfeld center of $\cA$ (as an abelian
category; the monoidal structure on the Drinfeld center requires additional
construction). Yet we include this case  as it is necessary to understand
the  $n=2$ case.

The $n=2$ case has been studied in the papers of Ben-Zvi, Brochier, and
Jordan \ocite{BBJ1, BBJ2}.

In both cases, we show that one can give  an explicit definition of
factorization homology  $\int_M\cA$ using suitable colored graphs in
dimension $n+1$ modulo an equivalence relation  generated by local moves;
this follows the general ideas first suggested by Walker in \ocite{walker}.
For $n=1$, such local relations were first explicitly written by 
physicists Levin and Wen in \ocite{levin-wen}, who dubbed such graphs on
surfaces ``stringnets''. A rewriting of this notion in a more mathematical
language can be found in \ocite{kirillov-stringnet}, where it is shown that
the stringnets and their boundary conditions coincide with the 2-1 part of
Turaev-Viro $(2+1)$-dimensional TQFT.

For $n=2$, the corresponding colored graphs are ribbon graphs in 3
dimensions; the space of such graphs modulo local relations is commonly
called the {\em skein module},  see e.g. \ocite{freyd}. This  space is part
of a  $(3+1)$-dimensional TQFT, which is usually called Crane-Yetter TQFT,
introduced in \ocite{CY}.

The main results of this note are \thmref{t:excision-skein}, which shows that
the space of colored graphs satisfies the excision property and thus
coincides with factorization homology (\corref{c:skein_facthom},
and \thmref{thm:cy_modular}, which
shows that in the case when $n=2$ and $\cA$ is modular, the category
$\ZCY(\Sigma)$ assigned to a surface $\Sigma$  in Crane-Yetter theory based
on category $\cA$, only depends on the number of boundary components of
$\Sigma$. In particular, in the case when $\Sigma$ is closed,
$\ZCY(\Sigma)$ is trivial and doesn't depend on the genus of $\Sigma$. This
result has been widely expected before (see e.g. unpublished notes of Freed
and Teleman \ocites{freed1, freed2}), but to the best of our knowledge, no
formal proof has been published yet.

\subsection*{Acknowledgments}
The authors would like to thank Pavel Etingof, Eugene Gorsky, Jin-Cheng Guu,
Dennis Sullivan, and Pavel Safronov for helpful discussions. We woudl also like 
to thank the referees for their many suggestions. 

\subsection*{Notation and conventions}

Throughout the paper, the word ``manifold'' stands for a smooth manifold
which admits a finite open cover such that any finite intersection of the
open subsets forming the cover is diffeomorphic to affine space. It is
known that this is equivalent to requiring that $M$ is diffeomorphic to the
interior of a compact manifold with boundary.

Throughout the paper, we fix an algebraically closed field $\kk$ of
characteristic 0. All abelian categories considered in this paper will be
locally finite $\kk$-linear categories (see \ocite{EGNO}*{Section 1.8}). 
In particular, we denote by $\Vect$ the category of finite-dimensional
vector spaces over $\kk$.

We will be denoting by $\boxtimes$ the Deligne tensor product of abelian
categories, see \ocite{EGNO}*{Section 1.11}; it is well defined for locally
finite $\kk$-linear categories and coincides with the Kelly tensor product
in $Rex$ (see \ocite{BBJ1}*{Section 3.2} for references).

We will be heavily using notions of fusion category, pivotal and spherical
fusion category, and premodular category. We refer the reader to
\ocite{EGNO} for definitions and basic properties of such categories. In
most of our formulas and computations, we will be suppressing the
associativity and unit morphisms, as well as the pivotal morphism $V\simeq
V^{**}$.

We also be using graphical presentations of morphisms.
We use the convention that morphisms go from top to bottom,
and the braiding of a braided category is represented by
the ` \textbackslash ' strand going over the ` / ' strand;
see Appendix for more details.

\subsection*{Note} 
While this paper was in preparation, we have received a preprint of Juliet Cooke \ocite{cooke}, 
which contains the result very similar to ours. Yet the proof and exposition are different, 
so we hope that many readers will find our work useful.

%%%%%%%%%%%%%%%%%%%%%%%%%%%%%%%%%%%%%%%%%%%%%%%%%%%%
\section{Factorization homology overview}\label{s:fact_homology}
In this section, we give a brief summary  of the theory of factorization homology. 
We only try to cover as much as is necessary for our purposes, referring the reader 
to review \ocite{AF19} and original papers cited there for details.

To keep things simple, we will only consider the theory for oriented manifolds, 
ignoring other possible choices of framing structures. All manifolds considered 
here will be smooth finitary, i.e. those that are interiors of compact manifolds 
with (possibly empty) boundary.

We define the symmetric monoidal category $\Disk$ as the category whose objects
are finite disjoint unions of copies of $\R^n$ (or, equivalently, open unit ball
$B^n$) and morphisms are orientation-preserving embeddings. The set of
embeddings is considered as  a topological  space, with compact-open $C^\infty$
topology, so $\Disk$ becomes a topological category, and thus, an
$\infty$--category. The monoidal structure is given by the  disjoint union.

Given a symmetric monoidal $\infty$-category $\cV$, an {\em $n$-disk algebra} in 
$\mathcal{V}$ is a symmetric monoidal  functor of $\infty$-categories 
$$
\Disk\to \cV.
$$
In particular, this defines an object $A\in \Obj(\cV)$ (namely, the image of the
standard unit $n$-disk). Abusing the language, we will also call $A$ a ``disk
algebra''. 

Given such a disk algebra $A$, one defines for any oriented $n$-manifold $M$ the
factorization homology 
$$
\int_M A\in\Obj \cV
$$
as a certain colimit, over all embeddings of collections of oriented disks into
$M$.  Note that existence of such factorization homology is not guaranteed: in
order for it to be defined, we need $\cV$ to have sufficiently many colimits.
We do not reproduce the definition here; instead, we state some of the
properties of this construction,
and give a list of properties (\thmref{t:facthom-characterize})
which characterizes factorization homology
(\ocite{AF15}, \ocite{AFT17}).
We refer the reader to the original papers for
details and proofs. 

\begin{theorem}
So defined factorization homology satisfies the following properties:
\begin{enumerate}
\item For an open $n$-ball $B^n$, we have 
$$
\int_{B^n}A = A
$$
\item Factorization homology is functorial with respect to open embeddings: for any open embedding of oriented $n$-manifolds $i\colon M\injto N$, we have a functor 
$$
i_*\colon \int_MA\to \int_N A
$$
\item It sends disjoint union to tensor product in $\mathcal{V}$: 
$$
\int_{M\sqcup N} A=\Bigl(\int_M A\Bigr) \boxtimes \Bigl(\int_N A\Bigr) 
$$
\end{enumerate}
\end{theorem}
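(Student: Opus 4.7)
The three properties are all essentially formal consequences of the universal-property definition of $\int_M A$ as a colimit over the overcategory of disk embeddings into $M$, so the plan is to unpack the colimit in each case rather than to do any geometric work.

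For (1), the plan is to observe that when $M = B^n$ the indexing diagram --- finite disjoint unions of oriented disks together with embeddings into $B^n$ --- admits the identity embedding $B^n \hookrightarrow B^n$ as a terminal object. Any object in the diagram maps to this terminal one via the inclusion $\bigsqcup B^n \hookrightarrow B^n$ (using that any finite disjoint union of oriented disks embeds into $B^n$ and such embeddings are all isotopic). Since the value of the diagram at this terminal object is $A$, the colimit collapses to $A$. Strictly speaking one needs that the assignment is a symmetric monoidal functor so that the value at a single disk is $A$, but this is built into the definition of $n$-disk algebra.

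For (2), the plan is purely formal: an open embedding $i\colon M \hookrightarrow N$ induces a functor between the indexing $\infty$-categories by post-composition, sending a disk embedding $\sqcup B^n \hookrightarrow M$ to $\sqcup B^n \hookrightarrow M \hookrightarrow N$. The diagram defining $\int_M A$ is then the restriction of the diagram defining $\int_N A$ along this functor. The universal property of the colimit produces the required morphism $i_* \colon \int_M A \to \int_N A$ in $\cV$. Functoriality in $i$ (composition of embeddings, identity embedding) is immediate from uniqueness in the universal property.

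For (3), the plan is to identify the indexing category for $M \sqcup N$ with the product of the indexing categories for $M$ and for $N$. Because the disks being embedded are connected, every disk embedding $\sqcup B^n \hookrightarrow M \sqcup N$ factors uniquely as a disk embedding into $M$ together with a disk embedding into $N$, giving a cofinal comparison between the diagrams. Using that $\boxtimes$ on $\Rex$ (more generally, the symmetric monoidal structure on $\cV$) preserves colimits in each variable, one concludes $\int_{M \sqcup N} A = \bigl(\int_M A\bigr) \boxtimes \bigl(\int_N A\bigr)$.

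The only step that requires genuine care is the cofinality argument in (3), since one must ensure that the disjoint-union decomposition of embedded disks behaves well with respect to the topology on the embedding spaces --- i.e., that the comparison of indexing diagrams is an equivalence of $\infty$-categories rather than merely of underlying $1$-categories. Everything else is a direct invocation of the colimit definition, and indeed these three properties are usually folded into the very definition of factorization homology rather than proved after the fact.
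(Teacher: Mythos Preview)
The paper does not actually prove this theorem: it is stated as background in the overview section, and the reader is explicitly referred to the original papers of Ayala--Francis and collaborators for details and proofs. So there is no ``paper's own proof'' to compare against; your sketch already goes further than the exposition in the paper.

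That said, your outline is essentially the standard argument and is correct in spirit. A few small comments. For (1), the identity embedding $B^n \hookrightarrow B^n$ is not terminal in the ordinary categorical sense but is final in the $\infty$-categorical sense (the relevant overcategory is contractible because the space of self-embeddings of $B^n$ is contractible, by the Kister--Mazur theorem in the topological case or the obvious deformation retraction in the smooth oriented case); you should flag that this is where the homotopy type of the embedding space enters. For (3), your caution is well placed: the cofinality statement is genuinely an $\infty$-categorical one, and one also needs that the symmetric monoidal structure on $\cV$ commutes with the relevant colimits (which is part of the standing assumptions on $\cV$ in the Ayala--Francis framework, and holds for $\Rex$ as discussed in the paper). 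None of this is a gap so much as a reminder that the arguments, while formal, live in the $\infty$-categorical world and the $1$-categorical intuition needs to be justified there.
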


Let us now restrict our attention to the special case when the target category
$\cV$ is the $(2,1)$ category $\Rex$ of finitely co-complete $\kk$-linear
categories with right exact functors, as defined in \ocite{BBJ1}. We won't
repeat the definition, listing instead the properties of this category; we refer
the reader to \ocite{BBJ1}*{Section 3} for proofs. 

\begin{enumerate}
	\item The category $\Rex$ is equivalent (as a $(2,1)$ category) to the
	category $\mathcal{P}r_c$ of  compactly generated presentable $\kk$-linear
	categories with compact and cocontinuous functors \ocite{BBJ1}*{Section 3.1}

	\item Category $\Rex$ is closed under small colimits
	\ocite{BBJ1}*{Proposition 3.5}

	\item Category $\mathcal{P}r_c$ (and thus $\Rex$) includes as a full
	subcategory the $2$-category of small $\kk$-linear abelian categories, with
	right exact functors

	\item Category $\Rex$ has a symmetric monoidal structure, given by so-called
	Kelly product. If $\cA, \cB$ are abelian category such that Deligne tensor
	product $\cA\boxtimes \cB$ (see \ocite{EGNO}*{Section 1.11}) is defined,
	then Kelly tensor product coincides with Deligne tensor product
	\ocite{BBJ1}*{Section 3.2}. From now on, we use notation $\boxtimes$ for the
	Kelly tensor product
   
	\item Let $\cA$ be an $E_1$ algebra in $\Rex$. Then we can define the notion
	of left (respectively, right) $\cA$-modules in $\cV$ and define relative
	tensor product $\cM\boxtimes_{\cA}\cN$ as an object of $\cV$. In the special
	case when $\cA$ is a multitensor category (i.e. a locally finite
	$\kk$-linear abelian rigid monoidal category), and $\cM$, $\cN$ are abelian
	categories, the relative Kelly product $\cM\boxtimes_{\cA}\cN$ is also an
	abelian category and coincides with the balanced tensor product (see
	\seref{s:module} below) whenever the latter is defined \ocite{BBJ1}*{Remark
	3.15, Corollary 3.18}.

	\item Balanced braided tensor categories are 2-disk algebras in $\Rex$
	(see \ocite{BBJ1}*{Section 3.3}).
\end{enumerate}

Using these properties, it was shown in \ocite{BBJ1} that factorization homology
with values in $\cV$ is well-defined (they were working in the case $n=2$;
however, it is trivial to see that the same also applies in case $n=1$)

We will now formulate the final property of the  factorization homology, called
the {\em excision property}. For simplicity, we will only do so in the case when
the target category $\cV$ is the $(2,1)$ category $\Rex$, even though it also
holds for any factorization homology.

Before formulating the excision property, we need the following lemma
about
state some simple corollaries of the properties above.  
\begin{lemma}\label{l:skein_alg}
    Let $n=1$ or $2$ and let $A$ be an $n$-disk algebra in $\cV=\Rex$. Then 
\begin{enumerate}
\item For any $n-1$-dimensional oriented manifold $N$, let $A(N)=\int_{N\times I} A$, 
where $I=(0,1)$ is the open interval. Then $A(N)$ has a canonical structure of an 
$E_1$   algebra in $\Rex$, with the multiplication coming from embedding 
$(N\times I)\sqcup(N\times I)\to N\times I$ (``stacking'').

\item Let $M$ be an $n$-dimensional manifold with boundary;
we denote by $M^{o}$ the interior of $M$.
Let $N$ be a boundary component of $M$.
Assume that we are given a diffeomorphism of a neighborhood of
$N$ in $M^o$ with $N\times (0,1)$; 
we will call such an isomorphism a {\em collaring}
or a {\em collared structure} at $N$.

Then this gives on  $\int_{M^o} A$ a natural structure of a module  over $A(N)$. 
\end{enumerate}
\end{lemma}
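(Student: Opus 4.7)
The plan is to deduce both statements directly from the functoriality and disjoint union axioms of factorization homology, combined with the product operation $N\times(-)$ on manifolds.

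For part (1), the key observation is that the assignment $I^{\sqcup k}\hookrightarrow I$, where $I=(0,1)$, coincides with the little intervals operad $E_1$, and crossing with $N$ produces a functor of $\infty$-operads $N\times(-)\colon \Disk_1\to\Disk_n$. Concretely, any orientation-preserving open embedding $\phi\colon I^{\sqcup k}\hookrightarrow I$ gives an orientation-preserving open embedding $\mathrm{id}_N\times\phi\colon (N\times I)^{\sqcup k}\hookrightarrow N\times I$. By the functoriality axiom and the disjoint union axiom, this induces a functor
$$
A(N)^{\boxtimes k}=\int_{(N\times I)^{\sqcup k}} A\longrightarrow \int_{N\times I} A =A(N).
$$
In particular, the stacking embedding $I\sqcup I\to I$, $(x,1)\mapsto x/2$, $(x,2)\mapsto (x+1)/2$, produces the multiplication $A(N)\boxtimes A(N)\to A(N)$. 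The associativity and unit constraints, together with all higher coherences required in $\Rex$, follow because the space of orientation-preserving embeddings $I^{\sqcup k}\hookrightarrow I$ in any fixed configuration is contractible, so any two diagrams of such embeddings are connected by an isotopy, and functoriality converts the isotopy into a canonical 2-isomorphism between the induced functors.

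For part (2), I would use the collaring $c\colon N\times(0,1)\xrightarrow{\sim} U\subset M^o$ to build open embeddings of the form
$$
(N\times I)^{\sqcup k}\sqcup M^o\longrightarrow M^o.
$$
Explicitly, partition $(0,1)$ into $k+1$ consecutive open subintervals $J_1,\dots,J_{k+1}$ (with $J_{k+1}$ meeting a neighborhood of $1$); embed the $i$-th copy of $N\times I$ into $c(N\times J_i)\subset M^o$, and embed $M^o$ via a diffeomorphism onto $M^o\setminus c(N\times[0,1-\varepsilon])$ that is the identity outside a larger neighborhood of $N$ and rescales the collar inside. By functoriality and disjoint union this gives
$$
A(N)^{\boxtimes k}\boxtimes \int_{M^o} A\longrightarrow \int_{M^o} A,
$$
compatible (up to isotopy of embeddings, hence canonical 2-isomorphism) with the $E_1$-structure from part (1). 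The $k=1$ case is the action map, and the compatibility for varying $k$ is the module axiom.

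The routine bookkeeping is arranging the embeddings to be consistent as $k$ varies and as the relative positions of the intervals change; the main potential obstacle is verifying that one genuinely obtains a coherent module structure rather than just an action of the underlying operation on objects. This reduces, in both parts, to the statement that any two choices of stacking embeddings (with $k$ copies of the interval $I$ in part (1), or $k$ copies of $I$ collared into $M^o$ in part (2)) are connected by a contractible space of isotopies; since $\Rex$ is a $(2,1)$-category, this contractibility is exactly what is needed to produce all higher coherences automatically. In practice one can verify the $E_1$-algebra and module axioms at the level of binary operations plus unit and associativity 2-isomorphisms, which all arise from explicit isotopies of embeddings of intervals.
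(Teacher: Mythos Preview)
Your proposal is correct and matches the paper's approach. The paper does not give an explicit proof of this lemma; it presents it as a direct corollary of the functoriality and monoidality axioms of factorization homology listed immediately before it, which is exactly the content you have spelled out. The paper later carries out the same construction in detail for the skein categories $Z(P\times I)$ and $Z(X)$ (the stacking monoidal structure and the collared module structure), using precisely the embeddings $I^{\sqcup k}\hookrightarrow I$ crossed with $P$ and the contractibility of the relevant spaces of isotopies that you invoke here.
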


One of the goals of this paper is to show that in some special cases, this
algebra and module structure coincide with so-called {\em skein algebra}
(respectively, {\em skein module}), see \ocite{walker}, \ocite{freyd}.

We can now state the final property of factorization homology. 
\begin{theorem}\label{t:excision}
    In the assumptions of \lemref{l:skein_alg}, 
    factorization homology satisfies the following {\em excision property}: 

Let $M_1$, $M_2$ be  $n$-manifolds with boundary, and $M_i^o$ the interior of 
$M_i$. Let $N_1$, $N_2$ be  connected components of the boundary of $M_1, M_2$ 
respectively,  together with a diffeomorphism $N_1\simeq \ov{N_2}$ (where bar 
stands for opposite orientation). Moreover, assume we are given collared 
structure at $N_1$, $N_2$ as in \lemref{l:skein_alg}.

Let $M$ be the manifold obtained by gluing together $M_1$ with $M_2$ using $\ph$; 
choice of collared structures gives a smooth structure on $M$. 

Then one has an equivalence of categories
$$
\int_{M} A=\Bigl(\int_{M^o_1} A\Bigr)\boxtimes_{A(N)} \Bigl(\int_{M^o_2} A\Bigr)
$$
where $\boxtimes_A$ is the relative tensor product in $\Rex$. 
\end{theorem}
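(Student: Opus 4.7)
The plan is to deduce this from the characterization of factorization homology stated in \thmref{t:facthom-characterize}: the excision property is one of the defining axioms of factorization homology in the Ayala--Francis sense, so the theorem is essentially built into the construction. Nevertheless, one can verify it directly from the colimit definition, and I would sketch such an argument.

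First, I would unpack the setup using the collar structures. The two collars combine into a single open embedding $N \times (-1, 1) \injto M$, so $M$ admits an open cover $\{V_1, V_2, V_{12}\}$ with $V_1 = M_1^o$, $V_2 = M_2^o$, and $V_{12} = N \times (-1, 1)$, whose intersections $V_1 \cap V_{12} = N \times (0,1)$ and $V_2 \cap V_{12} = N \times (-1,0)$ are each diffeomorphic to $N \times I$. By functoriality, the collar embeddings induce functors $A(N) = \int_{N \times I} A \to \int_{M_i^o} A$, and these are precisely the structure maps of the $A(N)$-module structures supplied by \lemref{l:skein_alg}.

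Next, I would express both sides as a colimit over a common shape. In $\Rex$, the right-hand side is computed by the two-sided bar construction
\[
\Bigl(\int_{M_1^o} A\Bigr) \boxtimes_{A(N)} \Bigl(\int_{M_2^o} A\Bigr) = \operatorname*{colim}_{[k]\in \Delta^{op}} \Bigl(\int_{M_1^o} A\Bigr) \boxtimes A(N)^{\boxtimes k} \boxtimes \Bigl(\int_{M_2^o} A\Bigr),
\]
and by the monoidal property (property (3) of the theorem) together with $A(N) = \int_{N\times I}A$, each term of the bar complex is identified with $\int_{M_1^o \,\sqcup\, (N\times I)^{\sqcup k} \,\sqcup\, M_2^o} A$; the simplicial face maps correspond to gluing an $N\times I$ factor into an adjacent piece via the collar embedding, and degeneracies to inserting a trivial slab.

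Finally, I would identify this bar construction with $\int_M A$ by a cofinality argument on the indexing $\infty$-category $\Disk_{/M}$: any finite embedding of oriented disks into $M$ can be isotoped so that every disk lies in one of $V_1$, $V_2$, $V_{12}$, and the subcategory of configurations adapted to this cover is cofinal in $\Disk_{/M}$; the resulting simplicial resolution of the colimit recovers exactly the bar construction above. The main obstacle is making this cofinality/isotopy argument rigorous at the $\infty$-categorical level, compatibly with the $C^\infty$-topology on embedding spaces; this is precisely the technical heart of Ayala--Francis $\otimes$-excision, and in their framework it is packaged by the statement that factorization homology is a symmetric monoidal functor of $\infty$-operads that sends collared unions of manifolds to relative tensor products over the collar algebra.
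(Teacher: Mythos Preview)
The paper does not give its own proof of this theorem: \thmref{t:excision} appears in the overview section as a known property of factorization homology, quoted from the literature (Ayala--Francis and Ben-Zvi--Brochier--Jordan), with no argument supplied. So there is no ``paper's proof'' to compare against; the statement is treated as background.

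Your sketch via the two-sided bar construction and cofinality of the collar-adapted disk configurations is exactly the shape of the Ayala--Francis $\otimes$-excision argument, and is the right outline. One small logical wrinkle: your opening sentence proposes to \emph{deduce} excision from \thmref{t:facthom-characterize}, but in the paper that theorem lists excision as one of the characterizing properties rather than as a consequence, so citing it would be circular (or at best a restatement). You seem to have noticed this yourself, since you then pivot to the direct colimit/bar argument; I would drop the first sentence and lead with the bar construction.
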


Finally, the following properties uniquely characterize factorization homology
(this is a recasting of \ocite{BBJ1}*{Theorem 2.5},
in turn based on \ocite{AF15}, \ocite{AFT17}):

\begin{theorem}
Let $A$ be a 2-disk algebra in $\Rex$.
Then the functor $\int_- A$ satisfies, and is characterized by,
the following properties:
\begin{enumerate}
\item If $U$ is contractible, then there is an equivalence in $\Rex$,
	\[ \int_U A \simeq A \]

\item The $E_1$ ``stacking'' structure on $A(N)$ in \lemref{l:skein_alg}
	is unique (any two diffeomorphisms $Y \simeq N\times I$
	respecting the fibre structure $N \times I \xrightarrow{proj} I$
	induces equivalent $E_1$-algebra structures on $\int_Y A$).

\item The excision property of \thmref{t:excision} holds.
\end{enumerate}
\label{t:facthom-characterize}
\end{theorem}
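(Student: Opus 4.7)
The plan is to prove this as a recasting of the Ayala--Francis characterization of factorization homology for disk algebras. The proof has two halves: first, verifying that $\int_- A$ actually satisfies the three listed properties; second, showing that any functor $F\colon \text{Mfld}^{or}_n \to \Rex$ satisfying (1), (2), (3) is naturally equivalent to $\int_- A$.

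For the first half, property (1) is immediate from the definition (it is part of the theorem listing properties of factorization homology stated earlier in \seref{s:fact_homology}), and property (3) is exactly \thmref{t:excision}. The only content to check is property (2): given two collarings $\ph_1,\ph_2\colon Y\xrightarrow{\sim} N\times I$ compatible with the projection to $I$, the induced $E_1$-algebra structures on $\int_Y A$ are equivalent. This reduces to observing that the space of such collarings is contractible (the relevant group of fibre-preserving diffeomorphisms of $N\times I$ is connected up to isotopy on each slice), so the choice does not affect the $E_1$-structure up to coherent equivalence in $\Rex$.

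For the second half, the strategy is to use a handle decomposition. Any finitary $n$-manifold $M$ (with $n=1$ or $2$) admits a finite open cover by Euclidean balls whose intersection pattern allows $M$ to be recovered by an iterated sequence of gluings of the form $M' = M_1 \cup_{N\times I} M_2$, where at each stage $M_1, M_2$ are unions of balls and the gluing locus is a collared codimension-$1$ submanifold. By property (1), $F$ and $\int_- A$ agree (and equal $A^{\boxtimes k}$) on disjoint unions of balls. By property (2), the $E_1$-algebra $F(N\times I)$ is canonically identified with $A(N) = \int_{N\times I} A$. By property (3), the value of $F$ on $M'$ is then forced to be the relative Kelly product $F(M_1) \boxtimes_{F(N\times I)} F(M_2)$, which matches $\int_{M'} A$ by induction. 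This constructs an equivalence $F(M) \simeq \int_M A$, and naturality with respect to orientation-preserving embeddings follows because such embeddings can, up to isotopy, be made compatible with chosen handle decompositions.

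The main obstacle is showing that the equivalence constructed in the second half is independent of the chosen handle decomposition, i.e.\ that the inductive identification is coherent. This is where property (2) does the heavy lifting: any two decompositions can be related by a sequence of refinements, and the canonicity of the $E_1$-structure (together with associativity of relative tensor product in $\Rex$, guaranteed by the listed properties of $\Rex$ from \ocite{BBJ1}) ensures that the resulting two equivalences $F(M) \simeq \int_M A$ are themselves equivalent. Making this coherence argument fully rigorous in the $(2,1)$-categorical setting is the subtle point and is carried out in detail in \ocite{AF15} and \ocite{AFT17}; accordingly, my proof would conclude by invoking those results rather than reproducing the coherence bookkeeping here.
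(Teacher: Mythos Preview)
The paper does not give its own proof of this theorem: it is stated as a recasting of \ocite{BBJ1}*{Theorem 2.5}, itself based on \ocite{AF15} and \ocite{AFT17}, and no argument is supplied in the body of the paper. Your proposal is therefore not competing with a proof in the paper but rather supplying the sketch that the paper omits. Your outline is correct and is precisely the standard Ayala--Francis argument: properties (1) and (3) are already stated earlier in \seref{s:fact_homology}, property (2) follows from connectedness of the relevant diffeomorphism groups, and the characterization half proceeds by induction on a handle/collar-gluing decomposition, with coherence across decompositions being the delicate point that you rightly defer to \ocite{AF15} and \ocite{AFT17}. Since your proposal and the paper both ultimately ground the result in those same references, there is no substantive difference in approach.
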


\comment{%%%%%%%%%%%%%%%%%%%%%%%%%%%%%%%%%%%%%%%%%%%%%%%%%%%%%%%%%%%%%%%%%%%%%%%%
Moreover, we claim that this (together with $\int_{B^n} A = A$) uniquely defines
factorization homology. Let $Man^n$ be the category of oriented $n$-manifolds
with morphisms being orientation-preserving embeddings; it has a natural
structure of a symmetric monoidal $\infty$ category which contains the category
$\Disk$ as a subcategory 

\begin{theorem}
\label{t:uniqueness}
    Let $n=1$ or $2$ and let $A$ be an $n$-disk algebra in $\cV=\Rex$. 
    Let $F\colon Man^n\to \Rex$ be a symmetric  monoidal functor which has the following properties 
    \begin{enumerate}
        \item Restriction of $F$ to $\Disk$ coincides with the disk algebra in $\Rex$ given by $A$
        \item $F$ satisfies the excision property: in the notation of \thmref{t:excision}, we have 
        $$
        F(M)=F(M_1^o)\boxtimes_{F(N\times I)} F(M_2^o)
        $$
   \end{enumerate}
Then for any oriented $n$-manifold $M$,  $F(M)$ is isomorphic in $\Rex$ to the factorization homology $\int_M A$. 
\end{theorem}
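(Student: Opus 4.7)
My plan is to split the theorem into its two halves: first, verify that $\int_- A$ satisfies properties (1)--(3); second, show that these three properties determine the functor up to equivalence in $\Rex$.

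For the first half, each property is essentially extracted from an earlier statement in the paper. Property (1) follows from $\int_{B^n}A = A$ (the first bullet of the unnamed properties theorem) together with the observation that any contractible finitary oriented $n$-manifold is diffeomorphic to $B^n$, combined with the functoriality of $\int_-A$ under open embeddings. Property (2) is a repackaging of \lemref{l:skein_alg}(1); the uniqueness assertion reduces to the fact that the space of diffeomorphisms $Y \simeq N \times I$ respecting the projection to $I$ is contractible, so the induced $E_1$-algebra structures are canonically equivalent. Property (3) is literally \thmref{t:excision}. So this direction amounts to a compilation of previous results.

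The more substantial direction is the characterization. I would argue by induction on the size of a finite good cover of $M$ by open disks, which exists by the finitary hypothesis. The base case, when $M$ is a finite disjoint union of disks, reduces to (1) and symmetric monoidality of $F$: both $F(M)$ and $\int_M A$ are canonically equivalent to $A^{\boxtimes |\pi_0(M)|}$. For the inductive step, single out one disk $U$ of the cover and write $M = M_1 \cup U$, where $M_1$ is the union of the remaining disks and $M_1 \cap U$ deformation retracts onto a disjoint union of cylinders $N \times I$. Excision (property (3)) expresses $F(M)$ as $F(M_1) \boxtimes_{F(N\times I)} F(U)$ and $\int_M A$ as $\int_{M_1} A \boxtimes_{A(N)} \int_U A$. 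The induction hypothesis matches each tensor factor, property (2) matches the two $E_1$-algebras along which one tensors, and one obtains the desired equivalence $F(M) \simeq \int_M A$.

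The hard part will be coherence. Property (2) only determines the $E_1$-structure on $A(N)$ up to equivalence, while the relative Kelly product depends on genuine one-sided module structures, so I need to upgrade each inductive equivalence $F(M_i) \simeq \int_{M_i} A$ to an equivalence of $A(N)$-modules compatible with the prescribed collaring at $N$. I also need the resulting global equivalence to be independent of the chosen good cover, which reduces to showing that any two good covers admit a common refinement and that excision is suitably associative and commutative with respect to such refinements. These are precisely the homotopy-coherence issues that the $\infty$-categorical framework of \ocite{AF15, AFT17} and \ocite{BBJ1}*{Theorem 2.5} was designed to resolve; rather than redo that bookkeeping by hand, the cleanest route is to invoke those results directly once the three properties have been identified as the inputs they require.
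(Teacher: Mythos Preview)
Your proposal departs from the paper's argument in a substantive way, and the inductive step as written has a genuine gap.

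First, a framing issue: the statement has only two hypotheses on $F$ (agreement with $A$ on $\Disk$, and excision). Your ``properties (1)--(3)'', with the added $E_1$-uniqueness clause, are those of \thmref{t:facthom-characterize}, not of this theorem. The paper also explicitly remarks that only a weak, non-canonical isomorphism is being claimed, so the coherence concerns you raise at the end (and the appeal to \ocite{AF15}, \ocite{AFT17}) are beside the point for this particular statement.

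Now to substance. The paper does not induct on a good cover. For $n=2$ it gives a two-stage explicit decomposition: first, every punctured sphere $S^2_k$ is obtained by gluing two copies of a $k$-gon with vertices removed; the gluing region $N$ is then a disjoint union of intervals, so $N\times I$ is a disjoint union of disks and both $F$ and $\int_-A$ agree on all ingredients by hypothesis~(1). Excision then gives $F(S^2_k)\simeq\int_{S^2_k}A$, and in particular $F(\Ann)\simeq A(S^1)$ for the annulus $\Ann\simeq S^2_2$. Second, an arbitrary oriented surface is glued from punctured spheres along circles; now $N\times I$ is a union of annuli, which were just handled, and a second application of excision finishes. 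For $n=1$ the only case is $S^1$, which is trivial.

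Your good-cover induction, by contrast, requires $M_1\cap U$ to be a collar $N\times I$ so that excision (in the form of \thmref{t:excision}) applies. A good cover gives no such control: $M_1$ is the union of the remaining charts, so $M_1\cap U$ is a union of sub-disks of $U$, possibly overlapping, and in any case not generally a cylinder. Asserting that it ``deformation retracts onto'' $N\times I$ is neither automatic nor sufficient, since excision needs an honest collar, not a homotopy equivalence. What actually works is a handle-by-handle argument, where each attachment is along a genuine collar; but carrying that out for $n=2$ is precisely the paper's two-stage decomposition (interval-gluings to build punctured spheres, then circle-gluings to build everything else).
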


Note that we claim a relatively weak statement of isomorphism, without discussing whether this isomorphism is canonical. 
\begin{proof}
 We give a proof for $n=2$; for $n=1$, it is trivial.    
    
 First, notice that $S^2_n$, the sphere with $n$ punctures, can be
 obtained by gluing two copies $X,X'$  of the $n$-gon with vertices removed.
 Since the interior of each of $X, X'$ is a disk,
 we see that $F(X^o)=\int_{X^o}A\simeq A$;
 similarly, since boundary $N=\del X$ is a disjoint union of intervals,
 $N\times I$ is a disjoint union of disks and thus $F(N\times I)=A(N)$.
 Thus, excision property for $F$ and for factorization homology
 implies that $F(S^2_n)\simeq \int_{S_2^n}A$.
 In particular, it implies that for an annulus
 $\Ann=S^1\times I\simeq S^2_2$, we have $F(\Ann)\simeq \int_{\Ann} A$.

Since any oriented surface $\Sigma$ (possibly with punctures) can be
obtained by gluing pieces isomorphic to $S^2_{n,k}$,
the sphere with $k$ boundary components and $n$ punctures,
along union of circles, and for each such piece,
its interior is the sphere with $n+k$ punctures,
it again follows from excision for $F$
and for factorization homology that $F(\Sigma)\simeq \int_\Sigma A$.

\end{proof}

}%end comment%%%%%%%%%%%%%%%%%%%%%%%%%%%%%%%%%%%%%%%%%%%%%%%%%%%%%%%%%%%%%%%%%%%%%%%

%%%%%%%%%%%%%%%%%%%%%%%%%%%%%%%%%%%%%%%%%%%%%%%%%%%%
\section{Module categories, balanced tensor product, and center}\label{s:module}
In this section, we review the results about balanced tensor product of module 
categories.
Our main goal is to give two constructions of the center
of an $\cC$-bimodule category $\cM$ -
$\cZ_\cC(\cM)$ (\defref{d:center}) and $\htr_\cC(\cM)$ (\defref{d:htr}),
and show that when $\cC$ is pivotal multifusion,
they are equivalent (\thmref{t:htr}).

Recall our convention that all categories considered in this paper
are locally finite  $\kk$-linear. Most of the time, they will be abelian; however, 
in some cases we will need to use $\kk$-linear additive (but not necessarily abelian) 
categories. For such a  category $\cA$, we will denote by $\Kar(\cA)$ the Karoubi envelope (also known as idempotent completion) of $\cA$. By definition, an object of $\Kar(\cA)$ is a pair $(A, p)$, where $A$ is an object of $\cA$ and $p\in \Hom_\cA(A,A)$ is an 
idempotent: $p^2=p$. Morphisms in $\Kar(\cA)$ are defined by 
$$
\Hom_{\Kar(\cA)}((A_1, p_1), (A_2, p_2))=\{f\in \Hom_\cA(A_1, A_2)\st p_2fp_1=f\}
$$

Throughout this section $\cC$ is a pivotal category,
though in the definitions $\cC$ is only required to be monoidal.
When $\cC$ is multifusion,
we use the conventions and notation laid out in the Appendix.
In particular, $\Irr(\cC)$ is the set of isomorphism classes,
$\Irr_0(\cC)$ are those simples appearing as direct summands
of the unit $\one$,
$\{X_i\}$ will be a fixed set of representatives of $\Irr(\cC)$,
$d_i^R$ is the (right) dimension of $X_i$,
and we will be using graphical presentation of morphisms.

We assume that the reader is familiar with the notions of module categories;
for a left module category $\cM$ over $\cC$, we will denote the action of 
$A\in\cC$ on $M\in\cM$ by $A\lact M$. Similarly, we use $M\ract A$ for right action. 
In this paper, all module categories are assumed to be semisimple (as abelian categories).

%In all the examples we consider, the Karoubi envelope will be abelian
%(even though in general it is not guaranteed). 

This section is organized as follows:
Subsection~\ref{s:zM} provides the definition
and some properties of $\cZ_\cC(\cM)$,
Subsection~\ref{s:htrM} does so for $\htr_\cC(\cM)$,
and Subsection~\ref{s:equiv} shows that when
$\cC$ is pivotal multifusion,
these definitions are essentially the same.

\subsection{$\cZ_\cC(\cM)$} \label{s:zM}\par \noindent

The following definition is essentially given in  \ocite{GNN}*{Definition 2.1}
(there $\cC$ is assumed to be fusion).

\begin{definition}\label{d:center}
Let $\cC$ be a finite multitensor  category,
and let $\cM$ be a $\cC$-bimodule category.
The center of $\cM$, denoted $\cZ_\cC(\cM)$, is the category with the following objects and morphisms:

Objects: pairs $(M,\ga)$, where $M\in \cM$ and $\ga$ is an isomorphism of functors  
$\ga_A\colon A \lact M\to M \ract A$, $A\in \cC$ (half-braiding) satisfying natural compatibility conditions.

Morphisms: $\Hom ((M,\ga), (M',\ga'))=\{f\in \Hom_\cM(M,M')\st f\ga=\ga' f\}$. 
\end{definition}

In particular, in the special case $\cM=\cC$, this construction gives the Drinfeld center $\cZ(\cC)$. 

\begin{remark}
    Equivalently, the center $\cZ_\cC(\cM)$ can be described as the category of $\cC$-bimodule functors $\cC\to \cM$; see \ocite{GNN} for details. 
\end{remark}    

%\begin{remark}
%In \ocite{BBJ1}, they use notation $\tr(\cM)$ for the same thing we denote here by $\cZ_\cC(\cM)$. 
%\end{remark}

\begin{theorem}\label{t:center}\par\noindent
Let $\cC$ be pivotal multifusion,
and $\cM$ a $\cC$-bimodule category.
\item Let $F\colon  \cZ_\cC(\cM)\to \cM$ be the natural forgetful functor $F\colon  (M,\ga) \mapsto M$. 
Then it has a two-sided adjoint functor $I\colon \cM\to \cZ_\cC(\cM)$, given by 
\begin{equation}
\label{e:induction}
I(M)=\bigoplus_{i\in \Irr(\cC)} X_i \lact M \ract X_i^*
\end{equation}
with the half-braiding shown in \firef{f:I(M)}.
\begin{figure}[ht]
$\displaystyle{\sum_{i,j \in\ Irr(\cC)}\sqrt{d_i^R}\sqrt{d_j^R}}$\quad 
\begin{tikzpicture}
\draw (0,1.5)--(0,-1.5);
\node[above] at (0,1.5) {$M$};
\node[semi_morphism={90,}] (L) at (-0.7,0) {$\al$};
\node[semi_morphism={270,}] (R) at (0.7,0) {$\al$};
\draw (L)-- +(0,1.5) node[above] {$i$}; \draw (L)-- +(0,-1.5) node[below] {$j$}; 
\draw (R)-- +(0,1.5)node[above] {$i^*$}; \draw (R)-- +(0,-1.5)node[below] {$j^*$}; 
\draw (-1.5, 1.5) .. controls +(down:1cm) and +(135:0.5cm) .. (L);
\draw (1.5, -1.5) .. controls +(up:1cm) and +(-45:0.5cm) .. (R);
\end{tikzpicture}
\caption{Half-braiding on $I(M)$. See Notation~\ref{n:summation} in Appendix for definition of $\al$.}
\label{f:I(M)} %% must put label after caption!
\end{figure}

The adjunction isomorphism for $F\colon \cZ(\cM) \rightleftharpoons \cM \colon  I$,
\[
\Hom_{\cZ(\cM)}((M_1,\ga), I(M_2)) \simeq \Hom_\cM(M_1,M_2)
\]
is given by:
\begin{align}
%%%%%%%
\label{e:adj_isom}
\sum_{i\in \Irr(\cC)} 
\begin{tikzpicture}
\node[morphism] (ph) at (0,0) {$\ph_i$};
\draw (ph) -- +(90:1cm) node[above] {$M_1$};
\draw (ph) -- +(270:1cm) node[below] {$M_2$};
\draw[midarrow] (ph) to[out=180,in=90] (-1,-1);
\node at (-0.9,-0.1) {$i$};
\draw[midarrow_rev] (ph) to[out=0,in=90] (1,-1);
\node at (0.9,-0.1) {$i$};
\end{tikzpicture}
%%%%%%
&\mapsto
%%%%%
\sum_{l \in \Irr_0(\cC)}
\begin{tikzpicture}
\draw[midarrow_rev={0.5}] (0,0) circle(0.5cm);
\node at (-0.7,0.1) {$l$};
\draw (0,1) -- (0,-1) node[pos=0, above] {$M_1$} node[below] {$M_2$};
\node[small_morphism] at (0,0.5) {$\ga$};
\node[small_morphism] at (0,-0.5) {\small $\ph_l$};
\end{tikzpicture}
%%%%%%
\\
%%%%%%
\label{e:adj_isom_2}
\sum_{j\in \Irr(\cC)} \sqrt{d_j^R}
\begin{tikzpicture}
\node[small_morphism] (ga) at (0,0.3) {$\ga$};
\draw (ga) -- (0,1) node[above] {$M_1$};
\draw (ga) -- (0,-1) node[below] {$M_2$};
\draw[midarrow] (ga) to[out=180,in=90] (-1,-1);
\node at (-0.9,0.1) {$j$};
\draw[midarrow_rev] (ga) to[out=0,in=90] (1,-1);
\node at (0.9,0.1) {$j$};
\node[small_morphism] at (0,-0.3) {\small $f$};
\end{tikzpicture}
%%%%%%
&\leftmapsto
%%%%%
\hspace{10pt}
\begin{tikzpicture}
\node[morphism] (f) at (0,0) {$f$};
\draw (f) -- +(90:1cm) node[above] {$M_1$};
\draw (f) -- +(270:1cm) node[below] {$M_2$};
\end{tikzpicture}
\end{align}
(Note the sum on the right in \eqnref{e:adj_isom}
is over $\Irr_0(\cC)$ and not $\Irr(\cC)$.)
The other adjunction isomorphism
for $I\colon \cM \rightleftharpoons \cZ(\cM) :F$,
\[
\Hom_\cM(M_1,M_2) \simeq \Hom_{\cZ(\cM)}(I(M_1), (M_2,\ga)) 
\]
is given by a similar formula,
essentially obtained by rotating all the diagrams above.
\end{theorem}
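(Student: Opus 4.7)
The proof unfolds in three stages: verify (a) that $I$ is a well-defined functor $\cM \to \cZ_\cC(\cM)$, i.e., that the assignment of \firef{f:I(M)} is a genuine half-braiding and the construction is functorial; (b) that \eqnref{e:adj_isom} and \eqnref{e:adj_isom_2} are mutually inverse natural bijections, exhibiting the adjunction $F \dashv I$; and (c) that a parallel construction obtained by reflecting the diagrams vertically yields the second adjunction $I \dashv F$.

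For (a), to show $\ga_A$ is invertible and satisfies the hexagon-type compatibility it suffices by semisimplicity to handle $A = X_k$. I would write an explicit candidate inverse using the dual basis $\albar$ of semi-morphisms (obtained from $\al$ via the pivotal structure), so that invertibility reduces to the completeness identity $\sum_\al \al\,\albar = \mathrm{id}$ in the relevant $\Hom$-spaces of $\cC$. The compatibility axiom reduces to a recoupling identity among the $\al$'s, which can be verified graphically by sliding semi-morphisms past the $M$-strand using only the ambient bimodule structure. Functoriality of $I$ is immediate from the formula, since the half-braiding depends on $M$ only through $\mathrm{id}_M$.

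For (b), the forward composition sends $f \in \Hom_\cM(M_1, M_2)$ via \eqnref{e:adj_isom_2} to a sum over $j \in \Irr(\cC)$ of components $M_1 \to X_j \lact M_2 \ract X_j^*$, then via \eqnref{e:adj_isom} to a diagram with two nested loops surrounding $f$. The plan is to use the center condition on $(M_1,\ga)$ to slide the inner $X_l$-strand past $\ga$, after which the coupled $(X_l, X_j)$-loops collapse by the bubble identity, the normalization $\sqrt{d_l^R}\sqrt{d_j^R}$ cancels correctly for $l \in \Irr_0(\cC)$, and the whole expression reduces to $f$. The reverse composition starts with $\ph = (\ph_i)$ in $\cZ_\cC(\cM)$; here the essential input is again the center condition, which allows the half-braiding of $I(M_2)$ to be exchanged with that of $(M_1,\ga)$, after which orthogonality of the $\al$-basis isolates the correct $\ph_i$. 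Naturality in both arguments is then routine.

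The main obstacle will be the careful bookkeeping in (b): tracking the normalization factors $\sqrt{d_i^R}\sqrt{d_j^R}$ across nested summations, and verifying that the restriction in \eqnref{e:adj_isom} to $l \in \Irr_0(\cC)$ rather than all of $\Irr(\cC)$ is the correct one. The latter arises because the closed inner loop projects onto the unit object $\mathbf{1} = \bigoplus_{l \in \Irr_0(\cC)} X_l$, and pivotality of $\cC$ is needed to guarantee that left and right dimensions agree so that the closing pairing is non-degenerate. Part (c) is obtained from (b) by reflecting all diagrams vertically and invoking pivotality to identify the rotated formulas with the unit and counit of the second adjunction; mutual inverseness follows by the same bubble and completeness identities applied in the symmetric order.
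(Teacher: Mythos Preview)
Your approach is essentially the same as the paper's: direct graphical verification that the maps in \eqnref{e:adj_isom} and \eqnref{e:adj_isom_2} are mutually inverse, using the completeness identity for the dual bases $\al$ (the paper's \lemref{l:summation}) together with naturality of the half-braiding $\ga$ to slide strands. The paper also first checks that the image of \eqnref{e:adj_isom_2} genuinely lands in $\cZ_\cC(\cM)$, exactly as in your step (b), and defers the half-braiding axioms for $I(M)$ to a separate lemma (\lemref{l:halfbrd}), as you do in (a).

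One correction: you write that ``pivotality of $\cC$ is needed to guarantee that left and right dimensions agree.'' This is false --- that is the definition of \emph{spherical}, not pivotal, and the whole point of the paper's careful treatment is that $\cC$ is only assumed pivotal multifusion, so $d_i^L$ and $d_i^R$ may differ. The non-degeneracy of the pairing \eqref{e:pairing} holds on each component $\eval{\cdots}_l$ regardless; what fails without sphericality is its compatibility with cyclic rotation, which is why the paper uses semicircular $\al$-nodes rather than circular ones. The restriction to $l \in \Irr_0(\cC)$ in \eqnref{e:adj_isom} is not because dimensions agree but because closing the $\ga$-loop produces an endomorphism of $\one$, and the only simples contributing are the summands $\one_l$ of $\one$, for which $d_l^R = 1$. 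Keep your normalizations in terms of $d_i^R$ throughout and this subtlety will sort itself out.
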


Note that the isomorphisms here differ slightly from that
of \ocite{kirillov-stringnet}.
We punt the proof to the Appendix (on page \pageref{pf:t:center}).

An important special case is when $\cM=\cM_1\boxtimes\cM_2$, where $\cM_1$ is a right module  category over a pivotal multifusion category  $\cC$, and $\cM_2$ is a left module category over $\cC$. In this case, by \ocite{ENO10}*{Proposition~3.8},
one has that $\cZ_\cC(\cM_1\boxtimes \cM_2)$ is naturally equivalent to 
the balanced tensor product of categories:
\begin{equation}\label{e:center-product}
\cZ_\cC(\cM_1\boxtimes \cM_2)\simeq \cM_1\boxtimes_\cC \cM_2
\end{equation}
where the balanced tensor product is defined by the universal property: for any abelian  category $\cA$, we have a natural equivalence
$$
Fun_{bal}(\cM_1\times \cM_2, \cA)=Fun(\cM_1\boxtimes_\cC \cM_2, \cA)
$$
where $Fun$, $Fun_{bal}$ stand for category of  $\kk$-linear additive functors (respectively, category of $\kk$-linear additive  $\cC$- balanced functors); see details in \ocite{ENO10}*{Definition 3.3}.

Under the equivalence \eqref{e:center-product}, the natural functor
$\cM_1\boxtimes \cM_2\to \cM_1\boxtimes_\cC \cM_2$ 
is identified with the functor
$I\colon  \cM_1\boxtimes \cM_2\to \cZ_\cC(\cM_1\boxtimes \cM_2)$ 
constructed in \thmref{t:center}.

Recall that a functor $F\colon \cA\to \cB$, where $\cB$ is abelian and $\cA$ additive (not necessarily abelian) is called \emph{dominant} if any object of $\cB$ appears as a subquotient of $F(X)$ for some $X \in \Obj\cA$. Similarly, we say that a full  subcategory $\cA\subset \cB$ is dominant if any object of $\cB$ appears as a subquotient of some $X\in \Obj \cA$. In the case when $\cA$ is a full additive subcategory in a semisimple abelian category $\cB$, this immediately implies that the  Karoubi envelope of $\cA$ is equivalent to $\cB$ (in particular, this implies that $\Kar(\cA)$ is abelian).

\begin{proposition} \label{p:I_dominant}
Under the hypotheses of \thmref{t:center},
the functor $I\colon \cM \to \cZ(\cM)$ is dominant. Moreover, 
 any object $(M,\ga)$ is a direct summand of $I(M)$.
\end{proposition}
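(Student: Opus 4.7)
My plan is to exhibit $(M,\ga)$ explicitly as a direct summand of $I(M)$, from which dominance of $I$ follows immediately. Define morphisms
\[
\iota\colon (M,\ga) \to I(M), \qquad \pi\colon I(M) \to (M,\ga)
\]
in $\cZ_\cC(\cM)$ to be the ones corresponding to $\mathrm{id}_M \in \mathrm{End}_\cM(M)$ under the two adjunction isomorphisms of \thmref{t:center}: $\iota$ is the unit of $F \dashv I$ at $(M,\ga)$, and $\pi$ is the counit of $I \dashv F$ at $(M,\ga)$. Both have explicit graphical presentations obtained from \eqref{e:adj_isom_2} and its ``rotated'' analogue by setting $f = \mathrm{id}_M$.

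The key claim is that $\pi \circ \iota$ is a nonzero scalar multiple of $\mathrm{id}_{(M,\ga)}$; rescaling $\pi$ then gives a splitting that exhibits $(M,\ga)$ as a direct summand of $I(M)$, and since direct summands are subquotients, dominance of $I$ follows at once. To avoid a full second proof for dominance, this is the only computation I need.

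Since the forgetful functor $F$ is faithful, it suffices to show $F(\pi)\circ F(\iota) = c\cdot \mathrm{id}_M$ in $\cM$ with $c\neq 0$. Because $F(\pi)|_i\colon X_i\lact M\ract X_i^* \to M$ and $F(\iota)|_j\colon M \to X_j\lact M\ract X_j^*$ land in different summands of $I(M)$ for $i\neq j$, the composition reduces to a single sum over $i\in\Irr(\cC)$, each term carrying a prefactor $\sqrt{d_i^R}\cdot\sqrt{d_i^R} = d_i^R$. Within each summand, graphical manipulation using the compatibility of the half-braiding $\ga$ with duality turns the configuration into a closed $X_i$-loop wrapped around $M$ decorated by two copies of $\ga$; the half-braiding axiom lets the two copies of $\ga$ collapse, and the remaining closed $X_i$-loop evaluates to $d_i^L\cdot \mathrm{id}_M$. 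Summing yields
\[
F(\pi)\circ F(\iota) \;=\; \Bigl(\sum_{i\in\Irr(\cC)} d_i^R d_i^L\Bigr)\mathrm{id}_M \;=\; \dim(\cC)\cdot \mathrm{id}_M,
\]
which is nonzero in characteristic $0$ for pivotal multifusion $\cC$ (\ocite{EGNO}).

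The main obstacle is this diagrammatic evaluation within each summand: one has to faithfully combine the external half-braiding $\ga$ of $(M,\ga)$ with the internal half-braiding on $I(M)$ displayed in \firef{f:I(M)}, and check that the $\sqrt{d_i^R}$ normalizations from \thmref{t:center} conspire with the dimension factors from closing up the strands to give exactly $\dim(\cC)$ (rather than $0$ from a misplaced sign or a smaller scalar from an incorrect hexagon application). Morally, the computation parallels the analogous one for the Drinfeld center of a fusion category in \ocite{kirillov-stringnet}, with the pivotal structure ensuring the closed loops evaluate to left quantum dimensions.
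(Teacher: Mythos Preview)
Your approach is essentially the same as the paper's: both construct $\iota$ and $\pi$ from the two adjunction isomorphisms applied to $\id_M$, and then verify that $\pi\circ\iota$ is an invertible scalar multiple of $\id_{(M,\ga)}$ (the paper delegates this computation to \lemref{l:Mga_proj} in the Appendix).

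There is, however, a small error in your evaluation that matters in the genuinely \emph{multifusion} case. A single closed $X_i$-loop, once slid off $M$, is not $d_i^L\cdot\id_M$: as an element of $\End(\one)$ it equals $d_i^L\cdot\id_{\one_{k_i}}$, so acting on $M$ it gives $d_i^L$ times the projector $\one_{k_i}\lact(-)$, not the identity. Consequently your termwise claim and the final scalar are wrong when $\one$ is not simple. The \emph{sum} over all $i\in\Irr(\cC)$ does still collapse to a scalar multiple of $\id_M$, but the scalar is $|\Irr_0(\cC)|\,\cD$ rather than $\dim(\cC)=|\Irr_0(\cC)|^2\,\cD$; see \lemref{l:dashed_circle} for this reduction and \lemref{l:Mga_proj} for the full computation. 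In the fusion case $|\Irr_0(\cC)|=1$ your argument is correct as written. This is a fixable detail, not a gap in the strategy.
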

\begin{proof}
The adjunction isomorphism applied to $\id_M\in \Hom_\cM(M,M)$
provides the inclusion $(M,\ga) \subseteq I(M)$,
and the other adjunction isomorphism gives the projection
$I(M) \to (M,\ga)$;
see \lemref{l:Mga_proj} in Appendix for details.
\end{proof}

\begin{proposition} \label{p:ZM_ss}
Under the hypotheses of \thmref{t:center},
if $\cM$ is finite semisimple, then so is $\cZ_\cC(\cM)$.
\end{proposition}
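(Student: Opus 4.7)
The plan is to combine the two-sided adjunction $F \dashv I \dashv F$ from \thmref{t:center} with \propref{p:I_dominant} to prove that every object of $\cZ_\cC(\cM)$ is both projective and injective, which forces semisimplicity; the finiteness statements then follow formally. Concretely, I would establish, in order: (i) that $\cZ_\cC(\cM)$ is abelian, with $F$ exact and faithful; (ii) that each $I(N)$, $N\in\cM$, is both projective and injective in $\cZ_\cC(\cM)$; and (iii) via \propref{p:I_dominant}, that every object of $\cZ_\cC(\cM)$ is a direct summand of some $I(N)$ and hence inherits both properties.

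For step (i), I would take the kernel and cokernel of a morphism $f\colon (M,\gamma)\to (M',\gamma')$ in $\cZ_\cC(\cM)$ to be the corresponding kernel and cokernel in $\cM$, equipped with the induced half-braiding. This works because each action functor $X \lact (-)$ and $(-) \ract X$ on $\cM$ has biadjoints coming from the rigid structure on $\cC$, hence is exact and commutes with kernels and cokernels; the half-braidings $\gamma,\gamma'$ then descend uniquely to these subquotients, and their coherence conditions pass through by naturality. In particular, $F$ is then exact and (by construction) faithful.

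For step (ii), the adjunction isomorphism \eqnref{e:adj_isom} reads
\[
\Hom_{\cZ_\cC(\cM)}\bigl((M,\gamma),\, I(N)\bigr) \simeq \Hom_\cM(M,N),
\]
naturally in $(M,\gamma)$. As a functor of $(M,\gamma)$, the right-hand side is $\Hom_\cM(F(-),N)$, which is exact because $F$ is exact by (i) and $\Hom_\cM(-,N)$ is exact since $\cM$ is semisimple. Hence $I(N)$ is injective in $\cZ_\cC(\cM)$. The dual adjunction $I\dashv F$ recorded in \eqnref{e:adj_isom_2} symmetrically shows that $I(N)$ is projective. Combined with (iii), every object of $\cZ_\cC(\cM)$ is both projective and injective, so every short exact sequence in $\cZ_\cC(\cM)$ splits. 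The finiteness statements then follow easily: $\Hom$-spaces in $\cZ_\cC(\cM)$ embed via $F$ into finite-dimensional $\Hom$-spaces in $\cM$, and every simple of $\cZ_\cC(\cM)$ is a summand of $I(M_i)$ for one of the finitely many simples $M_i\in\cM$, each such $I(M_i)$ admitting only finitely many indecomposable summands since its endomorphism algebra is finite-dimensional.

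The main potential obstacle is the routine but somewhat tedious bookkeeping in step (i): checking that the induced half-braidings on kernels and cokernels satisfy the full list of coherence axioms packaged into the definition of $\cZ_\cC(\cM)$. Once $\cZ_\cC(\cM)$ is known to be abelian and $F$ exact, everything else is a formal consequence of the biadjunction and semisimplicity of $\cM$.
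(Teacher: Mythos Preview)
Your proposal is correct and follows essentially the same outline as the paper: abelianness of $\cZ_\cC(\cM)$ via exactness of the action functors and induced half-braidings on kernels/cokernels, and finiteness of the set of simples via dominance of $I$ (\prpref{p:I_dominant}). The paper's proof is terse at the semisimplicity step (``follows easily''); your explicit argument that each $I(N)$ is projective and injective from the two-sided adjunction, together with \prpref{p:I_dominant}, is a clean and standard way to fill that in.
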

\begin{proof}
Using exactness in $\cM$ of the left and right actions,
abelianness of $\cM$ transfers to $\cZ(\cM)$.
For example, the kernel $K$ of a morphism
$f: M_1 \to M_2$ such that $f\in \Hom_{\cZ(\cM)}((M_1,\ga^1),(M_2,\ga^2))$
would inherit a half-braiding $\ga^1|_K$.
Semisimplicity follows easily.
Finiteness follows from \prpref{p:I_dominant};
$I$ ensures there can't be too many simples in $\cZ(\cM)$.
See \ocite{tham_elliptic} for a similar proof for $\cM = \cC$.
\end{proof}

%
%\begin{corollary}\label{c:box_ss}
%If $\cC$ is a pivotal multifusion category, and $\cM_1$, $\cM_2$ are finite semisimple right (respectively left) module category over $\cC$, then $\cM_1\boxtimes_cC \cM_2$ is also finite semisimple. 
%\end{corollary}
%
%Indeed, this immediately follows form \prpref{p:ZM_ss} and \eqref{e:center-product}. 
%    

For applications, we will need to consider centers over
a full, dominant, monoidal subcategory $\cC' \subseteq \cC$.
Equivalently,
$\cC'$ is a pivotal category
whose Karoubi envelope is multifusion.

\begin{lemma} \label{l:ZM_sub}
Let $\cC'$ be a pivotal locally finite $\kk$-linear additive category
whose Karoubi envelope $\cC = \Kar(\cC')$ is multifusion.
Let $\cM$ be a $\cC$-bimodule category,
and hence naturally a $\cC'$-bimodule category (as before, we assume that
 $\cM$ is a semisimple abelian category).
Then there is a natural equivalence
\[
  \cZ_\cC(\cM) \simeq \cZ_{\cC'}(\cM)
\]
In particular, for right, left $\cC$-module categories
$\cM_1,\cM_2$, there is a natural equivalence
\[
  \cM_1 \boxtimes_\cC \cM_2 \simeq \cM_1 \boxtimes_{\cC'} \cM_2.
\]
\end{lemma}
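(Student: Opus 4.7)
The plan is to exhibit the obvious restriction functor
$F\colon \cZ_\cC(\cM) \to \cZ_{\cC'}(\cM)$, sending a pair $(M,\ga)$ to the pair $(M, \ga')$ where $\ga'$ is the subfamily of half-braiding isomorphisms $\ga_A$ with $A \in \cC'$, and to show that $F$ is both fully faithful and essentially surjective.

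Full faithfulness is the easier half. A morphism in either center is a map $f\colon M_1 \to M_2$ in $\cM$ satisfying a compatibility square with $\ga^1_A$ and $\ga^2_A$ for every $A$ in $\cC$ (resp.\ $\cC'$). The key point is that every $A\in\cC$ is, by definition of $\Kar(\cC')$, a direct summand of some $A'\in\cC'$; since the functors $A\lact -$ and $-\ract A$ are additive in $A$, the compatibility square for $A'$ decomposes as a direct sum of squares for its summands. Hence imposing compatibility only for objects of $\cC'$ already forces it for all objects of $\cC$, and $F$ induces bijections on Hom-sets.

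For essential surjectivity, given $(M,\ga') \in \cZ_{\cC'}(\cM)$ I will extend $\ga'$ to a half-braiding $\ga$ defined on all of $\cC$ by idempotent splitting. For $A=(A',p)\in \cC$ with $p\in\End_{\cC'}(A')$ idempotent, the semisimple abelian category $\cM$ realizes $A\lact M$ as the image of the idempotent $p\lact \id_M$ acting on $A'\lact M$, and similarly for $M\ract A$. Naturality of $\ga'$ applied to the morphism $p\colon A' \to A'$ yields the identity $\ga'_{A'}\circ(p\lact\id_M)=(\id_M\ract p)\circ\ga'_{A'}$, so $\ga'_{A'}$ descends to an isomorphism $\ga_A\colon A\lact M \to M\ract A$ between the image objects. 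Because tensor products of idempotents are idempotents, $A\otimes B \in \cC$ is described by $p\otimes q$ inside $A'\otimes B' \in \cC'$, and the hexagon/unit axioms for $\ga$ follow from those for $\ga'$ by restricting along these idempotents; one also checks that the definition is independent of the representative $(A',p)$ in the usual Karoubi manner.

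The ``in particular'' statement is then a formal consequence: by \eqnref{e:center-product} applied to the multifusion category $\cC$, together with the first part,
\[
\cM_1 \boxtimes_\cC \cM_2 \simeq \cZ_\cC(\cM_1 \boxtimes \cM_2) \simeq \cZ_{\cC'}(\cM_1 \boxtimes \cM_2),
\]
and one identifies the last category with $\cM_1 \boxtimes_{\cC'} \cM_2$ via the universal property of the balanced tensor product, noting that the notions of $\cC$-balanced and $\cC'$-balanced functor coincide by the same idempotent argument used in the fully-faithful step. The main technical burden is the essential surjectivity step, specifically the bookkeeping needed to verify that the extended $\ga$ is well-defined on isomorphism classes in $\Kar(\cC')$ and satisfies all coherence conditions; everything else is immediate from additivity.
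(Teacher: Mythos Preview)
Your proposal is correct and follows essentially the same approach as the paper: define the restriction functor $(M,\ga)\mapsto(M,\ga|_{\cC'})$, prove essential surjectivity by extending a $\cC'$-half-braiding to $\cC$ via idempotent splitting (the paper writes this as $\ga_X=(\id\ract p)\circ\ga_Y\circ(\iota\lact\id)$ for $X$ a summand of $Y\in\cC'$), and deduce full faithfulness from the fact that every object of $\cC$ is a summand of one in $\cC'$. The only minor difference is that the paper remarks the functor is in fact an isomorphism of categories (since the extension of $\ga$ is unique), and for the ``in particular'' it simply notes that rigidity of $\cC$ is what is needed, whereas you spell out the chain through \eqnref{e:center-product} and the coincidence of $\cC$- and $\cC'$-balanced functors.
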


\begin{proof}
The equivalence is given as follows:
objects $(M,\gamma)$ in $\cZ_\cC(\cM)$ are naturally objects in $\cZ_{\cC'}(\cM)$
by forgetting some of the half-braiding, i.e. $(M, \gamma|_{\cC'})$;
morphisms $f: (M,\gamma) \to (M',\gamma')$
are naturally morphisms $f: (M,\gamma|_{\cC'}) \to (M',\gamma|_{\cC'})$.
We need to check that this is an equivalence.

The functor is essentially surjective: any half-braiding over $\cC'$
can be completed to a half-braiding over $\cC$.
To see this, let $\gamma$ be a half-braiding over $\cC'$.
Let $X \in \Obj \cC \backslash \Obj \cC'$,
and let it be a direct summand of some $Y \in \Obj \cC'$,
$X \overset{\iota}{\underset{p}{\rightleftharpoons}} Y$.
%\begin{tikzcd}
% X \ar[r, "\iota", shift left=0.5, rightharpoonup]
% & Y \ar[l, "p", shift left=00.55, rightharpoonup]
%end{tikzcd}.
Then we define the extension of $\gamma$ to $X$ by
$\gamma_X = (\id_{M_2} \ract p) \circ \gamma_Y \circ (\iota \lact \id_{M_1})$.
It is easy to check, using the semisimplicity of $\cC$,
that $\gamma_X$ is independent on the choice
of $Y$ and $p,\iota$.
It is also easy to check that the resulting extension
is indeed natural in $X$.

For morphisms, it is clear that this functor is faithful.
To show fullness, consider
$f \in \Hom_{\cZ_{\cC'}(\cM)}((M_1,\gamma^1),(M_2,\gamma^2))$.
We need to check that it also intertwines half-braiding
with $X \in \cC$, but this follows easily from the
definition of the extension of half-braiding given above.

Note since $\gamma$ has a unique extension to all of $\cC$,
this proof actually shows that the equivalence is an isomorphism.\\
\end{proof}

Note that in the proof above, we do not use the rigidity of $\cC$,
but we need it to conclude the second statement concerning
balanced tensor products.

\subsection{$\htr_\cC(\cM)$} \label{s:htrM}\par \noindent

Next we define the other notion of center.

\begin{definition}\label{d:htr}
Let $\cC$ be monoidal, and
$\cM$ a $\cC$-bimodule category.
%Let $\cM$ be a finite semisimple $\cC$-bimodule category.
Define the {\em horizontal trace} 
$\htr_\cC(\cM)$ as the  category with the following objects and morphisms: 

Objects: same as in $\cM$

Morphisms: $\Hom_{\htr_\cC(\cM)}(M_1,M_2)=
\bigoplus_X \ihom{\cM}{X}(M_1,M_2)/\sim$,
%\bigoplus_X \Hom_\cM(X \lact M_1, M_2 \ract X)/\sim$
where $\ihom{\cM}{X}(M_1,M_2) := \Hom_\cM(X \lact M_1, M_2 \ract X)$,
the sum is over all (not necessarily simple) objects $X\in \cC$,
and $\sim$ is the equivalence relation generated by
the following:
%\begin{itemize}
%\item Linearity
%\item

For any $\psi\in \ihom{\cM}{Y,X}(M_1,M_2) := \Hom_\cM(Y \lact M_1, M_2 \ract X)$
  and $f\in \Hom_\cC(X,Y)$, we have 

\[
\begin{tikzpicture}
\node[morphism] (psi) at (0,0) {$\psi$};
\draw (psi) -- +(0,1) node[above] {$M_1$};
\draw (psi) -- +(0,-1) node[below] {$M_2$};
\coordinate (L) at (-1,1);
\coordinate (R) at (1,-1);
\node[above] at (L) {$X$};
\node[below] at (R) {$X$};
\draw (L) to[out=-90,in=170] (psi);
\draw (R) to[out=90,in=-10] (psi);
\node[morphism] at (-0.8,0.4) {$f$};
\node at (-0.5,-0.15) {$Y$};
\end{tikzpicture}
\quad \sim \quad
\begin{tikzpicture}
\node[morphism] (psi) at (0,0) {$\psi$};
\draw (psi) -- +(0,1) node[above] {$M_1$};
\draw (psi) -- +(0,-1) node[below] {$M_2$};
\coordinate (L) at (-1,1);
\coordinate (R) at (1,-1);
\node[above] at (L) {$Y$};
\node[below] at (R) {$Y$};
\draw (L) to[out=-90,in=170] (psi);
\draw (R) to[out=90,in=-10] (psi);
\node[morphism] at (0.8,-0.4) {$f$};
\node at (0.5,0.15) {$X$};
\end{tikzpicture}
\]
In other words,
$\Hom_{\htr_\cC(\cM)}(M_1,M_2) = \int^X \ihom{\cM}{X,X}(M_1, M_2)$
is the coend of the functor
$\ihom{\cM}{-,-}(M_1, M_2) : \cC^{\text{op}} \times \cC \to \Vect$
(see e.g. \ocite{maclane}).
%\end{itemize}

%\noindent
Composition is given by
\[
\ihom{\cM}{Y}(M_2,M_3)) \tnsr \ihom{\cM}{X}(M_1,M_2)
\to \ihom{\cM}{Y\tnsr X}(M_1,M_3)
\]
which sends $\psi \tnsr \vphi$ to
\[
  Y \lact (X \lact M_1)
  \xxto{\id_Y \lact \psi} Y \lact (M_2 \ract X)
  \simeq (Y \lact M_2) \ract X
  \xxto{\vphi \ract \id_X} (M_3 \ract Y) \ract X
\]

For right, left $\cC$-module categories $\cM_1,\cM_2$,
we denote $\cM_1 \hatbox{\cC} \cM_2 = \htr_\cC(\cM_1 \boxtimes \cM_2)$.
\end{definition}

When the context is clear, we will drop the subscript $\htr = \htr_\cC$.
We will write $[\vphi] \in \Hom_{\htr(\cM)}(M_1,M_2)$ for the morphism
represented by $\vphi \in \ihom{\cM}{X}(M_1,M_2)$ for some $X$.\\

%
%This definition is a minor variation of the definition given in \ocite{BHLZ}*{Section 2.4}: 
%in the case when $\cM=\cC$, considered as a bimodule category with one object, 
%the definition above coincides with the one in \ocite{BHLZ}. 

It can be shown that in a certain sense this definition is dual to the definition of center 
given above and is closely related to the notion of co-center as described in \ocite{DSSP}*{Section 3.2.2}. However, we will not be discussing the exact relation here. 

It is easy to see that the category $\htr(\cM)$ is 
additive but not necessarily abelian.

There is a natural inclusion functor $\htr \colon  \cM \to \htr(\cM)$
which is identity on objects,
and on morphisms it is the natural map
$\Hom_\cM(M_1,M_2) = \ihom{\cM}{\one}(M_1,M_2)
\to \Hom_{\htr(\cM)}(M_1,M_2)$.

The horizontal trace construction is functorial,
and in particular, we have
\begin{lemma}
Given a functor of $\cC$-bimodule categories
$F: \cM \to \cM'$,
there is a natural functor $\htr(F): \htr(\cM) \to \htr(\cM')$
that is the same as $F$ on objects.
\end{lemma}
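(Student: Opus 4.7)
The plan is to define $\htr(F)$ on objects by $\htr(F)(M) = F(M)$, as specified, and on morphisms by pushing forward representatives through $F$ together with the bimodule structure isomorphisms of $F$. Recall that being a $\cC$-bimodule functor means $F$ comes equipped with natural isomorphisms $\lambda_{X,M}: F(X \lact M) \xrightarrow{\sim} X \lact F(M)$ and $\rho_{M,X}: F(M \ract X) \xrightarrow{\sim} F(M) \ract X$ satisfying the usual coherence conditions. Given a representative $\vphi \in \ihom{\cM}{X}(M_1,M_2) = \Hom_\cM(X \lact M_1, M_2 \ract X)$, I define
\[
  \htr(F)(\vphi) := \rho_{M_2,X} \circ F(\vphi) \circ \lambda_{X,M_1}^{-1} \;\in\; \ihom{\cM'}{X}(F(M_1), F(M_2)),
\]
and set $\htr(F)([\vphi]) := [\htr(F)(\vphi)]$.

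The first step is to verify well-definedness with respect to the coend relation $\sim$. Given $\psi \in \ihom{\cM}{Y,X}(M_1,M_2)$ and $f: X \to Y$, the two sides of the relation differ by pre-composing with $f \lact \id_{M_1}$ versus post-composing with $\id_{M_2} \ract f$. Applying $F$ to each side and inserting the bimodule isomorphisms, the naturality of $\lambda$ in the first variable and of $\rho$ in the second variable turns the relation $F(\psi \circ (f \lact \id)) = F((\id \ract f) \circ \psi)$ into the analogous coend relation between the two images in $\ihom{\cM'}{Y,X}(F(M_1), F(M_2))$. Thus the map descends to a well-defined linear map $\Hom_{\htr(\cM)}(M_1,M_2) \to \Hom_{\htr(\cM')}(F(M_1),F(M_2))$. (Conceptually, $F$ induces a natural transformation between the functors $\ihom{\cM}{-,-}(M_1,M_2)$ and $\ihom{\cM'}{-,-}(F(M_1),F(M_2))$ on $\cC^{\text{op}} \times \cC$, and $\htr(F)$ is the induced map on coends.)

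The second step is to check functoriality. For identities, $\htr(F)$ sends the class of $\id_{\one \lact M} \in \ihom{\cM}{\one}(M,M)$ to the class of a morphism canonically identified with $\id_{\one \lact F(M)}$ via the unit coherence of $\lambda$ and $\rho$, hence to $\id_{F(M)}$ in $\htr(\cM')$. For composition, given $\vphi \in \ihom{\cM}{X}(M_1,M_2)$ and $\psi \in \ihom{\cM}{Y}(M_2,M_3)$, their composite in $\htr(\cM)$ is built using the associator and the identification $(Y \lact M_2) \ract X \simeq Y \lact (M_2 \ract X)$; applying $F$ and inserting bimodule isomorphisms, the bimodule coherence (compatibility of $\lambda$, $\rho$, and the middle associator) ensures that $\htr(F)(\psi \circ_{\htr} \vphi) = \htr(F)(\psi) \circ_{\htr} \htr(F)(\vphi)$.

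The step requiring the most care is the well-definedness under $\sim$; everything else is formal bookkeeping with the bimodule coherence axioms. There is no genuine obstacle, since the construction is really just the functoriality of coends applied to the natural transformation between Hom-functors induced by $F$.
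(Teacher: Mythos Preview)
Your proposal is correct and is precisely the intended argument: the paper's own proof reads in full ``Straightforward exercise left to the reader.'' You have carried out that exercise, defining $\htr(F)$ on morphisms via the bimodule structure isomorphisms of $F$, checking descent to the coend using naturality of $\lambda$ and $\rho$, and verifying functoriality from the bimodule coherence axioms.
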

\begin{proof}
Straightforward exercise left to the reader.
\end{proof}

We also consider $\cC' \subseteq \cC$
as in \lemref{l:ZM_sub},
but here we do not need rigidity nor semisimplicity on $\cC$:

\begin{lemma} \label{l:htr_sub}
Let $\cC'$ be monoidal,
and let $\cC = \Kar(\cC')$ be its Karoubi envelope.
%Let $\cC'$ be a pivotal category
%whose Karoubi envelope $\cC = \Kar(\cC')$ is multifusion.
Let $\cM$ be a $\cC$-bimodule category,
and hence naturally a $\cC'$-bimodule category.
Then there is a natural equivalence
\[
  \htr_{\cC'}(\cM) \simeq \htr_\cC(\cM)
\]
In particular, for right, left $\cC$-module categories
$\cM_1,\cM_2$, there is a natural equivalence
\[
  \cM_1 \hatbox{\cC} \cM_2 \simeq \cM_1 \hatbox{\cC'} \cM_2
\]

\end{lemma}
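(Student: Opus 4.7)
The plan is to show that the natural functor $\htr_{\cC'}(\cM) \to \htr_\cC(\cM)$ induced by the inclusion $\cC' \hookrightarrow \cC$ is an equivalence. On objects it is the identity, so essential surjectivity is automatic, and the argument reduces to establishing fully faithfulness on morphism spaces.

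For fullness, let $[\vphi] \in \Hom_{\htr_\cC(\cM)}(M_1,M_2)$ be represented by some $\vphi \in \Hom_\cM(X \lact M_1, M_2 \ract X)$ with $X = (Y,p) \in \cC = \Kar(\cC')$ and $Y \in \cC'$. Let $\iota\colon (Y,p)\to Y$ and $\pi\colon Y \to (Y,p)$ be the canonical morphisms in $\cC$ satisfying $\pi\iota = \id_{(Y,p)}$ and $\iota\pi = p$, and set
\[
\tilde\vphi := (\id_{M_2} \ract \iota) \circ \vphi \circ (\pi \lact \id_{M_1}) \in \ihom{\cM}{Y}(M_1,M_2),
\]
which is labeled by an object of $\cC'$. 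Applying the coend relation in $\htr_\cC$ with $f := \iota \in \Hom_\cC((Y,p), Y)$ and $\psi := \vphi \circ (\pi \lact \id_{M_1}) \in \ihom{\cM}{Y,(Y,p)}(M_1,M_2)$, the identity $\pi\iota = \id_{(Y,p)}$ gives $[\vphi] = [\tilde\vphi]$, so $[\vphi]$ lies in the image of the functor.

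For faithfulness, suppose $\vphi_0, \vphi_1 \in \ihom{\cM}{Y}(M_1,M_2)$ for some $Y \in \cC'$ become equal in $\htr_\cC(\cM)$; they are then related by a finite chain of elementary coend relations over $\cC$. A single such step involves data $\psi \in \ihom{\cM}{B,A}(M_1,M_2)$ and $g \in \Hom_\cC(A,B)$, where $A = (Y_A,p_A)$ and $B = (Y_B, p_B)$ are (possibly) Karoubi objects with $Y_A,Y_B \in \cC'$. Setting $\tilde g := \iota_B \circ g \circ \pi_A \in \Hom_{\cC'}(Y_A,Y_B)$ and $\tilde\psi := (\id_{M_2} \ract \iota_A) \circ \psi \circ (\pi_B \lact \id_{M_1}) \in \ihom{\cM}{Y_B, Y_A}(M_1,M_2)$, a short calculation using only $\pi\iota = \id$ shows that these lifts witness an elementary $\cC'$-relation between the $\cC'$-refinements (in the sense of the fullness step) of the two sides of the original $\cC$-relation. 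Chaining these lifted relations, and noting that $\vphi_0, \vphi_1$ are their own refinements since they are already labeled in $\cC'$, yields the desired chain in $\htr_{\cC'}(\cM)$.

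The second assertion follows at once from the first applied to the $\cC$-bimodule category $\cM_1 \boxtimes \cM_2$. The main technical obstacle is the bookkeeping in the faithfulness step---tracking which labels live in $\cC$ versus $\cC'$ and verifying that lifted relations compose correctly---though conceptually the lemma is just the familiar fact that coends are invariant under Karoubi completion of the indexing category, and no rigidity or semisimplicity is needed.
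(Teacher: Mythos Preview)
Your proof is correct and follows essentially the same approach as the paper's: both construct the identity-on-objects functor, prove fullness by replacing a Karoubi-labeled morphism $\vphi \in \ihom{\cM}{(Y,p)}$ with $(\id \ract \iota)\circ \vphi \circ (\pi \lact \id) \in \ihom{\cM}{Y}$, and prove faithfulness by choosing for each Karoubi object appearing in a relation a containing object of $\cC'$ and pushing the relation forward via the resulting retraction. The paper is simply more explicit about the faithfulness bookkeeping, packaging your ``lift each elementary relation'' step into a single linear map $\Theta \colon \bigoplus_{A_j}\ihom{\cM}{A_j} \to \bigoplus_{B_j}\ihom{\cM}{B_j}$ and verifying $g_i \circ \Psi_i - \Psi_i \circ g_i = \Theta(f_i \circ \Phi_i - \Phi_i \circ f_i)$ directly; your phrase ``chain of elementary coend relations'' should really be read as ``finite linear combination,'' but the argument is the same.
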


\begin{proof}
The equivalence is given by the identity map on objects,
and for two objects $M_1,M_2 \in \Obj \cM$,
the map on morphisms is given by completing the bottom arrow:
\[
\begin{tikzcd}
  \bigoplus_{X \in \cC'} \ihom{\cM}{X}(M_1,M_2)
    \ar[r] \ar[d]
  & \bigoplus_{X \in \cC} \ihom{\cM}{X}(M_1,M_2)
    \ar[d]
  \\
  \Hom_{\htr_{\cC'}(\cM)}(M_1, M_2)
    \ar[r]
  & \Hom_{\htr_\cC(\cM)}(M_1, M_2)
\end{tikzcd}
\]
It remains to prove that the bottom arrow is an isomorphism.

Let us first observe the following.
Let $X, Y\in \Obj \cC'$, and suppose $X$ is a
direct summand of $Y$, with
$X \overset{\iota}{\underset{p}{\rightleftharpoons}} Y$.
%\begin{tikzcd}
%X \ar[r, "\iota", shift left=1, rightharpoonup]
%& Y \ar[l, "p", shift left=1, rightharpoonup]
%\end{tikzcd}.
Let $\vphi \in \ihom{\cM}{X}(M_1,M_2)$.
Then $\vphi = \vphi \circ p \circ \iota
\sim \iota \circ \vphi \circ p \in \ihom{\cM}{Y}(M_1,M_2)$,
where we write $p,\iota$ instead of
$p \lact \id_{M_1}, \id_{M_2} \ract \iota$
for simplicity. This works for $\cC$ too.
Thus one can identify $\ihom{\cM}{X}(M_1,M_2)$
with a subspace of $\ihom{\cM}{Y}(M_1,M_2)$.

Surjectivity: Essentially, we need to show that any
morphism in $\htr_\cC(\cM)$ can be ``absorbed'' into $\htr_{\cC'}(\cM)$.
Let $[\vphi] \in \Hom_{\htr_\cC(\cM)}(M_1,M_2)$
be represented by some $\vphi \in \ihom{\cM}{X}(M_1,M_2)$.
By the above observation,
we can choose $Y \in \Obj \cC'$
with $X$ a direct summand of $Y$,
then $\vphi \in \ihom{\cM}{X}(M_1,M_2)$
is identified with some morphism in
$\ihom{\cM}{Y}(M_1,M_2)$,
so $[\vphi]$ is in the image.

Injectivity: Essentially, we need to show that
relations can also be ``absorbed'' into $\htr_{\cC'}(\cM)$.
Let $[\vphi] \in \Hom_{\htr_{\cC'}(\cM)}(M_1,M_2)$
that is sent to 0.
By the observation above, we may represent it by some
$\vphi \in \ihom{\cM}{Y}(M_1,M_2)$ for some $Y\in \Obj \cC'$.
Since it is 0 in $\Hom_{\htr_\cC(\cM)}(M_1,M_2)$,
there exists
\begin{itemize}
\item a finite collection of objects $J = \{A_j\} \subset \Obj \cC$
  so that $A_0 = Y$.
\item $\Phi_i \in \ihom{\cM}{A_{m_i},A_{n_i}}(M_1,M_2)$,
\item $f_i : A_{n_i} \to A_{m_i}$,
\end{itemize}
such that $\vphi = \sum_i f_i \circ \Phi_i - \Phi_i \circ f_i
\in \bigoplus_{A_j \in J} \ihom{\cM}{A_j}(M_1,M_2)$.
%where for simplicity, we write $f_i$ where it should be
%$f_i \lact \id_{M_1}$ or $\id_{M_2} \ract f_i$.

We want to be able to replace the $A_j$'s with objects in $\cC'$.
For each $j \neq 0$, choose some $B_j \in \Obj \cC'$
such that $A_j$ is a direct summand of $B_j$:
$A_j \overset{\iota_j}{\underset{p_j}{\rightleftharpoons}} B_j$.
%\begin{tikzcd}
%A_j \ar[r, "\iota_j", shift left=1, rightharpoonup]
%& B_j \ar[l, "p_j", shift left=1, rightharpoonup]
%\end{tikzcd}.
For $j=0$, we take $B_0 = A_0 = Y$ and $\iota_0 = p_0 = \id_Y$.
This gives us maps
$\Theta_j: \psi \mapsto \iota_j \circ \psi \circ p_j:
  \ihom{\cM}{A_j}(M_1,M_2) \to \ihom{\cM}{B_j}(M_1,M_2)$.
Denote $\Theta = \sum \Theta_j$.

Now consider
\begin{itemize}
\item $L = \{B_j\} \subset \Obj \cC'$,
\item $\Psi_i = \iota_{n_i} \circ \Phi_i \circ p_{m_i}
  \in \ihom{\cM}{B_{m_i},B_{n_i}}(M_1,M_2)$,
\item $g_i = \iota_{m_i} \circ f_i \circ p_{n_i} :
  B_{n_i} \to B_{m_i}$.
\end{itemize}
It is a simple matter to verify that
$g_i \circ \Psi_i - \Psi \circ g_i =
  \Theta(f_i \circ \Phi_i - \Phi_i \circ f_i)$.
Hence $\vphi = \Theta(\vphi)
  = \Theta(\sum f_i \circ \Phi_i - \Phi_i \circ f_i)
  = \sum g_i \circ \Psi_i - \Psi \circ g_i$
is 0 in $\Hom_{\htr_{\cC'}}(M_1,M_2)$.

\end{proof}

\subsection{Equivalence} \label{s:equiv}\par \noindent

\begin{theorem}
  \label{t:htr}
Let $\cC$ be pivotal multifusion,
and $\cM$ a $\cC$-bimodule category.
One has a natural equivalence 
\[
  \Kar(\htr(\cM)) \simeq \cZ_\cC(\cM)
\]
Under this equivalence, the inclusion functor $\htr: \cM\to \htr(\cM)$ is 
identified with the functor $I : \cM\to \Z_\cC(\cM)$.

In particular, for right, left $\cC$-modules $\cM_1,\cM_2$,
we have
\[
  \cM_1 \boxtimes_\cC \cM_2 \simeq \cZ_\cC(\cM_1 \boxtimes \cM_2)
    \simeq \Kar(\cM_1 \hatbox{\cC} \cM_2)
\]

\end{theorem}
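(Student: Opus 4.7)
The plan is to construct a fully faithful functor $\Phi : \htr_\cC(\cM) \to \cZ_\cC(\cM)$ sending $M \mapsto I(M)$, and then extend it by the universal property of the Karoubi envelope. Since $\cZ_\cC(\cM)$ is abelian (\prpref{p:ZM_ss}), hence idempotent-complete, the extension $\widetilde{\Phi} : \Kar(\htr_\cC(\cM)) \to \cZ_\cC(\cM)$ exists. Essential surjectivity of $\widetilde{\Phi}$ will follow immediately from \prpref{p:I_dominant}: every $(M,\ga)$ in $\cZ_\cC(\cM)$ is a direct summand of $I(M) = \Phi(M)$, so it lies in the essential image after passage to $\Kar$.

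To define $\Phi$ on morphisms, I would send $\vphi \in \ihom{\cM}{X}(M_1, M_2) = \Hom_\cM(X \lact M_1, M_2 \ract X)$ to the composition
\[
I(M_1) = \bigoplus_i X_i \lact M_1 \ract X_i^*
\;\xrightarrow{\;\text{cap/cup with $X$}\;}\;
\bigoplus_{i,j} X_j \lact M_2 \ract X_j^* = I(M_2)
\]
built from $\vphi$ sandwiched between appropriate evaluation/coevaluation morphisms dictated by the half-braiding formula in \firef{f:I(M)}. The fact that this is compatible with the half-braidings, and that the assignment $\vphi \mapsto \Phi(\vphi)$ factors through the coend defining $\Hom_{\htr(\cM)}(M_1, M_2)$, are checked by the standard sliding/naturality argument in a pivotal category: for $\psi \in \ihom{\cM}{Y,X}(M_1,M_2)$ and $f : X \to Y$, both sides of the coend relation represent the same element after absorbing $f$ into the evaluation/coevaluation, using rigidity and the pivotal structure.

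The heart of the proof is full faithfulness. Using the two-sided adjunction $I \dashv F \dashv I$ from \thmref{t:center}, together with the rigidity and the pivotal structure of $\cC$, one computes
\[
\Hom_{\cZ(\cM)}\bigl(I(M_1), I(M_2)\bigr)
\;\simeq\;
\Hom_\cM\bigl(M_1, F(I(M_2))\bigr)
\;\simeq\;
\bigoplus_{i \in \Irr(\cC)} \Hom_\cM(X_i \lact M_1, M_2 \ract X_i),
\]
where the last step uses $\Hom_\cM(M_1, X_i \lact M_2 \ract X_i^*) \simeq \Hom_\cM(X_{i^*} \lact M_1, M_2 \ract X_{i^*})$ via the pivotal duality and the bijection $i \leftrightarrow i^*$ on $\Irr(\cC)$. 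On the other hand, since $\cC$ is semisimple the coend $\int^X \ihom{\cM}{X,X}(M_1, M_2)$ collapses to the direct sum $\bigoplus_{i \in \Irr(\cC)} \ihom{\cM}{X_i}(M_1, M_2)$. Matching these two identifications (and checking this matches the explicit morphism $\Phi$ above using the formulas in \eqnref{e:adj_isom}, \eqnref{e:adj_isom_2}) gives $\Hom_{\htr(\cM)}(M_1, M_2) \simeq \Hom_{\cZ(\cM)}(I(M_1), I(M_2))$.

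Finally, the identification of $\htr : \cM \to \htr(\cM)$ with $I : \cM \to \cZ_\cC(\cM)$ is by construction of $\Phi$, and the second claim about balanced tensor products follows by applying the main equivalence to $\cM = \cM_1 \boxtimes \cM_2$ and combining with \eqnref{e:center-product}. The main obstacle I anticipate is bookkeeping: verifying that the explicit coend relations for $\htr(\cM)$ correspond precisely to the ``straightening'' relations that arise when identifying morphisms in $\cZ(\cM)$ between $I(M_1)$ and $I(M_2)$ via the adjunctions, where the various appearances of dimensions $\sqrt{d_i^R}$ and the pivotal identifications $X_i^{**} \simeq X_i$ must be tracked consistently with the conventions of \thmref{t:center}.
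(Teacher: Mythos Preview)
Your proposal is correct and follows essentially the same approach as the paper: define a functor $\htr(\cM)\to\cZ_\cC(\cM)$ by $M\mapsto I(M)$, verify it respects the coend relations, prove full faithfulness via the adjunction of \thmref{t:center} together with \lemref{l:isom1}, deduce essential surjectivity from \prpref{p:I_dominant}, and then pass to the Karoubi envelope. The only cosmetic difference is that the paper uses the adjunction in the form $\Hom_{\cZ(\cM)}(I(M),I(M'))\cong\Hom_\cM(I(M),M')=\bigoplus_i\Hom_\cM(X_i\lact M\ract X_i^*,M')$, whereas you use the other side $\Hom_\cM(M_1,FI(M_2))$ and then dualize; both are valid since $I$ is two-sided adjoint.
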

Before proving the theorem, we will need the following lemma.
\begin{lemma}\label{l:isom1}
The natural linear map 
\begin{equation}\label{e:isom1}
  \bigoplus_{i \in \Irr(\cC)} \ihom{\cM}{X_i}(M_1,M_2) \to \Hom_{\htr(\cM)}(M_1, M_2)
\end{equation}
is an isomorphism.
\end{lemma}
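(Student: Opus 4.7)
The plan is to exploit the semisimplicity of $\cC$ (guaranteed because $\cC$ is multifusion) to compute the coend $\int^X \ihom{\cM}{X,X}(M_1,M_2)$ explicitly. The natural map in \eqref{e:isom1} is the restriction of the coend's structure map to the simple summands indexed by $\Irr(\cC)$; I will establish surjectivity by an inclusion-projection splitting argument and injectivity by producing an explicit left inverse.

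For surjectivity, given any $\vphi \in \ihom{\cM}{X}(M_1,M_2)$, pick a decomposition $X \cong \bigoplus_{i,k} X_i^{(k)}$ with inclusions $\iota_{i,k}\colon X_i\to X$ and projections $p_{i,k}\colon X\to X_i$ satisfying $\sum_{i,k}\iota_{i,k}p_{i,k}=\id_X$. Writing
\[
\vphi = \sum_{i,k} \bigl(\vphi \circ (\iota_{i,k}\lact\id_{M_1})\bigr) \circ (p_{i,k}\lact\id_{M_1})
\]
and applying the defining coend relation to slide each $p_{i,k}$ across $\psi_{i,k} = \vphi\circ(\iota_{i,k}\lact\id_{M_1}) \in \ihom{\cM}{X_i,X}(M_1,M_2)$ (with $f=p_{i,k}\colon X\to X_i$) yields
\[
\vphi \;\sim\; \sum_{i,k} (\id_{M_2}\ract p_{i,k})\circ\vphi\circ(\iota_{i,k}\lact\id_{M_1}) \;\in\; \bigoplus_i \ihom{\cM}{X_i}(M_1,M_2).
\]
This is the same splitting trick used in the proof of \lemref{l:htr_sub}.

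For injectivity, I would construct a left inverse $R\colon \Hom_{\htr(\cM)}(M_1,M_2)\to \bigoplus_i \ihom{\cM}{X_i}(M_1,M_2)$. For each $X$ fix a decomposition into simples as above, and define $R_X(\vphi) = \sum_{i,k} (\id_{M_2}\ract p_{i,k})\circ\vphi\circ(\iota_{i,k}\lact\id_{M_1})$. The nontrivial point is to check that the resulting map on $\bigoplus_X \ihom{\cM}{X}(M_1,M_2)$ descends to the coend, i.e.\ that
\[
R_X\bigl(\psi\circ(f\lact\id_{M_1})\bigr) = R_Y\bigl((\id_{M_2}\ract f)\circ\psi\bigr)
\]
for any $\psi\in\ihom{\cM}{Y,X}(M_1,M_2)$ and $f\in\Hom_\cC(X,Y)$. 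To verify this, expand $f$ in matrix form with respect to the fixed simple-summand decompositions of $X$ and $Y$; Schur's lemma forces the matrix entries of $f$ to vanish outside matching isotypic pieces, reducing the identity to a direct computation. Once well-definedness is known, $R$ is manifestly the identity on $\bigoplus_i \ihom{\cM}{X_i}(M_1,M_2)$ (the only decomposition of a simple is the trivial one), so $R$ is a left inverse to \eqref{e:isom1}; combined with surjectivity this produces a two-sided inverse.

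The main obstacle is the coend-compatibility check for $R$. While each individual step is elementary, the bookkeeping across simple-summand decompositions of two different objects $X$ and $Y$ requires care. Everything else in the argument is essentially formal, depending only on semisimplicity of $\cC$ and Schur's lemma.
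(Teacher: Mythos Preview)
Your argument is correct, and it takes a genuinely different route from the paper. The paper constructs an explicit inverse map graphically: it sends $\psi\in\ihom{\cM}{X}(M_1,M_2)$ to $\sum_i d_i^R$ times a diagram built from dual bases $\alpha$ with respect to the pairing \eqref{e:pairing}, then invokes \lemref{l:pairing_property} for well-definedness and \lemref{l:summation} to verify the two maps are mutually inverse. Your approach instead uses only the semisimplicity of $\cC$ and Schur's lemma, with no appeal to the pivotal structure or the graphical machinery. This is more elementary and makes transparent that the lemma is really the standard fact that a coend over a semisimple category collapses to a direct sum over simples. The paper's version, on the other hand, is phrased in the same graphical language used to define the functor $G$ in the proof of \thmref{t:htr}, so it plugs more directly into the surrounding arguments. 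One small point: your map $R_X$ is in fact independent of the chosen decomposition of $X$ (it is the canonical projection onto isotypic components), which you could note to streamline the coend-compatibility verification, though your matrix computation via Schur's lemma is perfectly adequate as written.
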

\begin{proof}
To prove the statement, we define a linear map  
$$
\Hom_{\htr(\cM)}(M_1, M_2) \to 
\bigoplus_{i \in \Irr(\cC)} \ihom{\cM}{X_i}(M_1,M_2)
$$
by 

\begin{equation}\label{e:isom2}
\psi\mapsto 
\sum_{i\in \Irr(\cC)} d_i^R \quad
\begin{tikzpicture}
\node[morphism] (psi) at (0,0) {$\psi$};
\draw (psi) -- +(0,1.5) node[above] {$M_1$};
\draw (psi) -- +(0,-1.5) node[below] {$M_2$};
\node[semi_morphism={90,}] (al1) at (-1,0.6) {$\al$};
\node[semi_morphism={90,}] (al2) at (1,-0.6) {$\al$};
\draw[midarrow={0.6}] (al1) to[out=-90,in=180] (psi);
\draw[midarrow={0.6}] (psi) to[out=0,in=90] (al2);
\node at (-0.7,-0.3) {$X$};
\node at (0.7,0.3) {$X$};
\draw[midarrow_rev] (al1) -- (-1,1.5) node[above] {$X_i$};
\draw[midarrow] (al2) -- (1,-1.5) node[below] {$X_i$};
\end{tikzpicture}
\end{equation}
for $\psi \in \ihom{\cM}{X}(M_1,M_2)$;
$\al$ is a sum over dual bases - see Notation~\ref{n:summation} in the Appendix.
\eqnref{e:isom2} is well-defined by \lemref{l:pairing_property}.
Using \lemref{l:summation}, it is easy to see that
\eqref{e:isom1} and \eqref{e:isom2} are mutually inverse. 
\end{proof}

\begin{proof}[Proof of \thmref{t:htr}]
Define the functor $G : \htr(\cM)\to \cZ_\cC(\cM)$ on objects by $G(M)=I(M)$, and on morphisms by 
\begin{equation}
G(\psi) = 
\sum_{i,j \Irr(\cC)} \sqrt{d_i^R}\sqrt{d_j^R}
%%%%%%%%
\begin{tikzpicture}
\node[morphism] (psi) at (0,0) {$\psi$}; 
\node[semi_morphism={90,}] (L) at (-1,0) {$\al$};
\node[semi_morphism={270,}] (R) at (1,0) {$\al$};
\draw (psi)-- +(0,1.5) node[above] {$M_1$}; \draw (psi)-- +(0,-1.5) node[below] {$M_2$}; 
\draw[midarrow_rev] (L)-- +(0,1.5) node[pos=0.5,left] {$i$};
\draw[midarrow] (L) -- +(0,-1.5) node[pos=0.5,left] {$j$}; 
\draw[midarrow] (R) -- +(0,1.5) node[pos=0.5,right] {$i$};
\draw[midarrow_rev] (R)-- +(0,-1.5) node[pos=0.5,right] {$j$};
\draw[midarrow={0.6}] (L) to[out=-80,in=180] (psi);
\node at (-0.5, -0.5) {$X$};
\draw[midarrow_rev={0.6}] (R) to[out=-100,in=0] (psi);
\node at (0.5, -0.5) {$X$};
\end{tikzpicture}
%%%%%%%%
\end{equation}
for $\psi \in \ihom{\cM}{X}(M_1,M_2)$;
once again see Notation~\ref{n:summation} in the Appendix
for definition of $\al$.

It is easy to check the following properties: 
\begin{enumerate}
\item $G$ is well-defined on morphisms (i.e. it preserves the equivalence relation):
this follows from \lemref{l:halfbrd}.

\item $G$ is dominant: any $Y\in \cZ_\cC(\cM)$ appears as a direct summand 
of  $G(M)$ for some $M\in \cM$. Namely, if $Y=(M,\ga)$,
then it appears as a direct summand of $G(M)$;
the projection to $Y$ is, up to a factor, $G(\sum d_i^R \ga_{X_i})$
(see \lemref{l:Mga_proj} in Appendix for proof;
compare \prpref{p:I_dominant}).

\item $G$ is bijective on morphisms:
by adjointness property (\thmref{t:center}), we have 
$$
\Hom_{\cZ_\cC(\cM)}(I(M), I(M')) \cong \Hom_{\cM}(I(M), M')
  =\bigoplus_{i}\Hom_\cM(X_i \lact M \ract X_i^*,  M')
$$
and by \lemref{l:isom1}, the right hand side coincides with $\Hom_{\htr(\cM)}(M,M')$.

\end{enumerate}

This immediately implies the statement of the theorem
by the universal properties of Karoubi envelopes.

\end{proof}

By \lemref{l:ZM_sub} and \lemref{l:htr_sub},
we extend the above theorem to $\cC' \subseteq \cC$:
\begin{corollary} \label{cor:center}
Let $\cC'$ be a pivotal category
whose Karoubi envelope $\cC = \Kar(\cC')$ is multifusion.
Let $\cM$ be a $\cC$-bimodule category,
and hence naturally a $\cC'$-bimodule category.
Then we have
\[
  \Kar(\htr_{\cC'}(\cM)) \simeq \Kar(\htr_\cC(\cM)) \simeq \cZ_\cC(\cM) \simeq \cZ_{\cC'}(\cM).
\]
\end{corollary}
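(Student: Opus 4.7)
The plan is to obtain the four-way equivalence as an immediate concatenation of the three preceding results in the subsection, so the bulk of the work has already been done. The statement essentially reads as a compatibility diagram, and the corollary just asserts that every arrow in the square commutes up to natural equivalence. I will therefore present the argument as a short chain of isomorphisms, pointing out where rigidity/semisimplicity are (and are not) used.

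First, I will apply \lemref{l:htr_sub} to get the equivalence $\htr_{\cC'}(\cM) \simeq \htr_\cC(\cM)$. Since the Karoubi envelope is a 2-functor (any equivalence of $\kk$-linear additive categories extends uniquely, up to natural isomorphism, to an equivalence of their idempotent completions), this upgrades to $\Kar(\htr_{\cC'}(\cM)) \simeq \Kar(\htr_\cC(\cM))$. Note that \lemref{l:htr_sub} does not require $\cC$ to be rigid or semisimple, which is important: here $\cC'$ itself is only assumed pivotal, and it is only after passing to $\cC=\Kar(\cC')$ that rigidity is automatic.

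Next, I will invoke \thmref{t:htr} applied to the multifusion category $\cC$ and the $\cC$-bimodule category $\cM$ to obtain $\Kar(\htr_\cC(\cM)) \simeq \cZ_\cC(\cM)$. Finally, I will apply \lemref{l:ZM_sub} to get $\cZ_\cC(\cM) \simeq \cZ_{\cC'}(\cM)$. Concatenating these three equivalences yields the four-way equivalence in the statement.

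There is no real obstacle here; the only minor point that merits a sentence is to verify that the composed equivalences are compatible with the canonical comparison functors between the two versions of horizontal trace and the two versions of the center (so that the resulting diagram of equivalences genuinely commutes up to natural isomorphism). This follows by inspection of the constructions in \lemref{l:ZM_sub} and \lemref{l:htr_sub}: both are identity on objects of $\cM$, and on morphisms they are induced by the inclusion $\cC' \hookrightarrow \cC$, so the comparison squares commute strictly. Thus the corollary reduces to citing the three preceding results.
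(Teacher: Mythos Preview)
Your proposal is correct and follows essentially the same approach as the paper: the paper simply states that the corollary follows by combining \lemref{l:ZM_sub}, \lemref{l:htr_sub}, and \thmref{t:htr}, which is precisely the chain of equivalences you spell out. Your additional remark about compatibility of the comparison functors is a reasonable sanity check but is not required for the statement as written, since the corollary only asserts the existence of equivalences rather than commutativity of a specific diagram.
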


Note $\Kar(\cM)$ inherits a $\cC'$-bimodule structure from $\cM$.
For example, $A \lact (M,p) = (A \lact M, \id_A \lact p)$.
We compare these constructions for $\cM$ and its Karoubi envelope:
\begin{lemma}
\label{lem:M_Karubi}
Under the same hypotheses as \corref{cor:center},
\[
  \Kar(\htr_{\cC'}(\cM)) \simeq \Kar(\htr_{\cC'}(\Kar(\cM)))
\]
In particular, if $\cM'$ is a dominant submodule category of $\cM$,
then
\[
  \Kar(\htr_{\cC'}(\cM')) \simeq \Kar(\htr_{\cC'}(\cM))
\]
\end{lemma}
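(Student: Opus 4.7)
The plan is to exhibit the natural functor $F := \htr_{\cC'}(j) : \htr_{\cC'}(\cM) \to \htr_{\cC'}(\Kar(\cM))$ induced by the Karoubi embedding $j : \cM \hookrightarrow \Kar(\cM)$, and to show that its Karoubi envelope $\Kar(F)$ is an equivalence. Since the Karoubi envelope construction preserves fully faithful functors and is essentially surjective onto direct summands, it suffices to show that $F$ is fully faithful on objects of $\cM$ and that every object of $\Kar(\htr_{\cC'}(\Kar(\cM)))$ is isomorphic to one in the image of $\Kar(F)$.

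For fully faithfulness, the key observation is that for $M_1, M_2 \in \cM$, both $\Hom_{\htr_{\cC'}(\cM)}(M_1, M_2)$ and $\Hom_{\htr_{\cC'}(\Kar(\cM))}(M_1, M_2)$ are computed as the coend $\int^{X \in \cC'} \Hom(X \lact M_1, M_2 \ract X)$. Since the objects $X \lact M_1$ and $M_2 \ract X$ lie in $\cM$ itself (the action of $\cC'$ on $\cM$ need not leave $\cM$, but $\cM \hookrightarrow \Kar(\cM)$ is full), the $\Hom$-spaces in $\cM$ and $\Kar(\cM)$ agree, and the coend relations --- which involve only morphisms in $\cC'$ --- are identical on both sides. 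Thus $F$ is fully faithful, and so is $\Kar(F)$.

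For essential surjectivity of $\Kar(F)$, a general object of $\Kar(\htr_{\cC'}(\Kar(\cM)))$ has the form $((M,p), Q)$, where $(M,p)$ is an idempotent in $\cM$ and $Q$ is an idempotent endomorphism of $(M,p)$ in $\htr_{\cC'}(\Kar(\cM))$. The canonical functor $\Kar(\cM) \to \htr_{\cC'}(\Kar(\cM))$ sends the splitting data $\iota : (M,p) \rightleftarrows M : \pi$ (with $\iota\pi = p$, $\pi\iota = \id_{(M,p)}$) to an exhibition of $(M,p)$ as a direct summand of $M$ in $\htr_{\cC'}(\Kar(\cM))$. The standard identification $\Kar(\Kar(-)) \simeq \Kar(-)$ then yields an isomorphism $((M,p), Q) \cong (M, \widetilde{Q})$ in $\Kar(\htr_{\cC'}(\Kar(\cM)))$, where $\widetilde{Q}$ is the corresponding idempotent in $\End_{\htr_{\cC'}(\Kar(\cM))}(M)$. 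By the fully faithfulness already established, $\widetilde{Q}$ lifts uniquely to an idempotent in $\End_{\htr_{\cC'}(\cM)}(M)$, and so $(M, \widetilde{Q}) \in \Kar(\htr_{\cC'}(\cM))$ is a preimage under $\Kar(F)$.

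For the ``in particular'' statement, since $\cM$ is semisimple and $\cM' \subseteq \cM$ is a full dominant $\cC'$-submodule category, every object of $\cM$ is a direct summand of some object of $\cM'$, and splitting of idempotents in $\cM$ gives a $\cC'$-equivariant equivalence $\Kar(\cM') \simeq \cM$. Applying the main statement with $\cM$ replaced by $\cM'$ then yields $\Kar(\htr_{\cC'}(\cM')) \simeq \Kar(\htr_{\cC'}(\Kar(\cM'))) \simeq \Kar(\htr_{\cC'}(\cM))$. The main obstacle in the argument above is the careful bookkeeping in identifying nested idempotents under $\Kar(\Kar(-)) \simeq \Kar(-)$ while respecting the horizontal trace relations, but this is essentially formal given the coend description of morphisms in $\htr_{\cC'}$.
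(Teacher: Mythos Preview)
Your proof is correct and follows essentially the same approach as the paper's. The paper's argument is terser: it simply notes that the inclusion $\cM \hookrightarrow \Kar(\cM)$ is a full, dominant $\cC'$-bimodule functor, observes that the induced functor $\htr(\cM) \to \htr(\Kar(\cM))$ inherits these properties, and concludes that Karoubi envelopes agree; your version unpacks this by explicitly verifying full faithfulness via the coend description and essential surjectivity via the $\Kar(\Kar(-)) \simeq \Kar(-)$ identification, which is exactly what lies behind the paper's one-line claim.
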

\begin{proof}
The natural inclusion $\cM \to \Kar(\cM)$ 
is a full, dominant functor of $\cC'$-bimodules,
and it is easy to see that the corresponding functor
$\htr(\cM) \to \htr(\Kar(\cM))$
is also full and dominant.
It follows that the induced functor
on their Karoubi envelopes is an equivalence.

The second statement follows because $\Kar(\cM') \simeq \Kar(\cM)$.
\end{proof}

%%%%%%%%%%%%%%%%%%%%%%%%%%%%%%%%%%%%%%%%%%%%%%%%%%%%
\section{Colored Graphs in Turaev-Viro theory}\label{s:TV}

In this section, we recall the definition of colored graphs
(called stringnets in \ocite{kirillov-stringnet})
in Turaev--Viro theory. This is intended to serve as a reminder only; proofs 
are omitted. Details and proofs can be found in \ocite{kirillov-stringnet}.

Throughout this section, all surfaces are assumed to be oriented. We denote by 
$\cA$ a spherical fusion category. We will be heavily using graphical presentation of morphisms 
in $\cA$; we give a summary of our notation and conventions in the Appendix.

For a finite graph $\Ga$ embedded in surface $\Si$, we denote 
by  $E(\Ga)$ the set of edges. Note that edges are not
oriented. Let $E^{or}$ be the set of oriented edges, i.e. pairs $\ee=(e,
\text{orientation of } e)$; for such an oriented edge $\ee$, we denote by
$\bar{\ee}$ the edge with opposite orientation.

If $\Si$ has a boundary, the graph is allowed to have uncolored one-valent
vertices on $\del \Si$ but no other common points with $\del \Si$; all
other  vertices will  be called interior.  We will  call the edges of $\Ga$
terminating at these  one-valent vertices {\em legs}.   
%%%%%%%%%%%%%%%%%%%%%%
\begin{definition}\label{d:coloring} Let $\Si$ an oriented surface
(possibly with boundary) and $\Ga\subset \Si$ --- an embedded graph as
defined above.  A {\em coloring} of $\Ga$ is the
following data:

  \begin{itemize}
    \item Choice of an object $V(\ee)\in \Obj \cA$ for every oriented edge
        $\ee\in E^{or}(\Ga)$ so that $V(\ov{\ee})=V(\ee)^*$.
    \item Choice of a vector $\ph(v)\in \<V(\ee_1),\dots,V(\ee_n)\>$ 
      (see  Appendix \eqref{e:vev})  for  every interior vertex $v$, where 
      $\ee_1, \dots, \ee_n$ are edges incident to $v$, taken in counterclockwise 
      order and with outward orientation (see Appendix \firef{f:coloring}). 
\end{itemize}

%An {\em isomorphism} $f$ of two coloring $\{V(\ee), \ph(v)\}$, $\{V'(\ee), 
%\ph'(v)\}$ is a collection of isomorphisms $f_\ee\colon V(\ee)\simeq 
%V'(\ee)$ which  agree  with isomorphisms $V(\ov{\ee})=V(\ee)^*$ and which 
%identify $\ph', \ph$:  $\ph'(v)=f\circ\ph(v)$. 

We will denote the set of all colored graphs on a surface $\Si$ by
$\Graph(\Si)$.
\end{definition}

\comment{
\begin{figure}[ht]
%%%%%%%%
\begin{tikzpicture}
\node[morphism] (ph) at (0,0) {$\ph$};
\draw[->] (ph)-- +(240:1cm) node[pos=0.7, left] {$V_n$} ;
\draw[->] (ph)-- +(180:1cm);
\draw[->] (ph)-- +(120:1cm);
\draw[->] (ph)-- +(60:1cm);
\draw[->] (ph)-- +(0:1cm);
\draw[->] (ph)-- +(-60:1cm) node[pos=0.7, right] {$V_1$};
\end{tikzpicture}
%%%%%%%%
\caption{Labeling of colored graphs}\label{f:coloring}
\end{figure}
}

Note that if $\Si$ has a boundary, then every colored graph $\Ga$ defines
a collection of points $B=\{b_1,\dots, b_n\}\subset \del \Si$ (the
endpoints of the legs of $\Ga$) and a collection of objects $V_b\in \Obj\
\cA$ for every $b \in B$: the colors of the legs of $\Ga$ taken with
outgoing orientation. We will denote the pair $(B, \{V_b\})$ by $\VV=\Ga\cap \del\Si$ 
and call it {\em boundary value}. We will denote 
$$
\Graph(\Si, \VV)=
    \text{set of all colored graphs in $\Si$ with boundary value } \VV.
$$ 
%We will return to the discussion of possible boundary values  later in
%\seref{s:excited}. 

We can also consider formal linear combinations of colored graphs. Namely,
for fixed boundary value $\VV$ as above, we will denote 
\begin{equation}\label{e:vgr}
\VGraph(\Si,\VV)=\{\text{formal linear combinations of graphs }\Ga\in
\Graph(\Si,\VV)\}
\end{equation}
In particular, if $\del \Si=\varnothing$, then the only possible boundary
condition is trivial ($B=\varnothing$); in this case, we will just write
$\VGraph(\Si)$. 

It follows from result of Reshetikhin and Turaev that for every colored graph 
$\Ga$ in a disk $D\subset \R^2$, one can define its ``evaluation'' 
\begin{equation}\label{e:TV-evaluation}
    \<\Ga\>_D\in\<V(\ee_1),\dots, V(\ee_n)\>
  \end{equation}
  where $\ee_1,\dots, \ee_n$ are the edges of $\Ga$ meeting the boundary
  of $D$ (legs), taken in counterclockwise order and with outgoing orientation; 
  in particular, in the case when $\Ga$ is a star graph, with one vertex colored 
  by $\ph\in \<V(\ee_1),\dots, V(\ee_n)\>$, then $\<\Ga\>=\ph$.

We call a formal linear combination of colored graphs $\Ga=\sum c_i \Ga_i\in \VGraph(\Si, \VV)$ 
a {\em null graph} if there exists an embedded disk $D\injto \Si$ such that all graphs $\Ga_i$ 
meet boundary of $D$ transversally, all $\Ga_i$ coincide outside of $D$ (as colored graphs) and 
$$
\<\Ga\>_D=\sum c_i\<\Ga_i\cap D\>_D=0.
$$
We will say $\Gamma$ is null with respect to $D$.
We can now give the main definition of this section. 

\begin{definition}\label{d:skein_2d}
For an oriented surface $\Si$ and boundary condition $\VV=(B, \{V_b\})$ on $\del\Si$, 
we define the stringnet space by 
\begin{equation}
\ZTV(\Si, \VV)=\VGraph(\Si,\VV)/N 
\end{equation}
where $N$ is the subspace spanned by all null graphs (for all possible embedded disks).  
\end{definition}

As an example, it was shown in \ocite{kirillov-stringnet} that 
$$
\ZTV(S^2)=\ZTV(\R^2)=\kk
$$

We can now define the category of boundary conditions. 

%%%%%%%%%%%%%%%%%%%%%%%%%
\begin{definition}\label{d:Chat}
Let $N$ be an oriented 1-dimensional manifold,
possibly non-compact.
Suppose first $N$ has no boundary.
Define  $\hatZTV(N)$ as the category whose objects are finite 
subsets  $B\subset N$ together with a choice of object $V_b\in \Obj \cA$ for 
every point $b\in B$; we will use the notation $\VV=(B, \{V_b\})$ for 
such an object, and $B$ is called the set of marked points of $\VV$.
Define the  morphisms in $\hatZTV(N)$ by  
$$
\Hom_{\hatZTV(N)}(\VV, \VV')=\ZTV(N\times [0,1]; \VV^*,\VV'),\qquad
\VV=(B,\{V_b\}), \quad 
\VV'=(B',\{V_{b'}\})
$$ 
where $\VV^*, \VV'$ means the boundary condition obtained by putting
points $b\in B$ on the ``top'' $N\times\{1\}$, colored by objects
$V_b^*$ for outgoing legs (and thus colored by $V_b$ for incoming legs),
and putting points $b'\in B'$ on the ``bottom'' $N\times\{0\}$, colored by
objects $V_{b'}$ for outgoing legs.

%%%%%%%%%%%%%%%%%
\begin{figure}[ht]
   %%%%%%%%
\begin{tikzpicture}
\node[morphism] (ph) at (0,0) {$\ph$};
\coordinate (top) at (0,1);
\coordinate (bot) at (0,-1);
\coordinate (left) at (-1.2,0);
\coordinate (right) at (1.2,0);
\draw[<-] (ph)-- (-1,0|-top) node[pos=0.7, left] {$V_1$} ;
\draw[<-] (ph)-- (1,0|-top) node[pos=0.7, right] {$V_n$};
\draw[->] (ph)-- (-1,0|-bot) node[pos=0.7, left=2pt] {$V'_1$} ;
\draw[->] (ph)-- (1,0|-bot) node[pos=0.7, right] {$V'_m$};
\draw (left|-top)--(right|-top); 
\draw (left|-bot)--(right|-bot); 
\end{tikzpicture}
%%%%%%%%
  \caption{Morphisms in $\hatZTV(N)$}
  \label{f:morphisms_ZTV}
\end{figure}
%%%%%%%%%%%%%%%%%%

This category is additive and $\kk$-linear. We denote by 
\begin{equation}\label{e:ZTV-cat}
\ZTV(N)=\Kar(\hatZTV(N))
\end{equation}
its Karoubi envelope.

For $N$ with boundary, we define $\hatZTV(N) = \hatZTV(N \backslash \del N)$,
$\ZTV(N) = \ZTV(N \backslash \del N)$.
\end{definition}

It is immediate from the definition that 
$$
\ZTV(I)\simeq \cA.
$$
where $I$ is an open/closed interval.

It has been shown in \ocite{kirillov-stringnet} that $\ZTV(S^1)=\Z(\cA)$ is the 
Drinfeld center of $\cA$. We will reprove it (in a slightly different way) as a 
special case of a more general result later.

\section{Skeins in Crane-Yetter Theory}
\label{sec:CY}

In this section, we give a definition of colored graphs/skeins
in Crane-Yetter theory, mirroring closely the previous section,
and we will reuse many definitions.
This definition essentially coincides with those given in
\ocite{freyd}, \ocite{cooke};
we use framed graphs instead of ribbons and coupons.

Throughout this section, all 3-manifolds are assumed to be oriented,
and may be non-compact and/or with boundary.
$\cA$ will be a skeletal premodular category;
see appendix for a summary of notation and conventions.

We will consider finite framed graphs $\Gamma$ in a 3-manifold $M$,
that is, $\Gamma$ is a smoothly embedded graph in $M$ with finitely
many edges, and each edge comes with a transversal
ray field along it
(that is, each point $p$ on an edge
is assigned a ray $\rho_p$ in $T_p M$
emanating from the origin, varying smoothly with $p$);
the transversal ray field $\rho_p$ is the \emph{framing} of the edge.
We also impose the condition that edges are not tangent to each other
at a vertex (this is necessary for the ``infinitesimal spheres''
discussion below).
From here on, we will simply refer to finite framed graphs as
graphs.

Graphs are allowed to intersect the boundary $\del M$
transversally; each point of intersection of $\Gamma$
with $\del M$ should be a vertex of $\Gamma$,
and they are the \emph{boundary vertices} of $\Gamma$.
Other vertices of $\Gamma$ are the \emph{interior vertices}.
Furthermore, the framing on $\Gamma$ induces at each
boundary vertex $b$ a ray in $T_b (\del M)$,
a \emph{framing} on $b$.
This makes the boundary $\del M$
an \emph{extended surface}, a surface
together with a configuration of finitely many
framed points.

For each interior vertex $v$,
the ``infinitesimal sphere'' at $v$ also acquires
an extended surface structure as follows.
The space of rays emanating from the origin in $T_v M$
is a sphere $S_v^2$, which we call the infinitesimal sphere.
An edge $e$ leaving $v$ has a tangent vector $\nu_e$ at $v$,
which gives us a point $\overline{\nu_e} \in S_v^2$.
The framing on $e$ at $v$ is a ray $\rho_v$ in $T_v M$;
the quarter plane spanned by $\nu_e$ and $\rho_v$ in $T_v M$
defines a ray in $T_{\overline{\nu_e}} S_v^2$,
i.e. a framing of $\overline{\nu_e}$.
The collection of such framed points $\overline{\nu_e}$ is the
extended surface structure that $S_v^2$ inherits from the graph
($\overline{\nu_e}$ are distinct by the extra condition
of non-tangency of edges at vertices).

Given an input premodular category $\cA$,
and given an extended sphere $S$ where
each marked point $p_i$ is colored with an object
$V_i \in \cA$,
the Reshetikhin-Turaev construction functorially
yields a vector space $\ZRT(S; V_1,\ldots, V_k)$
\ocite{rt}.
In particular, this vector space is (non-canonically)
isomorphic to $\eval{V_1,\ldots,V_k}$.

\begin{definition}
A coloring of a graph $\Gamma \subset M$
is the following data:
\begin{itemize}
\item Choice of an object $V(\ee) \in \Obj \cA$
  for each oriented edge $\ee \in E^{or}(\Gamma)$,
    so that $V(\ov{\ee}) = V(\ee)^*$.
\item Choice of a vector
  $\vphi(v) \in \ZRT(S_v^2; V(\ee_1),\ldots,V(\ee_n))$,
  for each interior vertex $v$,
  where $\ee_i$ are the edges incident to $v$,
  taken with outward orientation (pointing away from $v$).
\end{itemize}
\end{definition}

If $M$ has boundary, then we can color each boundary
vertex of $\Gamma$ with the color of the incident
edge (taken with outgoing orientation).
The pair $(B,\{V_b\})$ of the set of boundary vertices
with a coloring is the \emph{boundary value} of $\Gamma$.
We will denote
\[
  \Graph(M,\VV) =
    \text{set of all colored graphs
      in $M$ with boundary value } \VV
\]
and similarly consider formal linear combinations:
\[
\VGraph(M,\VV)=\{\text{formal linear combinations of graphs }
  \Gamma \in \Graph(M,\VV)\}
\]

It follows from result of Reshetikhin and Turaev that for every colored graph 
$\Ga$ in a ball $D\subset \R^3$, one can define its ``evaluation'' 
\[
  \eval{\Ga}_D\in \ZRT(\del D; V(\ee_1),\ldots,V(\ee_n))
    \cong \eval{V(\ee_1),\ldots,V(\ee_n)}
\]
where $\ee_1,\dots, \ee_n$ are the edges of $\Ga$ meeting the boundary
of $D$ (legs), taken with outgoing orientation; 
in particular, in the case when $\Ga$ is a star graph
in the unit ball in $\R^3$,
with one vertex at the center colored by
$\vphi \in \ZRT(S_v^2; V(\ee_1),\ldots,V(\ee_n))$,
then $\eval{\Ga}=\vphi$.
\footnote{
Here the identification
$\ZRT(S_v^2; V(\ee_i)) \cong \ZRT(\del D; V(\ee_i))$
is made using the natural maps 
$\del D \hookrightarrow \R^3 \backslash 0
\simeq T_0 \R^3 \backslash 0 \to S_v^2$.
}

We call a formal linear combination of colored graphs
$\Gamma = \sum c_i \Gamma_i \in \VGraph(M,\VV)$
a \emph{null graph} if there exists en embedded closed ball
$D \hookrightarrow M$ such that all $\Gamma_i$
meet $\del D$ transversally,
all $\Gamma_i$ coincide outside of $D$
as colored graphs,
and
\[
  \eval{\Gamma}_D = \sum c_i \eval{\Gamma_i}_D = 0
\]
(Note $D$ is allowed to touch the boundary $\del M$.)
We will say $\Gamma$ is null with respect to $D$.\\

We can now give the main definition of this section:

\begin{definition}\label{d:skein_3d}
For an oriented 3-manifold $M$ and boundary condition
$\VV=(B, \{V_b\})$ on $\del M$, 
we define the space of skeins by
\[
  \ZCY(M, \VV)=\VGraph(M,\VV)/N 
\]
where $N$ is the subspace spanned by all null graphs
(for all possible embedded disks).  
\end{definition}

We can now define the category of boundary conditions. 

\begin{definition}
Let $\Sigma$ be an oriented surface, possibly non-compact.
Suppose first $\Sigma$ has no boundary. Define $\hatZCY(\Sigma)$
as the category whose objects are finite subsets $B \subset \Sigma$, together
with a framing and coloring $V_b\in \Obj \cA$ for each point $b\in B$;
we will use the notation $\VV = (B,\{V_b\})$
for such an object (suppressing the framing),
and we call $B$ the set of marked points of $\VV$.
Define the morphisms in $\hatZCY(\Sigma)$ by
\[
  \Hom_{\ZSig{}}(\VV,\VV')
  = \ZCY(\Sigma \times [0,1]; \VV^*,\VV'),
  \qquad \VV=(B,\{V_b\}),
  \quad \VV'=(B',\{V_{b'}\})
\]
where $\VV^*, \VV'$ means the boundary condition obtained by putting
points $b\in B$ on the ``top'' $\Sigma \times\{1\}$, colored by objects
$V_b^*$ for outgoing legs (and thus colored by $V_b$ for incoming legs),
and putting points $b'\in B'$ on the ``bottom'' $N\times\{0\}$, colored by
objects $V_{b'}$ for outgoing legs.\\

$\ZSig{}$ is additive and $\kk$-linear.
We denote by
\begin{equation}
  \label{e:zcy}
\ZCY(\Sigma) = \Kar(\ZSig{})
\end{equation}
its Karoubi envelope.

For $N$ with boundary, we define $\hatZCY(N) = \hatZCY(N \backslash \del N)$,
$\ZCY(N) = \ZCY(N \backslash \del N)$.

\end{definition}

It is immediate from the definition that for a 2-disk $D^2$, $\ZCY(D^2) \simeq \cA$.

\section{Generalities of Skein Modules and Categories of Boundary Values}

In this section, we consider properties
of skein modules and categories of boundary values
that are common for both the Turaev-Viro theory
and Crane-Yetter theory.
Subsection~\ref{s:skein_modules}
is focused on the space
of relations (i.e. the null graphs $N \subset \VGraph(Y,\VV)$),
in particular how they are generated.
In subsection~\ref{s:cat_bval},
we exhibit a ``stacking'' monoidal structure
on the category of boundary values
of manifolds of the form $P\times (0,1)$,
and show it to be pivotal.

Throughout this section, $n = 1$ or 2.
We will use $Z,\hat{Z}$ to denote either
$\ZTV, \hatZTV$ (when $n=1$)
or $\ZCY, \hatZCY$ (when $n=2$),
so that $Z(n\text{-manifold})$ is a category,
and $Z((n+1)\text{-manifold}; \VV)$ is a vector space.
$\cA$ is spherical fusion for $n=1$,
and is premodular for $n=2$.
Denote by $I = (0,1)$, the \emph{open} interval.

\subsection{Skein Modules}
\label{s:skein_modules}

Recall that a null graph in $Y$ is null
with respect to some $(n+1)$ ball $D$,
and $D$ is allowed to touch the boundary $\del Y$.
In future applications,
it will be convenient to only consider balls
$D$ that do not meet $\del Y$,
such balls can be displaced by
ambient isotopy but balls meeting $\del Y$ may not.
Boundary vertices are univalent,
so graphs have simple behavior near the boundary.
If we exclude balls $D$ that meet $\del Y$,
the resulting space of null graphs $N'$
will be strictly smaller than $N$,
but not by much; the following lemma says
we just need to include
equivalence of graphs under ambient isotopy rel boundary:

\begin{lemma}
\label{lem:null_isotopy}
Let $Y$ be an $(n+1)$-manifold,
possibly with boundary or non-compact,
and let $\VV \in \Obj \hat{Z}(\del Y)$
be a fixed boundary value.
Define $N' \subset N \subset \VGraph(Y,\VV)$
to be the subspace generated by
graphs that are null with respect to a ball
that does not meet the boundary $\del Y$.
Define $N'' \subset \VGraph(Y,\VV)$
to be relations obtained by ambient isotopy,
i.e. generated by graphs $\Gamma^1 - \Gamma^0$,
where $\Gamma^t = \vphi^t(\Gamma)$,
$\vphi^t$ is a compactly-supported ambient isotopy fixing $\del Y$.
Then $N = N' + N''$.
\end{lemma}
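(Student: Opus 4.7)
The plan is to prove both inclusions. The inclusion $N' + N'' \subseteq N$ is straightforward: $N' \subseteq N$ by definition, and for any compactly supported ambient isotopy $\vphi^t$ fixing $\del Y$, the difference $\vphi^1(\Gamma) - \Gamma$ is null with respect to any ball $D$ containing the support of $\vphi^t$, since Reshetikhin--Turaev evaluation is invariant under diffeomorphism inside a ball.

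The main content is $N \subseteq N' + N''$. Given $\Gamma = \sum c_i \Gamma_i$ null with respect to a ball $D$ that meets $\del Y$, I would modify $\Gamma$ by an element of $N''$ so that the resulting relation is supported in an interior ball. Pick a collar $\del Y \times [0,\eps)$ of $\del Y$ and construct a compactly supported ambient isotopy $\vphi^t$ of $Y$ fixing $\del Y$ such that, inside the thinner collar $\del Y \times [0,\eps/2)$, each $\tilde\Gamma_i := \vphi^1(\Gamma_i)$ consists only of straight perpendicular framed segments emanating from the marked points of $\VV$. Such an isotopy exists because each $\Gamma_i$ has only finitely many edges and interior vertices, which can be pushed out of the thinner collar while the boundary data is held fixed. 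Because the $\Gamma_i$ all have the same boundary value $\VV$, the $\tilde\Gamma_i$ coincide inside $\del Y \times [0,\eps/2)$, and by construction $\sum c_i (\Gamma_i - \tilde\Gamma_i) \in N''$.

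Next I would define $D^\circ := \vphi^1(D) \cap \{z \geq \eps/2\}$ in the collar coordinates. For small enough $\eps$, this is a topological $(n+1)$-ball contained in $Y \setminus \del Y$. Outside $D^\circ$, the $\tilde\Gamma_i$ coincide: outside $\vphi^1(D)$ by naturality of $\vphi^1$ (since the $\Gamma_i$ coincide outside $D$), and inside the shell $\vphi^1(D) \setminus D^\circ \subset \{z < \eps/2\}$ by the straightening step. It remains to check that $\sum c_i \langle \tilde\Gamma_i \cap D^\circ \rangle_{D^\circ} = 0$. Using the factorization of Reshetikhin--Turaev evaluation along the cut disk $\vphi^1(D) \cap \{z = \eps/2\}$,
\[
  \langle \tilde\Gamma_i \cap \vphi^1(D) \rangle_{\vphi^1(D)}
    = F\bigl(\langle \tilde\Gamma_i \cap D^\circ \rangle_{D^\circ}\bigr),
\]
where $F$ is the linear map obtained by composing with the common shell. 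Since the shell consists only of straight perpendicular segments colored by the $V_b$'s (identity morphisms in RT calculus), $F$ is in fact an isomorphism. The left-hand side, when summed with coefficients $c_i$, vanishes because diffeomorphism invariance gives $\sum c_i \langle \tilde\Gamma_i \cap \vphi^1(D) \rangle_{\vphi^1(D)} = \sum c_i \langle \Gamma_i \cap D \rangle_D = 0$. Injectivity of $F$ then places $\sum c_i \tilde\Gamma_i$ in $N'$, and the decomposition $\Gamma = \sum c_i \tilde\Gamma_i + \sum c_i (\Gamma_i - \tilde\Gamma_i) \in N' + N''$ completes the argument.

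The principal technical point is the injectivity of $F$, which hinges on the shell carrying only trivial straight segments; this is precisely why the straightening step is essential, and it is the reason for invoking the common boundary value $\VV$. The existence of the straightening isotopy for a finite graph with prescribed framed boundary data is a routine general position argument in dimension $n+1 \leq 3$ that I would only sketch.
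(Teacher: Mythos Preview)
There is a genuine gap in your straightening step. You assert the existence of a \emph{single} ambient isotopy $\vphi^t$ such that every $\tilde\Gamma_i = \vphi^1(\Gamma_i)$ consists of straight perpendicular segments in the thinner collar. But the $\Gamma_i$ may differ inside $D$, and in particular the legs emanating from a boundary vertex $b \in D \cap \del Y$ can have distinct tangent directions at $b$ for different $i$ (the boundary value $\VV$ fixes only the point, the color, and, when $n=2$, a tangential framing in $\del Y$, not the normal direction of the leg). A single diffeomorphism $\vphi^1$ cannot carry distinct arcs through $b$ onto the same perpendicular segment, so no such $\vphi^t$ exists in general. Your justification---pushing finitely many edges and vertices out of the collar---works for any one $\Gamma_i$ but not for all of them simultaneously.

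The paper handles this by applying a \emph{separate} isotopy $\vphi_i^t$ to each $\Gamma_i$, supported in a small neighborhood of one boundary vertex $b \in D$ at a time, arranged so that the $\vphi_i^1(\Gamma_i)$ all agree near $b$; then $D$ is pushed slightly off $b$, decreasing the number of boundary vertices it contains. Iterating reduces to the case where $D$ meets no point of $\VV$, whence $D$ can be isotoped into the interior. If you try to repair your argument with graph-dependent isotopies, note that there is no longer a single $\vphi^1(D)$, so the cut-off ball $D^\circ$ and the factorization-through-the-shell argument (including the claim that $D^\circ$ is a ball) must be reworked; the paper's vertex-by-vertex induction sidesteps this entirely.

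A minor point on the easy inclusion: the support of a compactly supported isotopy need not lie in a single ball, so $\vphi^1(\Gamma)-\Gamma$ is not obviously null with respect to one $D$. This is fixed by fragmentation (the paper's \lemref{lem:isotopy_cover}).
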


\begin{proof}
It suffices to show that $N \subset N' + N''$.
Let $\Gamma = \sum c_i\Gamma_i$ be a null graph
with some boundary value $\VV$,
null with respect to a ball $D \subset Y$,
and suppose $D$ meets the boundary $\del Y$.
We would like to shrink $D$ to not meet $\del Y$
while maintaining that $\Gamma$ be null with respect to it.
Clearly if $D$ does not meet any point in $\VV$
then we can do this, and then $\Gamma \in N'$.

Suppose $D$ does contain some boundary vertex $b \in \VV$.
For each $i$, apply a small ambient isotopy $\vphi_i^t$ supported
in a small neighborhood of $b$ so that
the resulting graphs $\vphi_i^1(\Gamma_i)$
agree in a (possibly smaller) neighborhood of $b$.
\[
\begin{tikzpicture}
%% top boundary
\draw (0,1) -- (2,1)
  node[pos=0.9,above] {$\del Y$};
\node[label={$b$}] (b) at (1,1) {};
%% boundar of ball D
\draw (0.6,1) to[out=180,in=60] (0,0.5);
\draw (1.4,1) to[out=0,in=120] (2,0.5);
\node at (0.2,0.5) {$D$};
%% graphs
\node[label={[shift={(0.4,-0.3)}] $\Gamma_1$}] (ga1) at (0.5,0.1) {};
\draw (1,1) to[out=-130,in=50] (ga1);
\node[label={[shift={(0.1,0)}] $\Gamma_2$}] (ga2) at (1.5,0) {};
\draw (1,1) to[out=-50,in=130] (ga2);
\end{tikzpicture}
%%%%%%%%%%%%%%%%%%%%%%%%%%%%%%%%%%%%%%%%%%%
\tikz \node at (0,0.5) {$\longrightarrow$};
%%%%%%%%%%%%%%%%%%%%%%%%%%%%%%%%%%%%%%%%%%%
\begin{tikzpicture}
%% top boundary
\draw (0,1) -- (2,1)
  node[pos=0.9,above] {$\del Y$};
\node[label={$b$}] (b) at (1,1) {};
%% boundar of ball D
\draw (0.6,1) to[out=180,in=60] (0,0.5);
\draw (1.4,1) to[out=0,in=120] (2,0.5);
\draw (0.7,1) to[out=0,in=180] (1.05,0.9);
\draw (1.05,0.9) to[out=0,in=180] (1.3,1);
\node at (0.2,0.5) {$D$};
%% graphs
\node[label={[shift={(0.2,-0.3)}] \tiny $\vphi_1^1(\Gamma_1)$}] (ga1) at (0.5,0.1) {};
\draw (1,1) to[out=-50,in=50] (ga1);
\node[label={[shift={(0.35,-0.1)}] \tiny $\vphi_2^1(\Gamma_2)$}] (ga2) at (1.5,0) {};
\draw (1,1) to[out=-50,in=130] (ga2);
\end{tikzpicture}
\]
Then we can push $D$ slightly inwards away from the boundary at $b$,
and note that this new graph
$\Gamma' = \sum c_i \vphi_i^1(\Gamma_i)$
will be null with respect to the deformed $D$.
This reduces the number of points in $\VV$ that $D$ contains,
so after performing this finitely many times,
we are back to the case considered above where $D$
does not contain any boundary vertices.
Thus we see that repeated applications of isotopies
(i.e. relations in $N''$) takes $\Gamma$ to another graph
$\Gamma' \in N'$; in other words,
$\Gamma \in N'' + N'$.
\end{proof}

The following lemma says that isotopies can be broken into
a sequence of ``smaller" ones:
\begin{lemma}
\label{lem:isotopy_cover}
Let $\vphi^t$ be an isotopy of diffeomorphisms $\vphi^t\colon Y \to Y$
that is supported on some compact set $K$.
Let $\{U_i\}$ be a finite open cover.
Then there exists a sequence of isotopies $\vphi_j^t$
such that each $\vphi_j^t$ is supported on some
$U_{a_j} \cap K$,
and the isotopies concatenate to give a piecewise-smooth
isotopy from $\vphi^0$ to $\vphi^1$.
\end{lemma}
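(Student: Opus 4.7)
The plan is a standard fragmentation argument, carried out in two stages: first reduce to \emph{short} sub-isotopies by subdividing time, then fragment each short piece spatially via a partition of unity. Replacing $\vphi^t$ by $\vphi^t \circ (\vphi^0)^{-1}$, I may assume $\vphi^0 = \id$; the resulting isotopy is still compactly supported (on an at most slightly enlarged compact set), and any fragmentation of it translates back to one for the original.

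For the time subdivision, I would run a Lebesgue-number argument on the compact product $K \times [0,1]$. At each $(y,t)$ the point $\vphi^t(y)$ lies in some $U_{i(y,t)}$, and by continuity there is an open neighborhood $W \times J \subset Y \times [0,1]$ of $(y,t)$ with $\vphi^s(W) \subset U_{i(y,t)}$ for all $s \in J$. A finite subcover yields a partition $0 = t_0 < t_1 < \cdots < t_N = 1$ together with a finite open cover $\{V_\alpha\}$ of $K$ such that $\vphi^t(V_\alpha) \subset U_{i(\alpha,k)}$ whenever $t \in [t_k, t_{k+1}]$. This reduces the problem to fragmenting each ``short'' sub-isotopy $\psi_k^s := \vphi^{t_k+s} \circ (\vphi^{t_k})^{-1}$ for $s \in [0, t_{k+1}-t_k]$, which has the defining property that every point's trajectory under $\psi_k^s$ lies entirely inside some single $U_i$.

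The spatial fragmentation of each short $\psi_k$ is the crux. I would take a partition of unity $\{\rho_\alpha\}$ subordinate to $\{V_\alpha\}$ and inductively construct isotopies $\eta_\alpha^s$, each supported in $V_\alpha \subset U_{i(\alpha,k)}$, whose compositions at time $1$ realize $\psi_k^1$. This is the classical fragmentation lemma of diffeomorphism group theory (cf.\ Banyaga). The main obstacle is that the naive attempt---flowing $\rho_\alpha X^s$ separately, where $X^s$ generates $\psi_k$---does not reproduce $\psi_k$, since flows of sums of vector fields are not compositions of flows. Standard remedies are either an inductive peeling-off construction (build $\eta_1$ supported in $V_1$, replace the remaining isotopy by $\psi_k \circ \eta_1^{-1}$, and iterate) or a further refinement in time making the displacements so small that the discrepancy is absorbed by the open cover. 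Here it is useful to observe that the lemma only demands the concatenation be \emph{some} isotopy from $\vphi^0$ to $\vphi^1$ (not that it equal $\vphi^t$ at intermediate $t$), so it suffices to exhibit any factorization of $\vphi^1 \circ (\vphi^0)^{-1}$ into a product of diffeomorphisms, each isotopic to $\id$ through a $U_{a_j} \cap K$-supported isotopy.
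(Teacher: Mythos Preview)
The paper does not give its own proof of this lemma; it simply cites \cite{edwards-kirby}*{Corollary 1.3}. Your sketch is precisely the standard fragmentation argument that Edwards--Kirby carry out (time subdivision via a Lebesgue-number argument, followed by inductive spatial peeling via a partition of unity), so your approach matches the cited reference.
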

Proof can be found in \ocite{edwards-kirby}*{Corollary 1.3}.

In other words, given two diffeomorphisms $\vphi^0,\vphi^1$ that are isotopic,
there is another sequence of isotopies that takes $\vphi^0$ to $\vphi^1$
such that each is supported on a subset of $Y$.
One can make the new isotopies as close to the original isotopy as needed.

Finally, we show that the subspace of null graphs
are spanned by those that are null with respect to ``small" balls.
More precisely,

\begin{proposition}
\label{prp:null_cover}
Let $Y$ be an $(n+1)$-manifold,
possibly with boundary or non-compact.
Let $\{U_i\}$ be a finite open cover of $Y$.
Let $\VV \in \Obj \hat{Z}(\del Y)$
be a fixed boundary value.
Define $N_i \subset N \subset \VGraph(Y,\VV)$
to be the subspace of null graphs
in $Y$ with boundary value $\VV$
that are null with respect to some closed ball $D$
contained in $U_i$.
Then the space of null graphs is generated by $N_i$'s,
i.e.
\[
  N = \sum N_i
\]
\end{proposition}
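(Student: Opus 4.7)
The plan is to apply \lemref{lem:null_isotopy} to write the space of null graphs as $N = N' + N''$, where $N'$ is spanned by null relations with respect to balls disjoint from $\partial Y$ and $N''$ is spanned by ambient isotopy relations. It then suffices to prove separately that $N'' \subseteq \sum N_i$ and $N' \subseteq \sum N_i$.

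For $N''$, I would use \lemref{lem:isotopy_cover}. Given an ambient isotopy $\varphi^t$ fixing $\partial Y$ with compact support $K$, cover $K$ by finitely many open balls each contained in some $U_i$ (possible because $\{U_i\}$ covers $K$ and $K$ is compact), and apply \lemref{lem:isotopy_cover} with this refinement. The result decomposes $\varphi^t$ into a concatenation of isotopies $\varphi_j^t$ each supported in some closed ball $D_j$ contained in a single $U_{i_j}$. For each such piece, $\Gamma$ and $\varphi_j^1(\Gamma)$ agree outside $D_j$, while their restrictions to $D_j$ are isotopic rel $\partial D_j$ and hence have equal Reshetikhin--Turaev evaluation. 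Thus each piece is a null relation with respect to $D_j$, lying in $N_{i_j}$, so $N'' \subseteq \sum N_i$.

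For $N'$, let $\Gamma = \sum c_j \Gamma_j$ be null with respect to a closed ball $D \subseteq Y \setminus \partial Y$. Choose any point $p \in D$ and an index $i$ with $p \in U_i$, and a small closed ball $B \subseteq U_i$ containing $p$. A standard ball-shrinking argument produces a compactly supported ambient isotopy $\varphi^t \colon Y \to Y$ fixing $\partial Y$ such that $\varphi^1(D) \subseteq B \subseteq U_i$. By diffeomorphism invariance of the Reshetikhin--Turaev evaluation, $\varphi^1(\Gamma) = \sum c_j \varphi^1(\Gamma_j)$ is null with respect to the ball $\varphi^1(D) \subseteq U_i$, hence lies in $N_i$. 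Meanwhile, $\Gamma - \varphi^1(\Gamma) = \sum c_j(\Gamma_j - \varphi^1(\Gamma_j))$ is a sum of ambient isotopy relations and so lies in $N'' \subseteq \sum N_i$ by the previous paragraph. Therefore $\Gamma \in N_i + \sum N_i = \sum N_i$.

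The one technical input is the existence of the shrinking isotopy $\varphi^t$, which I expect to be the main thing to verify carefully. It can be constructed by choosing a smooth Euclidean chart $U \subseteq Y \setminus \partial Y$ containing $D$, interpolating between the identity and a radial contraction of $U$ that carries $D$ into $B$, and extending by the identity outside $U$; because $D$ lies in the interior of $Y$, the support of the resulting isotopy stays away from $\partial Y$ as required.
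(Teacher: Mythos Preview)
Your proof is correct and follows essentially the same strategy as the paper: decompose $N = N' + N''$ via \lemref{lem:null_isotopy}, handle $N''$ by fragmenting the isotopy with \lemref{lem:isotopy_cover}, and handle $N'$ by isotoping the ball $D$ into a single $U_i$ and invoking the $N''$ case for the difference. The only minor differences are that you refine the cover to balls before applying \lemref{lem:isotopy_cover} (which makes the ``each piece lies in some $N_i$'' step more explicit than the paper's version) and that you sketch the shrinking isotopy rather than simply asserting its existence; both are harmless elaborations of the same argument.
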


\begin{proof}
Let $\Gamma = \sum c_j \Gamma_j \in N$
be a null graph.
By \lemref{lem:null_isotopy},
$\Gamma$ can be written as a sum of null graphs
$\Gamma' + \Gamma''$,
where $\Gamma' = \sum c_j' \Gamma_j'$
is a sum of graphs, each $\Gamma_j'$
is null with respect to some ball
not meeting $\del Y$,
and $\Gamma'' = \sum c_j'' \Gamma_j''$
is a sum of graphs,
each $\Gamma_j''$ is of the form
$(\Gamma_j'')^1 - (\Gamma_j'')^0$
for some smooth isotopy $(\Gamma_j'')^t$.\\

Consider one such $\Gamma_j''$,
and suppose that $\vphi^t:Y\to Y$
is an ambient isotopy
supported on a compact subset $K\subset Y$,
such that $(\Gamma_j'')^t  = \vphi^t(\Theta)$
for some graph $\Theta$.
By \lemref{lem:isotopy_cover},
there is a sequence of isotopies $\vphi_k^t$,
such that each $\vphi_k^t$ is supported on some
$U_{a_k} \cap K$, and the isotopies
concatenate to give a piecewise-smooth
isotopy from $\vphi^0$ to $\vphi^1$.
Then
$\Gamma_j'' = \vphi^1(\Theta) - \vphi^0(\Theta)
  = \sum_k \vphi_k^1(\Theta) - \vphi_k^0(\Theta)
  \in \sum N_i$.
Thus, in the sum $\Gamma = \Gamma' + \Gamma''$,
we have $\Gamma'' \in \sum N_i$.\\

Now consider a term $\Gamma_j'$ in $\Gamma'$,
and suppose it is null with respect to some ball
$D$ not meeting $\del Y$.
There exists an ambient isotopy of identity
$\vphi^t: Y \to Y$ that moves $D$
into some open set $U_a$.
Then $\vphi^1(\Gamma_j') \in N_a$.
But by the same argument as above,
$\vphi^1(\Gamma_j') - \Gamma_j' \in \sum N_i$.
Hence, we conclude that
$\Gamma' = \sum c_j\Gamma_j' \in \sum N_i$,
and we are done.
\end{proof}

\subsection{Categories of Boundary Values}
\label{s:cat_bval}

\begin{lemma}
Let $X_1,X_2$ be $n$-manifolds without boundary,
possibly non-compact.
Let $\vphi: X_1 \to X_2$
be an orientation-preserving embedding.
Then $\vphi$ induces an obvious inclusion functor
\[
  \vphi_*: \hZ(X_1) \to \hZ(X_2)
\]
that sends objects to their image under $\vphi$,
and sends morphisms to their image under $\vphi \times \id_I$.
This descends to the Karoubi envelopes
\[
  \vphi_*: Z(X_1) \to Z(X_2)
\]
Furthermore, an isotopy $\vphi^t: X_1 \to X_2$
induces a natural isomorphism from $\vphi_*^0$ to $\vphi_*^1$,
and isotopic isotopies induce the same natural isomorphisms.
\end{lemma}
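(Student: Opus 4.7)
The plan is to verify four claims in sequence: well-definedness of $\vphi_*$ on morphisms together with functoriality, extension to the Karoubi envelope, construction of the natural transformation $\eta$ from an isotopy, and invariance of $\eta$ under isotopies of isotopies. The single essential technical input throughout is \lemref{lem:null_isotopy}, which identifies in $\hZ$ any two graphs that are ambient-isotopic rel boundary.

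For the first two items, the orientation-preserving embedding $\vphi : X_1 \to X_2$ produces an orientation-preserving embedding $\vphi \times \id_I : X_1 \times I \to X_2 \times I$, whose pushforward sends colored (framed, when $n=2$) graphs to graphs of the same type and preserves the null subspace $N$: embedded $(n+1)$-balls go to embedded $(n+1)$-balls, and the local evaluation $\langle -\rangle_D$ depends only on the graph's restriction to $D$. Functoriality on morphisms (composition being stacking of cylinders) is immediate, and extension to the Karoubi envelope is automatic from the universal property of $\Kar$.

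Given an isotopy $\vphi^t : X_1 \to X_2$, define an embedding $\Phi : X_1 \times I \to X_2 \times I$ by $\Phi(x,t) = (\vphi^t(x), t)$ and, for $\VV = (B, \{V_b\}) \in \hZ(X_1)$, set $\eta_\VV \in \Hom_{\hZ(X_2)}(\vphi^0_*\VV, \vphi^1_*\VV)$ to be the class of $\Phi(B \times I)$ (in the $n=2$ case the framing on each arc is the pushforward of the framing on $b$ by $d\vphi^t$, which is automatically transversal to the arc). For naturality with $\psi : \VV \to \VV'$ represented by $\Gamma \subset X_1 \times I$, both composites $\vphi^1_*(\psi) \circ \eta_\VV$ and $\eta_{\VV'} \circ \vphi^0_*(\psi)$ are graphs in $X_2 \times [0,2]$ differing only in whether the ``slide along $\vphi^t$'' happens above or below $\Gamma$; a smooth $s$-family of non-decreasing schedules $f_s : [0,2] \to [0,1]$ interpolating between the two cases, combined with the corresponding $s$-family of abstract-graph extensions of $\Gamma$, yields a 1-parameter family of embeddings fixing the boundary legs. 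By the isotopy extension theorem this produces an ambient isotopy rel $\del(X_2 \times [0,2])$, and \lemref{lem:null_isotopy} then gives equality of the two composites in $\hZ(X_2)$. Invertibility of $\eta$ comes from the natural transformation attached to the reverse isotopy $\vphi^{1-t}$: the concatenated graph is ambient-isotopic rel boundary to an identity cylinder.

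Finally, an isotopy of isotopies is a 2-parameter family $\vphi^{s,t}$ with $\vphi^{s,0}$ and $\vphi^{s,1}$ independent of $s$. The family $\Phi_s(x,t) = (\vphi^{s,t}(x), t)$ yields an ambient isotopy rel $\del(X_2 \times I)$ from $\Phi_0(B \times I)$ to $\Phi_1(B \times I)$, so by \lemref{lem:null_isotopy} the induced natural transformations agree componentwise. The only delicate step is the naturality verification; all else is formal.
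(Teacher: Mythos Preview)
Your proof is correct and is precisely the expansion the paper elides: its own proof reads in full ``Clear.'' The argument you give---pushforward of graphs along $\vphi \times \id$, the trace-of-isotopy construction of $\eta$, and the reduction of naturality and homotopy-invariance to ambient isotopy rel boundary via \lemref{lem:null_isotopy}---is the intended one. One cosmetic point: with the paper's convention that the source of a morphism sits at $t=1$ and the target at $t=0$, your $\Phi(x,t)=(\vphi^t(x),t)$ actually lands in $\Hom(\vphi^1_*\VV,\vphi^0_*\VV)$ rather than $\Hom(\vphi^0_*\VV,\vphi^1_*\VV)$; replacing $\vphi^t$ by $\vphi^{1-t}$ fixes this and nothing else changes.
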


\begin{proof}
Clear.
\end{proof}

\begin{lemma}
\label{lem:disjoint_union}
Under the same hypothesis above,
\begin{align*}
  \hZ(X_1 \sqcup X_2) \simeq \hZ(X_1) \boxtimes \hZ(X_2) \\
  Z(X_1 \sqcup X_2) \simeq Z(X_1) \boxtimes Z(X_2)
\end{align*}
\end{lemma}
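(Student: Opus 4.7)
The plan is to construct the equivalence directly on objects and morphisms, and then pass to Karoubi envelopes.

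On objects, note that a finite (framed, colored) subset $B \subset X_1 \sqcup X_2$ decomposes uniquely as $B_1 \sqcup B_2$ with $B_i = B \cap X_i$. This gives a natural bijection between $\Obj \hZ(X_1 \sqcup X_2)$ and pairs $(\VV_1,\VV_2) \in \Obj \hZ(X_1) \times \Obj \hZ(X_2)$, which in turn corresponds to the object set of $\hZ(X_1) \boxtimes \hZ(X_2)$ (using that the Kelly product of two additive categories with chosen object sets has objects indexed by such pairs, up to additive completion).

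For morphisms, we need a natural isomorphism
\[
Z\bigl((X_1 \sqcup X_2)\times I;\,(\VV_1\sqcup\VV_2)^*,\,\VV_1'\sqcup\VV_2'\bigr)
\;\cong\;
Z(X_1\times I;\VV_1^*,\VV_1')\tnsr Z(X_2\times I;\VV_2^*,\VV_2').
\]
At the level of $\VGraph$, any graph in $(X_1\sqcup X_2)\times I$ splits uniquely as a disjoint union $\Gamma_1 \sqcup \Gamma_2$ with $\Gamma_i \subset X_i \times I$ (because $(X_1\sqcup X_2)\times I$ has two connected components, and graphs are finite), so the map $\Gamma_1 \otimes \Gamma_2 \mapsto \Gamma_1 \sqcup \Gamma_2$ gives a linear isomorphism on the free $\VGraph$ spaces. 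The main point is then to check that this isomorphism descends to the quotients by null graphs.

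For this, apply \prpref{prp:null_cover} with the open cover $\{X_1\times I,\, X_2\times I\}$ of $(X_1\sqcup X_2)\times I$: the subspace $N$ of null graphs is generated by graphs that are null with respect to balls $D$ contained entirely in one of the two components. Such a relation on a graph $\Gamma = \Gamma_1 \sqcup \Gamma_2$ acts only on the component containing $D$, so under the tensor decomposition it corresponds precisely to an element of $N_1 \otimes \VGraph_2 + \VGraph_1 \otimes N_2$. Hence the $\VGraph$-isomorphism descends to an isomorphism on skein modules, and a short check confirms compatibility with composition (stacking graphs respects the disjoint decomposition) and identities, giving the equivalence $\hZ(X_1\sqcup X_2)\simeq \hZ(X_1)\boxtimes \hZ(X_2)$.

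The second statement follows by taking Karoubi envelopes on both sides, using that $\Kar(\cA\boxtimes\cB)\simeq \Kar(\cA)\boxtimes\Kar(\cB)$ (any idempotent in the Kelly product of additive categories splits as a tensor of idempotents in the factors up to passage to the Karoubi envelope). The only real obstacle is the null-relation compatibility step; everything else is bookkeeping, and \prpref{prp:null_cover} is tailored exactly to handle it.
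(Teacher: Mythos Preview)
Your proof is correct and follows the same line as the paper's (which merely says the $\hZ$ case is ``easily seen'' and the $Z$ case follows by universal property). Two small remarks: invoking \prpref{prp:null_cover} is overkill, since any embedded ball is connected and therefore already lies entirely in one of the two components of $(X_1\sqcup X_2)\times I$; and your parenthetical justification that idempotents in $\cA\boxtimes\cB$ split as tensors of idempotents is not literally true, though the conclusion $\Kar(\cA\boxtimes\cB)\simeq\Kar(\cA)\boxtimes\Kar(\cB)$ is --- the paper sidesteps this by appealing to the fact that the Deligne--Kelly product of finite semisimple abelian categories is again finite semisimple.
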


\begin{proof}
The proof for $\hZ$ is clear:
the inclusions of $X_1$ and $X_2$ into $X_1\sqcup X_2$
together induce
$\hZ(X_1) \boxtimes \hZ(X_2) \to \hZ(X_1 \sqcup X_2)$,
and this is easily seen to be an isomorphism of categories.
The equivalence for $Z$ then follows by universal property,
and the fact that the Deligne-Kelly tensor product
of two finite semisimple abelian categories
is also a finite semisimple  abelian category.
\end{proof}

Finally we discuss the ``stacking" monoidal structure
of some special $n$-manifolds.
Let $P$ be a $(n-1)$-manifold without boundary,
possibly disconnected (with finitely many components) or non-compact.
For $n=1$, $P$ is just a collection of points.
For $n=2$, $P$ is a collection of open intervals and circles.\\

Let $I = (0,1)$,
and let $m : I \sqcup I \to I$
be $x/2$ on the first $I$ and $(x+1)/2$
on the second $I$.
This is part of an $A_\infty$-space structure,
as defined in \ocite{stasheff}:
$m$ is not associative,
but there is a ``straight line" isotopy
$m_3^t: I \sqcup I \sqcup I \to I$
from $m_3^0 = m \circ (m \sqcup \id_I)$
to $m_3^1 = m \circ (\id_I \sqcup m)$,
relating two extreme ways of including three intervals
into one.\\

Let
\begin{align*}
  \tilde{m} &: P \times I \sqcup P \times I
    = P \times (I \sqcup I)
    \to P \times I \\
  \tilde{m}_3^t &: P \times I \sqcup P \times I \sqcup P \times I
    = P \times (I \sqcup I \sqcup I) \to P \times I
\end{align*}

\begin{proposition}
\label{prp:stacking}
There is a monoidal structure on $\hZ(P\times I)$
given as follows:
\begin{itemize}
  \item The tensor product is
  \[
    \tnsr := \tilde{m}_* : \hZ(P\times I) \boxtimes \hZ(P\times I)
      \to \hZ(P\times I)
  \]
  \item The unit $\one$ is the empty configuration.
    (Left, right unit constraints are given in proof.)
  \item The associativity constraint $\alpha$ is the natural isomorphism
    that is induced by $\tilde{m}_3^t$.
\end{itemize}
Similarly, there is a monoidal structure on $Z(P\times I)$.
\end{proposition}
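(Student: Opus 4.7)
The plan is to construct each piece of monoidal data explicitly — tensor, unit, associator, and unit constraints — then verify the pentagon and triangle axioms. Throughout, the workhorse is the preceding lemma: orientation-preserving embeddings $X_1\hookrightarrow X_2$ induce functors on $\hZ$, isotopies between such embeddings induce natural isomorphisms, and 2-isotopic isotopies yield the same natural isomorphism. The tensor product is defined as the composite $\hZ(P\times I)\boxtimes\hZ(P\times I)\simeq\hZ(P\times I\sqcup P\times I)\xrightarrow{\tilde m_*}\hZ(P\times I)$, using \lemref{lem:disjoint_union}; the unit $\one$ is the empty configuration; and the associator $\alpha_{X,Y,Z}$ is the natural isomorphism induced by the isotopy $\tilde m_3^t$. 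For the left unit constraint, $\one\tnsr X$ is the image of $X$ under the embedding $\iota_R\colon(p,x)\mapsto(p,(x+1)/2)$, so the straight-line isotopy from $\iota_R$ to $\id_{P\times I}$ induces $\lambda_X\colon\one\tnsr X\xrightarrow{\sim}X$; the right constraint $\rho_X$ is defined analogously via $(p,x)\mapsto(p,x/2)$.

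The pentagon and triangle axioms each reduce to showing that two specified isotopies of embeddings $P\times I^{\sqcup k}\hookrightarrow P\times I$, with matching endpoint embeddings, are 2-isotopic. Equivalently, one needs the space of $P$-fibre-preserving orientation-preserving embeddings $P\times I^{\sqcup k}\hookrightarrow P\times I$ to be simply connected. This follows from the contractibility of the configuration space of $k$ disjoint open subintervals of $I$ with prescribed ordering — linear interpolation between any two such configurations supplies the needed paths, and a second linear interpolation provides the 2-isotopies. With the 2-isotopies in hand, the second half of the preceding lemma forces the two composites around each coherence diagram to agree. The monoidal structure on $Z(P\times I)=\Kar(\hZ(P\times I))$ is then inherited from $\hZ(P\times I)$ by universality of the Karoubi envelope.

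The main obstacle is the bookkeeping in the coherence step: one must carefully match up isotopies across different factors of the tensor product (for the pentagon, parameterizing simultaneously over all four factors) and mix unit-constraint isotopies with associator isotopies (for the triangle). The underlying topology is elementary — contractibility of configurations of disjoint open subintervals of $I$ — but writing the required 2-isotopies down and checking that they respect the $P$-fibre structure requires some care with parametrization.
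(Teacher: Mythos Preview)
Your proposal is correct and follows essentially the same approach as the paper: unit constraints via straight-line isotopies, the associator via $\tilde m_3^t$, coherence from the fact that any two (order-preserving) inclusions $I^{\sqcup k}\hookrightarrow I$ are isotopic and any two such isotopies are themselves isotopic, and passage to $Z(P\times I)$ by the universal property of the Karoubi envelope. The paper's proof is a terse sketch of exactly this argument; you have simply written out the details more carefully.
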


\begin{proof}
Left unit constraint $l_A : A \tnsr \one \to A$
is given by a ``straight line" graph,
likewise for right unit constraint.
That $\alpha$ satisfies the pentagon relations
follows from the fact that any two inclusions
$I^{\sqcup 4} \hookrightarrow I$
are isotopic, and any two isotopies are themselves isotopic.
The result for $Z(P\times I)$ follows from universal property.
\end{proof}

\begin{proposition}
\label{p:stack_pivotal}
The monoidal structure on $\hZ(P\times I)$ and $Z(P\times I)$
given in \prpref{prp:stacking}
is pivotal.
\end{proposition}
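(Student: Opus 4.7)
The plan is to mimic the standard construction of pivotal structures in skein-theoretic categories: construct duals via reflection of marked points and dualization of colors in $\cA$, establish rigidity by straightening S-curves via ambient isotopy, and then lift the pivotal structure of $\cA$ pointwise to the category of boundary values.

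First, define the dual object. For $\VV = (B, \{V_b\}) \in \hZ(P\times I)$, let $\VV^\vee$ be the configuration obtained by applying the involution $(p,t) \mapsto (p, 1-t)$ to $B$ (with framings reflected accordingly when $n=2$) and replacing each color $V_b$ by the left dual $V_b^*$ in $\cA$. Define $\mathrm{ev}_\VV \colon \VV^\vee \otimes \VV \to \one$ and $\mathrm{coev}_\VV \colon \one \to \VV \otimes \VV^\vee$ as the skein classes of the obvious cap and cup graphs in $(P\times I)\times[0,1]$: for each $b\in B$, a single arc colored by $V_b$ connecting the two paired marked points, with no other vertices or crossings.

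Second, verify the zig-zag identities. The composition $(\mathrm{id}_\VV \otimes \mathrm{ev}_\VV) \circ (\mathrm{coev}_\VV \otimes \mathrm{id}_\VV)$ is represented by a graph that, for each $b \in B$, is a single $V_b$-colored strand with two bends---an S-curve from top to bottom of the morphism cylinder. By \lemref{lem:null_isotopy}, ambient isotopy rel boundary is part of the skein equivalence, so the S-curve is isotopic to a straight vertical strand, which represents $\mathrm{id}_\VV$. The other zig-zag is analogous, establishing rigidity.

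Third, construct the pivotal isomorphism. Since $\cA$ is pivotal (spherical fusion when $n=1$, premodular hence ribbon when $n=2$), each object $V$ of $\cA$ carries a canonical isomorphism $a_V\colon V \xrightarrow{\sim} V^{**}$. Define $\iota_\VV\colon \VV \to \VV^{\vee\vee}$ as the skein class of the graph consisting of, for each $b \in B$, a short $V_b$-strand carrying a single coupon decorated by $a_{V_b}$. Naturality in $\VV$ follows because any morphism in $\hZ(P\times I)$ is itself a skein class of graphs, and local application of the pivotal coherence in $\cA$ (namely $a_{V'} \circ f = f^{**} \circ a_V$ for any $\cA$-morphism $f$) allows one to commute the $a_{V_b}$ coupons past any local decoration. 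Monoidality $\iota_{\VV \otimes \VV'} = \iota_\VV \otimes \iota_{\VV'}$ is immediate because the construction is entirely pointwise and stacking commutes with such decorations.

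Finally, transfer everything to $Z(P\times I) = \Kar(\hZ(P \times I))$. A pivotal structure on an additive category extends canonically to its Karoubi envelope: $(M,p)^\vee = (M^\vee, p^\vee)$, with all structure morphisms induced by functoriality. The main obstacle will be careful bookkeeping of framings in the $n=2$ case, since the involution $(p,t)\mapsto(p,1-t)$ reverses orientation in the $I$-direction; the induced framings on $\VV^\vee$ must be chosen so that the cap/cup graphs are genuine framed graphs in $(P\times I)\times[0,1]$ and so that all pivotal identities reduce, via local evaluation in a ball, to the pivotal identities already holding in $\cA$.
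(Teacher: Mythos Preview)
Your proposal is correct and follows essentially the same approach as the paper: both define the dual object via the flip $(p,t)\mapsto(p,1-t)$ with colors replaced by their duals in $\cA$, realize (co)evaluation as bent-identity cap/cup skeins, obtain the zig-zag identities by ambient isotopy, and build the pivotal isomorphism pointwise from the pivotal structure $\delta\colon V\to V^{**}$ of $\cA$; the extension to the Karoubi envelope is likewise handled the same way. The paper is slightly more explicit in naming the bending maps $\upsilon,\upsilon',\eta,\eta'$ and in observing that the dual of a morphism is given by the $180^\circ$ rotation $\Theta$ of its representing graph, but the content is the same.
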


\begin{remark}
The input category $\cA$ has to be spherical,
but the resulting categories $Z(P\times I)$ may not be;
in future work, we will show that $Z(S^1 \times I)$ is
pivotal but not spherical.
\end{remark}

\begin{proof}
It suffices to prove this for $\hZ(P\times I)$,
since its Karoubi envelope will inherit the pivotal structure.

The rigid and pivotal structures come from topological constructions.
Denote by $\theta: P \times I \to P \times I$
be the orientation-reversing diffeomorphism
which flips $I$,
i.e. $(p,x) \mapsto (p, 1-x)$.
Denote by $\Theta: P \times I \times [0,1] \to P \times I \times [0,1]$
the orientation-preserving diffeomorphism
that rotates the $I \times [0,1]$ rectangle by $180^\circ$,
i.e. $(p,x,t) \mapsto (p,1-x,1-t)$.

Denote by $\upsilon$ the map that takes $P \times I \times [0,1]$,
squeezes it in half along the $I$ direction,
bends it like an accordion so that the left side collapses,
and puts it back in $P \times I \times [0,1]$
so that the top and bottom are now attached to the top
(see \firef{f:ups}).

\begin{figure}[ht]
$\upsilon,\upsilon',\eta, \eta':$
\begin{tikzpicture}
\begin{scope}[shift={(0,-0.5)}]
%\node at (-0.5,1) {0};
%\node at (-0.5,0) {1};
\draw (0,0) -- (1,0) node[pos=0, below] {\tiny $A$} node[below] {\tiny $B$};
\draw (0,1) -- (1,1) node[pos=0, above] {\tiny $C$} node[above] {\tiny $D$};
\draw[dotted] (0,0) -- (0,1);
\draw[dotted] (1,0) -- (1,1);
\end{scope}
\end{tikzpicture}
%%%%%%%%%
$\to$
%%%%%%%%% (1)
\begin{tikzpicture}
\begin{scope}[shift={(0,-0.5)}]
\draw[dotted] (0,1) arc (180:360:0.5cm);
\draw (0,1) -- (1,1)
  node[pos=0, above] {\tiny $B$}
  node[pos=0.5,above] {\tiny $A/C$}
  node[above] {\tiny $D$};
\draw[white] (0.5,0.95) -- (0.5,1.05);
\end{scope}
\end{tikzpicture}
%%%%%%%%%
,
%%%%%%%%% (2)
\begin{tikzpicture}
\begin{scope}[shift={(0,-0.5)}]
\draw[dotted] (0,1) arc (180:360:0.5cm);
\draw (0,1) -- (1,1)
  node[pos=0, above] {\tiny $C$}
  node[pos=0.5,above] {\tiny $D/B$}
  node[above] {\tiny $A$};
\draw[white] (0.5,0.95) -- (0.5,1.05);
\end{scope}
\end{tikzpicture}
%%%%%%%%%
,
%%%%%%%%% (3)
\begin{tikzpicture}
\begin{scope}[shift={(0,-0.5)}]
\draw[dotted] (1,0) arc (0:180:0.5cm);
\draw (0,0) -- (1,0)
  node[pos=0,below] {\tiny $A$}
  node[pos=0.5,below] {\tiny $B/D$}
  node[below] {\tiny $C$};
  \draw[white] (0.5,-0.05) -- (0.5,0.05);
\end{scope}
\end{tikzpicture}
%%%%%%%%%
,
%%%%%%%%% (4)
\begin{tikzpicture}
\begin{scope}[shift={(0,-0.5)}]
\draw[dotted] (1,0) arc (0:180:0.5cm);
\draw (0,0) -- (1,0)
  node[pos=0,below] {\tiny $D$}
  node[pos=0.5,below] {\tiny $C/A$}
  node[below] {\tiny $B$};
\draw[white] (0.5,-0.05) -- (0.5,0.05);
\end{scope}
\end{tikzpicture}
\caption{The maps $\upsilon,\upsilon',\eta,\eta'$ for $P = \{*\}$}
\label{f:ups}
\end{figure}
$\upsilon', \eta,\eta'$ are defined similarly.

Let $\VV = (B, \{V_b\}) \in \Obj \hZ(P \times I)$.
Its left dual $\VV^*$ is given by
$(\theta(B), \{V_b^*\})$,
that is, apply the flipping diffeomorphism $\theta$
defined above to the marked points,
and label them by the left duals of the original labeling.
Similarly, the right dual is
$\rdual \VV = (\theta(B), \{\rdual V_b\})$.
(It is not too important to distinguish $V_b^*$ from $\rdual V_b$
since $\cA$ itself is pivotal.)

The left evaluation and coevaluation morphisms for $\VV$ are obtained by
applying $\upsilon$ and $\eta$ to $\id_\VV$, respectively.
Similarly, the right evaluation and coevaluation morphisms for $\VV$
are obtained by applying $\upsilon'$ and $\eta'$ to $\id_\VV$, respectively.
It is easy to see that these morphisms have the required properties.

Given a morphism
$f \in \Hom_{\hZ(P\times I)}(\VV, \VV')$
represented by a graph $\Gamma$,
it is easy to check that
its left and right duals are given by applying
the rotation $\Theta$ to $\Gamma$,
and keeping all orientations and labels of
the edges of $\Gamma$.

The pivotal structure is essentially the identity morphism,
but with one vertex on each vertical line labeled by $\delta$,
the pivotal structure of $\cA$.
\end{proof}

\begin{example}
  \label{xmp:tv_circle}
We pointed out at the end of \secref{s:TV} that
$\ZTV(I) \simeq \cA$. Giving $\ZTV(I)$ the stacking monoidal structure
above, we see that this equivalence is a tensor equivalence
respecting the pivotal structure.
\end{example}

\begin{example}
  \label{xmp:cy_annulus}
Similarly, we had $\ZCY(I \times I) \simeq \cA$.
$I \times I$ can stack in two ways, along the first copy of $I$
(horizontal stacking) or the second (vertical stacking).
They both give monoidal structures equivalent to $\cA$'s.
\end{example}

\begin{proposition}
The $E_1$-algebra structure of $Z(P\times I)$ is unique
in the sense of \thmref{t:facthom-characterize};
that is, any automorphism of $P \times I$
induces an $E_1$-algebra self-equivalence on $Z(P\times I)$.
\label{p:E1-uniqueness}
\end{proposition}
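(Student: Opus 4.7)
The plan is to lift the equivalence $\vphi_*$ already provided by the functoriality lemma preceding \prpref{prp:stacking} to an equivalence of monoidal structures, using the fact that the stacking tensor product is defined by an embedding $\tilde{m}$ and that isotopic embeddings induce naturally isomorphic functors. Throughout, I read ``automorphism'' as orientation-preserving diffeomorphism $\vphi:P\times I\to P\times I$ (an orientation-reversing one would produce an equivalence to the opposite $E_1$-algebra, a separate statement).

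First, the functoriality lemma already gives an auto-equivalence $\vphi_*:Z(P\times I)\to Z(P\times I)$ with quasi-inverse $(\vphi^{-1})_*$. The content is to exhibit the monoidal coherence isomorphism
\[
\mu:\vphi_*\circ\tilde{m}_*\;\xRightarrow{\sim}\;\tilde{m}_*\circ(\vphi_*\boxtimes\vphi_*).
\]
Since $\tnsr=\tilde{m}_*$, this amounts to comparing the two orientation-preserving embeddings $\vphi\circ\tilde{m}$ and $\tilde{m}\circ(\vphi\sqcup\vphi):(P\times I)\sqcup(P\times I)\hookrightarrow P\times I$. Both send the two copies of $P\times I$ onto disjoint ``lower'' and ``upper'' halves of the target (the halves being $\vphi$-images of the standard ones, resp.\ the standard ones with each half pre-composed by $\vphi$), and an ambient isotopy between them can be built by a straight-line rescaling in the $I$-coordinate inside the image of $\vphi$. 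Applying the same functoriality lemma, which also says that an isotopy of embeddings induces a natural isomorphism of the induced functors, yields $\mu$.

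Second, I would check the pentagon and unit coherences. The unit coherence is immediate: $\one$ is the empty configuration, which is preserved by $\vphi_*$ on the nose, and the unit constraints are given by straight-line isotopies that $\vphi$ carries to straight-line isotopies. For the pentagon, the two ways of reassociating $\vphi_*(A\tnsr B\tnsr C)$ correspond to two paths in the space of orientation-preserving embeddings $(P\times I)^{\sqcup 3}\hookrightarrow P\times I$, built out of $\mu$ and the associator $\alpha$ (induced by $\tilde{m}_3^t$). These two paths are homotopic rel endpoints, and the second half of the functoriality lemma (``isotopic isotopies induce the same natural isomorphism'') then promotes this homotopy-of-isotopies into commutativity of the pentagon.

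The main obstacle is the topological input: showing the two embeddings $\vphi\circ\tilde{m}$ and $\tilde{m}\circ(\vphi\sqcup\vphi)$ are isotopic, and that the two competing $3$-fold isotopies used in the pentagon are themselves isotopic. Both reduce to connectivity / simple-connectivity of spaces of orientation-preserving embeddings of disjoint unions of copies of $P\times I$ into $P\times I$; these can be established by a straightening argument in the $I$-direction combined with the observation that the complement of a single embedded copy of $P\times I$ in $P\times I$ is itself a disjoint union of pieces diffeomorphic to $P\times I$. Once these topological facts are in hand, the rest of the argument is a purely formal application of the two functoriality statements (isotopies induce natural isomorphisms; isotopic isotopies induce the same one), with no further computation required.
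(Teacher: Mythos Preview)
Your approach differs from the paper's and contains a genuine gap.

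The paper's proof is much shorter and takes a different route. It restricts to the fiber-preserving case (which is all that \thmref{t:facthom-characterize} actually requires: there one compares trivializations $Y\simeq N\times I$ \emph{respecting the projection to $I$}). Reducing to connected $P\in\{*,I,S^1\}$, the paper simply observes that $\mathrm{Diff}^+(P)$ is connected, so any fiber-preserving orientation-preserving automorphism of $P\times I$ is isotopic to the identity; the induced functor is then naturally isomorphic to the identity functor and is trivially an $E_1$-equivalence. No monoidal coherence needs to be built by hand.

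Your argument instead attempts to treat an arbitrary orientation-preserving diffeomorphism $\vphi$ and build $\mu:\vphi_*\circ\tilde m_*\Rightarrow\tilde m_*\circ(\vphi_*\boxtimes\vphi_*)$ from an isotopy between $\vphi\circ\tilde m$ and $\tilde m\circ(\vphi\sqcup\vphi)$. This isotopy need not exist. Take $P=S^1$ and the orientation-preserving flip $\vphi(\theta,t)=(-\theta,1-t)$. Then $\vphi\circ\tilde m$ sends the first copy of $P\times I$ to $S^1\times(1/2,1)$ and the second to $S^1\times(0,1/2)$, while $\tilde m\circ(\vphi\sqcup\vphi)$ does the opposite; as labeled embeddings $(P\times I)^{\sqcup 2}\hookrightarrow P\times I$ these lie in different path-components. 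Consequently $\vphi_*$ is anti-monoidal (you get $\vphi_*(A\otimes B)\simeq\vphi_*(B)\otimes\vphi_*(A)$), not an $E_1$-self-equivalence. So your asserted connectivity of the embedding space is false, and the ``straightening in the $I$-direction'' cannot work for $\vphi$ that reverse that direction. (Your complement observation is also false for a generic embedding of a square into a square, though it does hold for the specific embeddings you actually use.)

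If you restrict to fiber-preserving $\vphi$ as the paper does, your construction of $\mu$ goes through, but at that point the paper's one-line argument---$\vphi$ is isotopic to $\id$---already gives the conclusion without checking any pentagon.
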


\begin{proof}
It is not hard to see that it suffices to consider $P$ connected.
For $P= *, I, S^1$, the space of self-diffeomorphisms of $P$ is connected,
so any fiber-preserving automorphism of $P\times I$
is isotopic to the identity,
so the induced functor of the automorphism
is an equivalence of $E_1$-algebras.
\end{proof}

Next we consider (left) module categories over $Z(P\times I)$.
First, $I$ is a left module over the $A_\infty$ space $I$,
as follows. Let $f : I \to (1/2, 1) \subset I$ be some inclusion
that is identity near 1.
The embedding $n = (\cdot/2) \sqcup f : I \sqcup I \to I$
gives left multiplication,
and it is associative up to some isotopy,
that is, the two inclusions
$n_3^0 := n \circ (\id_I \sqcup n)$ and $n_3^1 := n \circ (m \sqcup \id_I)$
are isotopic via some isotopy $n_3^t$.
It is not hard to see that any two such left module structures are
equivalent.\\

Now let $X$ be a collared $n$-manifold,
i.e. we have an embedding $P \times I \hookrightarrow X$,
where the 0 end in $I$ escapes to infinity in $X$.
Crossing with $P$, we can upgrade the above left module structure
on $I$ to $X$,
obtaining a left multiplication
$\tilde{n} : P \times I \sqcup X \to X$
and an isotopy $\tilde{n}_3^t$ from
$\tilde{n} \circ (\id_{P\times I} \sqcup \tilde{n})$
to $\tilde{n} \circ (\tilde{m} \sqcup \id_X)$.
(See \lemref{l:skein_alg}.)

\begin{proposition}
\label{p:collared_module}
Given a collared $n$-manifold $X$,
there is a left $\hZ(P \times I)$-module category structure on $\hZ(X)$
given by
\[
  \lact \, := \tilde{n}_* : \hZ(P\times I) \boxtimes \hZ(X)
    \to \hZ(X)
\]
and the associativity constraint is given by the
natural isomorphism induced by the isotopy $\tilde{n}_3^t$.
Such a structure is unique up to equivalence.\\

Similarly there is a left $Z(P\times I)$-module category structure on $Z(X)$.
\end{proposition}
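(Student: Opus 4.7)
The plan is to run the same construction and verification as in \prpref{prp:stacking}, but with the ``stacking'' embedding $\tilde{m}$ replaced by the ``left multiplication'' embedding $\tilde{n}$ on the collared module $X$. Specifically, I first define the action functor $\lact := \tilde{n}_*$, where the previous lemma (functoriality of $\hZ$ under embeddings) guarantees that an orientation-preserving embedding yields an induced additive $\kk$-linear functor. The left unit constraint $\one \lact A \to A$ is represented by the ``straight-line'' graph obtained from the fact that $\tilde{n}$ restricted to $\emptyset \sqcup X \hookrightarrow X$ is just the collar embedding, which is isotopic to the identity rel its image. The associativity constraint $\alpha_{A,B,M}\colon (A \tnsr B) \lact M \to A \lact (B \lact M)$ is the natural isomorphism induced by the isotopy $\tilde{n}_3^t$ between $\tilde{n} \circ (\tilde{m} \sqcup \id_X)$ and $\tilde{n} \circ (\id_{P\times I} \sqcup \tilde{n})$.

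The main thing to verify is the mixed pentagon axiom relating $\alpha$ for the module with the associativity $\alpha$ of the monoidal structure on $\hZ(P\times I)$ from \prpref{prp:stacking}. This comes down to showing that the two ways to go around the pentagon produce isotopic isotopies between the corresponding composite embeddings $(P\times I)^{\sqcup 3} \sqcup X \to X$. I expect this to be the main technical point, but it follows from the standard $A_\infty$--module coherence: the space of orientation-preserving embeddings $I^{\sqcup k} \sqcup I_X \to I_X$ (with the $I_X$ factor mapping in a collared way) is contractible, so any two piecewise-linear isotopies between the same endpoints are themselves isotopic, and this latter isotopy (of isotopies) induces an equality of natural transformations via the previous lemma. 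Crossing everything with $P$ does not change this.

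To extend the module structure to the Karoubi envelope $Z(X) = \Kar(\hZ(X))$ over $Z(P\times I) = \Kar(\hZ(P\times I))$, I would invoke the universal property of Karoubi completions: tensor with an idempotent-split object is idempotent-split, and both $\boxtimes$ and $\Kar$ are compatible, exactly as in the last step of \prpref{prp:stacking}.

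For uniqueness, suppose $\tilde{n}$ and $\tilde{n}'$ are two choices of left multiplication coming from two collared structures on $X$. Both are orientation-preserving embeddings $(P\times I) \sqcup X \hookrightarrow X$ whose restrictions to $X$ are isotopic to the identity through the collar; since the relevant space of such embeddings is connected (again by the contractibility argument above, restricted to fiber-preserving embeddings on the $P\times I$ factor), there is an isotopy from $\tilde{n}$ to $\tilde{n}'$. This isotopy, together with the isotopy between the two choices of $\tilde{n}_3^t$, supplies a monoidal natural isomorphism between the two module-structure functors, giving an equivalence of $\hZ(P\times I)$-module categories; the uniqueness of the monoidal $E_1$-structure on $\hZ(P\times I)$ established in \prpref{p:E1-uniqueness} ensures the comparison is well-defined up to equivalence. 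Passing to Karoubi envelopes preserves this equivalence.
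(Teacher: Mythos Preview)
Your proposal is correct and takes essentially the same approach as the paper: the paper's proof is the single line ``Similar to \prpref{prp:stacking},'' and you have simply unpacked what that entails (action via $\tilde{n}_*$, unit and associativity constraints via straight-line graphs and the isotopy $\tilde{n}_3^t$, the module pentagon from contractibility of the relevant embedding spaces, and passage to Karoubi envelopes via the universal property). One small remark: your appeal to \prpref{p:E1-uniqueness} in the uniqueness argument is unnecessary---the uniqueness of the module structure follows directly from the connectedness of the space of collared embeddings $I \sqcup I \hookrightarrow I$ (and isotopies between them), without needing to invoke anything about the $E_1$-algebra structure on $\hZ(P\times I)$ itself.
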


\begin{proof}
Similar to \prpref{prp:stacking}.
\end{proof}

There is a similar story for right module structure,
where $X$ is a collared $n$-manifold so that 1 escapes to infinity.

\section{Excision for $\ZTV, \ZCY$}

In this section, we prove the main result of the paper,
that $\ZTV$ and $\ZCY$ satisfy excision.
As in the previous section,
essentially the same proof works for
both the Turaev-Viro and Crane-Yetter theory,
so we adopt the same notation as before,
namely $Z, \hZ$ stands for either of the theories.\\

Let $X$ be an $n$-manifold without boundary,
with finitely many components,
possibly non-compact.
To present $X$ as the quotient of some $n$-manifold $X'$
by some gluing,
consider a smooth function
$f: X \to S^1 = \RR / 2\ZZ$,
together with a trivialization of $P$-bundles
$P \times I \simeq f^\inv(I)$
for some $(n-1)$-manifold $P$.
Take $X'$ to be the ``preimage of $(0,3)$ under $f$";
more precisely, pullback $f$ along the universal
covering map $\RR \to \RR/2\ZZ$ to get
$\tilde{f}: \tilde{X} \to \RR$,
and take $X' = \tilde{f}^\inv((0,3))$
(see figure below).
So $X$ is obtained from $X'$ by gluing the parts over
$(0,1)$ and $(2,3)$.

\[
\begin{tikzpicture}
%% X'
\draw (1.5,1) circle (0.4cm);
\draw (3.5,1) circle (0.4cm);
\draw[line width=1mm, white] (-0.4,1) -- (4.4,1);
\draw (-0.4,1.05) -- (4.4,1.05);
\draw (-0.4,0.95) -- (4.4,0.95);
\draw[fill, white] (1.5,1) circle (0.37cm);
\draw[fill, white] (3.5,1) circle (0.37cm);
% brace for X'
\draw[decorate,decoration={brace,amplitude=10pt,raise=4pt}]
 (0,1.5) -- (3,1.5)
 node[pos=0.5, yshift=0.8cm] {$X'$};
\draw[dotted] (0,1.5) -- (0,-1);
\draw[dotted] (3,1.5) -- (3,-1);
\draw[decorate,decoration={brace,amplitude=3pt,raise=2pt}]
 (0,1.1) -- (1,1.1)
 node[pos=0.5, yshift=0.4cm] {\small $P \times I$};
\draw[decorate,decoration={brace,amplitude=3pt,raise=2pt}]
 (2,1.1) -- (3,1.1)
 node[pos=0.5, yshift=0.4cm] {\small $P \times I$};

% arrow X' \to \RR
\draw[->] (1.5,0.4) -- (1.5,-0.6) node[pos=0.5,right] {$\tilde{f}$};
% X, R/Z
\node at (-1.5,1) {$\tilde{X}$};
\node at (-1.5,-1) {$\RR$};

% picture for \RR
\draw (-0.4,-1) -- (4.4,-1);
%\draw[dashed] (-1,-1) -- (5,-1);
\foreach \x in {0,...,4} {
 \node at (\x,-1.3) {\x};
 \draw (\x,-0.9) -- (\x,-1.1);
}
\node at (0.5,-0.7) {$I$};

\end{tikzpicture}
%%%%%%%%%%%%%% mapsto %%%%%%%%%%%%%%%%%%%%%%%%%%%%%%%%%%%%
\hspace{5pt}
\begin{tikzpicture}
\node at (0,1) {$\to$};
\node at (0,-1) {$\to$};
\end{tikzpicture}
\hspace{5pt}
%%%%%%%%%%%%%% last picture %%%%%%%%%%%%%%%%%%%%%%%%%%%%%%%%%%%%
\begin{tikzpicture}

%% X
%\draw[thick_black, gray] (-0.5,1) -- (1.5,1);
%\draw[fill, gray] (0.5,1) circle (0.4cm);
\draw (0.5,1) circle (0.4cm);
\draw[line width=1mm, white] (-0.5,1) -- (1.5,1);
\draw (-0.5,1.05) -- (1.5,1.05);
\draw[dotted] (-0.7,1.05) -- (1.7,1.05);
\draw (-0.5,0.95) -- (1.5,0.95);
\draw[dotted] (-0.7,0.95) -- (1.7,0.95);
\draw[fill, white] (0.5,1) circle (0.37cm);

% arrow X \to \RR
\draw[->] (0.5,0.4) -- (0.5,-0.6) node[pos=0.5,right] {$f$};
% X, R/Z
\node at (2.5,1) {$X$};
\node at (2.5,-1) {$\RR/2\ZZ$};

% picture for \RR
\draw (-0.5,-1) -- (1.5,-1);
\draw[dotted] (-0.7,-1) -- (1.7,-1);
\node at (0,-1.3) {1};
\node at (1,-1.3) {2/0};
\foreach \x in {0,...,1} {
 \draw (\x,-0.9) -- (\x,-1.1);
}
\end{tikzpicture}
\]

\begin{remark}
\label{rmk:gluing}
Excision is usually phrased in terms of gluing
two collared manifolds.
In the above language,
that will correspond to the case when
$X' = X_1 \sqcup X_2$,
where $X_1 = \tilde{f}^\inv((0,1.5))$,
$X_2 = \tilde{f}^\inv((1.5,3))$,
so that the pullback map $X' \to X$
is the gluing/overlapping of $X_1$ and $X_2$
over $(0,1)$, the collared neighborhoods.
\end{remark}

Since $\tilde{f}^\inv((0,1)) \simeq \tilde{f}^\inv((2,3))
\simeq f^\inv(I)$ naturally,
the trivialization $\PI \simeq f^\inv(I)$
gives a left and right $\PI$-module structure on $X'$,
and makes $\hZ(X')$ a $\hZ(\PI)$-bimodule category
(likewise for $Z$).\\

The natural gluing map $X' \to X$
is the composition $X' \subset \tilde{X} \to X$.
We can also embed $X'$ in $X$ as follows:
consider a following sequence of maps
$X' \to \PI \sqcup X' \sqcup \PI \to X' \to X$;
the first map is just the obvious inclusion,
the second is the left and right module maps
``squeezing" $X'$ into itself,
and the third map is the natural quotient map.
It is easy to see that the composition
is an embedding, in fact a diffeomorphism
onto $X \backslash f^\inv(0.5)$.
We denote this composition by $i$.\\

Since $i:X' \to X$ is an embedding,
it induces a functor
$i_* : \hZ(X') \to \hZ(X)$.
Recall that there is a natural functor
$\htr: \hZ(X') \to \htr(\hZ(X'))$
that is identity on objects.

\begin{lemma}
The inclusion functor $i_*: \hZ(X') \to \hZ(X)$
extends along $\htr$ to a functor
$i_*: \htr(\hZ(X')) \to \hZ(X)$.
\end{lemma}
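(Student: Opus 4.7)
The plan is to build the extension $i_*: \htr(\hZ(X')) \to \hZ(X)$ directly. On objects, $\htr(\hZ(X'))$ has the same objects as $\hZ(X')$, so I would declare $i_*$ on objects to agree with the existing inclusion, which immediately gives extension along $\htr$ at the level of objects. The real content is the definition on morphisms.

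On morphisms I would use a \emph{closure} construction. Let $[\psi] \in \Hom_{\htr(\hZ(X'))}(M_1, M_2)$ be represented by some $\psi \in \ihom{\hZ(X')}{Y}(M_1, M_2) = \Hom_{\hZ(X')}(Y \lact M_1, M_2 \ract Y)$, where $Y \in \hZ(P \times I)$, and view $\psi$ as a graph $\Gamma_\psi$ in $X' \times [0,1]$. Applying $i \times \id$ produces a graph in $X \times [0,1]$ whose top boundary consists of the $i(M_1)$-legs together with $Y$-legs in $f^{\inv}((0,0.5)) \times \{1\}$ (the image of the left collar), and whose bottom boundary consists of the $i(M_2)$-legs together with $Y$-legs in $f^{\inv}((0.5,1)) \times \{0\}$ (the image of the right collar). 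Crucially, both collections of $Y$-legs lie within the collared neighborhood $f^{\inv}(I) \simeq P \times I \subset X$, on opposite sides of the gluing locus $f^{\inv}(0.5)$. I would close the graph by attaching, for each $Y$-leg, an identity strand running through a thickened neighborhood of $f^{\inv}(0.5) \times [0,1]$, embedded in $P \times I \times [0,1] \subset X \times [0,1]$, that connects the matching top and bottom legs (with compatible labels, orientations, and framings) and avoids the rest of the graph. The result has boundary $i(M_1)$ on top and $i(M_2)$ on bottom, giving the desired morphism $i_*([\psi]) \in \Hom_{\hZ(X)}(i(M_1), i(M_2))$.

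I would then verify well-definedness on the coend relations and functoriality. For the coend: given $\psi \in \ihom{\hZ(X')}{Y,Z}(M_1, M_2)$ and $f: Z \to Y$ in $\hZ(P \times I)$, the two representatives $(\id_{M_2} \ract f) \circ \psi$ and $\psi \circ (f \lact \id_{M_1})$ yield graphs in $X \times [0,1]$ after closure that differ only in whether $f$ is inserted on the bottom-right or top-left of the closure arc; these two graphs are related by an ambient isotopy rel boundary inside the closure region, hence define equal morphisms in $\hZ(X)$ by \lemref{lem:null_isotopy}. Functoriality follows similarly: the composition $\vphi \circ \psi$ in $\htr(\hZ(X'))$ stacks the collar labels using the tensor product in $\hZ(P \times I)$, and the closure of the stacked legs matches the vertical composition of the individual closures in $\hZ(X)$, since each closure arc passes through $P \times I \times [0,1]$ independently. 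Finally, when $Y = \one$ there are no $Y$-legs to close, so $i_*([\psi])$ reduces to $i_*(\psi)$, confirming that the new functor extends the given one along $\htr$.

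The main obstacle is that the closure construction depends on auxiliary choices (the specific arcs chosen, and where $f$ is inserted along them), so a priori produces different graphs in $X \times [0,1]$. All such ambiguities are controlled by ambient isotopy within the $P \times I \times [0,1]$ collar region rel boundary, and by \lemref{lem:null_isotopy} these correspond to null relations in $\VGraph(X \times [0,1])$. Thus the closure descends to a well-defined skein class, and the proof reduces to repeatedly invoking the isotopy relation.
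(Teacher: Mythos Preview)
Your proposal is correct and takes essentially the same approach as the paper. The paper packages the ``include then close'' construction into a single continuous map $\Psi: X' \times [0,1] \to X \times [0,1]$ that pinches the collar portions of the top-left and bottom-right boundaries and glues them along a seam at $f^{-1}(0.5)$, but the effect on a graph is exactly what you describe: the $Y$-legs get connected by arcs passing through the seam region, and the coend relation is verified by sliding the inserted morphism $f$ along these arcs from one side of the seam to the other via an isotopy.
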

\begin{proof}
Consider a map $\Psi: X' \times \clI \to X \times \clI$
described as a composition of operations
given by the figures below (with further explanations later):
\\
\begin{tikzpicture}

%% X' \times [0,1]
% top row
%\node at (-0.5,1) {1};
\draw (0,1) -- (1,1) node[pos=0.25,above] {};
\draw[dashed] (1,1) -- (2,1) node[pos=0.5,above] {};
\draw (2,1) -- (3,1);
% bottom row
%\node at (-0.5,0) {0};
\draw (0,0) -- (1,0);
\draw[dashed] (1,0) -- (2,0) node[pos=0.5,below] {};
\draw (2,0) -- (3,0) node[pos=0.75,below] {};
% vertical boundary
%\draw[dotted] (0,1) -- (0,0);
%\draw[dotted] (3,0) -- (3,1);
% the horizontal foliation
\foreach \x in {1,...,9} {
	\draw[thin] (0,0.1*\x) -- (1,0.1*\x);
	\draw[thin] (2,0.1*\x) -- (3,0.1*\x);
}
% arrows
\draw[->] (-0.2,0.9) -- (-0.2,0.1);
\draw[->] (3.2,0.1) -- (3.2,0.9);

\end{tikzpicture}
%%%%%%%%%%%%%% to %%%%%%%%%%%%%%%%%%%%%%%%%%%%%%%%%%%%
\hspace{3pt}
\begin{tikzpicture}
\node at (0,0.5) {$\to$};
\node at (0,0.7) {\tiny (1)};
\end{tikzpicture}
\hspace{3pt}
%%%%%%%%%%%%%% 2nd picture %%%%%%%%%%%%%%%%%%%%%%%%%%%%%%%%%%%%
\begin{tikzpicture}

%% X' \times [0,1]
% top row
\draw (0.5,1) -- (1,1);
\draw[dashed] (1,1) -- (2,1) node[pos=0.5,above] {};
\draw (2,1) -- (3,1);
% bottom row
\draw (0,0) -- (1,0);
\draw[dashed] (1,0) -- (2,0) node[pos=0.5,below] {};
\draw (2,0) -- (2.5,0);
% slanted boundary
\draw (0,0) -- (0.5,1) node[pos=0.6,left] {};
\draw (2.5,0) -- (3,1) node[pos=0.4,right] {};
% the foliation
\foreach \x in {1,...,9} {
	\draw[thin] (0,0) -- (0.5,0.1*\x);
	\draw[thin] (0.5,0.1*\x) -- (1,0.1*\x);
	\draw[thin] (2,0.1*\x) -- (2.5,0.1*\x);
	\draw[thin] (2.5,0.1*\x) -- (3,1);
}
% arrows
\draw[->] (0.1,-0.2) -- (0.9,-0.2);
\draw[->] (2.9,1.2) -- (2.1,1.2);
\end{tikzpicture}
%%%%%%%%%%%%%% to %%%%%%%%%%%%%%%%%%%%%%%%%%%%%%%%%%%%
\hspace{3pt}
\begin{tikzpicture}
\node at (0,0.5) {$\to$};
\node at (0,0.7) {\tiny (2)};
\end{tikzpicture}
\hspace{3pt}
%%%%%%%%%%%%%% 3rd picture %%%%%%%%%%%%%%%%%%%%%%%%%%%%%%%%%%%%
\begin{tikzpicture}

%% X' \times [0,1]
% top row
\draw (0.5,1) -- (1,1);
\draw[dashed] (1,1) -- (2,1) node[pos=0.5,above] {};
\draw (2,1) -- (2.5,1);
% bottom row
\draw (0.5,0) -- (1,0);
\draw[dashed] (1,0) -- (2,0) node[pos=0.5,below] {};
\draw (2,0) -- (2.5,0);
% vertical boundary
\draw (0.5,0) -- (0.5,1) node[pos=0.6,left] {};
\draw (2.5,0) -- (2.5,1) node[pos=0.4,right] {};
% the foliation
\foreach \x in {1,...,9} {
	\draw[thin] (0.5,0) -- (0.5 + 0.05 * \x, 1 - 0.1 * \x);
	\draw[thin] (0.5 + 0.05 * \x, 1 - 0.1 * \x) -- (1, 1 - 0.1 * \x);
	\draw[thin] (2, 0.1 * \x) -- (2.5 - 0.05 * \x, 0.1 * \x);
	\draw[thin] (2.5 - 0.05 * \x, 0.1*\x) -- (2.5,1);
}
\end{tikzpicture}
%%%%%%%%%%%%%% to %%%%%%%%%%%%%%%%%%%%%%%%%%%%%%%%%%%%
\hspace{3pt}
\begin{tikzpicture}
\node at (0,0.5) {$\to$};
\node at (0,0.7) {\tiny (3)};
\end{tikzpicture}
\hspace{3pt}
%%%%%%%%%%%%%% last picture %%%%%%%%%%%%%%%%%%%%%%%%%%%%%%%%%%%%
\begin{tikzpicture}

%% X
% top row
\draw[dashed] (-0.5,1) -- (0,1);
\draw (0,1) -- (1,1) node[pos=0.5,above] {};
\draw[dashed] (1,1) -- (1.5,1);
\filldraw (0.5,1) circle (0.5pt); %tiny node
% bottom row
\draw[dashed] (-0.5,0) -- (0,0);
\draw (0,0) -- (1,0) node[pos=0.5,below] {};
\draw[dashed] (1,0) -- (1.5,0);
\filldraw (0.5,0) circle (0.5pt); %tiny node
%vertical seam for A
\draw[densely dotted] (0.5,0) -- (0.5,1);
%\node at (0.6,0.75) {};
% the foliation
\foreach \x in {1,...,9} {
	\begin{scope}[shift={(0,0)}]
		%same as previous, but shifted to right by 0
		\draw[thin] (0.5,0) -- (0.5 + 0.05 * \x, 1 - 0.1 * \x);
		\draw[thin] (0.5 + 0.05 * \x, 1 - 0.1 * \x) -- (1, 1 - 0.1 * \x);
	\end{scope}
	\begin{scope}[shift={(-2,0)}]
		%same as previous, but shifted to left by 2
		\draw[thin] (2, 0.1 * \x) -- (2.5 - 0.05 * \x, 0.1 * \x);
		\draw[thin] (2.5 - 0.05 * \x, 0.1*\x) -- (2.5,1);
	\end{scope}
}

\end{tikzpicture}
\\
The first figure depicts $X' \times \clI$,
with the $\clI$ factor going in the vertical direction.
The foliation depicted consists of the obvious horizontal leaves
$X' \times \{r\}$;
we depict the foliation only to better explain the operations
we perform below.
The left and right parts are the $P\times I$ portions
that would glue to give $X$.
Operation (1) ``pinches'' the left vertical side down
and the right vertical side up.
Operation (2) ``squeezes'' the bottom left and top right portions.
Operation (3) glues the two vertical sides.

%If there is a graph $\Gamma$ in $X' \times \clI$,
%it is carried along with each operation,
%so we get a graph $\Psi(\Gamma)$ in $X \times \clI$.
Below we depict a graph $\Gamma$ in $X' \times \clI$
with incoming boundary value $A \lact M$ and
outgoing boundary value $N \ract A$,
representing an element of $\Hom_{\htr(\hZ(X'))}(M,N)$.
%(or more precisely of $\ihom{\hZ(X')}{A}(M,N)$).
It is sent to a graph $\Psi(\Gamma)$ in $X \times \clI$
with incoming boundary $M$ and outgoing boundary $N$,
representing an element of $\Hom_{\hZ(X)}(M,N)$.

\begin{tikzpicture}

%% X' \times [0,1]
% top row
\node at (-0.5,1) {1};
\draw (0,1) -- (1,1) node[pos=0.25,above] {$A$};
\filldraw (0.5,1) circle (0.5pt);
\draw[dashed] (1,1) -- (2,1) node[pos=0.5,above] {$M$};
\draw (2,1) -- (3,1);
% bottom row
\node at (-0.5,0) {0};
\draw (0,0) -- (1,0);
\draw[dashed] (1,0) -- (2,0) node[pos=0.5,below] {$N$};
\draw (2,0) -- (3,0) node[pos=0.75,below] {$A$};
\filldraw (2.5,0) circle (0.5pt);
% vertical boundary
\draw[dotted] (0,1) -- (0,0);
\draw[dotted] (3,0) -- (3,1);

%% Gamma, graph in X', left half
\node at (1.5,0.5) {\small $\Gamma$};
\node[dotnode] (b) at (0.75,0.5) {}; %node in middle(morphism)
\draw (b) to[bend right] (0.5,0); %leg to N
\draw (b) to[out=180,in=-90] (0.25,1); %leg to A
\draw (b) -- (1,0.6); %leg going right, up
\draw[dashed] (1,0.6) -- (1.25,0.7); %continued
\draw (b) -- (1,0.4); %leg going right, down
\draw[dashed] (1,0.4) -- (1.25,0.3); %continued
%% Gamma, graph in X', right half
\node[dotnode] (a) at (2.25,0.5) {}; %node in middle(morphism)
\draw (a) to[out=0, in=90] (2.75,0); %leg to A
\draw (a) to[bend right] (2.5,1); %leg to M
\draw (a) to[out=-100, in=90] (2.2,0); %leg to N
\draw (a) -- (2,0.5); %leg to left
\draw[dashed] (2,0.5) -- (1.75,0.5); %continued

% arrow X' \to \RR
\node at (-2,0.5) {$X' \times \clI$};
\draw[->] (-2,0.2) -- (-2,-0.7) node[pos=0.5,left] {$\tilde{f}$};
\node at (-2,-1) {$\RR$};

% picture for \RR
\draw (0,-1) -- (1,-1) node[pos=0,below] {0} node[pos=1,below] {1};
\draw[dashed] (1,-1) -- (2,-1);
\draw (2,-1) -- (3,-1) node[pos=0,below] {2} node[pos=1,below] {3};

\end{tikzpicture}
%%%%%%%%%%%%%% mapsto %%%%%%%%%%%%%%%%%%%%%%%%%%%%%%%%%%%%
\hspace{3pt}
\begin{tikzpicture}
\node at (0,0.5) {$\mapsto$};
\node at (0,0.7) {\tiny (1)};
\end{tikzpicture}
\hspace{3pt}
%%%%%%%%%%%%%% middle picture %%%%%%%%%%%%%%%%%%%%%%%%%%%%%%%%%%%%
\begin{tikzpicture}

%% X' \times [0,1]
% top row
\draw (0.5,1) -- (1,1);
\draw[dashed] (1,1) -- (2,1) node[pos=0.5,above] {$M$};
\draw (2,1) -- (3,1);
% bottom row
\draw (0,0) -- (1,0);
\draw[dashed] (1,0) -- (2,0) node[pos=0.5,below] {$N$};
\draw (2,0) -- (2.5,0);
% slanted boundary
\filldraw (0.5,1) circle (0.5pt);
\draw (0,0) -- (0.5,1) node[pos=0.6,left] {$A^*$};
\filldraw (2.5,0) circle (0.5pt);
\draw (2.5,0) -- (3,1) node[pos=0.4,right] {$A$};

%% Gamma, graph in X', left half
\node at (1.5,0.5) {\small $\Gamma$};
\node[dotnode] (b) at (0.75,0.5) {}; %node in middle(morphism)
\draw (b) to[bend right] (0.5,0); %leg to N
\draw (b) to[out=180,in=-30] (0.25,0.5); %leg to A
\draw (b) -- (1,0.6); %leg going right, up
\draw[dashed] (1,0.6) -- (1.25,0.7); %continued
\draw (b) -- (1,0.4); %leg going right, down
\draw[dashed] (1,0.4) -- (1.25,0.3); %continued
%% Gamma, graph in X', right half
\node[dotnode] (a) at (2.25,0.5) {}; %node in middle(morphism)
\draw (a) to[out=0, in=120] (2.75,0.5); %leg to A
\draw (a) to[bend right] (2.5,1); %leg to M
\draw (a) to[out=-100, in=90] (2.2,0); %leg to N
\draw (a) -- (2,0.5); %leg to left
\draw[dashed] (2,0.5) -- (1.75,0.5); %continued

% picture for \RR
\draw (0,-1) -- (1,-1) node[pos=0,below] {0} node[pos=1,below] {1};
\draw[dashed] (1,-1) -- (2,-1);
\draw (2,-1) -- (3,-1) node[pos=0,below] {2} node[pos=1,below] {3};

\end{tikzpicture}
%%%%%%%%%%%%%% mapsto %%%%%%%%%%%%%%%%%%%%%%%%%%%%%%%%%%%%
\hspace{3pt}
\begin{tikzpicture}
\node at (0,0.5) {$\mapsto$};
\node at (0,0.7) {\tiny (2),(3)};
\end{tikzpicture}
\hspace{3pt}
%%%%%%%%%%%%%% last picture %%%%%%%%%%%%%%%%%%%%%%%%%%%%%%%%%%%%
\begin{tikzpicture}

%% X
% top row
\draw[dashed] (-0.5,1) -- (0,1);
\draw (0,1) -- (1,1) node[pos=0.5,above] {$i_*(M)$};
\draw[dashed] (1,1) -- (1.5,1);
\filldraw (0.5,1) circle (0.5pt); %tiny node
% bottom row
\draw[dashed] (-0.5,0) -- (0,0);
\draw (0,0) -- (1,0) node[pos=0.5,below] {$i_*(N)$};
\draw[dashed] (1,0) -- (1.5,0);
\filldraw (0.5,0) circle (0.5pt); %tiny node
%vertical seam for A
\draw[densely dotted] (0.5,0) -- (0.5,1);
%\node at (0.6,0.75) {\tiny $A$};

%% i_*(Gamma), graph in X, left half
\node[dotnode] (a) at (0.2,0.5) {}; %node in middle
\node at (-0.7,0.5) {\small $\Psi(\Gamma)$};
\draw (a) -- (0.5,0.5); %leg to A
\draw (a) to[bend right=10] (0.25,1); %leg to M
\draw (a) to[bend right=10] (0.2,0); %leg to N
\draw (a) -- (0,0.5); %leg to left
\draw[dashed] (0,0.5) -- (-0.25,0.5); %continued
%% i_*(Gamma), graph in X, right half
\node[dotnode] (b) at (0.75,0.5) {}; %node in middle
\draw (b) to[bend right=10] (0.75,0); %leg to N
\draw (b) -- (0.5,0.5); %leg to A
\draw (b) -- (1,0.6); %leg to right, up
\draw[dashed] (1,0.6) -- (1.25,0.7); %continued
\draw (b) -- (1,0.4); %leg to right, down
\draw[dashed] (1,0.4) -- (1.25,0.3); %continued

% arrow X \to \RR
\node at (2.5,0.5) {$X \times \clI$};
\draw[->] (2.5,0.2) -- (2.5,-0.7) node[pos=0.5,right] {$f$};
\node at (2.5,-1) {$\RR/2\ZZ$};

% picture for \RR
\draw[dashed] (-0.5,-1) -- (0,-1);
\draw (0,-1) -- (1,-1) node[pos=0,below] {0} node[pos=1,below] {1};
\draw[dashed] (1,-1) -- (1.5,-1);

\end{tikzpicture}

\comment{
\begin{tikzpicture}

\foreach \x in {0,...,10}
  \tikzmath{\y = sin(deg(pi*\x/10))/5;}
  \node[dotnode] at (\x/10, \y) {};

\end{tikzpicture}
}

Note that operation (1) creates corners in the top left and bottom right,
so $\Psi$ is not exactly a smooth map;
however, it is an embedding when restricted to $X' \times (0,1)$,
and can easily be slightly perturbed to be a smooth embedding.
As we ultimately care about the images of graphs $\Psi(\Gamma)$
up to isotopy, we will not bother with the details of this perturbation
nor the non-smoothness of $\Psi$ at the corner.

The only points in $X \times \clI$ that are hit more than once
are in $f^\inv(0.5)$;
we call this the \emph{seam}.
In the figure above, the seam is depicted as the vertical dotted line
in the right most figure.
The seams is also the image of the top left and bottom right boundary pieces
(the parts labeled $A$).
The image of $\Psi|_{X' \times (0,1)}$ is
exactly $X \times (0,1) \backslash $seam.
%abusing notation, we will call this restriction $\Psi$ also,
%and later make use of its inverse $\Psi^\inv$.\\

We claim that the following map is well-defined:
\begin{align*}
\Hom_{\htr(\hZ(X'))}(M,N) &\to \Hom_{\hZ(X)}(i_*(M),i_*(N))
\\
\Gamma &\mapsto \Psi(\Gamma)
\end{align*}
It is not hard to see that the assignment
$\Gamma \mapsto \Psi(\Gamma)$
yields a well-defined map
$\ihom{\hZ(X')}{A}(M,N) \to \Hom_{\hZ(X)}(i_*(M),i_*(N))$;
a graph $\Gamma=\sum c_i\Gamma_i$
that is null with respect to some ball $D$
would have image $\Psi(\Gamma)$ null with respect to $\Psi(D)$.
We need to check that the relations $\sim$ in
$\Hom_{\htr(\hZ(X'))}(M,N) = \bigoplus \ihom{\hZ(X')}{A}(M,N)/\sim$
are satisfied.
Recall that relations are generated by
$\Theta \circ (\psi \lact \id_M) - (\id_N \ract \psi) \circ \Theta$,
where $\Theta \in \ihom{\hZ(X')}{A,B}(M,N)$
and $\psi \in \Hom_{\hZ(P\times I)}(B,A)$.
We see that
\[
%%%% picture for psi on top
\begin{tikzpicture}
\begin{scope}[shift={(0,-0.5)}]
\node at (1.5,0.5) {$\Theta$};
%% top row
\draw (0,1) -- (1,1)
  node[pos=0.35,above] {$B$};
\node[dotnode] at (0.7,1) {};
\draw (0,1) to[out=-80,in=-100] (0.7,1);
\node at (0.35,0.9) {\tiny $\psi$};
\draw[regular] (1,1) -- (2,1)
  node[pos=0.5,above] {$M$};
\draw (2,1) -- (3,1);
%% bottom row
\draw (0,0) -- (1,0);
\draw[regular] (1,0) -- (2,0)
  node[pos=0.5,below] {$N$};
\draw (2,0) -- (3,0)
  node[pos=0.65,below] {$B$};
\node[dotnode] at (2.3,0) {};
\end{scope}
\end{tikzpicture}
\; - \;
%%%% picture for psi on bottom
\begin{tikzpicture}
\begin{scope}[shift={(0,-0.5)}]
\node at (1.5,0.5) {$\Theta$};
%% top row
\draw (0,1) -- (1,1)
  node[pos=0.35,above] {$B$};
\node[dotnode] at (0.7,1) {};
%\draw (0,1) to[out=-80,in=-100] (0.7,1);
%\node at (0.35,0.9) {\tiny $\psi$};
\draw[regular] (1,1) -- (2,1)
  node[pos=0.5,above] {$M$};
\draw (2,1) -- (3,1);
%% bottom row
\draw (0,0) -- (1,0);
\draw[regular] (1,0) -- (2,0)
  node[pos=0.5,below] {$N$};
\draw (2,0) -- (3,0)
  node[pos=0.65,below] {$B$};
\node[dotnode] at (2.3,0) {};
\draw (2.3,0) to[out=80,in=100] (3,0);
\node at (2.65,0.1) {\tiny $\psi$};
\end{scope}
\end{tikzpicture}
\; \mapsto \;
%%%% picture after map, left one
\begin{tikzpicture}
\begin{scope} [shift={(0,-0.5)}]
%% top row
\draw[regular] (0,1) -- (2,1);
\draw (0.5,1) -- (1.5,1);
%% bottom row
\draw[regular] (0,0) -- (2,0);
\draw (0.5,0) -- (1.5,0);
%% seam and psi
\draw[dotted] (1,0) -- (1,1);
\draw[dotted] (1,0) to[out=135,in=-135] (1,1);
\node at (0.9,0.5) {\tiny $\psi$};
\end{scope}
\end{tikzpicture}
\; - \;
%%%% picture after map, right one
\begin{tikzpicture}
\begin{scope} [shift={(0,-0.5)}]
%% top row
\draw[regular] (0,1) -- (2,1);
\draw (0.5,1) -- (1.5,1);
%% bottom row
\draw[regular] (0,0) -- (2,0);
\draw (0.5,0) -- (1.5,0);
%% seam and psi
\draw[dotted] (1,0) -- (1,1);
\draw[dotted] (1,0) to[out=45,in=-45] (1,1);
\node at (1.1,0.5) {\tiny $\psi$};
\end{scope}
\end{tikzpicture}
= 0
\]
We leave checking that composition is respected as
a simple exercise.
\end{proof}

We want to show that $i_*$ is an equivalence,
and will be considering $\Psi^\inv$ applied to graphs.
It is not clear that this is well-defined,
e.g. moving parts of a graph in $X\times \clI$ across the seam
could result in different graphs with
different boundary conditions in $X' \times \clI$.
However, the relation
$\Theta \circ (\psi \lact \id_M) - (\id_N \ract \psi) \circ \Theta$
essentially takes care of this ambiguity.

Let us make this precise.
Consider a small neighborhood
$P \times (0.5-\veps,0.5+\veps) \times \clI$
of the seam in $X \times \clI$.
Consider the following vector field $\nu$:
at $(p,x,t) \in P \times (0.5-\veps,0.5+\veps) \times \clI$,
the vector field has value $\sigma(x)\sin(\pi t) \frac{\del}{\del x}$,
where $\sigma(x)$ is a smooth non-negative cut-off function on $(0,1)$
that has support exactly $(0.5-\veps,0.5+\veps)$.
This vector field $\nu$ has the following displacing property:
for any compact subset $K$ in $P \times (0,5-\veps,0.5+\veps) \times (0,1)$
(i.e. near the seam and not touching the boundary),
the flow eventually pushes $K$ off of the seam,
i.e. there is some $\alpha$ such that
the flow under $\nu$ after time $\alpha$
does not intersect the seam.

Let $\zeta^\alpha$ be the isotopy generated by $\nu$.
Denoting by $L_0$ the seam,
we define $L_\alpha = \zeta^\alpha(L_0)$.
Let $\Psi_\alpha$ be the composition
$\zeta^\alpha \circ \Psi$.
Then $L_\alpha$ is the ``seam" for $\Psi_\alpha$.

Suppose a graph $\Gamma$ in $X\times \clI$
intersects the seam $L_0$ transversally,
in that the edges meet $L_0$ transversally
and no vertices are on $L_0$.
Then $\Gamma$ defines a boundary value at the seam:
the marked points are the points of intersection,
and coloring is the color associated to the edge
taken with right-ward orientation
(that is, in direction of $\nu$).
In particular, the boundary value of
$i_*(\Gamma)$ in the figure above is $A$.
If $\Gamma$ intersects $L_\alpha$ transversally,
then we can also define its boundary value
at $L_\alpha$ similarly; to be precise,
it is the boundary value of $\zeta^{-\alpha}(\Gamma)$
at the seam.

\begin{lemma}
\label{lem:alpha_independence}
Let $\Gamma$ be a graph in $X \times \clI$
that represents a morphism in
$\Hom_{\hZ(X)}(i_*(M),i_*(N))$
for some $M,N \in \Obj \htr(\hZ(X'))$.
Choose some $\alpha$ such that
$\Gamma$ is transverse to $L_\alpha$,
and suppose it defines the boundary value $A_\alpha$.
We see that $\Psi_\alpha^\inv(\Gamma)$
%$\overline{\Psi_\alpha^\inv(\Gamma \backslash L_\alpha)}$
is a graph in $X' \times \clI$ representing a morphism in
$\Hom_{\hZ(X')}(A_\alpha \lact M, N \ract A_\alpha)$.
Then as a morphism in $\Hom_{\htr(\hZ(X'))}(M,N)$,
$\Psi_\alpha^\inv(\Gamma)$ is independent of
such a choice of $\alpha$.
\end{lemma}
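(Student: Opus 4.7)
The plan is to show that $[\Psi_\alpha^{-1}(\Gamma)] \in \Hom_{\htr(\hZ(X'))}(M,N)$ is locally constant in $\alpha$ and hence constant, by examining how the cut graph changes as $\alpha$ varies.

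Let $S = \{\alpha : \Gamma \pitchfork L_\alpha\}$. Since $\Gamma$ has finitely many edges and vertices and the family $\{L_\alpha\}$ varies smoothly, a standard transversality argument shows that $S$ is open in $\RR$ and its complement is a locally finite set of critical values, at each of which generically exactly one of the following occurs: (a) a single edge of $\Gamma$ is simply tangent to $L_{\alpha_c}$, producing a birth--death pair of transverse intersections as $\alpha$ crosses $\alpha_c$, or (b) a single vertex of $\Gamma$ lies on $L_{\alpha_c}$.

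Within a connected component of $S$, the boundary values $A_\alpha$ vary by a smooth isotopy of marked point configurations in $\PI$, giving an isomorphism $\phi : A_{\alpha_0} \to A_{\alpha_1}$ in $\hZ(\PI)$. Because the two collars of $X'$ are identified with the same neighborhood of the seam under the gluing, this isotopy acts identically on both collars of $X'$, and an ambient isotopy in $X' \times \clI$ rel boundary yields
\[
(\id_N \ract \phi) \circ \Psi_{\alpha_0}^{-1}(\Gamma) \;=\; \Psi_{\alpha_1}^{-1}(\Gamma) \circ (\phi \lact \id_M)
\]
as elements of $\Hom_{\hZ(X')}(A_{\alpha_0} \lact M, N \ract A_{\alpha_1})$. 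Setting $\psi := \Psi_{\alpha_0}^{-1}(\Gamma) \circ (\phi^{-1} \lact \id_M) \in \ihom{\hZ(X')}{A_{\alpha_1},A_{\alpha_0}}(M,N)$, the coend relation defining $\htr$ applied to the pair $(\psi,\phi)$ then yields $[\Psi_{\alpha_0}^{-1}(\Gamma)] = [\psi \circ (\phi \lact \id_M)] = [(\id_N \ract \phi) \circ \psi] = [\Psi_{\alpha_1}^{-1}(\Gamma)]$.

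To cross a critical value $\alpha_c$, I will compare $\Psi_{\alpha_c\pm\veps}^{-1}(\Gamma)$ directly: in case (a) the two cut graphs differ by a small arc of $\Gamma$ near the tangency being transferred from one collar of $X'$ to the other via the gluing, while in case (b) a small neighborhood of the vertex is transferred similarly. In each case the local change can be described by a morphism in $\hZ(\PI)$ moved across the coend, so the two classes are identified by the same defining relation of $\sim$. Combined with local constancy on $S$ and the connectedness of $\RR$, this gives the desired independence. The main obstacle is making this transfer rigorous in cases (a) and (b): one must set up local coordinates around the critical event, identify the transferred piece explicitly with a morphism in $\hZ(\PI)$, and verify that the change of cut-graph class across $\alpha_c$ is exactly an instance of the defining relation of $\sim$, rather than merely analogous to it.
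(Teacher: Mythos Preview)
Your argument is correct in spirit and considerably more detailed than the paper's own proof, which is simply ``Clear from the picture.'' However, you have over-engineered the argument by introducing a continuity-in-$\alpha$ analysis with separate treatment of critical values. This detour is unnecessary, and the part you yourself flag as the remaining obstacle (cases (a) and (b)) can be avoided entirely.

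The point is that your local-constancy step already works \emph{globally}: for any two $\alpha_0,\alpha_1\in S$, not just nearby ones in the same component, one can compare $\Psi_{\alpha_0}^{-1}(\Gamma)$ and $\Psi_{\alpha_1}^{-1}(\Gamma)$ directly. The region between $L_{\alpha_0}$ and $L_{\alpha_1}$ in $X\times\clI$ is (after an evident identification) a copy of $(\PI)\times\clI$, and the portion of $\Gamma$ it contains defines a single morphism $\psi\in\Hom_{\hZ(\PI)}(A_{\alpha_0},A_{\alpha_1})$. Under $\Psi_{\alpha_0}^{-1}$ this piece sits in the top-left collar of $X'\times\clI$ (acting on $M$), while under $\Psi_{\alpha_1}^{-1}$ the same piece sits in the bottom-right collar (acting on $N$); the remainder of the graph is literally the same $\Theta\in\ihom{\hZ(X')}{A_{\alpha_1},A_{\alpha_0}}(M,N)$ in both cases. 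Thus $\Psi_{\alpha_0}^{-1}(\Gamma)=\Theta\circ(\psi\lact\id_M)$ and $\Psi_{\alpha_1}^{-1}(\Gamma)=(\id_N\ract\psi)\circ\Theta$, which is a single instance of the defining relation of $\htr$. This is precisely what the paper's picture shows.

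Note that the morphism $\psi$ here is the actual graph piece, not an isotopy-induced isomorphism of boundary values as in your version; consequently it does not matter whether $\Gamma$ has tangencies or vertices between $L_{\alpha_0}$ and $L_{\alpha_1}$, and no genericity or case analysis is needed. Your isotopy-based $\phi$ works only within a component of $S$, which is why you were forced into the critical-value analysis; replacing it with the graph-piece $\psi$ removes that constraint.
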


\begin{proof}
Clear from the picture.
\end{proof}

We come to the main ``topological" result of the paper:
\begin{theorem}
  \label{t:hat_excision}
The extension $i_*: \htr(\hZ(X')) \to \hZ(X)$
is an equivalence.
\end{theorem}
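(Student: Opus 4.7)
The plan is to construct a quasi-inverse $\Phi : \hZ(X) \to \htr(\hZ(X'))$ to $i_*$, defined on objects by displacing marked points off the seam $f^\inv(0.5)$ via ambient isotopy (which gives essential surjectivity of $i_*$ immediately, since the displaced configuration then lies in the image of $i$), and defined on morphisms by $\Gamma \mapsto [\Psi_\alpha^\inv(\Gamma)]$ for suitable $\alpha$. More precisely: given a graph $\Gamma \subset X \times \clI$ representing a morphism $i_*(M) \to i_*(N)$, first perturb $\Gamma$ by a small ambient isotopy so that it meets $L_\alpha$ transversally and avoids vertices on $L_\alpha$, for some $\alpha > 0$; then read off the coloring of the edges crossing $L_\alpha$ to get a boundary value $A \in \Obj \hZ(P\times I)$, and set $\Phi(\Gamma) := [\Psi_\alpha^\inv(\Gamma)] \in \Hom_{\htr(\hZ(X'))}(M,N)$.

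Well-definedness of $\Phi$ requires three checks. First, independence of $\alpha$, which is precisely \lemref{lem:alpha_independence}. Second, isotopy invariance: using \lemref{lem:isotopy_cover} one decomposes any ambient isotopy of $\Gamma$ in $X \times \clI$ into small isotopies, each supported either in $X \times \clI \setminus L_0$ (which lifts through $\Psi$ to an isotopy in $X' \times \clI$) or in a small neighborhood of the seam; in the second case, the effect on $\Psi_\alpha^\inv(\Gamma)$ is to transfer a local piece $\psi \in \Hom_{\hZ(P\times I)}$ between the left and right $P \times I$ collars of $X'$, which is exactly the defining relation $\Theta \circ (\psi \lact \id_M) \sim (\id_N \ract \psi) \circ \Theta$ in $\htr(\hZ(X'))$. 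Third, invariance under null relations: apply \prpref{prp:null_cover} to an open cover of $X \times \clI$ consisting of small balls contained in $X \times \clI \setminus L_0$ together with a tubular neighborhood $V$ of $L_0$. Null relations with balls of the first kind pull back through $\Psi$ to null relations in $\hZ(X')$, hence vanish in $\htr(\hZ(X'))$. Null relations with balls in $V$ can be transported off $L_0$ by ambient isotopy, reducing to the previous case once the isotopy invariance already established is applied.

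Once $\Phi$ is shown to be well-defined, it is mutually inverse to $i_*$. On the one hand, for $\Gamma$ transverse to $L_\alpha$ one has $i_*(\Phi(\Gamma)) = \Psi(\Psi_\alpha^\inv(\Gamma)) = \zeta^{-\alpha}(\Gamma)$, which is ambient-isotopic to $\Gamma$ and therefore equal in $\hZ(X)$. On the other hand, for $\Gamma' \in \ihom{\hZ(X')}{A}(M,N)$, the composition $\Phi(i_*(\Gamma')) = [\Psi_\alpha^\inv(\Psi(\Gamma'))]$ differs from $[\Gamma']$ only by pushing the $A$-colored collar piece across the seam, which is absorbed by the horizontal-trace relation; hence it equals $[\Gamma']$ in the quotient.

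The main obstacle is item three of well-definedness: correctly handling null relations whose supporting ball meets the seam. Although \prpref{prp:null_cover} reduces to local nullity, one still needs the ambient-isotopy interpretation of trace relations from item two in order to move such balls off $L_0$ without introducing uncontrolled corrections. I expect the rest of the argument to be largely formal once this local/seam-crossing bookkeeping is set up cleanly.
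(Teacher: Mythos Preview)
Your proposal is correct and follows essentially the same route as the paper: define an inverse on morphisms via $\Psi_\alpha^{-1}$, invoke \lemref{lem:alpha_independence} for $\alpha$-independence, and use \prpref{prp:null_cover} with the two-piece cover (complement of the seam, neighborhood of the seam) to reduce null relations to the case where the ball $D$ misses some $L_\alpha$.

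The only organizational difference worth noting is that the paper avoids your separate ``isotopy invariance'' step (item two). Rather than proving that isotopies near the seam are absorbed by the trace relation and then using that to transport balls $D\subset V$ off $L_0$, the paper observes directly that the displacing flow $\zeta^\alpha$ guarantees some $L_{-\alpha+\varepsilon}$ misses $D$, and then the key observation (null w.r.t.\ $D$ disjoint from $L_\alpha$ implies $\Psi_\alpha^{-1}(\Gamma)$ null w.r.t.\ $\Psi_\alpha^{-1}(D)$) applies immediately. In effect, \lemref{lem:alpha_independence} already encodes the seam-crossing isotopy invariance you prove separately, so your item two is redundant with item one plus the key observation. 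Your version works; the paper's is slightly leaner.
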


\begin{proof}
It was already evident from the object map that
$\htr(\hZ(X')) \to \hZ(X)$ is essentially surjective
- it only misses objects that have marked points on $f^\inv(0.5)$,
but such an object is isomorphic to an object
with points moved slightly off of $f^\inv(0.5)$.\\

To show that $i_*$ is fully faithful,
fix objects $M,N \in \htr(\hZ(X'))$.
By \lemref{lem:alpha_independence},
the family $\Psi_\alpha^\inv$ of maps
defines a map
$\Phi : \VGraph(X\times \clI; i_*(M)^*,i_*(N))
\to \Hom_{\htr(\hZ(X'))}(M,N)$.\\

Let us show that $\Phi$ factors through the projection
$\VGraph(X\times \clI; i_*(M)^*,i_*(N)) \to \Hom_{\hZ(X)}(i_*(M),i_*(N))$.
We make the following observation:
If $\Gamma = \sum c_i\Gamma_i$ is null with respect to some
closed ball $D \subset X \times \clI$,
and there is some $L_\alpha$ that does not meet $D$
and is transversal to $\Gamma$,
then $\Phi(\Gamma) = \Psi_\alpha^\inv(\Gamma)$
is null with respect to $\Psi_\alpha^\inv(D)$.\\

Let $0 < \beta < 0.5$ be such that
$i_*(M)$ and $i_*(N)$ do not have any marked points
in $f^\inv((0.5-\beta,0.5+\beta)) \subset X$;
denote $J = (0.5-\beta, 0.5+\beta)$.
Consider the open cover $\{U_1,U_2\}$ of $X \times \clI$,
where $U_1 = f^\inv(J)$
and $U_2 = X\times \clI \backslash L_0$.
By \prpref{prp:null_cover},
the space of null graphs is generated by graphs that
are null with respect to balls $D$ contained
in either $U_1$ or $U_2$, thus
it suffices to show that $\Phi$ sends such graphs to 0.
By the previous observation, it suffices to check that
there exists an $\alpha$ that doesn't intersect such $D$.

For $D \subset U_2$,
such $L_\alpha$ exists by Sard's theorem
- for small enough $\alpha$, $L_\alpha$ does not
intersect $D$, so it suffices to consider
transversality with $\Gamma$, which is a generic condition.\\

Now suppose $D \subset U_1$.
Since there are no marked points on the boundary in $U_1$,
by \lemref{lem:null_isotopy},
we may assume that $D$ does not meet the boundary.
As we pointed out, the vector field $\nu$ defining the isotopy
$\zeta^\alpha$ has the property that it will displace
$D$ off of $L_0$.
So if $\zeta^\alpha(D)$ does not intersect $L_0$,
we can take $L_{-\alpha + \veps}$,
where small $\veps$ is chosen to get transversality with $\Gamma$,
and we are done.
\end{proof}

Combining the topological result above with
the algebraic results of \secref{s:module},
we have the main result of the paper:
\begin{theorem}
\label{t:excision-skein}
There is an equivalence
\[
  \cZ_{Z(P\times I)}(Z(X')) \simeq Z(X)
\]
In particular, when $X = X_1 \cup X_2$
as in \rmkref{rmk:gluing},
\[
  Z(X_1) \boxtimes_{Z(P\times I)} Z(X_2) 
  \simeq \cZ_{Z(P\times I)}(Z(X_1 \sqcup X_2))\simeq Z(X)
\]
\end{theorem}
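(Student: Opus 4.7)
The plan is to combine the topological excision result \thmref{t:hat_excision} with the algebraic machinery relating the horizontal trace to the center that was developed in \secref{s:module}. The key observation is that \thmref{t:hat_excision} already does the hard topological work of identifying $\htr(\hZ(X'))$ with $\hZ(X)$; what remains is essentially bookkeeping with Karoubi envelopes.

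First, I would apply the Karoubi envelope functor to both sides of the equivalence $i_* : \htr(\hZ(X')) \xrightarrow{\sim} \hZ(X)$ from \thmref{t:hat_excision}. The right side becomes $\Kar(\hZ(X)) = Z(X)$ by definition \eqref{e:zcy}. For the left side, I would first use \lemref{lem:M_Karubi} (noting that $\hZ(X') \hookrightarrow Z(X') = \Kar(\hZ(X'))$ is the natural full, dominant inclusion of $\hZ(P\times I)$-bimodules) to obtain
\[
  \Kar(\htr_{\hZ(P\times I)}(\hZ(X'))) \simeq \Kar(\htr_{\hZ(P\times I)}(Z(X'))).
\]
Then, invoking \corref{cor:center} with $\cC' = \hZ(P\times I)$, $\cC = Z(P\times I)$, and $\cM = Z(X')$ (viewed as a $\cC$-bimodule via the left and right collared structures of \prpref{p:collared_module}), I identify this further with $\cZ_{Z(P\times I)}(Z(X'))$. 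Chaining the equivalences gives the first claim $\cZ_{Z(P\times I)}(Z(X')) \simeq Z(X)$.

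For the second statement, when $X' = X_1 \sqcup X_2$ as in \rmkref{rmk:gluing}, \lemref{lem:disjoint_union} supplies an equivalence $Z(X') \simeq Z(X_1) \boxtimes Z(X_2)$ as $Z(P\times I)$-bimodules (with $Z(X_1)$ right and $Z(X_2)$ left modules via their respective collared structures). Then \eqref{e:center-product} yields
\[
  \cZ_{Z(P\times I)}(Z(X_1) \boxtimes Z(X_2)) \simeq Z(X_1) \boxtimes_{Z(P\times I)} Z(X_2),
\]
and combining with the first part gives the desired excision.

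The main obstacle I anticipate is confirming that the hypotheses of \corref{cor:center} and \eqref{e:center-product} are genuinely satisfied, namely that $\hZ(P\times I)$ is a pivotal category whose Karoubi envelope $Z(P\times I)$ is multifusion. Pivotality comes from \prpref{p:stack_pivotal}, but the multifusion property (semisimplicity, finitely many simples, and rigidity of $Z(P\times I)$) requires a separate verification—for the Turaev–Viro case with $P$ a point this is immediate from $Z(I) \simeq \cA$ (\xmpref{xmp:tv_circle}), but for the Crane–Yetter case, particularly for $P = S^1$, the category $Z(S^1 \times I)$ is more subtle and one must argue semisimplicity and finiteness using the premodular structure on $\cA$. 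A secondary technical point is checking that the bimodule structures on $Z(X')$ coming from the topological collared embeddings match those required by the algebraic statements, but this should follow from the naturality of the stacking constructions in \prpref{prp:stacking} and \prpref{p:collared_module}.
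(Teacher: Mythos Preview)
Your proposal is correct and follows essentially the same chain of equivalences as the paper: \thmref{t:hat_excision} followed by \lemref{lem:M_Karubi} and \corref{cor:center}, then \lemref{lem:disjoint_union} and \eqref{e:center-product} for the second statement. The one point you leave open---showing $Z(P\times I)$ is multifusion when $P=S^1$---is handled in the paper by a bootstrap: one first applies the very argument you outline with $P=I$ (where $Z(I\times I)\simeq\cA$ is already multifusion) to compute $Z(S^1\times I)\simeq\cZ(\cA)$ as an abelian category via \xmpref{xmp:annulus}, and then combines this with the pivotal rigidity from \prpref{p:stack_pivotal} to conclude multifusion.
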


\begin{proof}
We claim that $Z(P\times I)$ is multifusion; we justify this claim later.
By \prpref{p:stack_pivotal}, $\hZ(P\times I)$ is pivotal.
In reference to the notation in \secref{s:module},
take $\cC' = \hZ(P\times I), \cC = Z(P\times I),
\cM' = \hZ(X'), \cM = Z(X')$.
Then we have
\begin{align*}
\cZ_{Z(P\times I)}(Z(X'))
&\simeq \Kar(\htr_{\hZ(P\times I)}(Z(X')))
  & \text{ by \corref{cor:center},}\\
&\simeq \Kar(\htr_{\hZ(P\times I)}(\hZ(X')))
  & \text{ by \lemref{lem:M_Karubi},}\\
&\simeq \Kar(\hZ(X))
  & \text{ by extending $i_*$ from \thmref{t:hat_excision} to Kar} \\
&= Z(X)
\end{align*}

The second statement follows from the first
by applying \lemref{lem:disjoint_union} and \eqnref{e:center-product}.

Now we need to justify $Z(P\times I)$ being multifusion.
This is true for $P = \{*\}$ and for $P = I$.
By \xmpref{xmp:annulus}, which uses the argument above for $P = I$,
we have $Z(S^1 \times I) \simeq \cZ(\cA)$ as a $\kk$-linear abelian category
(but not as a monoidal category, see \rmkref{rmk:ZA_stacking}); in particular,
this implies that it is semisimple with finitely many simple objects.  Since by
\prpref{p:stack_pivotal}, the stacking monoidal structure on $Z(P\times I)$ is
rigid and pivotal, this shows that $Z(P\times I)$ is a pivotal  multifusion
category.

So $Z(P\times I)$ is pivotal multifusion for any connected $P$;
the claim follows for a disjoint union of finitely many such $P$'s.
\end{proof}

\begin{corollary}
\label{c:skein_facthom}
In each of the two cases below 
\begin{itemize} 
\item  $n=1$, $\cA$ a spherical fusion category 
\item $n=2$, $\cA$ a premodular category
\end{itemize}
for an $n$-manifold $X$ the category $Z(X)$ of boundary 
values for colored graphs constructed above coincides with the factorization homology $\int_X \cA$. 
\end{corollary}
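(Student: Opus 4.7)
The plan is to invoke the characterization of factorization homology given by \thmref{t:facthom-characterize}: a symmetric monoidal functor $F\colon Man^n \to \Rex$ that (i) assigns $\cA$ to a contractible manifold, (ii) induces the canonical $E_1$-algebra structure on $F(N\times I)$, and (iii) satisfies excision, must coincide with $\int_{-}\cA$. I will check that $Z(-)$ satisfies all three properties in each of the two cases $n=1,2$.

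First I would verify functoriality and the disjoint-union/monoidal compatibility. Functoriality with respect to oriented embeddings has already been established in \secref{s:cat_bval} (an embedding $\varphi\colon X\to Y$ induces $\varphi_*\colon Z(X)\to Z(Y)$, and isotopic embeddings yield isomorphic functors), and the compatibility $Z(X_1\sqcup X_2)\simeq Z(X_1)\boxtimes Z(X_2)$ is \lemref{lem:disjoint_union}. Next, property (i): for a contractible $n$-manifold $X$, one has $Z(X)\simeq\cA$. This is exactly $Z(I)\simeq \cA$ for $n=1$ (see after \defref{d:Chat}) and $\ZCY(D^2)\simeq\cA$ for $n=2$ (see after \eqnref{e:zcy}); since any contractible $n$-manifold is diffeomorphic to $\R^n$, hence embeds into and receives an inclusion from the ball, one concludes $Z(X)\simeq \cA$ in general.

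For property (ii), the canonical $E_1$-algebra structure on $Z(N\times I)$ coming from stacking is furnished by \prpref{prp:stacking}, and uniqueness up to equivalence is \prpref{p:E1-uniqueness}. Property (iii), excision, is the content of \thmref{t:excision-skein}: for any presentation $X = X_1 \cup_{N\times I} X_2$ with collared gluing (as in \rmkref{rmk:gluing}), one has
\[
Z(X)\simeq Z(X_1)\boxtimes_{Z(N\times I)} Z(X_2).
\]

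Given these three properties, \thmref{t:facthom-characterize} yields an equivalence $Z(X)\simeq \int_X \cA$ in $\Rex$ for every oriented $n$-manifold $X$. The only nontrivial step is checking that the input datum of the theorem, namely the $n$-disk algebra structure on $\cA$ in $\Rex$, is the same one studied by \ocite{BBJ1} (for $n=2$) and its $n=1$ analogue: in both cases this is standard, as a spherical fusion category carries the obvious 1-disk algebra structure, and a premodular (in particular balanced braided) category is a 2-disk algebra in $\Rex$ by item (6) in \seref{s:fact_homology}. The main conceptual obstacle has already been absorbed into \thmref{t:excision-skein}; the remainder is a formal application of the uniqueness theorem.
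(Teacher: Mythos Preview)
Your proposal is correct and follows essentially the same approach as the paper's own proof: both verify the three characterizing properties of \thmref{t:facthom-characterize} (value on the disk, uniqueness of the $E_1$-structure via \prpref{p:E1-uniqueness}, and excision via \thmref{t:excision-skein}) and then invoke that theorem. Your version is slightly more verbose---you spell out functoriality, disjoint-union compatibility, and the fact that $\cA$ is an $n$-disk algebra---but these are harmless elaborations of the same argument.
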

\begin{proof}
%It follows from the previous theorem that the category $Z(X)$ satisfies all 
%the properties of factorization homology listed in \thmref{t:uniqueness}.
We verify that $Z(-)$ satisfies the three characterizing properties
laid out in \thmref{t:facthom-characterize}.

\begin{enumerate}
\item For both cases of $n$, $\cA$ defines an $n$-disk algebra in $\cV=\Rex$,
	thus defining factorization homologies $\int_- \cA$
	which coincide with $Z(-)$ on the $n$-disk.

\item \prpref{p:E1-uniqueness} proves the uniqueness of the $E_1$-algebra
	structure	on $Z(P\times I)$.

\item \thmref{t:excision-skein} proves the excision property (as in
	\thmref{t:excision}.

\end{enumerate}
Thus, $Z(X) \simeq \int_X \cA$.
\end{proof}

\begin{corollary} \label{c:skein_ss}In the assumptions of \corref{c:skein_facthom}, 
$Z(X)$ is a finite semisimple category. 
\end{corollary}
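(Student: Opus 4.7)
The plan is to induct on a finite decomposition of $X$ into disks, applying excision at each step to reduce the question to a center construction, and then invoke \prpref{p:ZM_ss} to preserve finite semisimplicity.

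More precisely: since $X$ is finitary by our standing hypothesis, it admits a finite open cover by charts diffeomorphic to $\R^n$ such that every finite intersection is also diffeomorphic to $\R^n$. Using this, I would first exhibit $X$ as the end result of a finite sequence $X_0 \subset X_1 \subset \cdots \subset X_k = X$, where $X_0$ is a disjoint union of $n$-disks and each $X_{j+1}$ is obtained from $X_j \sqcup (\text{disk})$ by gluing along a submanifold diffeomorphic to $P_j \times I$ for some compact $(n-1)$-manifold $P_j$ (a disjoint union of points when $n=1$, or a disjoint union of intervals and circles when $n=2$). This is a standard consequence of the finitary hypothesis and the fact that gluing along a collar of $P_j \times I$ can model attaching a chart.

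For the induction itself: the base case is $Z(\text{disk}) \simeq \cA$, which is finite semisimple by assumption; disjoint unions stay finite semisimple by \lemref{lem:disjoint_union} and the fact that the Deligne tensor product of finite semisimple abelian categories is finite semisimple. For the inductive step, \thmref{t:excision-skein} gives
\[
  Z(X_{j+1}) \simeq \cZ_{Z(P_j \times I)}\bigl( Z(X_j \sqcup \text{disk}) \bigr).
\]
By the inductive hypothesis, $Z(X_j \sqcup \text{disk})$ is finite semisimple. The category $Z(P_j \times I)$ is pivotal multifusion — this was verified inside the proof of \thmref{t:excision-skein} for each connected component of $P_j$, hence for $P_j$ itself. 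Thus \prpref{p:ZM_ss} applies and yields that $Z(X_{j+1})$ is finite semisimple.

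The only potentially delicate point is setting up the decomposition $X_0 \subset \cdots \subset X_k = X$ cleanly, so that each gluing actually fits the format of \thmref{t:excision-skein} (i.e., the collared structure along $P_j \times I$ is realized as in \rmkref{rmk:gluing}). This is, however, purely a matter of bookkeeping with handlebody-style decompositions of finitary manifolds, and presents no conceptual obstacle. The algebraic content — that centers over pivotal multifusion categories of finite semisimple bimodule categories remain finite semisimple — is exactly \prpref{p:ZM_ss}, and it is what drives the inductive step.
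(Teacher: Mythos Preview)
Your proposal is correct and follows essentially the same approach as the paper: build $X$ from disks by a finite sequence of gluings, apply \thmref{t:excision-skein} at each step to express the result as a center $\cZ_{Z(P\times I)}(\cM)$, note that $Z(P\times I)$ is pivotal multifusion (as established inside the proof of \thmref{t:excision-skein}), and invoke \prpref{p:ZM_ss} to propagate finite semisimplicity. The paper phrases the induction as repeated self-gluings of a single copy of $I^n$ (e.g.\ square $\to$ annulus $\to$ torus) rather than attaching a fresh disk at each stage, but this is a cosmetic difference covered by the same excision theorem.
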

\begin{proof}
Any connected $X$ can be built from $I^n$
by a sequence of gluings of collared manifolds.
For example, for $n=2$,
gluing opposite edges of a square gives an annulus,
and gluing boundaries of the annulus together
gives the torus. 

By \thmref{t:excision-skein}, the corresponding category $Z(X)$ thus can be
constructed from the  Deligne product of several copies of  $Z(I^n) \simeq \cA$
by repeatedly applying the center construction, replacing a category $\cM$ by
$\cZ_{Z(P\times I)}(\cM)$. Since it was shown in the proof of
\thmref{t:excision-skein} that $Z(P\times I)$ is pivotal multifusion, it now follows
from \prpref{p:ZM_ss} that applying the center construction always gives a
finite semisimple category. Thus, $Z(X)$ is a finite semisimple category.

\end{proof}

\section{Examples and Computations}
\label{s:computations}

In this section, we present some examples and computations
using the results obtained so far.

\begin{example}
  $\ZTV(S^1) \simeq \cZ(\cA)$.
This follows from applying \thmref{t:excision-skein} to
$X' = (0,3)$, $X = S^1 = \RR/2\ZZ$
(see \xmpref{xmp:tv_circle}).
\hfill $\qed$

This example, is, of course, well known: see, e.g., \ocite{kirillov-stringnet}, \ocite{DSSP}.
\end{example}

\begin{example}
\label{xmp:annulus}
$\ZCY(\Ann) \simeq \cZ(\cA)$ as abelian categories,
where $\Ann = I \times S^1$ is the annulus.
Here we get $\Ann$ by gluing $I \times I$ to itself
in the vertical direction (see \xmpref{xmp:cy_annulus}).
The result follows from applying \thmref{t:excision-skein} to
$X' = I \times (0,3)$, $X = \Ann = I \times \RR/2\ZZ$,
with $P=I$.

Again, this result is not new: see, e.g., \ocite{BBJ1}.

Let us flesh out some details.
Define $\hA = \htr(\cA)$, where $\cA$ is an $\cA$-bimodule
by left, right multiplication.
\thmref{t:hat_excision} gives an equivalence
$\hA \simeq \hatZCY(\Ann)$, pictorially given by
the following figure on the left:
\begin{equation}
\label{e:hA_Ann}
%%% \hA stuff %%%%%%%%%%%%%%%%%%%%%%%%%%%%%%%%%%%%%%%%%%%
\begin{tikzpicture}
\begin{scope}[shift={(0,-1.5)}]
\node at (0,3) {$\hA$};
\node at (0,2) {$Y$};
\node at (0,0) {$Y'$};
\node[small_morphism] (ph) at (0,1) {$\vphi$};
\draw (ph) -- (0,0.2);
\draw (ph) -- (0,1.8);
\draw (ph) -- (-0.5,1.5) node[pos=1,above left] {$A$};
\draw (ph) -- (0.5,0.5) node[pos=1,below right] {$A$};
\end{scope}
\end{tikzpicture}
%%%%%%%%%%%%%%%%%%%%%%%%%%%%%%%
\begin{tikzpicture}
\begin{scope}[shift={(0,-1.5)}]
\node at (0,0) {$\mapsto$};
\node at (0,1) {$\mapsto$};
\node at (0,2) {$\mapsto$};
\node at (0,3) {$\longrightarrow$};
\end{scope}
\end{tikzpicture}
%%% Ann %%%%%%%%%%%%%%%%%%%%%%%%%%%%%%%%%%%%%%%%%%%%%%%%%
\begin{tikzpicture}
\begin{scope}[shift={(0,-1.5)}]
\node at (0,3) {$\hatZCY(\Ann)$};
%% bottom ellipse face
\draw (0,0) ellipse (1cm and 0.5cm);
%\draw (0,0) ellipse (0.5cm and 0.25cm); don't want to get covered
%% back half of graph
\draw (0.75,1) arc(0:180:0.75cm and 0.375cm);
%% inner vertical lines
\draw[overline] (-0.5,0) -- (-0.5,2);
\draw[overline] (0.5,0) -- (0.5,2);
\draw (0,0) ellipse (0.5cm and 0.25cm); %bottom inner ellipse
%% top ellipse face
\draw[overline] (0,2) ellipse (1cm and 0.5cm);
\draw (0,2) ellipse (0.5cm and 0.25cm);
%% vertical lines
\draw (-1,0) -- (-1,2);
\draw (1,0) -- (1,2);
%% graph
\node[dotnode, label={[shift={(-0.1,-0.04)}] $Y$}] (top) at (0.75,2) {};
\node[dotnode, label={[shift={(-0.08,-0.43)}] $Y'$}] (bottom) at (0.75,0) {};
\draw[overline, midarrow] (0.75,1) arc(0:-180:0.75cm and 0.375cm)
  node[pos=0.5, above] {\small $A$};
\draw (top) -- (bottom);
\node[dotnode, label={[shift={(0.15,-0.1)}] \tiny $\vphi$}] at (0.75,1) {};
\end{scope}
\end{tikzpicture}
%%%%%%%%%%%%%%%%%%%%%%%%%%%%%%%
\begin{tikzpicture}
\begin{scope}[shift={(0,-1.5)}]
\node at (0,0) {$\mapsto$};
\node at (0,1) {$\mapsto$};
\node at (0,2) {$\mapsto$};
\node at (0,3) {$\simeq$};
\end{scope}
\end{tikzpicture}
%%% Ann again %%%%%%%%%%%%%%%%%%%%%%%%%%%%%%%%%%%%%%%%%%%%%%%%
\begin{tikzpicture}
\begin{scope}[shift={(0,-1.5)}]
\node at (0,3) {$\hatZCY(\Ann)$};
%% bottom ellipse
\draw (0,0) ellipse (1cm and 0.5cm);
%\draw (0,0) ellipse (0.5cm and 0.25cm); don't want to get covered
%% back half of graph
\draw (0.75,1) arc(0:180:0.75cm and 0.375cm);
%% inner vertical lines
\draw[overline] (-0.5,0) -- (-0.5,2);
\draw[overline] (0.5,0) -- (0.5,2);
\draw (0,0) ellipse (0.5cm and 0.25cm); %bottom inner ellipse
%% top ellipse
\draw[overline] (0,2) ellipse (1cm and 0.5cm);
\draw (0,2) ellipse (0.5cm and 0.25cm);
%% vertical lines
\draw (-1,0) -- (-1,2);
\draw (1,0) -- (1,2);
%% graph
\node[dotnode, label={[shift={(0.2,-0.04)}] $Y$}] (top) at (-0.75,2) {};
\node[dotnode, label={[shift={(0.08,-0.43)}] $Y'$}] (bottom) at (-0.75,0) {};
\draw[overline, midarrow_rev] (0.75,1) arc(0:-180:0.75cm and 0.375cm)
  node[pos=0.5, above] {\small $A$};
\draw (top) -- (bottom);
\node[dotnode, label={[shift={(-0.15,-0.1)}] \tiny $\vphi$}] at (-0.75,1) {};
\end{scope}
\end{tikzpicture}
\end{equation}
Here the $A$ loop is given a trivial (e.g. always horizontal) framing.
It is clear from this picture that $\End_\hA(\one)$
is commutative.\\

By \prpref{prp:stacking},
$\Ann = S^1 \times I$ has a horizontal stacking operation
that, under the equivalence $\hA \simeq \hatZCY(\Ann)$ above,
is given by a map
$\ihom{\cA}{A_1}(Y_1,Y_1') \tnsr \ihom{\cA}{A_2}(Y_2,Y_2')
\to \ihom{\cA}{A_1 \tnsr A_2}(Y_1 \tnsr Y_2, Y_1' \tnsr Y_2)$
described as follows:
\begin{equation}
%%% \hA stuff %%%%%%%%%%%%%%%%%%%%%%%%%%%%%%%%%%%%%%%%%%%
\begin{tikzpicture}
\begin{scope}[shift={(0,-1.5)}]
\node at (0,3) {$\hA$};
\node at (0,2) {$Y_1$};
\node at (0,0) {$Y_1'$};
\node[small_morphism] (ph) at (0,1) {$\vphi_1$};
\draw (ph) -- (0,0.2);
\draw (ph) -- (0,1.8);
\draw (ph) -- (-0.5,1.5) node[pos=1,above left] {$A_1$};
\draw (ph) -- (0.5,0.5) node[pos=1,below right] {$A_1$};
\end{scope}
\end{tikzpicture}
%%%%%%%%%%%%%%%%%%%%%%%%%%%%%%%
\begin{tikzpicture}
\begin{scope}[shift={(0,-1.5)}]
\node at (0,1) {$\boxtimes$};
\node at (0,3) {$\boxtimes$};
\end{scope}
\end{tikzpicture}
%%% \hA stuff %%%%%%%%%%%%%%%%%%%%%%%%%%%%%%%%%%%%%%%%%%%
\begin{tikzpicture}
\begin{scope}[shift={(0,-1.5)}]
\node at (0,3) {$\hA$};
\node at (0,2) {$Y_2$};
\node at (0,0) {$Y_2'$};
\node[small_morphism] (ph) at (0,1) {$\vphi_2$};
\draw (ph) -- (0,0.2);
\draw (ph) -- (0,1.8);
\draw (ph) -- (-0.5,1.5) node[pos=1,above left] {$A_2$};
\draw (ph) -- (0.5,0.5) node[pos=1,below right] {$A_2$};
\end{scope}
\end{tikzpicture}
%%%%%%%%%%%%%%%%%%%%%%%%%%%%%%%
\begin{tikzpicture}
\begin{scope}[shift={(0,-1.5)}]
\node at (0,1) {$\mapsto$};
\node at (0,3) {$\longrightarrow$};
\end{scope}
\end{tikzpicture}
%%% \hA stuff %%%%%%%%%%%%%%%%%%%%%%%%%%%%%%%%%%%%%%%%%%%
\begin{tikzpicture}
\begin{scope}[shift={(0,-1.5)}]
\node at (0,3) {$\hA$};
\node at (0,2) {$Y_1 \tnsr Y_2$};
\node at (0,0) {$Y_1' \tnsr Y_2'$};
\node[small_morphism] (ph1) at (-0.2,0.9) {\tiny $\vphi_1$};
\node[small_morphism] (ph2) at (0.2,1.1) {\tiny $\vphi_2$};
\draw (ph2) -- (0.2,1.8);
\draw (ph2) -- (0.2,0.2);
\draw (ph2) -- +(150:1cm);
\draw (ph2) -- +(-30:0.8cm);
\draw[overline] (ph1) -- (-0.2,1.8);
\draw (ph1) -- (-0.2,0.2);
\draw (ph1) -- +(150:0.8cm);
\draw[overline] (ph1) -- +(-30:1cm);
\end{scope}
\end{tikzpicture}
\end{equation}
This stacking operation gives rise to the monoidal structure
that is defined in \prpref{prp:stacking},
where we take $P=S^1$.
\hfill $\qed$
\end{example}

\begin{remark}
Note that the stacking operation in \xmpref{xmp:annulus}
does \emph{not} result in the usual tensor product on the Drinfeld center
$\cZ(\cA)$ (the latter can be defined as the functor assigned to pair of pants
in Turaev-Viro theory). It is explored in more detail in \ocite{tham_reduced},
where the tensor product is defined
purely in terms of $\cA$ (i.e. without recourse to topology);
it is shown that the stacking tensor product is typically not spherical 
(but pivotal) and not fusion (but multifusion).
\label{rmk:ZA_stacking}
\end{remark}

Next we will be concerned with relating $\ZCY$ of a surface $\Sigma$
with that of a punctured one $\Sigma_0$, that is,
$\Sigma_0 = \Sigma \backslash \{p\}$.
We will think of $\Sigma$ as obtained from $\Sigma_0$ by
gluing with an open disk, ``sealing'' the puncture:
$\Sigma = \Sigma_0 \cup \DD^2$,
implicitly choosing some collared structure on $\Sigma_0$ and $\DD^2$.\\

Recall $\hA := \htr(\cA)$ from \xmpref{xmp:annulus}.
There is a right action of $\Hom_\hA(\one,\one)$ on the
morphisms of $\hatZCY(\Sigma_0)$, by
``pushing in'' from the puncture, i.e.
$\Hom_{\hatZCY(\Sigma_0)}(Y,Y') \tnsr \Hom_\hA(\one,\one)
\to \Hom_{\hatZCY(\Sigma_0)}(Y \ract \one, Y' \ract \one)
\cong \Hom_{\hatZCY(\Sigma_0)}(Y, Y')$.
It is easy to see that for $\Gamma \in \Hom_{\hatZCY(\Sigma_0)}(Y,Y')$
and $f,g \in \Hom_\hA(\one,\one)$,
$\Gamma \ract (f \circ g) = (\Gamma \ract f) \ract g$.
Moreover, for $\Gamma' \in \Hom_{\hatZCY(\Sigma_0)}(Y',Y'')$,
$(\Gamma' \circ \Gamma) \ract (f \circ g) =
(\Gamma' \ract f) \circ (\Gamma \ract g)$.\\

Let $\pi = \sum d_i/\cD \cdot \id_{X_i}
\in \bigoplus \ihom{\cA}{X_i}(\one,\one) = \Hom_\hA(\one,\one)$.
(Note: $\cD$ and simples $X_i$ are of $\cA$, and not of $\cZ(\cA)$.)
$\pi$ is an idempotent in $\Hom_\hA(\one,\one)$,
and hence also acts as an idempotent on $\Hom_{\hatZCY(\Sigma_0)}(Y,Y')$.

\begin{proposition}
\label{prp:punctured_glue}
Let $\Sigma_0 = \Sigma \backslash \{p\}$ as above.
Consider the category $\hB$ 
consisting of the same objects as $\hatZCY(\Sigma_0)$,
but morphisms given by
\[
  \Hom_\hB(Y,Y') = \im(\Hom_{\hatZCY(\Sigma_0)}(Y,Y') \rcirclearrowleft \pi)
\]
Then the restriction to $\hB$ of the inclusion functor
corresponding to $i: \Sigma_0 \hookrightarrow \Sigma$
is an equivalence:
\[
  i_*|_\hB : \hB \simeq \hatZCY(\Sigma)
\]
\end{proposition}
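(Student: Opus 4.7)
The plan is to use the excision theorem to reduce the proposition to an algebraic identification in which the Kirby color $\pi$ appears naturally. Write $\Sigma = \Sigma_0 \cup \DD^2$, with $\DD^2$ sealing the puncture along a collaring circle $S^1$, and apply \thmref{t:hat_excision}. Together with $\hatZCY(\DD^2) \simeq \cA$ and $\hZ(S^1 \times I) \simeq \hA$ (\xmpref{xmp:annulus}), this gives
\[
  \hatZCY(\Sigma) \simeq \hatZCY(\Sigma_0) \hatbox{\hA} \cA,
\]
with the inclusion functor $i_*$ corresponding to $Y \mapsto Y \boxtimes \one$ for $\one \in \cA$ the unit (no marked points inside the disk). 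Essential surjectivity of $i_*|_\hB$ is then immediate: any object of $\hatZCY(\Sigma)$ can be isotoped so all marked points lie in $\Sigma_0$, and hence comes from an object of $\hB$.

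For the morphism spaces, I apply \lemref{l:isom1} after passing to the pivotal multifusion Karoubi closure $\Kar(\hA)$ (permitted by \corref{cor:center}), yielding
\[
  \Hom_{\hatZCY(\Sigma_0) \hatbox{\hA} \cA}(Y \boxtimes \one, Y' \boxtimes \one)
    \simeq \bigoplus_{i} \Hom_{\hatZCY(\Sigma_0)}(Y, Y' \ract X_i) \otimes \Hom_\cA(X_i, \one),
\]
where $i$ ranges over simples of $\cA$. Geometrically, each summand is represented by a graph in $\Sigma_0 \times I$ whose $X_i$-colored edge exits the collar circle at the puncture, paired with an element of $\Hom_\cA(X_i, \one)$ arising from evaluating an $X_i$-loop around the puncture in the sealed disk. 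Summing over $i$ with the weights $d_i/\cD$ (which encode this disk-evaluation) produces precisely the action of the Kirby color $\pi$ on $\Hom_{\hatZCY(\Sigma_0)}(Y, Y')$, and the coend $\sim$-relation translates geometrically into the handle-slide invariance of the Kirby color. The resulting image is exactly $\Hom_\hB(Y, Y')$.

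The main obstacle is making this identification of horizontal-trace morphisms with the $\pi$-image rigorous. This requires a graphical computation closely paralleling the construction of the functor $G$ in the proof of \thmref{t:htr}, using the Appendix's dual-basis notation $\al$. The key geometric inputs are the RT-evaluation that an $X_i$-loop bounding a disk in $\Sigma$ equals $d_i$, combined with the normalization identity $\sum_i d_i^2/\cD = 1$, which ensures simultaneously that $\pi$ is idempotent and that $i_*(\pi\cdot f) = i_*(f)$ in $\hatZCY(\Sigma)$. With this in hand, $i_*|_\hB$ is full (any morphism in $\hatZCY(\Sigma)$ lifts to a graph isotoped into $\Sigma_0 \times I$, then projected by $\pi$) and faithful (anything annihilated by $i_*$ necessarily lies in the $(1-\pi)$-image, hence vanishes after projecting by $\pi$), completing the proof.
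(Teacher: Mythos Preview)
Your approach via excision is genuinely different from the paper's, which constructs an explicit inverse $j$ to $i_*$ by pushing graphs off the ``pole'' $p\times\clI$ and then acting by $\pi$, using the sliding lemma (\lemref{lem:sliding}) to show independence of the chosen isotopy and \prpref{prp:null_cover} to verify that $j$ kills null graphs. That argument is short and entirely topological; yours attempts to reduce everything to algebra in $\htr$.

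However, there is a genuine gap at the step where you invoke \lemref{l:isom1}. That lemma decomposes morphisms in $\htr_{\cC}(\cM)$ as a direct sum over $\Irr(\cC)$. Here $\cC = \Kar(\hA) \simeq \cZ(\cA)$, so the sum must run over simples of the \emph{Drinfeld center}, not over simples of $\cA$. Your displayed formula with ``$i$ ranges over simples of $\cA$'' does not follow. Worse, if one takes the formula at face value with $X_i\in\Irr(\cA)$, then $\Hom_\cA(X_i,\one)=0$ unless $X_i=\one$ (since $\cA$ is fusion), and the whole sum collapses to $\Hom_{\hatZCY(\Sigma_0)}(Y,Y')$; this would say $i_*$ is fully faithful on all of $\hatZCY(\Sigma_0)$, which is false in general (and would make the idempotent $\pi$ play no role).

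To repair the argument along these lines you would need to (i) correctly run the sum over $\Irr(\cZ(\cA))$, (ii) identify how $\one\in\hA$ sits inside $\Kar(\hA)$ --- under the functor $G$ of \thmref{t:htr} it is sent to $I(\one)$, not to the unit of $\cZ(\cA)$ --- and (iii) show that the resulting Hom space is the $\pi$-image. Step (iii) is essentially the statement that $\pi\in\End_\hA(\one)$ is, under $\Kar(\hA)\simeq\cZ(\cA)$, the projection $I(\one)\twoheadrightarrow\one_{\cZ(\cA)}\hookrightarrow I(\one)$; this is true but requires its own argument (compare \lemref{l:Mga_proj}). None of this is in your write-up, and the faithfulness claim (``anything annihilated by $i_*$ lies in the $(1-\pi)$-image'') is asserted without proof. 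The paper's direct construction avoids all of this.
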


\begin{proof}
First note that $\hB$ is indeed closed under composition
of morphisms because $\pi$ is idempotent.
It is clear that $i_*|_\hB$ is essentially surjective.
To prove fully faithfulness, consider two objects
$Y,Y' \in \hatZCY(\Sigma_0)$.
Abusing notation, we also denote $i_*(Y), i_*(Y')
\in \Obj \hatZCY(\Sigma_0)$ by $Y,Y'$.
We call the vertical segment
$p\times \clI \subset \Sigma \times \clI$
the \emph{pole},
so that $\Sigma_0 \times \clI = \Sigma \times \clI 
\backslash $pole.\\

We construct an inverse map to $i_*$.
Let $U$ be a small open neighborhood of $p$ in $\Sigma$,
and let $\cN = U \times \clI \subset \Sigma \times \clI$
be a small open neighborhood of the pole.
Choose $U$ small enough so that it does not contain any
marked points of $Y, Y'$.
Consider a graph $\Gamma \in \Graph(\Sigma \times \clI; Y^*,Y')$.
Define $j(\Gamma)$ as follows: if $\Gamma$ intersects the pole,
then use an isotopy supported in $\cN$
to push $\Gamma$ off of it, resulting in a new graph $\Gamma'$.
Now $\Gamma'$ can be considered a graph in
$\Graph(\Sigma_0 \times \clI; Y^*,Y')$.
Then we define $j(\Gamma) = \Gamma' \ract \pi$.\\

We need to check that $j$ is well-defined.
Firstly, the (linear combination of) graphs
$\Gamma' \ract \pi$ is independent of the choice of isotopy
- this follows from the sliding lemma (\lemref{lem:sliding}).
More generally, it means that for any isotopy $\vphi$ of
$\Sigma\times \clI$ supported on $\cN$,
$j(\Gamma) = j(\vphi(\Gamma))$.\\

Now we check that $j$ sends null graphs to 0.
Take the two set open cover $\{\cN, \Sigma_0\times \clI\}$
of $\Sigma \times \clI$, and apply \prpref{prp:null_cover}.
Let $\Gamma = \sum c_i\Gamma_i$ be null with respect to
some ball $D$.
If $D \subset \Sigma_0\times \clI$, clearly
$j(\Gamma)$ is null with respect to $D$.
If $D \subset \cN$, we may assume $D$ doesn't touch the
boundary (by choice of $U$),
so we can isotope it with some isotopy $\vphi$
supported on $\cN$ so that $\vphi(D)$ doesn't meet the pole.
Then clearly $j(\vphi(\Gamma))$ is null with respect to $\vphi(D)$.

Finally, it is easy to see that $j$ is inverse to $i_*$.
For example, $i_* \circ j$ amounts
to adding a trivial dashed circle, which is equivalent to 1
by \lemref{l:dashed_circle}.
\end{proof}

\begin{corollary}
  \label{cor:muger}
  $\ZCY(S^2) \simeq \ZMu(\cA)$,
  the M\"uger center of $\cA$,
  and in particular, when $\cA$ is modular,
  $\ZCY(S^2) \simeq \Vect$.
\end{corollary}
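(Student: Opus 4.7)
The plan is to apply \prpref{prp:punctured_glue} to the pair $(\Sigma, \Sigma_0) = (S^2, S^2 \setminus \{p\})$ and then identify the resulting category $\hB$ with $\ZMu(\cA)$. Since $\Sigma_0 \cong \R^2$, one first checks that $\hatZCY(\Sigma_0)$ is equivalent as a $\kk$-linear category to $\cA$: a configuration of framed colored points in $\R^2$ corresponds to the tensor product of its labels, and morphisms, represented by colored framed graphs in $\R^2 \times \clI \cong \R^3$, coincide via the Reshetikhin--Turaev evaluation with the usual morphisms in $\cA$ between such tensor products. Through this equivalence, $\hB$ has the same objects as $\cA$, with morphism spaces $\Hom_\hB(V, W) = \im\bigl(\Hom_\cA(V, W) \rcirclearrowleft \pi\bigr)$.

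The central step is to identify the right $\pi$-action with the M\"uger projection. Pushing $\pi = \sum_i (d_i/\cD)\id_{X_i}$ in from a collar of $p$ deposits a weighted Kirby-color loop $\omega/\cD = (1/\cD)\sum_i d_i X_i$ in $\Sigma_0 \times \clI$ around the puncture. In $\R^2$ such a loop can be isotoped out to a large loop surrounding any prescribed time-slice of a graph representing $f\colon V \to W$, and after sliding along strands one reduces the computation to the classical Kirby-color killing lemma: encirclement by $\omega/\cD$ of a simple strand $X_j$ acts as $\id_{X_j}$ when $X_j \in \ZMu(\cA)$ and as zero otherwise. This furnishes a natural idempotent endotransformation $\mu$ of $\id_\cA$ whose image on $V$ is the maximal M\"uger-central summand $V_\mu \subseteq V$; by naturality of $\mu$, the loop can then be slid onto either boundary, yielding $\pi\cdot f = \mu_W \circ f = f \circ \mu_V$.

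Granting this identification, one has $\Hom_\hB(V, W) = \mu_W \circ \Hom_\cA(V, W) = \Hom_\cA(V, W_\mu) = \Hom_{\ZMu(\cA)}(V_\mu, W_\mu)$, so the assignment $V \mapsto V_\mu$ defines a fully faithful, essentially surjective functor $\hB \to \ZMu(\cA)$ and is therefore an equivalence. Passing to Karoubi envelopes,
\[
  \ZCY(S^2) = \Kar(\hatZCY(S^2)) \simeq \Kar(\hB) \simeq \Kar(\ZMu(\cA)) \simeq \ZMu(\cA),
\]
the last equivalence holding because $\ZMu(\cA)$, being a finite semisimple abelian full subcategory of $\cA$, is already idempotent complete. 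When $\cA$ is modular the M\"uger center is $\Vect$ by definition of modularity, so $\ZCY(S^2) \simeq \Vect$ follows at once.

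The main obstacle I expect is verifying the geometric-to-algebraic identification in the second step: one has to carefully move the small loop at the puncture out across a generic time-slice of a representative graph and invoke the Kirby-color killing lemma (which is standard in premodular categories but is not recorded explicitly earlier in the paper), combined with naturality of $\mu$ to slide the encircling loop to either $V$ or $W$. Once this identification is in hand, the remaining categorical manipulations and the Karoubi-envelope step are essentially formal.
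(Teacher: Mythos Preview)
Your proposal is correct and follows essentially the same route as the paper: apply \prpref{prp:punctured_glue} with $\Sigma_0$ the punctured sphere $\cong \disk$, identify $\hatZCY(\disk)\simeq\cA$, recognize the $\pi$-action as encirclement by the Kirby-color loop, and invoke the killing lemma to see that only transparent simples survive. The paper is slightly more terse---it computes $\End_\hB(X_i)$ directly on simples and cites \ocite{muger}*{Corollary 2.14} for the killing statement---whereas you package the same content as a natural idempotent $\mu$ on $\id_\cA$; but this is a cosmetic difference, and your anticipated ``main obstacle'' is exactly the external input the paper imports from M\"uger.
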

\begin{proof}
Think of the disk $\disk$ as a punctured sphere,
so by \prpref{prp:punctured_glue}, we have that
$\hatZCY(S^2) \simeq \hB$,
where $\hB$ is the category
with the same objects as $\hatZCY(\disk) \simeq \cA$,
but morphisms are, for $A,A'\in \cA$,
\[
\Hom_\hB(A,A') =
\Bigg\{
\frac{1}{\cD}
\begin{tikzpicture}
\draw[regular] (-0.3,-0.1) to[out=90,in=90] (0.3,-0.1);
\draw[overline] (0,0.6) -- (0,-0.6);
\node[small_morphism] at (0,0.3) {\tiny $f$};
\draw[overline,regular] (-0.3,-0.1) to[out=-90,in=-90] (0.3,-0.1);
\end{tikzpicture}
\st f \in \Hom_\cA(A,A')
\Bigg\}
\]
In particular, when $A = A' = X_i$ a simple object,
it follows from \ocite{muger}*{Corollary 2.14} that
simple objects that are not transparent,
i.e. not in the M\"uger center,
are killed:
\[
\End_\hB(X_i) =
\begin{cases}
  \End_\cA(X_i) & \text{ if } X_i \in \ZMu(\cA)\\
  0 & \text{ otherwise}
\end{cases}
\]
It follows that $\hB$ coincides with the M\"uger center,
which is already abelian,
and so $\ZCY(S^2) \simeq \Kar(\hB) = \ZMu(\cA)$.
\end{proof}

\section{Crane-Yetter and the Elliptic Drinfeld Center}
\label{sec:elliptic}

In \ocite{tham_elliptic}, the second author constructed a category
similar to the Drinfeld center, but instead the objects
have two half-braidings that satisfy some compatibility.
In this section, we show that this category is
the category of boundary values on the once-punctured torus.\\

We note that all morphisms depicted using graphical calculus
are over $\cA$, but they may represent morphisms
in a different category. In particular,
dashed lines do not need an orientation
and in makes sense to use the circular $\al$
instead of the semicircular one.

For the reader's convenience, we recall the definition
and some properties of the elliptic Drinfeld center:

\begin{definition}
Let $\cA$ be a premodular category.
The category $\ZZA$ consists of objects of the form
$(A,\lambda^1,\lambda^2)$, where $\lambda^1,\lambda^2$
are half-braidings on $A$ that satisfy
\begin{align} \label{e:COMM}
\begin{tikzpicture}
\begin{scope} [shift={(0,-0.7)}]
%% all the vertical stuff
\node[small_morphism] (lmb1) at (0,0.4) {\tiny $\lmb^1$};
\node[small_morphism] (lmb2) at (0,1) {\tiny $\lmb^2$};
\draw (0,0) -- (lmb1)
  node[pos=0,below] {$A$};
\draw (lmb1) -- (lmb2);
\draw (lmb2) -- (0,1.4)
  node[pos=1,above] {$A$};
%% line through lmb1
\draw (lmb1) to[out=180,in=-90] (-1,1.4);
\draw (lmb1) to[out=0,in=90] (1,0);
%% line through lmb2, goes over lmb1's
\draw (lmb2) to[out=180,in=-90] (-0.5,1.4);
\draw[overline] (lmb2) to[out=0,in=90] (0.5,0);
\end{scope}
\end{tikzpicture}
=
\begin{tikzpicture}
\begin{scope} [shift={(0,-0.7)}]
%% all the vertical stuff
\node[small_morphism] (lmb1) at (0,1) {\tiny $\lmb^1$};
\node[small_morphism] (lmb2) at (0,0.4) {\tiny $\lmb^2$};
\draw (0,0) -- (lmb2)
  node[pos=0,below] {$A$};
\draw (lmb2) -- (lmb1);
\draw (lmb1) -- (0,1.4)
  node[pos=1,above] {$A$};
%% line through lmb2
\draw (lmb2) to[out=180,in=-90] (-0.5,1.4);
\draw (lmb2) to[out=0,in=90] (0.5,0);
%% line through lmb1, goes over lmb1's
\draw[overline] (lmb1) to[out=180,in=-90] (-1,1.4);
\draw (lmb1) to[out=0,in=90] (1,0);
\end{scope}
\end{tikzpicture}
\end{align}
We call the relation \eqnref{e:COMM} ``COMM''.
The morphisms $\Hom_\ZZA((A,\lmb^1,\lmb^2),(A',\mu^1,\mu^2)$
are morphisms of $\cA$ that intertwine both half-braidings,
i.e.
\[
  \Hom_\ZZA((A,\lmb^1,\lmb^2),(A',\mu^1,\mu^2))
    :=
  \Hom_{\ZA}((A,\lmb^1),(A',\mu^1))
  \cap \Hom_{\ZA}((A,\lmb^2),(A',\mu^2))
\]
\comment{
\[
  \Hom_\ZZA((A,\lmb^1,\lmb^2),(A',\mu^1,\mu^2)
    := \{\vphi \in \Hom_\cA(A,A') \st
        (\id_{A'}\tnsr \vphi) \circ \lmb^i = \mu^i \circ (\vphi \tnsr \id_A)\}
\]
}

\end{definition}

\begin{proposition}[\ocite{tham_elliptic}, Prop 3.4]
$\ZZA$ is a finite semisimple category.
\end{proposition}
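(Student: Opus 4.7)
The strategy is to realize $\ZZA$ as a relative Drinfeld center over $\cA$, so that the proposition follows from \prpref{p:ZM_ss}. Since $\cA$ is premodular, its braiding $\sigma$ yields a fully faithful pivotal tensor embedding $\iota \colon \cA \to \cZ(\cA)$, $X \mapsto (X, \sigma_{-, X})$, using the braiding itself as half-braiding. Tensoring with $\iota(X)$ in $\cZ(\cA)$ then equips $\cZ(\cA)$ with the structure of an $\cA$-bimodule category. I would next establish a natural equivalence
\[
  \ZZA \;\simeq\; \cZ_\cA(\cZ(\cA)),
\]
sending an object $(A, \lmb^1, \lmb^2) \in \ZZA$ to the pair $((A, \lmb^1), \gamma)$, where $(A, \lmb^1) \in \cZ(\cA)$ and $\gamma_X \colon \iota(X) \tnsr (A, \lmb^1) \to (A, \lmb^1) \tnsr \iota(X)$ is the morphism whose underlying $\cA$-map is $\lmb^2_X$. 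On morphisms, both sides are just $\cA$-morphisms intertwining the two half-braidings, so the identification on $\Hom$ spaces is the identity.

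Granted this equivalence, the proposition is immediate from \prpref{p:ZM_ss}: $\cA$ is pivotal multifusion, being premodular, and $\cZ(\cA)$, as the Drinfeld center of a spherical fusion category, is a finite semisimple abelian category with a manifestly biexact $\cA$-bimodule structure coming from the tensor product in $\cZ(\cA)$.

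The principal technical obstacle is the diagrammatic check that the $\cZ(\cA)$-morphism condition on $\gamma_X$ is equivalent to the COMM relation \eqref{e:COMM} between $\lmb^1$ and $\lmb^2$. One expands the tensor-product half-braiding formula
\[
  \lmb^{Y_1 \tnsr Y_2}_W = (\id_{Y_1} \tnsr \lmb^{Y_2}_W) \circ (\lmb^{Y_1}_W \tnsr \id_{Y_2})
\]
for $(Y_1, Y_2)$ taken as $(\iota(X), (A, \lmb^1))$ and as $((A, \lmb^1), \iota(X))$, and verifies that the resulting intertwining equation for $\gamma_X = \lmb^2_X$ --- with the half-braiding of $\iota(X)$ supplied by $\sigma$ --- is exactly the equality of the two pictures in \eqref{e:COMM}. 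The remaining checks (transport of the abelian and semisimple structure through the equivalence, the half-braiding axioms for $\gamma$, and functoriality) are formal and closely parallel the analogous discussion of $\cZ_\cC(\cM)$ in \seref{s:module}.
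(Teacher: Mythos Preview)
The paper does not give its own proof of this proposition; it simply cites \ocite{tham_elliptic}. Your approach therefore supplies an independent argument using the machinery already developed in \seref{s:module}, which is a genuine contribution rather than a reconstruction.

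Your strategy is sound: once the equivalence $\ZZA \simeq \cZ_\cA(\cZ(\cA))$ is established, \prpref{p:ZM_ss} applies verbatim since $\cA$ is pivotal (multi)fusion and $\cZ(\cA)$ is finite semisimple. The identification of objects and morphisms is correct as stated. The one point that deserves more than a sketch is the claim that the $\cZ(\cA)$-morphism condition on $\gamma_X = \lmb^2_X$ is \emph{exactly} the COMM relation \eqref{e:COMM}: the diagrams in \eqref{e:COMM} involve specific over/under crossings, and whether they match depends on which of the two canonical embeddings $\cA \hookrightarrow \cZ(\cA)$ you take (i.e., $(X,\sigma_{-,X})$ versus $(X,\sigma_{X,-}^{-1})$). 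You should write out the intertwining condition explicitly and compare it strand-by-strand with \eqref{e:COMM} under the paper's braiding convention; one of the two embeddings will match on the nose, the other will not.

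It is also worth noting that the paper's later results give an alternative route: \prpref{prp:zza_punctorus} identifies $\ZZA$ with $\ZCY(\punctorus) \simeq \Kar(\htr_\cA(\htr_\cA(\cA)))$, and iterating \thmref{t:htr} shows this is $\cZ_\cA(\cZ(\cA))$ for a topologically-induced bimodule structure. Your argument is the algebraic shadow of that computation, bypassing the skein-theoretic input entirely.
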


\begin{proposition}[\ocite{tham_elliptic}, Prop 3.5, Prop 3.8]
\label{prp:elliptic_adjoint}
The forgetful functor $\mathcal{F}^\text{el} : \ZZA \to \cA$ has a two-sided
adjoint $\Iel: \cA \to \ZZA$,
where on objects, $\Iel$ sends
\[
  A \mapsto (\dirsum_{i,j} X_iX_jAX_j^*X_i^*, \Gamma^1, \Gamma^2)
\]
where
\[
\Gamma^1 =
\begin{tikzpicture}
%% middle three vertical ones
\draw (0,-1) -- (0,1);
\draw (-0.3,-1) -- (-0.3,1);
\draw (0.3,-1) -- (0.3,1);
\node[small_morphism] (al1) at (-0.6,0) {\tiny $\albar$};
\draw (al1) -- (-0.6,-1);
\draw (al1) -- (-0.6,1);
\draw (al1) to[out=180,in=-90] (-1.2,1);
\node[small_morphism] (al2) at (0.6,0) {\tiny $\albar$};
\draw (al2) -- (0.6,-1);
\draw (al2) -- (0.6,1);
\draw (al2) to[out=0,in=90] (1.2,-1);
\end{tikzpicture}
,
\Gamma^2 =
\begin{tikzpicture}
%% middle and side vertical ones
\draw (0,-1) -- (0,1);
%\draw (-0.6,-1) -- (-0.6,1); %draw later under
\draw (0.6,-1) -- (0.6,1);
\node[small_morphism] (al1) at (-0.3,0) {\tiny $\albar$};
\draw (al1) -- (-0.3,-1);
\draw (al1) -- (-0.3,1);
\draw (al1) to[out=180,in=-90] (-1.2,1);
\draw[overline] (-0.6,-1) -- (-0.6,1);
\node[small_morphism] (al2) at (0.3,0) {\tiny $\albar$};
\draw (al2) -- (0.3,-1);
\draw (al2) -- (0.3,1);
\draw[overline] (al2) to[out=0,in=90] (1.2,-1);
\end{tikzpicture}
\]
where $\albar$ is defined in \lemref{l:halfbrd}.

On morphisms, $f\in \Hom_\cA(A,A')$,
\[
  \Iel(f) = \dirsum_{i,j} \id_{X_i X_j} \tnsr f \tnsr \id_{X_j^* X_i^*}
\]

We refer to \ocite{tham_elliptic} for the functorial isomorphisms
giving the adjunction.

Furthermore, $\Iel$ is dominant.
\end{proposition}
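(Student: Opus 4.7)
I would exploit the observation that $\Iel$ is a double application of the induction functor $I : \cA \to \cZ(\cA)$ of \thmref{t:center}: the underlying object $\bigoplus_{i,j} X_i X_j A X_j^* X_i^*$ of $\Iel(A)$ coincides with $I(I(A))$, and the two half-braidings $\Gamma^1, \Gamma^2$ correspond to the outer and inner layer of induction respectively. The proof then proceeds in three stages.

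Stage 1 (well-definedness). Each of $\Gamma^1, \Gamma^2$ being a half-braiding on its own is verified by an immediate repetition of the argument in \thmref{t:center}, as each is built from one layer of $\albar$-summations. The real content is the COMM relation \eqref{e:COMM}. I would verify COMM by a direct graphical computation: expand both sides of \eqref{e:COMM} as diagrams with two independent sums $\sum_i, \sum_j$ of $\albar$-couplings at different heights, and commute the two $\albar$-layers past one another using naturality of the braiding together with \lemref{l:summation}.

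Stage 2 (adjunction). I would write down the adjunction isomorphism
\[
\Hom_\ZZA((A_1, \lmb^1, \lmb^2), \Iel(A_2)) \simeq \Hom_\cA(A_1, A_2)
\]
by direct analogues of \eqref{e:adj_isom} and \eqref{e:adj_isom_2}: one direction opens a morphism $\ph \in \Hom_\cA(A_1, A_2)$ into a diagram decorated with two nested $\al$-coupons, one pair for each of the indices $i$ and $j$; the inverse closes a morphism $f \in \Hom_\ZZA$ successively against $\lmb^1$ and $\lmb^2$ (the composition being well-defined thanks to COMM). That these are mutually inverse reduces, via \lemref{l:summation}, to the single-layer calculation of \thmref{t:center} applied twice, first for $i$ and then for $j$. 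The other adjunction $\Hom_\cA(A_1, A_2) \simeq \Hom_\ZZA(\Iel(A_1), (A_2, \lmb^1, \lmb^2))$ is obtained symmetrically by rotating all diagrams.

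Stage 3 (dominance). Given $M = (A, \lmb^1, \lmb^2) \in \ZZA$, apply the two adjunctions of Stage 2 to $\id_A$ to produce a split embedding $M \injto \Iel(A)$ and a projection $\Iel(A) \to M$. Their composite is an endomorphism of $M$ in $\ZZA$; by an adjunction argument (parallel to \lemref{l:Mga_proj}) it equals a nonzero scalar times $\id_M$, giving the desired splitting of $M$ as a direct summand of $\Iel(A)$.

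\noindent\emph{Main obstacle.} The crux is verifying COMM in Stage 1. This is not a formal consequence of $\Iel \simeq I \circ I$, since the two inductions use \emph{opposite} crossing conventions (visible as the overlined strands in $\Gamma^2$ but not $\Gamma^1$). Checking COMM requires a genuine graphical argument using the premodular braiding of $\cA$ to slide the inner $\albar$-layer through the outer one; this is the topological heart of the statement, corresponding morally to the exchange of the two generating loops of the once-punctured torus.
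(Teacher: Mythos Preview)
The paper does not prove this proposition; it is quoted from \ocite{tham_elliptic} (Propositions~3.5 and~3.8 there), and the statement itself explicitly defers the adjunction isomorphisms to that reference. So there is no in-paper proof to compare your proposal against.

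That said, your outline is a sound reconstruction of how such an argument should go, and matches how the paper \emph{uses} this proposition (cf.\ the proof of \prpref{prp:zza_punctorus}, which treats $\Iel$ exactly as a double application of $I$ via the equivalence $\ZCY(\punctorus)\simeq\Kar(\htr_\cA(\htr_\cA(\cA)))$). Your identification of the COMM verification in Stage~1 as the only non-formal step is accurate: each $\Gamma^k$ individually being a half-braiding is immediate from \lemref{l:halfbrd}, and the dominance argument in Stage~3 is a straightforward two-layer analogue of \prpref{p:I_dominant} / \lemref{l:Mga_proj}.

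One small imprecision in Stage~2: COMM on $(A_1,\lmb^1,\lmb^2)$ is not what makes the closing-up map well-defined (that map is just a specific $\cA$-composition and makes sense regardless). Rather, COMM is needed in the \emph{other} direction, to check that the morphism $f\mapsto(\text{two nested }\al\text{-layers decorated with }\lmb^1,\lmb^2)$ actually intertwines $\Gamma^2$, whose overlined crossings through the outer layer are precisely where the commutation of $\lmb^1$ past $\lmb^2$ enters. This does not affect the overall correctness of your plan.
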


\begin{theorem}[\ocite{tham_elliptic}, Theorem 4.3]
\label{thm:modular}
When $\cA$ is modular,
there is an equivalence
\begin{align*}
\cA &\simeq \ZZA \\
A &\mapsto (\bigoplus_i X_i A X_i^*, \Gamma, \Omega)
\end{align*}
where $\Gamma$ is the half-braiding on $I(X)$ in \thmref{t:center},
and $\Omega = c_{X_i^*,-}^\inv \circ c_{-,A} \circ c_{-,X_i}$,
where $c_{-,-}$ is the braiding on $\cA$.
\end{theorem}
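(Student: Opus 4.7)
The plan is to construct the functor $F: \cA \to \ZZA$ explicitly by the stated formula, then verify in order: (i) well-definedness, (ii) fully faithfulness, (iii) essential surjectivity. Since both categories are finite semisimple ($\cA$ by assumption and $\ZZA$ by the quoted result of \ocite{tham_elliptic}), any fully faithful functor between them which hits a generating set (e.g.\ one preserving the number of simple objects, or one whose image is dominant) is automatically an equivalence.

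For step (i), I would first check that $\Omega$, defined on $X_i A X_i^*$ via the braiding $c$ of $\cA$, extends to a half-braiding by direct graphical calculation using the hexagon axioms, and is natural in $A$. The key verification is the COMM relation between $\Gamma$ and $\Omega$: writing both half-braidings as diagrams (with $\Gamma$ realized as the sum over dual bases $\al$ as in \firef{f:I(M)}, and $\Omega$ realized as an over/under-crossing using $c$), the COMM relation reduces to a sequence of hexagon moves and an application of \lemref{l:halfbrd}. Functoriality of $F$ on morphisms is immediate from functoriality of $I$ in \thmref{t:center}.

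For step (ii), the adjunction of \thmref{t:center} gives
\[
  \Hom_{\ZA}((I(A), \Gamma), (I(A'), \Gamma)) \simeq \Hom_\cA(A, I(A')) = \bigoplus_{j \in \Irr(\cA)} \Hom_\cA(A, X_j A' X_j^*).
\]
A morphism in $\Hom_\ZZA(F(A), F(A'))$ is an element of the left hand side which in addition intertwines $\Omega$. I would impose the $\Omega$-intertwining condition on each component $A \to X_j A' X_j^*$ and show, using the non-degeneracy of the braiding (i.e.\ modularity), that only the $j=0$ summand survives; that component gives exactly $\Hom_\cA(A, A')$. The argument is essentially the standard ``killing ring'' computation: post-composing with an $\Omega$-braiding wrapped around an arbitrary $Y\in\cA$ and averaging (or equivalently, taking traces against the $S$-matrix) forces the $X_j$-strand to carry a transparent object; in a modular category the only such simple is the unit.

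For step (iii), the cleanest route is to use that $\Iel$ is dominant (\prpref{prp:elliptic_adjoint}), so every simple object of $\ZZA$ sits as a summand of some $\Iel(B) = \bigoplus_{i,j} X_iX_j B X_j^* X_i^*$. As an object with only the $\Gamma$ half-braiding, $\Iel(B)$ coincides with $I\bigl(\bigoplus_j X_j B X_j^*\bigr)$, and a computation mirroring step (i) shows its second half-braiding is the $\Omega$ of $F\bigl(\bigoplus_j X_j B X_j^*\bigr)$; hence $\Iel(B)$ lies in the essential image of $F$ and, since $F$ is fully faithful between semisimple categories, so does each of its summands. I expect the main obstacle to be the modularity step in (ii): verifying cleanly that the $\Omega$-intertwining condition, when expressed diagrammatically, cuts $\Hom_\cA(A,I(A'))$ down to the $j=0$ summand. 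The remaining verifications reduce to bookkeeping of pivotal/braided diagrams and applications of \lemref{l:halfbrd} and \lemref{l:summation}.
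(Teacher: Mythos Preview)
The paper does not prove this theorem; it is stated without proof as a citation of \ocite{tham_elliptic}, Theorem~4.3, and then used as input to \thmref{thm:modular_module}. There is therefore no proof in the present paper to compare your proposal against.

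Your outline is nonetheless a reasonable sketch. Steps (i) and (ii) follow the expected pattern, and the killing-ring argument in (ii) is the standard way modularity enters. Step (iii) has a gap, however: you claim that $\Iel(B)$, equipped with its second half-braiding $\Gamma^2$ from \prpref{prp:elliptic_adjoint}, agrees with $F\bigl(\bigoplus_j X_j B X_j^*\bigr)$, whose second half-braiding is $\Omega$. But $\Gamma^2$ is built from dual-basis sums $\albar$, while $\Omega$ is built from the braiding $c$ of $\cA$; these are a priori different half-braidings on the same underlying object, and ``a computation mirroring step (i)'' does not produce an identification. You would need either an explicit isomorphism in $\ZZA$ between the two, or a separate argument for essential surjectivity --- for instance, a count showing $|\Irr(\ZZA)| \le |\Irr(\cA)|$, which together with full faithfulness of $F$ between finite semisimple categories would finish the job.
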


\begin{proposition}
\label{prp:zza_punctorus}
Let $\punctorus$ be the once-punctured torus.
There is an equivalence
\[
  \ZCY(\punctorus) \cong \ZZA
\]
Under this equivalence, the inclusion functor
$\cA \simeq \ZCY(\DD^2) \to \ZCY(\punctorus)$
is identified with $\Iel: \cA \to \ZZA$.
\end{proposition}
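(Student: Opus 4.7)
The plan is to apply \thmref{t:excision-skein} twice, following the standard presentation of the punctured torus as obtained from a disk by two pairs of boundary identifications. Topologically, cutting $\punctorus$ along a non-separating simple closed curve (representing the generator $a$ of $\pi_1(\punctorus) = F_2$) yields the pair of pants $P_3$, and cutting $P_3$ further along an arc connecting two of its three boundary circles yields an annulus $\Ann$. Thus $\punctorus$ is obtained from $\Ann$ by two successive regluings: an arc-regluing producing $P_3$, and a circle-regluing producing $\punctorus$.

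Applying excision to the arc-regluing (with $P = (0,1)$, so $\ZCY(P \times I) \simeq \cA$), combined with the identification $\hatZCY(\Ann) \simeq \hA = \htr_\cA(\cA)$ from \xmpref{xmp:annulus}, gives $\hatZCY(P_3) \simeq \htr_\cA(\hA)$, where the $\cA$-bimodule structure on $\hA$ comes from the two sides of the reglued arc. Applying excision again to the circle-regluing (with $P = S^1$, so $\ZCY(P \times I) \simeq \hA$) yields $\hatZCY(\punctorus) \simeq \htr_\hA(\htr_\cA(\hA))$, the outer bimodule structure coming from the two reglued boundary circles of $P_3$. After taking Karoubi envelopes and applying \corref{cor:center}, this gives $\ZCY(\punctorus) \simeq \Kar(\htr_\hA(\htr_\cA(\hA)))$. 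Following the graphical calculus used in \xmpref{xmp:annulus}, morphisms $A \to A'$ in this iterated trace can be written explicitly as direct sums $\bigoplus_{X,Y\in \Irr(\cA)} \Hom_\cA(A \otimes X \otimes Y, X \otimes Y \otimes A')$, modulo the $\cA$-bilinearity relations inherited from both traces together with the 3D skein relations in $\punctorus \times I$.

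Upon Karoubi-completion, objects become triples $(A, \lambda^1, \lambda^2)$ with two half-braidings arising from the two layers of trace, geometrically corresponding to strands winding around the generators $a, b$ of $\pi_1(\punctorus)$; morphisms intertwine both half-braidings. The COMM relation \eqnref{e:COMM} emerges from the 3D isotopy in $\punctorus \times I$ that slides an $a$-strand past a $b$-strand with the specific over/under crossings dictated by the framing. This identifies $\ZCY(\punctorus)$ with $\ZZA$. For the second part of the statement, the inclusion $\cA \simeq \ZCY(\DD^2) \to \ZCY(\punctorus)$ factors as $\cA \to \hA \to \htr_\hA(\htr_\cA(\hA)) \hookrightarrow \Kar(\cdots) \simeq \ZZA$, and by \prpref{p:I_dominant} applied to each horizontal trace, this composition is the two-fold induction functor; its explicit formula matches $\Iel$ from \prpref{prp:elliptic_adjoint}, with the induced half-braidings coinciding with $\Gamma^1$ and $\Gamma^2$.

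The main obstacle will be extracting the COMM relation from the topology of $\punctorus \times I$: I must verify that the specific 3D isotopy allowing one winding loop to slide past another produces exactly the braided-commutativity \eqnref{e:COMM}, with correct framings and crossings. A subsidiary obstacle is tracking the $\hA$-bimodule structure on $\htr_\cA(\hA)$ coming from the two reglued boundary circles of $P_3$ under the equivalence $\hatZCY(P_3) \simeq \htr_\cA(\hA)$, and verifying that the outer horizontal trace built from it yields precisely $\ZZA$ rather than some other iterated center.
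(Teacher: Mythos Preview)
Your decomposition is genuinely different from the paper's, and the difference creates real work that you have not done. The paper presents $\punctorus$ as a disk (the `+' shape) with \emph{two interval gluings}: first glue sides $1,3$ to get $\Ann$, then glue sides $2,4$ to get $\punctorus$. Both gluings have $P=I$, so excision gives $\hatZCY(\punctorus)\simeq\htr_\cA(\htr_\cA(\cA))$ with both traces over $\cA$. Your decomposition instead has the outer gluing along a \emph{circle} ($P=S^1$), so your outer trace is over $\hA\simeq\hatZCY(\Ann)$ with its stacking monoidal structure --- a multifusion category whose simples are not those of $\cA$. Your formula $\bigoplus_{X,Y\in\Irr(\cA)}\Hom_\cA(\ldots)$ is therefore not what $\htr_\hA$ produces; you would need to sum over simples of $\ZCY(\Ann)$, and you never analyze that category's monoidal structure. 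The paper's decomposition avoids this entirely.

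The more serious gap is the passage from the iterated trace to $\ZZA$. Karoubi completion does not make objects ``become triples $(A,\lambda^1,\lambda^2)$''; objects in $\Kar(\htr_\cA(\htr_\cA(\cA)))$ are pairs $(A,p)$ with $p$ idempotent. The paper instead builds an explicit functor $\htr_\cA(\htr_\cA(\cA))\to\ZZA$ sending $A\mapsto\Iel(A)$ and a morphism $\vphi\in\Hom_\cA(B_1B_2A,A'B_2B_1)$ to the morphism \eqref{eqn:punctorus_elliptic} assembled from $\albar$'s. Full faithfulness is then a one-line computation: $\Hom_{\htr(\htr(\cA))}(A,A')\cong\bigoplus_{i_1,i_2}\Hom_\cA(X_{i_1}X_{i_2}A,A'X_{i_2}X_{i_1})$ by \lemref{l:isom1} twice, which equals $\Hom_\cA(\Iel(A),A')\cong\Hom_\ZZA(\Iel(A),\Iel(A'))$ by the adjunction in \prpref{prp:elliptic_adjoint}. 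Dominance of $\Iel$ finishes it. The COMM relation never needs to be extracted from an isotopy: it is satisfied by $\Iel(A)$ by construction, and the functor lands in $\ZZA$ automatically. Your proposal to derive COMM from a 3D isotopy is the right geometric picture, but it is not how the argument is organized, and you have not carried it out.
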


\begin{proof}
Think of the once-punctured torus as an open disk,
drawn like a `+' sign, with opposite sides identified
($\Ann = S^1 \times I$):
\[
%% disk
%% all lines are drawns in clockwise order, beginning
%% with the top '2' edge
\begin{tikzpicture}
\node at (1,1) {$\DD^2$};
%\draw (0.5,2) -- (1.5,2) node[pos=0.5,above] {2};
% gray rectangle labelled 2
\draw[fill=gray!30] (0.5,2) rectangle (1.5,2.5);
\draw[fill=gray!30,draw=none] (0.51,1.9) rectangle (1.49,2.1);
\node at (1,2.25) {2};
\draw (1.5,2) to[out=-90,in=180] (2,1.5);
%\draw (2,1.5) -- (2,0.5) node[pos=0.5,right] {3};
% gray rectangle labelled 3
\draw[fill=gray!30] (2,1.5) rectangle (2.5,0.5);
\draw[fill=gray!30,draw=none] (1.9,1.49) rectangle (2.1,0.51);
\node at (2.25,1) {3};
\draw (2,0.5) to[out=180,in=90] (1.5,0);
%\draw (1.5,0) -- (0.5,0) node[pos=0.5,below] {4};
% gray rectangle labelled 4
\draw[fill=gray!30] (1.5,0) rectangle (0.5,-0.5);
\draw[fill=gray!30,draw=none] (1.49,0.1) rectangle (0.51,-0.1);
\node at (1,-0.25) {4};
\draw (0.5,0) to[out=90,in=0] (0,0.5);
%\draw (0,0.5) -- (0,1.5) node[pos=0.5,left] {1};
% gray rectangle labelled 4
\draw[fill=gray!30] (0,0.5) rectangle (-0.5,1.5);
\draw[fill=gray!30,draw=none] (0.1,0.51) rectangle (-0.1,1.49);
\node at (-0.25,1) {1};
\draw (0,1.5) to[out=0,in=-90] (0.5,2);
\end{tikzpicture}
%% arrow
\begin{tikzpicture}
  \draw[->] (0,1) -- (1,1) node[pos=0.5,above] {glue 1,3};
\end{tikzpicture}
%% disk after gluing edges 1 and 3
\begin{tikzpicture}
\node at (1,1) {$\Ann$};
%\draw (0.5,2) -- (1.5,2) node[pos=0.5,above] {2};
% gray rectangle labelled 2
\draw[fill=gray!30] (0.5,2) rectangle (1.5,2.5);
\draw[fill=gray!30,draw=none] (0.51,1.9) rectangle (1.49,2.1);
\node at (1,2.25) {2};
\draw (1.5,2) to[out=-90,in=180] (2,1.5);
\draw[regular] (2,1.5) -- (2.5,1.5);
\draw[regular] (2,0.5) -- (2.5,0.5);
\draw (2,0.5) to[out=180,in=90] (1.5,0);
%\draw (1.5,0) -- (0.5,0) node[pos=0.5,below] {4};
% gray rectangle labelled 4
\draw[fill=gray!30] (1.5,0) rectangle (0.5,-0.5);
\draw[fill=gray!30,draw=none] (1.49,0.1) rectangle (0.51,-0.1);
\node at (1,-0.25) {4};
\draw (0.5,0) to[out=90,in=0] (0,0.5);
\draw[regular] (0,0.5) -- (-0.5,0.5);
\draw[regular] (0,1.5) -- (-0.5,1.5);
\draw (0,1.5) to[out=0,in=-90] (0.5,2);
\end{tikzpicture}
%% arrow
\begin{tikzpicture}
  \draw[->] (0,1) -- (1,1) node[pos=0.5,above] {glue 2,4};
\end{tikzpicture}
%% disk after gluing both pairs
\begin{tikzpicture}
\node at (1,1) {$A$};
% top right
\draw[regular] (1.5,2) -- (1.5,2.5);
\draw (1.5,2) to[out=-90,in=180] (2,1.5);
\draw[regular] (2,1.5) -- (2.5,1.5);
% bottom right
\draw[regular] (2,0.5) -- (2.5,0.5);
\draw (2,0.5) to[out=180,in=90] (1.5,0);
\draw[regular] (1.5,0) -- (1.5,-0.5);
% bottom left
\draw[regular] (0.5,0) -- (0.5,-0.5);
\draw (0.5,0) to[out=90,in=0] (0,0.5);
\draw[regular] (0,0.5) -- (-0.5,0.5);
% top left
\draw[regular] (0,1.5) -- (-0.5,1.5);
\draw (0,1.5) to[out=0,in=-90] (0.5,2);
\draw[regular] (0.5,2) -- (0.5,2.5);
\end{tikzpicture}
\]

The left most figure shows how $\ZCY(\DD^2) \simeq \cA$
is a module category over $\ZCY(I\times I) \simeq \cA$
in four ways;
we think of the 1,2 edges as acting on the left,
3,4 edges as acting on the right.
The actions are just usual left and right multiplication.

By \thmref{t:hat_excision},
the first ``glue 1,3" arrow induces an equivalence
$\hatZCY(\Ann) \simeq \htr_{\hatZCY(I\times I)}(\hatZCY(\DD^2))
\simeq \htr_{\cA}(\cA)$
(see also \xmpref{xmp:annulus}).
Again by \thmref{t:hat_excision},
the second ``glue 2,4" arrow induces an equivalence
$\hatZCY(\punctorus) \simeq
\htr_{\hatZCY(I\times I)}(\hatZCY(\Ann))
\simeq \htr_{\cA}(\htr_{\cA}(\cA))$.

Let us give a more explicit description of the last equivalence.
For $A,A' \in \Obj \cA$,
\[
  \Hom_{\htr(\htr(\cA))}(A,A')
  \cong \int^{B_2} \Hom_{\htr(\cA)}(B_2 \tnsr A, A' \tnsr B_2)
  \cong \int^{B_2} \int^{B_1}
      \Hom_\cA(B_1 \tnsr B_2 \tnsr A, A' \tnsr B_2 \tnsr B_1)
\]
Under the equivalence, a morphism
$\vphi \in \Hom_\cA(B_1 \tnsr B_2 \tnsr A, A' \tnsr B_2 \tnsr B_1)$,
shown on the left in the figure below,
is sent to the graph
in $\punctorus \times [0,1]$ shown on the right:
\begin{equation}
\label{eqn:morphism_punctorus}
\begin{tikzpicture}
\begin{scope}[shift={(0,-1.5)}]
\node[morphism] (ph) at (0,1.5) {$\vphi$};
\draw (ph) -- +(90:1cm) node[pos=1,above] {$A$};
\draw (ph) -- +(-90:1cm) node[pos=1,below] {$A'$};
\draw[midarrow_rev] (ph) -- +(-150:1cm)
  node[pos=1,below left] {1}
  node[pos=0.5,below] {\tiny $B_1$};
\draw[midarrow_rev] (ph) -- +(150:1cm)
  node[pos=1,above left] {2}
  node[pos=0.5,above] {\tiny $B_2$};
\draw[midarrow] (ph) -- +(30:1cm)
  node[pos=1,above right] {3}
  node[pos=0.5,above] {\tiny $B_1$};
\draw[midarrow] (ph) -- +(-30:1cm)
  node[pos=1,below right] {4}
  node[pos=0.5,below] {\tiny $B_2$};
%\draw[postaction={decoration}] (ph) -- +(-40:3cm);
\end{scope}
\end{tikzpicture}
%%%%%%%%%%%%%%%%
\mapsto
%%%%%%%%%%%%%%%%
\begin{tikzpicture}
\begin{scope}[shift={(0,-1.5)}]
%% top face; drawing curves in clockwise order,
%% starting from top left
\draw (-0.2,2.75) to[out=0,in=-60] (0.3,3);
\draw (1.3,3) to[out=-60,in=180] (2,2.75);
\draw (2.2,2.25) to[out=180,in=120] (1.7,2);
\draw (0.7,2) to[out=120,in=0] (0,2.25);
%% bottom face; drawing curves in clockwise order,
%% starting from top left
\draw (-0.2,0.75) to[out=0,in=-60] (0.3,1);
\draw (1.3,1) to[out=-60,in=180] (2,0.75);
\draw (2.2,0.25) to[out=180,in=120] (1.7,0);
\draw (0.7,0) to[out=120,in=0] (0,0.25);
%% vertical lines
%% drawn going in order of appearance in above
\draw[dotted] (-0.2,0.75) -- (-0.2,2.75);
\draw[dotted] (0.3,1) -- (0.3,3);
\draw[dotted] (1.3,1) -- (1.3,3);
\draw[dotted] (2,0.75) -- (2,2.75);
\draw[dotted] (2.2,0.25) -- (2.2,2.25);
\draw[dotted] (1.7,0) -- (1.7,2);
\draw[dotted] (0.7,0) -- (0.7,2);
\draw[dotted] (0,0.25) -- (0,2.25);
%% the graph
\node[small_morphism] (ph) at (1,1.5) {\tiny $\vphi$};
\node[dotnode] (top) at (1,2.5) {};
\node[dotnode] (bottom) at (1,0.5) {};
\draw (ph) -- (top) node[pos=1,above] {$A^*$};
\draw (ph) -- (bottom) node[pos=1,below] {$A'$};
\draw[midarrow_rev] (ph) -- (-0.1,1.5)
  node[pos=1,left] {\small 1}
  node[pos=0.4,above] {\tiny $B_1$};
\draw (ph) -- (0.8,2)
  node[pos=1,above] {\small 2};
\draw (ph) -- (2.1,1.5)
  node[pos=1,right] {\small 3};
\draw[midarrow={0.6}] (ph) -- (1.2,1)
  node[pos=1,below] {\small 4};
\node at (1.3, 1.3) {\tiny $B_2$};
\end{scope}
\end{tikzpicture}
\end{equation}

Now we define a functor $\htr_\cA(\htr_\cA(\cA)) \to \ZZA$.
On objects, it sends $A \mapsto \Iel(A)$.
On morphisms, the morphism in \eqnref{eqn:morphism_punctorus}
is sent to

\begin{equation}
\label{eqn:punctorus_elliptic}
\begin{tikzpicture}
\node[small_morphism] (ph) at (0,0) {$\vphi$};
\draw (ph) -- +(90:1.2cm) node[pos=1,above] {$A$};
\draw (ph) -- +(-90:1.2cm) node[pos=1,below] {$A'$};
% left alpha
\node[small_morphism] (al1) at (-0.6,0.5) {$\albar$};
\draw[midarrow_rev] (ph) -- (al1)
  node[pos=0.4,above] {\tiny $B_2$};
\draw[midarrow_rev=0.7] (al1) -- (-0.6,1.2);
\draw[midarrow=0.9] (al1) -- (-0.6,-1.2);
\node at (-0.8,1) {\tiny $j$};
\node at (-0.8,-0.9) {\tiny $j'$};
% left beta
\node[small_morphism] (be1) at (-1.2,-0.5) {\small $\bar{\beta}$};
\draw[midarrow_rev={0.3}, overline] (ph) -- (be1)
  node[pos=0.2,below] {\tiny $B_1$};
\draw[midarrow_rev=0.9] (be1) -- (-1.2,1.2);
\draw[midarrow={0.8}] (be1) -- (-1.2,-1.2);
\node at (-1.4,1) {\tiny $i$};
\node at (-1.4,-1) {\tiny $i'$};
% right beta
\node[small_morphism] (be2) at (1.2,0.5) {\small $\bar{\beta}$};
\draw (ph) -- (be2);
\draw[midarrow=0.8] (be2) -- (1.2,1.2);
\draw[midarrow_rev=0.9] (be2) -- (1.2,-1.2);
\node at (1.4,1) {\tiny $i$};
\node at (1.4,-1) {\tiny $i'$};
% right alpha
\node[small_morphism] (al2) at (0.6,-0.5) {$\albar$};
\draw (ph) -- (al2);
\draw[midarrow=0.9, overline] (al2) -- (0.6,1.2);
\draw[midarrow_rev={0.7}] (al2) -- (0.6,-1.2);
\node at (0.8,1) {\tiny $j$};
\node at (0.8,-0.9) {\tiny $j'$};
\end{tikzpicture}
\end{equation}

It is clear that this assignment respects composition of morphisms.
The following sequence of isomorphisms shows that
this functor is fully faithful:

\begin{align*}
  \Hom_{\htr(\htr(\cA))}(A,A')
  &\cong \bigoplus_{i_1,i_2 \in \Irr(\cA)}
    \Hom_\cA(X_{i_1} \tnsr X_{i_2} \tnsr A, A' \tnsr X_{i_2} \tnsr X_{i_1})
    \; (\text{by \lemref{l:isom1}})\\
  &\cong \Hom_\cA(X_{i_1} \tnsr X_{i_2} \tnsr A \tnsr X_{i_2}^* \tnsr X_{i_1}^*, A') \\
  &\cong \Hom_{\ZZA}(\Iel(A),\Iel(A'))
    \; (\text{by \prpref{prp:elliptic_adjoint}})
\end{align*}
Since $\ZZA$ is abelian,
we have that the extension to the Karoubi envelope is an equivalence:
\[
  \ZCY(\punctorus) \simeq \Kar(\htr_\cA(\htr_\cA(\cA))) \simeq \ZZA
\]
and we are done. But before we end the proof,
we provide an explicit inverse functor that will be useful later:
on objects,
\begin{equation}
\label{e:P_elliptic}
(A,\lmb^1,\lmb^2) \mapsto
\im(P_{(A,\lmb^1,\lmb^2)})
\text{, where }
P_{(A,\lmb^1,\lmb^2)} :=
\frac{1}{\cD^2}
%%%%%%%%
\begin{tikzpicture}
\begin{scope}[shift={(0,-1)}]
\node[small_morphism] (lmb1) at (0,1.3) {\tiny $\lmb^1$};
\node[small_morphism] (lmb2) at (0,0.7) {\tiny $\lmb^2$};
\draw (lmb1) -- (0,2) node[pos=1,above] {$A$};
\draw (lmb1) -- (lmb2);
\draw (lmb2) -- (0,0) node[pos=1,below] {$A$};
\draw[regular] (lmb2) -- +(150:1.5cm) node[pos=1,above left] {2};
\draw[regular] (lmb2) -- +(-30:0.6cm) node[pos=1,below right] {4};
\draw[thick_overline,regular] (lmb1) -- +(-150:1.5cm) node[pos=1,below left] {1};
\draw[regular] (lmb1) -- +(30:0.6cm) node[pos=1,above right] {3};
\end{scope}
\end{tikzpicture}
%%%%%
=
\frac{1}{\cD^2}
%%%%%
\begin{tikzpicture}
\begin{scope}[shift={(0,-1)}]
\node[small_morphism] (lmb1) at (0,0.7) {\tiny $\lmb^1$};
\node[small_morphism] (lmb2) at (0,1.3) {\tiny $\lmb^2$};
\draw (lmb2) -- (0,2) node[pos=1,above] {$A$};
\draw (lmb2) -- (lmb1);
\draw (lmb1) -- (0,0) node[pos=1,below] {$A$};
\draw[regular] (lmb1) -- +(-150:0.6cm) node[pos=1,below left] {1};
\draw[regular] (lmb1) -- +(30:1.5cm) node[pos=1,above right] {3};
\draw[regular] (lmb2) -- +(150:0.6cm) node[pos=1,above left] {2};
\draw[thick_overline,regular] (lmb2) -- +(-30:1.5cm) node[pos=1,below right] {4};
\end{scope}
\end{tikzpicture}
\end{equation}
where the equality of diagrams follows from the COMM requirement
\eqnref{e:COMM},
and the dashed line represents a weighted sum over simples
(see Appendix).
On morphisms,
\[
\Hom_\ZZA((A,\lmb^1,\lmb^2),(A',\mu^1,\mu^2)) \ni f
  \mapsto P_{(A',\mu^1,\mu^2)} \circ f \circ P_{(A,\lmb^1,\lmb^2)}
\]

Thus we have a 2-commutative diagram
\begin{equation}
\label{e:2comm}
\begin{tikzcd}
\cA \ar[r,"\Iel"] \ar[d,"\simeq"]
& \ZZA \ar[d,"\simeq"]
\\
\ZCY(\disk) \ar[r,"\text{incl.}_*"]
& \ZCY(\punctorus)
\end{tikzcd}
\end{equation}
\end{proof}

In particular, when $\cA$ is modular,
we have an equivalence 
$\text{incl.}_* : \ZCY(\disk) \xrightarrow{\simeq} \ZCY(\punctorus)$.
Our next task is to upgrade this equivalence to an equivalence
of left $\ZCY(\Ann)$-modules.
In the rightmost figure in \eqnref{e:ell_obj}, the gray area
is a collar neighborhood of the puncture of $\punctorus$.
By \prpref{p:collared_module},
there is a (left) $\hatZCY(\Ann)$-module structure
on $\hatZCY(\punctorus)$: on objects,

\begin{equation}
\label{e:ell_obj}
\begin{tikzpicture}
\draw (0,0) circle (1cm);
\draw (0,0) circle (0.7cm);
\node[dotnode] at (0.6,0.6) {};
\node at (0.9,0.8) {$C$};
\end{tikzpicture}
%%%%%%%%%%%%%%%%%%%%%%%%%%%%%%%%%%%%%%%%%%%%%%%%%
\boxtimes
%%%%%%%%%%%%%%%%%%%%%%%%%%%%%%%%%%%%%%%%%%%%%%%%%
\begin{tikzpicture}
\begin{scope}[shift={(0,-1)}]
\node[dotnode] at (1,1) {};
\node at (1.2,1) {$A$};
% top right
\draw[regular] (1.5,2) -- (1.5,2.5);
\draw (1.5,2) to[out=-90,in=180] (2,1.5);
\draw[regular] (2,1.5) -- (2.5,1.5);
% bottom right
\draw[regular] (2,0.5) -- (2.5,0.5);
\draw (2,0.5) to[out=180,in=90] (1.5,0);
\draw[regular] (1.5,0) -- (1.5,-0.5);
% bottom left
\draw[regular] (0.5,0) -- (0.5,-0.5);
\draw (0.5,0) to[out=90,in=0] (0,0.5);
\draw[regular] (0,0.5) -- (-0.5,0.5);
% top left
\draw[regular] (0,1.5) -- (-0.5,1.5);
\draw (0,1.5) to[out=0,in=-90] (0.5,2);
\draw[regular] (0.5,2) -- (0.5,2.5);
\end{scope}
\end{tikzpicture}
%%%%%%%%%%%%%%%%%%%%%%%%%%%%%%%%%%%%%%%%%%%%%%%%%
\mapsto
%%%%%%%%%%%%%%%%%%%%%%%%%%%%%%%%%%%%%%%%%%%%%%%%%
\begin{tikzpicture}
\begin{scope}[shift={(0,-1)}]
\node[dotnode] at (1,1) {};
\node at (1.2,1) {$A$};
%% note: changed 2 to 1.99 because it leaves weird gap
% top right
\draw[really_thick] (1.35,1.99) -- (1.35,2.5);
\draw[regular] (1.5,2) -- (1.5,2.5);
\draw[really_thick] (1.35,2) to[out=-90,in=180] (2,1.35);
\draw (1.5,2) to[out=-90,in=180] (2,1.5);
\draw[really_thick] (1.99,1.35) -- (2.5,1.35);
\draw[regular] (2,1.5) -- (2.5,1.5);
% bottom right
\draw[really_thick] (1.99,0.65) -- (2.5,0.65);
\draw[regular] (2,0.5) -- (2.5,0.5);
\draw[really_thick] (2,0.65) to[out=180,in=90] (1.35,0);
\draw (2,0.5) to[out=180,in=90] (1.5,0);
\draw[really_thick] (1.35,0.01) -- (1.35,-0.5);
\draw[regular] (1.5,0) -- (1.5,-0.5);
% bottom left
\draw[really_thick] (0.65,0.01) -- (0.65,-0.5);
\draw[regular] (0.5,0) -- (0.5,-0.5);
\draw[really_thick] (0.65,0) to[out=90,in=0] (0,0.65);
\draw (0.5,0) to[out=90,in=0] (0,0.5);
\draw[really_thick] (0.01,0.65) -- (-0.5,0.65);
\draw[regular] (0,0.5) -- (-0.5,0.5);
% top left
\draw[really_thick] (0.01,1.35) -- (-0.5,1.35);
\draw[regular] (0,1.5) -- (-0.5,1.5);
\draw[really_thick] (0,1.35) to[out=0,in=-90] (0.65,2);
\draw (0,1.5) to[out=0,in=-90] (0.5,2);
\draw[really_thick] (0.65,1.99) -- (0.65,2.5);
\draw[regular] (0.5,2) -- (0.5,2.5);
% dotnode C
\node[dotnode] at (0.45,0.45) {};
\node at (0.65,0.45) {$C$};
\end{scope}
\end{tikzpicture}
\end{equation}
while on morphisms, the module structure,
employing the equivalences of \eqnref{eqn:morphism_punctorus}
and \eqnref{e:hA_Ann}, is given as follows:

\begin{equation}
%% morphism in Ann
\begin{tikzpicture}
\begin{scope}[shift={(0,-1.5)}]
\node at (0,3) {$\Hom_{\hatZCY(\Ann)}(C,C')$};
\node[small_morphism] (psi) at (0,1) {\small $\psi$};
\draw (psi) -- +(90:1cm) node[pos=1,above] {$C$};
\draw (psi) -- +(-90:1cm) node[pos=1,below] {$C'$};
\draw[midarrow_rev={0.7}] (psi) -- +(150:0.5cm) node[pos=1,above left] {$D$};
\draw[midarrow={0.7}] (psi) -- +(-30:0.5cm) node[pos=1,below right] {$D$};
\end{scope}
\end{tikzpicture}
%% tnsr 
\begin{tikzpicture}
\begin{scope}[shift={(0,-1.5)}]
\node at (0,3) {$\tnsr$};
\node at (0,1) {$\tnsr$};
\end{scope}
\end{tikzpicture}
%% punctorus morphism 
\begin{tikzpicture}
\begin{scope}[shift={(0,-1.5)}]
\node at (0,3) {$\Hom_{\hatZCY(\punctorus)}(A,A')$};
\node[small_morphism] (ph) at (0,1) {\small $\vphi$};
\draw (ph) -- +(90:1cm) node[pos=1,above] {$A$};
\draw (ph) -- +(-90:1cm) node[pos=1,below] {$A'$};
\draw[midarrow_rev] (ph) -- +(-150:1cm) node[pos=1,below left] {1}
                          node[pos=0.5,below] {\small $B_1$};
\draw[midarrow_rev] (ph) -- +(150:1cm) node[pos=1,above left] {2}
                         node[pos=0.5,above] {\small $B_2$};
\draw[midarrow]  (ph) -- +(30:1cm) node[pos=1,above right] {3}
                        node[pos=0.5,above] {\small $B_1$};
\draw[midarrow] (ph) -- +(-30:1cm) node[pos=1,below right] {4}
                         node[pos=0.5,below] {\small $B_2$};
\end{scope}
\end{tikzpicture}
%% arrow
\begin{tikzpicture}
\begin{scope}[shift={(0,-1.5)}]
\node at (0,3) {$\to$};
\node at (0,1) {$\to$};
\end{scope}
\end{tikzpicture}
%% result punctorus morphism 
%% drawn out of usual order to incorporate front/back
\begin{tikzpicture}
\begin{scope}[shift={(0,-1.5)}]
\node at (0,3) {$\Hom_{\hatZCY(\punctorus)}(A,A')$};
%% drawing the curves going around vphi first,
%% except the bottom one,
%% because these go behind everything
\draw[midarrow_rev] (-0.93, 0.6) to[out=30,in=-30] (-0.93,1.4); % 1 -> 2
\draw[midarrow_rev={0.7}] (-0.8,1.6) to[out=-30,in=-150] (0.8,1.6);
\draw[midarrow_rev] (0.93, 1.4) to[out=-150,in=150] (0.93,0.6);
%% node phi and all the 1,2,3,4
\node[small_morphism] (ph) at (0,1) {\tiny $\vphi$};
\draw[overline] (ph) -- +(90:1cm) node[pos=1,above] {$A$};
\draw (ph) -- +(-90:1cm) node[pos=1,below] {$A'$};
\draw (ph) -- +(-150:1cm) node[pos=1,below left] {1};
\draw (ph) -- +(150:1cm) node[pos=1,above left] {2};
\draw (ph) -- +(30:1cm) node[pos=1,above right] {3};
\draw (ph) -- +(-30:1cm) node[pos=1,below right] {4};
%% node psi
\node[small_morphism] (psi) at (-0.3,0.55) {\tiny $\psi$};
\draw[overline] (psi) -- (-0.3,2) node[above] {$C$};
\draw (psi) -- (-0.3,0) node[below] {$C'$};
\draw (psi) to[out=-170,in=30] (-0.8,0.4);
\draw[midarrow,overline] (psi) to[out=10,in=150] (0.8,0.4);
\node at (0.4,0.4) {\tiny $D$};
\end{scope}
\end{tikzpicture}
\end{equation}
(The $D$-labeled strand originally goes around the annulus
in $\Ann \times [0,1]$;
after inserting into $\punctorus \times [0,1]$,
it wraps around like the gray area in \eqnref{e:ell_obj}).
This extends to a left $\ZCY(\Ann)$-module structure
on $\ZCY(\punctorus)$.

Similarly, there is a left $\hatZCY(\Ann)$-module structure
on $\hatZCY(\disk)$ (which extends to $\ZCY$):
\begin{equation}
%% morphism in Ann
\begin{tikzpicture}
\node at (0,3) {$\Hom_{\hatZCY(\Ann)}(C,C')$};
\node[small_morphism] (psi) at (0,1) {\small $\psi$};
\draw (psi) -- +(90:1cm) node[pos=1,above] {$C$};
\draw (psi) -- +(-90:1cm) node[pos=1,below] {$C'$};
\draw[midarrow_rev={0.7}] (psi) -- +(150:0.5cm) node[pos=1,above left] {$D$};
\draw[midarrow={0.7}] (psi) -- +(-30:0.5cm) node[pos=1,below right] {$D$};
\end{tikzpicture}
%% tnsr 
\begin{tikzpicture}
\node at (0,3) {$\tnsr$};
\node at (0,1) {$\tnsr$};
\end{tikzpicture}
%% disk morphism 
\begin{tikzpicture}
\node at (0,3) {$\Hom_{\hatZCY(\disk)}(A,A')$};
\node[small_morphism] (ph) at (0,1) {\small $\vphi$};
\draw (ph) -- +(90:1cm) node[pos=1,above] {$A$};
\draw (ph) -- +(-90:1cm) node[pos=1,below] {$A'$};
\end{tikzpicture}
%% arrow
\begin{tikzpicture}
\node at (0,3) {$\to$};
\node at (0,1) {$\to$};
\end{tikzpicture}
%% result disk morphism 
%% drawn out of usual order to incorporate front/back
\begin{tikzpicture}
\node at (0,3) {$\Hom_{\hatZCY(\disk)}(A,A')$};
%% node phi
\node[small_morphism] (ph) at (0,1) {\tiny $\vphi$};
%\draw (ph) -- +(90:1cm) node[pos=1,above] {$A$}; %draw last
\draw (ph) -- +(-90:1cm) node[pos=1,below] {$A'$};

%% node psi
\node[small_morphism] (psi) at (-0.3,0.7) {\tiny $\psi$};
%\node (ref) at (0.3,1.3) {};
\draw (psi) to[out=150,in=150] (0.3,1.3);
\draw[overline,midarrow_rev] (0.3,1.3) to[out=-30,in=-30] (psi);
\draw[overline] (psi) -- (-0.3,2) node[above] {$C$};
\draw (psi) -- (-0.3,0) node[below] {$C'$};
\node at (0.6,1) {\small $D$};

\draw[overline] (ph) -- +(90:1cm) node[pos=1,above] {$A$};
\end{tikzpicture}
\end{equation}

In light of \eqnref{e:2comm},
the following theorem is an upgrade
of \thmref{thm:modular}:

\begin{theorem}
\label{thm:modular_module}
Let $\cA$ be modular. There is an equivalence of
left $\ZCY(\Ann)$-modules
\[
  \ZCY(\disk) \simeq \ZCY(\punctorus)
\]
\end{theorem}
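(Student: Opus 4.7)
The plan is to lift the abelian-category equivalence $\cA \simeq \ZZA$ from \thmref{thm:modular} (given by $A \mapsto (\bigoplus_i X_i A X_i^*, \Gamma, \Omega)$) to an equivalence of $\ZCY(\Ann)$-modules, via the identifications $\ZCY(\disk) \simeq \cA$ and $\ZCY(\punctorus) \simeq \ZZA$ from \prpref{prp:zza_punctorus}. Note that, unlike the inclusion functor $\text{incl}_*$ in \eqref{e:2comm}, which corresponds to the merely dominant functor $\Iel$, the equivalence of \thmref{thm:modular} is a genuine equivalence, and it is this functor that we aim to upgrade.

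First, I would spell out both module actions explicitly in algebraic terms. Under $\ZCY(\Ann) \simeq \cZ(\cA)$ from \xmpref{xmp:annulus}, the $\ZCY(\Ann)$-action on $\ZCY(\disk) \simeq \cA$ reduces on objects to $(C,\la) \lact A = C \tnsr A$: the $D$-loop in the picture just before the theorem simply wraps a $C$-strand around the $\vphi$-strand, so the half-braiding $\la$ drops out. Similarly, under $\ZCY(\punctorus) \simeq \ZZA$, the corresponding action on a $\ZZA$-object $(A, \mu^1, \mu^2)$ should yield $(C \tnsr A, \tilde\mu^1, \mu^2)$ for an appropriate combination $\tilde\mu^1$ of $\mu^1$ with $\la$; this asymmetry reflects the fact that the collared $D$-loop crosses only one of the two topological cycles of $\punctorus$, namely the meridian around the puncture.

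Then I would verify that the equivalence of \thmref{thm:modular} intertwines these two actions. The underlying objects in $\cA$ of $G(C \tnsr A)$ and of $(C, \la) \lact G(A)$ coincide canonically (both equal $\bigoplus_i X_i C A X_i^*$ up to rigidity-type natural isomorphism), so the nontrivial check is that the half-braidings correspond. Modularity is essential here: the half-braiding $\Omega$ in the definition of the equivalence is built from the braiding $c_{-,-}$ of $\cA$, which is precisely what a $D$-strand circling the puncture evaluates to via the sliding identities (e.g., \lemref{lem:sliding} and the arguments underlying \corref{cor:muger}). Once the intertwining is checked, the equivalence of \thmref{thm:modular} automatically upgrades to an equivalence of left $\ZCY(\Ann)$-modules.

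The main obstacle is the final graphical identity: matching the braiding appearing in $\Omega$ with the algebraic formula arising from the $D$-loop around the puncture, while keeping careful track of how $\la$ interacts with $\mu^1$ versus $\mu^2$. I expect this to be a routine but intricate pictorial computation in the graphical calculus of $\cA$, one in which the modularity hypothesis is invoked precisely to guarantee that the $S$-matrix-style resolution of loops around the puncture coincides with the algebraic half-braiding $\Omega$ used to define the equivalence in \thmref{thm:modular}.
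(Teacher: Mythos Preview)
Your strategy---verify that the equivalence of \thmref{thm:modular} intertwines the two $\ZCY(\Ann)$-module actions, with the sliding lemma as the key tool---is exactly the paper's strategy. The execution differs, however. Rather than translating both module actions into algebraic formulas on $\cA$ and $\ZZA$ and then comparing half-braidings, the paper stays entirely in the skein picture: it first rewrites the equivalence of \thmref{thm:modular} as $A \mapsto \im\bigl(\frac{1}{\cD}\,[\text{projector with a single dashed strand in the 2--4 direction}]\bigr)$ via \lemref{l:IM_htr_proj}, then applies the $\hatZCY(\Ann)$-action directly to this projector as a morphism in $\hatZCY(\punctorus)$, and finally uses two applications of \lemref{lem:sliding} to collapse the resulting diagram to precisely what one obtains by first acting on $A$ in $\hatZCY(\disk)$ and then pushing across. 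No algebraic description of the induced $\ZZA$-action is ever needed.

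One topological correction: the collared $D$-loop is a meridian of the puncture, and in $\pi_1(\punctorus)$ this is the commutator $[a,b]$ of the two generating cycles. In the graphical picture it therefore winds around \emph{both} the $B_1$- and $B_2$-strands, not just one. Your expectation that the action sends $(A,\mu^1,\mu^2)$ to $(C\tnsr A,\tilde\mu^1,\mu^2)$ with $\mu^2$ unchanged is thus not what the geometry predicts. This does not invalidate your outline---the ``routine but intricate'' computation you anticipate would simply produce a more complicated formula---but it does indicate that your algebraic route is messier than you estimate, and is a further reason the paper's purely graphical argument is the cleaner path.
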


\begin{proof}
Under the equivalence $\ZZA \simeq \ZCY(\punctorus)$,
it is easy to see that the equivalence of \thmref{thm:modular}
can be rewritten as
\begin{align*}
\ZCY(\disk) \simeq \cA &\simeq \ZCY(\punctorus) \\
A &\mapsto
\im \Bigg(\sum_{i,j} \frac{\sqrt{d_i}\sqrt{d_j}}{\cD^2}
\begin{tikzpicture}
\draw (0.3,1) -- (0.3,-1) % right vertical strand, goes below everything
  node[pos=0,above] {\small \text{ }$i^*$}
  node[pos=1,below] {\small \text{ }$j^*$};
\draw[overline,regular] (-1,0.5) -- (0.86,-0.7)
  node[pos=0,above left] {$2$}
  node[pos=1,below right] {$4$};
\draw[overline] (0,1) -- (0,-1)
  node[pos=0,above] {\small $A$}
  node[pos=1,below] {\small $A$};
\draw[overline] (-0.3,1) -- (-0.3,-1)
  node[pos=0,above] {\small $i$}
  node[pos=1,below] {\small $j$};
\node[small_morphism] (al) at (-0.3,-0.15) {\tiny $\al$};
\draw[regular] (al) -- (-1,-0.5) node[pos=1,below left] {1};
\node[small_morphism] (al2) at (0.3,0.15) {\tiny $\al$};
\draw[regular] (al2) -- (1,0.5) node[pos=1,above right] {3};
\end{tikzpicture}
\Bigg)
\cong
\im \Bigg(
\frac{1}{\cD}
\begin{tikzpicture}
\draw[regular] (-0.86,0.5) -- (0.86,-0.5)
  node[pos=0,above left] {2}
  node[pos=1,below right] {4};
\draw[thick_overline] (0,1) -- (0,-1)
  node[pos=0,above] {$A$}
  node[pos=1,below] {$A$};
\end{tikzpicture}
\Bigg)
\end{align*}
where the isomorphism is essentially given by
$\hat{P}'_A$ and $\check{P}'_A$
from \lemref{l:IM_htr_proj} (with $\cM = \cC$),
which is easily seen to be natural in $A$.

Then we see that
\begin{align*}
%% drinfeld morphism %%%%%%%%%%%%%%%%%%%%%%%%%%%%%
\begin{tikzpicture}
\node[small_morphism] (psi) at (0,0) {\small $\psi$};
\draw (psi) -- +(90:1cm) node[pos=1,above] {$C$};
\draw (psi) -- +(-90:1cm) node[pos=1,below] {$C'$};
\draw (psi) -- +(150:0.5cm) node[pos=1,above left] {$D$};
\draw (psi) -- +(-30:0.5cm) node[pos=1,below right] {$D$};
\end{tikzpicture}
\boxtimes
%% morphism in ZCY(punctorus) %%%%%%%%%%%%%%%%%%%%%%%%
\begin{tikzpicture}
\draw[regular] (-0.86,0.5) -- (0.86,-0.5)
  node[pos=0,above left] {2}
  node[pos=1,below right] {4};
\node[small_morphism] (ph) at (0,0.5) {\small $\vphi$};
\draw (ph) -- (0,1) node[pos=1,above] {$A$};
\draw[overline] (ph) -- (0,-1) node[pos=1,below] {$A'$};
\end{tikzpicture}
\mapsto
%% result object %%%%%%%%%%%%%%%%%%%%%%%%%%%%%%%%%%
\begin{tikzpicture}
%% drawing the curves going around vphi first,
%% except the bottom one,
%% because these go behind everything
\draw (-0.93, -0.4) to[out=30,in=-30] (-0.93,0.4);
\draw (-0.8,0.6) to[out=-30,in=-150] (0.8,0.6);
\draw (0.93, 0.4) to[out=-150,in=150] (0.93,-0.4);
%% original object from ZCY(punctorus)
\draw[regular] (-0.86,0.5) -- (0.86,-0.5)
  node[pos=0,above left] {2}
  node[pos=1,below right] {4};
\node[small_morphism] (ph) at (0,0.7) {\tiny $\vphi$};
\draw (ph) -- (0,1) node[pos=1,above] {$A$};
\draw[overline] (ph) -- (0,-1) node[pos=1,below] {$A'$};
%% node psi
\node[small_morphism] (psi) at (-0.3,-0.45) {\tiny $\psi$};
\draw[overline] (psi) -- (-0.3,1) node[above] {$C$};
\draw (psi) -- (-0.3,-1) node[below] {$C'$};
\draw (psi) to[out=-170,in=30] (-0.8,-0.6);
\draw[overline] (psi) to[out=10,in=150] (0.8,-0.6);
%% put numbers 1,3
\node at (-1,-0.7) {1};
\node at (1,0.7) {3};
\end{tikzpicture}
=
%%%%%%%%%%%%%%%%%%%%%%%%%%%%%%%%%%%%%%%%%%%%%%%%%%
%% basically drawn the same as above, small modifications
\begin{tikzpicture}
\draw (-0.93, -0.4) to[out=30,in=-30] (-0.93,0.4);
%\draw (-0.8,0.6) to[out=-30,in=-150] (0.8,0.6); %these two slide off
%\draw (0.93, 0.4) to[out=-150,in=150] (0.93,-0.4);
\draw (0.8,0.6) to[out=-150,in=150] (0.3,0.2);
\draw (0.3,0.2) to[out=-30,in=-150] (0.93,0.4);
%% original object from ZCY(punctorus)
\draw[regular] (-0.86,0.5) -- (0.86,-0.5)
  node[pos=0,above left] {2}
  node[pos=1,below right] {4};
\node[small_morphism] (ph) at (0,0.6) {\tiny $\vphi$};
\draw (ph) -- (0,1) node[pos=1,above] {$A$};
\draw[overline] (ph) -- (0,-1) node[pos=1,below] {$A'$};
%% node psi
\node[small_morphism] (psi) at (-0.3,-0.45) {\tiny $\psi$};
\draw[overline] (psi) -- (-0.3,1) node[above] {$C$};
\draw (psi) -- (-0.3,-1) node[below] {$C'$};
\draw (psi) to[out=-170,in=30] (-0.8,-0.6);
\draw[overline] (psi) to[out=10,in=150] (0.8,-0.6);
%% put numbers 1,3
\node at (-1,-0.7) {1};
\node at (1,0.7) {3};
\end{tikzpicture}
=
%% final %%%%%%%%%%%%%%%%%%%%%%%%%%%%%%%%%%%%%%%
\begin{tikzpicture}
%% regular line goes behind
\draw[regular] (-0.86,0.5) to[out=-30,in=120] (0.86,-0.3);
\node at (-1,0.6) {\small 2};
\node at (1,-0.4) {\small 4};
%% node phi
\node[small_morphism] (ph) at (0,-0.3) {\tiny $\vphi$};
%\draw (ph) -- +(90:1cm) node[pos=1,above] {$A$}; %draw last
\draw (ph) -- (0,-1) node[pos=1,below] {$A'$};
%% node psi
\node[small_morphism] (psi) at (-0.3,-0.6) {\tiny $\psi$};
%% loop
\draw (psi) to[out=150,in=150] (0.3,0);
\draw[overline] (0.3,0) to[out=-30,in=-30] (psi);
\draw[overline] (psi) -- (-0.3,1) node[above] {$C$};
\draw (psi) -- (-0.3,-1) node[below] {$C'$};
\draw[overline] (ph) -- (0,1) node[pos=1,above] {$A$};
\end{tikzpicture}
\end{align*}
where we use the sliding lemma (\lemref{lem:sliding})
for both equalities, and isotopies to move the strands around.
The final diagram is what one obtains
if we apply $\psi$ to $\vphi \in \Hom_\cA(A,A')$ first
and then send it to $\ZCY(\punctorus)$.
Hence, the equivalence does respect the module structure
and we are done.
\end{proof}

Finally, we state the main result of this section:

\begin{theorem}\label{thm:cy_modular}
Let $\cA$ be modular.
Let $\Sigma$ be a connected compact oriented surface with
$b$ boundary components and genus $g$,
and let $S_{0,b} = S^2 \backslash (\DD^2)^{\sqcup b}$ be a genus 0 surface
with $b$ boundary components.
Then
\[
  \ZCY(\Sigma) \cong \ZCY(S_{0,b})
\]
In particular,
$\ZCY(\text{closed surface}) \cong \ZCY(S^2) \cong \Vect$
and $\ZCY(\text{once-punctured surface}) \cong \ZCY(\DD^2) \cong \cA$.
\end{theorem}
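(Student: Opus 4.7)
The plan is to induct on the genus $g$. The base case $g=0$ is immediate since then $\Sigma = S_{0,b}$, so I focus on the inductive step: assuming the result for genus $g-1$, I will show $\ZCY(\Sigma_{g,b}) \simeq \ZCY(\Sigma_{g-1,b})$.

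Topologically, any compact oriented surface of genus $g \geq 1$ with $b$ boundary components decomposes as $\Sigma_{g-1,b+1} \cup_{S^1} \punctorus$, where $\punctorus$ is the once-punctured torus (torus with one open disk removed): starting from $\Sigma_{g-1,b}$, remove an open disk to create an auxiliary boundary circle, then glue $\punctorus$ along this circle. A quick Euler characteristic check ($\chi(\Sigma_{g-1,b+1}) + \chi(\punctorus) = (3-2g-b) + (-1) = 2-2g-b$) confirms the result has the right genus and number of boundary components. The decomposition comes equipped with natural collared structures at the gluing circle.

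Applying the excision theorem (\thmref{t:excision-skein}) yields
\[
  \ZCY(\Sigma_{g,b}) \simeq \ZCY(\Sigma_{g-1,b+1}) \boxtimes_{\ZCY(\Ann)} \ZCY(\punctorus).
\]
Since $\cA$ is modular, \thmref{thm:modular_module} furnishes an equivalence $\ZCY(\punctorus) \simeq \ZCY(\disk)$ of left $\ZCY(\Ann)$-modules. Substituting and applying excision once more in reverse, this time reading the right-hand side as the result of capping off the extra boundary circle of $\Sigma_{g-1,b+1}$ with a disk, identifies
\[
  \ZCY(\Sigma_{g-1,b+1}) \boxtimes_{\ZCY(\Ann)} \ZCY(\disk) \simeq \ZCY(\Sigma_{g-1,b}).
\]
Hence $\ZCY(\Sigma_{g,b}) \simeq \ZCY(\Sigma_{g-1,b})$, and induction delivers $\ZCY(\Sigma_{g,b}) \simeq \ZCY(S_{0,b})$. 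The two special cases then follow: for a closed surface ($b=0$), \corref{cor:muger} gives $\ZCY(S^2) \simeq \Vect$; for a once-punctured surface ($b=1$), $\ZCY(\disk) \simeq \cA$ was already established in \secref{sec:CY}.

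The most delicate point I anticipate is ensuring that the left $\ZCY(\Ann)$-module structure on $\ZCY(\punctorus)$ produced by the collar of the excision gluing circle matches the module structure used in \thmref{thm:modular_module} (which was defined via a chosen collar of the puncture). Since \prpref{p:collared_module} guarantees uniqueness of collared module structures up to equivalence, any ambient isotopy between the two collars yields the required $\ZCY(\Ann)$-linear identification, so this is a matter of bookkeeping rather than a genuine obstruction. Everything else in the argument is a clean application of excision together with the genus-reducing equivalence from \thmref{thm:modular_module}.
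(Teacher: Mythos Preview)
Your argument is correct and matches the paper's proof essentially line for line: the paper also inducts on genus by writing $\Sigma = \Sigma_0' \cup_{\Ann} \punctorus$ with $\Sigma_0'$ a once-punctured genus-$(g-1)$ surface, applies excision, invokes \thmref{thm:modular_module} to swap $\ZCY(\punctorus)$ for $\ZCY(\disk)$, and then applies excision again to cap off. Your remark about reconciling the $\ZCY(\Ann)$-module structures via \prpref{p:collared_module} is a point the paper leaves implicit, so your treatment is if anything slightly more careful.
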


\begin{proof}
Suppose $g>0$, so that we can present $\Sigma$
as a connect sum $\Sigma' \# \torus$,
where $\Sigma'$ is a connected compact oriented surface with
$b$ boundary components and genus $g-1$.
We think of the connect sum as
$\Sigma = \Sigma_0' \cup_{\Ann} (\punctorus)$,
where $\Sigma_0' = \Sigma' \backslash \{pt\}$
is a punctured surface.
Then by \thmref{t:excision-skein} and \thmref{thm:modular_module},
$\ZCY(\Sigma) \simeq \ZCY(\Sigma_0') \boxtimes_{\ZCY(\Ann)} \ZCY(\punctorus)
\simeq \ZCY(\Sigma_0') \boxtimes_{\ZCY(\Ann)} \ZCY(\DD^2)
\simeq \ZCY(\Sigma_0' \cup_{\Ann} \DD^2) = \ZCY(\Sigma')$.
Thus, by induction on the genus, we have
$\ZCY(\Sigma) \simeq \ZCY(S_{0,b})$.

The final statements follow from the $b=0,1$ cases
and \corref{cor:muger}.
\end{proof}

\begin{remark}
Here is an alternative proof to \thmref{thm:cy_modular} and
\thmref{thm:modular_module},
pointed out by Jin-Cheng Guu, which avoids passage to
the elliptic Drinfeld center and may be of independent interest.
Recall the well-known equivalence
\begin{align} 
\cA \boxtimes \cA &\simeq \ZA \\
A \boxtimes B &\mapsto (A \tnsr B, c^\inv \tnsr c)
\end{align}
when $\cA$ is modular \ocite{muger}.
This can be interpreted as an equivalence
\[
\ZCY(\DD^2 \sqcup \DD^2) \simeq \ZCY(\Ann)
\]
which, using the equivalences established in \xmpref{xmp:annulus},
is given by
\[
A \boxtimes B \mapsto \im (
\begin{tikzpicture}
\draw (0.2, 0.6) -- (0.2,-0.6);
\draw[overline, regular] (-0.6, 0.3) -- (0.6, -0.3);
\draw[overline] (-0.2, 0.6) -- (-0.2,-0.6);
\node at (-0.2, -0.8) {\small $A$};
\node at (0.2, -0.8) {\small $B$};
\end{tikzpicture}
)
\]
By similar reasoning as in the proof of \thmref{thm:modular_module},
this can be shown to be a $\ZCY(\Ann)$-bimodule equivalence.
Thus, performing a surgery (replacing an annulus with two disks or vice versa)
does not affect $\ZCY$ of a surface.
In particular, this yields
\[
\ZCY(\Sigma) \simeq \cA^{\boxtimes b}
\]
where $b$ is the number of boundary components in $\Sigma$.
\end{remark}

%%%%%%%%%%%%%%%%%%%%%%%%%%%%%%%%%%%%%%%%%%%%%%%%%%%%
\section{Appendix: Pivotal Multifusion Categories Conventions}\label{s:appendix}

This appendix is dedicated to notation and basic results about pivotal multifusion categories.
It is adapted from \ocite{kirillov-stringnet},
modified to accommodate for the non-spherical non-fusion case.
We also point the reader to \ocite{EGNO}*{Chapter 4}
and \ocite{ENO2005} for further reference.

Let $\cC$ be a $\kk$-linear pivotal multifusion category,
where $\kk$ is an algebraically closed field of characteristic 0.
In all our formulas and computations, we will be suppressing the
associativity and unit morphisms;
we also suppress the pivotal morphism
$V \simeq V^{**}$ when there is little cause for confusion.

We denote by $\Irr(\cC)$ the set of isomorphism classes of simple objects in $\cC$,
and denote by $\Irr_0(\cC) \subseteq \Irr(\cC)$
the subset of simple objects appearing in the direct sum decomposition of the
unit object $\one$;
it is known that $\one$ decomposes into a direct sum of distinct simples,
so $\End(\one) \cong \bigoplus_{l\in \Irr_0(\cC)} \End(\one_l)$.
We fix a representative $X_i$ for each isomorphism class $i\in \Irr(\cC)$;
abusing language, we will frequently use the same letter $i$ for 
denoting both a simple object and its isomorphism class.
Rigidity gives us an involution $-^*$ on $\Irr(\cC)$;
it is known that $l^* = l$ for $l\in \Irr_0(\cC)$.
For $l\in \Irr_0(\cC)$, we may use the notation
$\one_l := X_l$ to emphasize that it is part of the unit.

For $k,l\in \Irr_0(\cC)$,
let $\cC_{kl} := \one_k \tnsr \cC \tnsr \one_l$,
so that $\cC = \bigoplus_{k,l\in \Irr_0(\cC)} \cC_{kl}$.
Any simple $X_i$ is contained in exactly one of these $\cC_{kl}$'s,
or in other words, there are unique $k_i,l_i \in \Irr_0(\cC)$
such that $\one_{k_i} \tnsr X_i \tnsr \one_{l_i} \neq 0$.
Since $\cC_{kl}^* = \cC_{lk}$,
we have that $k_{i^*} = l_i$.

When $\cC$ is spherical fusion, the categorical dimension
is a scalar, defined as a trace, but here the non-simplicity of $\one$
and non-sphericality complicates things.
To avoid confusion, denote by $\delta: V \to V^{**}$
the pivotal morphism.
The \emph{left dimension} of an object $V \in \Obj \cC$
is the morphism
\[
  \ldim{V} := (\one
                \xrightarrow{\coev} V \tnsr V^*
                \xrightarrow{\delta \tnsr \id} V^{**} \tnsr V^*
                \xrightarrow{\ev} \one)
                \in \End(\one)
\]
Similarly, the \emph{right dimension} of $V$ is the morphism
\[
  \rdim{V} := (\one 
                \xrightarrow{\coev} \rdual{V} \tnsr V
                \xrightarrow{\id \tnsr \delta^\inv} \rdual{V}\tnsr \prescript{**}{}V
                \xrightarrow{\ev} \one)
                \in \End(\one)
\]
Note that these are \emph{vectors} and not scalars,
since $\one$ may not be simple.
It is easy to see that $\rdim{V} = \ldim{V^*} = \ldim{\rdual{V}}$.
When $\cC$ is spherical, we will drop the superscripts.

When $V = X_i$ is simple, we can interpret its left and right dimensions
as scalars as follows. We have $X_i \in \cC_{k_i l_i}$,
so $\Hom(\one, X_i \tnsr X_i^*) = \Hom(\one, \one_{k_i} \tnsr X_i \tnsr X_i^*)
\simeq \Hom(\one_{k_i}, X_i \tnsr X_i^*)$,
and likewise
$\Hom(X_i \tnsr X_i^*,\one) \simeq \Hom(X_i \tnsr X_i^*,\one_{k_i})$,
so $d_{X_i}^L$ factors through $\one_{k_i}$,
and hence we may interpret $d_{X_i}^L$ as an element of
$\End(\one_{k_i}) \cong \kk$.
Similarly, $d_{X_i}^R$ may be interpreted as an element of
$\End(\one_{l_i}) \cong \kk$.
We denote these scalar dimensions by $d_i^L, d_i^R$,
and fix square roots such that $\sqrt{d_i^L} = \sqrt{d_{i^*}^R}$.
The dimensions of simple objects are nonzero.

The \emph{dimension} of $\cC_{kl}$ is the sum
\begin{equation}\label{e:dimC}
\cD := \sum_{i\in \Irr(\cC_{kl})} d_i^R d_i^L
\end{equation}
By \ocite{ENO2005}*{Proposition 2.17},
this is the same for all pairs $k,l \in \Irr_0(\cC)$,
and by \ocite{ENO2005}*{Theorem 2.3},
they are nonzero.

We define functors $\cC^{\boxtimes n}\to \Vect$ by
\begin{align}
\label{e:vev}
\eval{V_1,\dots,V_n} &=
  \Hom_\cC(\one, V_1\otimes\dots\otimes V_n) \\
\label{e:vev_multi}
\eval{V_1,\dots,V_n}_l &=
  \Hom_\cC(\one_l, V_1\otimes\dots\otimes V_n)
  \text{ for }l\in \Irr_0(\cC) \\
  &\simeq \eval{\one_l, V_1,\ldots,V_n} \nonumber
\end{align}
for any collection $V_1,\dots, V_n$ of objects of $\cC$.
Clearly $\eval{V_1,\ldots,V_n} = \bigoplus_l \eval{V_1,\ldots,V_n}_l$.

Note that the pivotal structure gives functorial isomorphisms
\begin{equation}\label{e:cyclic}
z\colon\<V_1,\dots,V_n\>\simeq \<V_n, V_1,\dots,V_{n-1}\>
\end{equation}
such that $z^n=\id$ (see \ocite{BK}*{Section 5.3}); thus, up to a canonical
isomorphism, the space $\<V_1,\dots,V_n\>$ only depends on the cyclic order
of $V_1,\dots, V_n$.
In general, $z$ does not preserve the direct sum decomposition
of $\eval{V_1,\ldots,V_n}$ above.
For example, for a simple $X_i \in \cC_{k_i l_i}$,
we have $z : \eval{X_i, X_i^*}_{k_i} \simeq \eval{X_i^*, X_i}_{l_i}$.

We will commonly use graphic presentation of morphisms in a category, 
representing a morphism 
$W_1\otimes \dots \otimes  W_m\to V_1\otimes\dots\otimes V_n$ by a  
diagram with $m$ strands at the top, labeled by $W_1, \dots, W_m$ and $n$ strands at the bottom, labeled 
$V_1,\dots, V_n$ (Note: this differs from the convention in many other papers!). 
We will allow diagrams with  with oriented strands, 
using the convention that a strand labeled by $V$ is the same as the strands labeled by $V^*$ with opposite orientation
(suppressing isomorphisms $V\simeq V^{**}$).

We will show a morphism
$\ph\in \eval{V_1,\dots, V_n}$ by a  
round circle labeled by $\ph$
with outgoing edges labeled $V_1, \dots, V_n$ in
counter-clockwise order,
as shown in \firef{f:coloring}.
By \eqnref{e:cyclic} and the fact that $z^n=\id$,
this is unambiguous.
We will show a morphism
$\ph\in \eval{V_1,\ldots,V_n}_l$ by
a semicircle labeled by $\ph$ and $l$ as shown in
\firef{f:coloring};
in contrast with a circular node,
a semicircle imposes a strict ordering on the outgoing legs,
not just a cyclic ordering.
\begin{figure}[ht]
%%%%%%%%
\begin{tikzpicture}
\node[morphism] (ph) at (0,0) {$\ph$};
\draw[->] (ph)-- +(240:1cm) node[pos=0.7, left] {$V_n$} ;
\draw[->] (ph)-- +(180:1cm);
\draw[->] (ph)-- +(120:1cm);
\draw[->] (ph)-- +(60:1cm);
\draw[->] (ph)-- +(0:1cm);
\draw[->] (ph)-- +(-60:1cm) node[pos=0.7, right] {$V_1$};
\end{tikzpicture}
%%%%%%%%
\hspace{20pt}
%%%%%%%%
\begin{tikzpicture}
\node[semi_morphism={180,\small $l$}] (ph) at (0,0) {$\ph$};
\draw[->] (ph)-- +(210:1cm) node[pos=0.7, above] {$V_1$} ;
\draw[->] (ph)-- +(240:1cm);
\draw[->] (ph)-- +(270:1cm);
\draw[->] (ph)-- +(300:1cm);
\draw[->] (ph)-- +(-30:1cm) node[pos=0.7, above] {$V_n$};
\end{tikzpicture}
%%%%%%%%
\caption{Labeling of colored graphs}\label{f:coloring}
\end{figure}

We have a natural composition map 
\begin{equation}\label{e:composition}
\begin{aligned}
 \<V_1,\dots,V_n, X\>\otimes\<X^*, W_1,\dots,
W_m\>&\to\<V_1,\dots,V_n, W_1,\dots, W_m\>\\
\ph\otimes\psi\mapsto \ph\cc{X}\psi= \ev_{X^*}\circ (\ph\otimes\psi)
\end{aligned}
\end{equation}
where $\ev_{X^*}\colon X\otimes  X^*\to \one$ is the evaluation morphism
(the pivotal structure is suppressed).
%; see
%\firef{f:local_rels1} for an illustration of this operation).  

Repeated applications of the composition map above gives us
a non-degenerate pairing
\begin{equation}\label{e:pairing}
\eval{V_1,\dots,V_n}\otimes \eval{V_n^*,\dots,V_1^*}\to \End(\one)
\end{equation}
More precisely, when restricted to the subspaces,
\begin{equation}\label{e:pairing_sub}
\eval{V_1,\dots,V_n}_k \otimes \eval{V_n^*,\dots,V_1^*}_l \to \End(\one)
\end{equation}
the pairing is 0 if $k\neq l$, and is non-degenerate if $k=l$.
The pairing is illustrated below for
$\ph_1 \in \eval{V_1,\dots,V_n}_k,
\ph_2 \in \eval{V_n^*,\dots,V_1^*}_l$:
\[
(\ph_1, \ph_2) = 
\begin{tikzpicture}
\begin{scope}[shift={(0,0.3)}]
\node[semi_morphism={180,\small $k$}] (ph1) at (0,0) {$\ph_1$};
\node[semi_morphism={180,\small $l$}] (ph2) at (1,0) {$\ph_2$};
\draw (ph1) to[out=-40,in=-140] (ph2);
%\draw (ph1) ..controls (0.2,-0.4) and (0.8,-0.4) .. (ph2);
\draw (ph1) ..controls (0,-0.5) and (1,-0.5) .. (ph2);
\draw (ph1) ..controls (-0.5,-0.8) and (1.5,-0.8) .. (ph2);
\draw (ph1) ..controls (-1.5, -1) and (2.5, -1) .. (ph2);
\end{scope}
\end{tikzpicture}
\overset{\text{if }\cC\text{ not spherical}}
  {\not\equiv}
\begin{tikzpicture}
\begin{scope}[shift={(0,0.3)}]
\node[semi_morphism={180,\small $k$}] (ph1) at (0,0) {$\ph_1$};
\node[semi_morphism={180,\small $l$}] (ph2) at (1,0) {$\ph_2$};
\draw (ph1) to[out=-40,in=-140] (ph2);
%\draw (ph1) ..controls (0.2,-0.4) and (0.8,-0.4) .. (ph2);
\draw (ph1) ..controls (0,-0.5) and (1,-0.5) .. (ph2);
\draw (ph1) ..controls (-0.5,-0.8) and (1.5,-0.8) .. (ph2);
\draw (ph1) ..controls (-1, 0) and (-0.3, 0.5) ..
(0.5, 0.5) .. controls (1.3, 0.5) and (2, 0) .. (ph2);
\end{scope}
\end{tikzpicture}
= (z^\inv \cdot \ph_1, z \cdot \ph_2) 
\]
Thus, we have functorial isomorphisms
\begin{equation}\label{e:dual}
\<V_1,\dots,V_n\>^*\simeq \<V_n^*,\dots,V_1^*\>
\end{equation}

When $\cC$ is spherical,
this pairing is compatible with the cyclic permutations \eqref{e:cyclic},
in the sense that
$(\ph_1,\ph_2) = (z \cdot \ph_1, z^\inv \cdot \ph_2)$.
Compatibility fails when $\cC$ is not spherical;
for example, it is easy to see that for
$\ph_1 = \ph_2 = \coev_{X_i} \in \eval{X_i,X_i^*}$,
one has $(\ph_1,\ph_2) = d_i^L$,
while for
$z \cdot \ph_1 = z^\inv \cdot \ph_2 = \coev_{X_i^*} \in \eval{X_i^*,X_i}$,
one has instead $(z \cdot \ph_1, z^\inv \ph_2) = d_i^R$.

\begin{lemma}
\label{l:pairing_property}
For $\ph \in \eval{V_1,\dots,V_n}_l,
\ph' \in \eval{V_n^*,\ldots,V_1^*}_l,
\psi \in \eval{W_n^*,\dots,W_1^*}_l$,
and
$f \in \Hom(V_1 \tnsr \cdots \tnsr V_n, W_1 \tnsr \cdots \tnsr W_n)$,
we have
\begin{align}
  (\ph,\ph') &= (\ph',\ph) \\
  (f \circ \ph_1, \ph_2) &= (\ph_1, f^* \circ \ph_2)
\end{align}
\end{lemma}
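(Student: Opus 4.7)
My plan is to establish both identities by graphical calculus in the pivotal multifusion category $\cC$; recall that the pairing \eqref{e:pairing} is realized as a nested closed planar diagram in which $\ph$ and $\ph'$ sit at the top with their respective strands joined by evaluation cups, as depicted just before the lemma.

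For the adjunction identity $(f\circ \ph_1, \ph_2) = (\ph_1, f^*\circ \ph_2)$, I would insert $f$ between $\ph_1$ and the cups in the picture for the left-hand side and repeatedly invoke the defining relation for the dual morphism,
\[
\ev_{W^*} \circ (g \otimes \id_{W^*}) = \ev_{V^*} \circ (\id_V \otimes g^*),
\]
valid for any $g\colon V \to W$. Applied one strand at a time to each of the $n$ nested cups, this slides $f$ across to become $f^*$ acting on $\ph_2$ from the outside. The argument uses only the rigidity axioms and not sphericality.

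For the symmetry $(\ph,\ph') = (\ph',\ph)$, I would first use rigid adjunction to convert the pairing into a composition: iterated cupping sends $\ph' \in \eval{V_n^*,\dots,V_1^*}$ to a morphism $\tilde{\ph'}\colon V_1 \otimes \cdots \otimes V_n \to \one$ with $(\ph,\ph') = \tilde{\ph'} \circ \ph$, and analogously $(\ph',\ph) = \tilde{\ph} \circ \ph'$. Symmetry then reduces to showing these two closed morphisms in $\End(\one)$ coincide. I would induct on $n$: the base $n=1$ amounts to the identity $\ev_{V^*}\circ(\ph \otimes \ph') = \ev_{V^{**}}\circ(\ph'\otimes\ph)$, which follows from the zigzag relations combined with the naturality of the pivotal morphism $\delta\colon V \simeq V^{**}$; the inductive step peels off the outermost pair of strands (cupping them off using $\ev$) and invokes the hypothesis on the resulting smaller nested diagram.

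The main obstacle will be handling the non-spherical base case carefully, since left- and right-evaluations genuinely differ in a pivotal but non-spherical category. The two diagrams, however, use evaluations on opposite sides of the pivotal isomorphism $V\simeq V^{**}$, so the potential asymmetry cancels and only the pivotal axiom — not sphericality — is needed. By contrast, applying a cyclic rotation $z$ to one argument would break this cancellation, consistent with the observation immediately preceding the lemma that the pairing is not compatible with cyclic permutations in the non-spherical case.
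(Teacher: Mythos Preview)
Your proposal is correct and essentially matches the paper's approach: the paper's entire proof reads ``Straightforward from definitions,'' so you have supplied considerably more detail than the authors deemed necessary. Your graphical-calculus argument for the adjunction identity and your inductive reduction for symmetry are both sound ways to unpack that one-line proof; the only minor quibble is notational (what you write as $\ev_{V^{**}}$ in the base case should be the pivotal-twisted evaluation $V^*\otimes V^{**}\to\one$, but this is clearly what you intend).
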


\begin{proof}
  Straightforward from definitions.
\end{proof}

We will make two additional conventions related to the graphic presentation of morphisms.

\begin{notation}\label{n:dashed}
A dashed line in the picture stands for the sum of all colorings of an edge by 
simple objects $i$, each taken with coefficient $d_i^R$:
     \begin{equation} \label{e:regular_color}
          \begin{tikzpicture}
        \draw[->, regular] (0, 0.5)--(0, -0.5);
       \end{tikzpicture}
      \;
      =\sum_{i\in \Irr(\cC)} d_i^R \quad 
      \begin{tikzpicture}
        \draw[->] (0, 0.5)--(0, -0.5) node[pos=0.8, right] {$i$};
       \end{tikzpicture}
     \end{equation}
\end{notation}
When $\cC$ is spherical, the orientation of such a dashed line is irrelevant.

\begin{notation}\label{n:summation}
Let $\cC$ be spherical.
If a figure contains a pair of circles, one with outgoing edges labeled $V_1,\dots, V_n$ and
the other with edges labeled $V_n^*,\dots, V_1^*$ , and the vertices  are
labeled by the same letter $\al$  (or $\be$, or \dots)
it will stand for summation over the dual bases:
\begin{equation}\label{e:summation_convention}
%%%%%%%%
\begin{tikzpicture}
\node[morphism] (ph) at (0,0) {$\al$};
\draw[->] (ph)-- +(240:1cm) node[pos=0.7, left] {$V_n$};
\draw[->] (ph)-- +(180:1cm);
\draw[->] (ph)-- +(120:1cm);
\draw[->] (ph)-- +(60:1cm);
\draw[->] (ph)-- +(0:1cm);
\draw[->] (ph)-- +(-60:1cm) node[pos=0.7, right] {$V_1$};
\node[morphism] (ph') at (3,0) {$\al$};
\draw[->] (ph')-- +(240:1cm) node[pos=0.7, left]  {$V^{*}_1$};
\draw[->] (ph')-- +(180:1cm);
\draw[->] (ph')-- +(120:1cm);
\draw[->] (ph')-- +(60:1cm);
\draw[->] (ph')-- +(0:1cm);
\draw[->] (ph')-- +(-60:1cm) node[pos=0.7, right] {$V^{*}_n$};
\end{tikzpicture}
%%%%%%%%
\quad := \sum_\al\quad
%%%%%%%%
\begin{tikzpicture}
\node[morphism] (ph) at (0,0) {$\ph_\al$};
\draw[->] (ph)-- +(240:1cm) node[pos=0.7, left] {$V_n$};
\draw[->] (ph)-- +(180:1cm);
\draw[->] (ph)-- +(120:1cm);
\draw[->] (ph)-- +(60:1cm);
\draw[->] (ph)-- +(0:1cm);
\draw[->] (ph)-- +(-60:1cm) node[pos=0.7, right] {$V_1$};
\node[morphism] (ph') at (3,0) {$\ph {}^\al$};
\draw[->] (ph')-- +(240:1cm) node[pos=0.7, left] {$V^{*}_1$};
\draw[->] (ph')-- +(180:1cm);
\draw[->] (ph')-- +(120:1cm);
\draw[->] (ph')-- +(60:1cm);
\draw[->] (ph')-- +(0:1cm);
\draw[->] (ph')-- +(-60:1cm) node[pos=0.7, right] {$V^{*}_n$};
\end{tikzpicture}
%%%%%%%%
\end{equation}
where $\ph_\al\in \<V_1,\dots, V_n\>$, $\ph^\al\in \<V_n^*,\dots, V_1^*\>$
are dual bases with respect to pairing \eqref{e:pairing}.

When $\cC$ is not spherical, the pairing is no longer
compatible with $z$ from \eqnref{e:cyclic},
so such notation can only make sense with semicircles:
\begin{equation}
\begin{tikzpicture}
  \node[semi_morphism={180,}] (al1) at (0,0.3) {$\al$};
\draw[->] (al1) -- +(-150:1cm) node[pos=0.8, left] {$V_1$};
\draw[->] (al1) -- +(-110:1cm);
\draw[->] (al1) -- +(-70:1cm);
\draw[->] (al1) -- +(-30:1cm) node[pos=0.8, right] {$V_n$};
\node[semi_morphism={180,}] (al2) at (3,0.3) {$\al$};
\draw[->] (al2) -- +(-150:1cm) node[pos=0.8, left] {$V_n^*$};
\draw[->] (al2) -- +(-110:1cm);
\draw[->] (al2) -- +(-70:1cm);
\draw[->] (al2) -- +(-30:1cm) node[pos=0.8, right] {$V_1^*$};
\end{tikzpicture}
%%%%%%%%
\quad := \sum_{\al, l} \quad
%%%%%%%%%
\begin{tikzpicture}
  \node[semi_morphism={180,\small $l$}] (al1) at (0,0.3) {$\ph_\al$};
\draw[->] (al1) -- +(-150:1cm) node[pos=0.8, left] {$V_1$};
\draw[->] (al1) -- +(-110:1cm);
\draw[->] (al1) -- +(-70:1cm);
\draw[->] (al1) -- +(-30:1cm) node[pos=0.8, right] {$V_n$};
\node[semi_morphism={180,\small $l$}] (al2) at (3,0.3) {\small $\ph^\al$};
\draw[->] (al2) -- +(-150:1cm) node[pos=0.8, left] {$V_n^*$};
\draw[->] (al2) -- +(-110:1cm);
\draw[->] (al2) -- +(-70:1cm);
\draw[->] (al2) -- +(-30:1cm) node[pos=0.8, right] {$V_1^*$};
\end{tikzpicture}
\end{equation}
where $\ph_\al\in \eval{V_1,\dots, V_n}_l$, $\ph^\al\in \eval{V_n^*,\dots, V_1^*}_l$
are dual bases with respect to the pairing \eqref{e:pairing}.

\end{notation}

The following lemma illustrates the use of the notation above.

\begin{lemma}\label{l:summation}
For any $V_1,\ldots,V_n \in \cC$, we have 
$$
\begin{tikzpicture}
%\draw[<-] (0,-1) -- (0,1) node[above] {$X$};
\draw[->] (-0.4,1.2) node[above] {$V_1$} -- (-0.4,-1.2) node[below] {$V_1$};
\draw[->] (0,1.2) node[above] {$\cdots$} -- (0,-1.2) node[below] {$\cdots$};
\draw[->] (0.4,1.2) node[above] {$V_n$} -- (0.4,-1.2) node[below] {$V_n$};
\end{tikzpicture}
%%%%%%%%%%%%%%%%%%%%%%%
\quad = \quad 
%%%%%%%%%%%%%%%%%%%%%%%
\sum_{i\in \Irr(\cC)} d_i^R \quad 
\begin{tikzpicture}
\node[semi_morphism={90,}] (T) at (0, 0.5) {$\al$};
\node[semi_morphism={90,}] (B) at (0, -0.5) {$\al$};
%\draw[<-] (T) -- +(0, 0.5) node[above] {$X$};
\draw (T) -- +(-0.3, 0.7) node[above left] {$V_1$};
\draw (T) -- +(0, 0.7) node[above] {$\cdots$};
\draw (T) -- +(0.3, 0.7) node[above right] {$V_n$};
%\draw[->] (B) -- +(0, -0.5) node[below] {$X$};
\draw[->] (B) -- +(-0.3, -0.7) node[below left] {$V_1$};
\draw[->] (B) -- +(0, -0.7) node[below] {$\cdots$};
\draw[->] (B) -- +(0.3, -0.7) node[below right] {$V_n$};
\draw[midarrow={0.6}] (T)--(B) node[pos=0.5, right] {$i$};
\end{tikzpicture}
%%%%%%%%%%%%%%%%%%%%%%%
\quad = \quad 
%%%%%%%%%%%%%%%%%%%%%%%
\begin{tikzpicture}
\node[semi_morphism={90,}] (T) at (0, 0.5) {$\al$};
\node[semi_morphism={90,}] (B) at (0, -0.5) {$\al$};
%\draw[<-] (T) -- +(0, 0.5) node[above] {$X$};
\draw (T) -- +(-0.3, 0.7) node[above left] {$V_1$};
\draw (T) -- +(0, 0.7) node[above] {$\cdots$};
\draw (T) -- +(0.3, 0.7) node[above right] {$V_n$};
%\draw[->] (B) -- +(0, -0.5) node[below] {$X$};
\draw[->] (B) -- +(-0.3, -0.7) node[below left] {$V_1$};
\draw[->] (B) -- +(0, -0.7) node[below] {$\cdots$};
\draw[->] (B) -- +(0.3, -0.7) node[below right] {$V_n$};
\draw[->, regular] (T)--(B);
\end{tikzpicture}
%%%%%%%%%%%%%%%%%%%%%%%
\quad = \quad 
\sum_{i\in \Irr(\cC)} d_i^L \quad 
%%%%%%%%%%%%%%%%%%%%%%%
\begin{tikzpicture}
\node[semi_morphism={270,}] (T) at (0, 0.5) {$\al$};
\node[semi_morphism={270,}] (B) at (0, -0.5) {$\al$};
%\draw[<-] (T) -- +(0, 0.5) node[above] {$X$};
\draw (T) -- +(-0.3, 0.7) node[above left] {$V_1$};
\draw (T) -- +(0, 0.7) node[above] {$\cdots$};
\draw (T) -- +(0.3, 0.7) node[above right] {$V_n$};
%\draw[->] (B) -- +(0, -0.5) node[below] {$X$};
\draw[->] (B) -- +(-0.3, -0.7) node[below left] {$V_1$};
\draw[->] (B) -- +(0, -0.7) node[below] {$\cdots$};
\draw[->] (B) -- +(0.3, -0.7) node[below right] {$V_n$};
\draw[midarrow={0.6}] (T)--(B) node[pos=0.5, right] {$i$};;
\end{tikzpicture}
%%%%%%%%%%%%%%%%%%%%%%%
\quad = \quad 
%%%%%%%%%%%%%%%%%%%%%%%
\begin{tikzpicture}
\node[semi_morphism={270,}] (T) at (0, 0.5) {$\al$};
\node[semi_morphism={270,}] (B) at (0, -0.5) {$\al$};
%\draw[<-] (T) -- +(0, 0.5) node[above] {$X$};
\draw (T) -- +(-0.3, 0.7) node[above left] {$V_1$};
\draw (T) -- +(0, 0.7) node[above] {$\cdots$};
\draw (T) -- +(0.3, 0.7) node[above right] {$V_n$};
%%\draw[->] (B) -- +(0, -0.5) node[below] {$X$};
\draw[->] (B) -- +(-0.3, -0.7) node[below left] {$V_1$};
\draw[->] (B) -- +(0, -0.7) node[below] {$\cdots$};
\draw[->] (B) -- +(0.3, -0.7) node[below right] {$V_n$};
\draw[<-, regular] (T)--(B);
\end{tikzpicture}
$$
\end{lemma}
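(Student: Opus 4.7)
The plan is to reduce the statement to the standard resolution of identity in a semisimple category and then to convert between the composition pairing $\Hom(X_i, W) \otimes \Hom(W, X_i) \to \End(X_i) = \kk \cdot \id$ and the diagrammatic pairing from \eqref{e:pairing} (whose dual bases the summation notation $\al$ refers to); the conversion factor will be precisely the dimension $d_i^R$ (resp.\ $d_i^L$).

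Let $W = V_1 \otimes \cdots \otimes V_n$. First, by semisimplicity of $\cC$, we have the canonical decomposition
\[
  \id_W \;=\; \sum_{i \in \Irr(\cC)} \sum_\al e_\al^i \circ f^\al_i,
\]
where $\{e_\al^i\}$ is any basis of $\Hom(X_i, W)$ and $\{f^\al_i\} \subset \Hom(W, X_i)$ is the dual basis with respect to composition, i.e.\ $f^\be_i \circ e_\al^i = \de_{\al\be}\, \id_{X_i}$. Next, via the rigid/pivotal structure we identify $\Hom(X_i, W) \simeq \eval{W, X_i^*}_{k_i}$ by bending the $X_i$--strand using $\coev_{X_i}$, and $\Hom(W, X_i) \simeq \eval{X_i, V_n^*, \ldots, V_1^*}_{l_i}$ by bending using $\ev_{X_i}$ (and the pivotal isomorphism); the decorations by $\Irr_0(\cC)$ are determined by the fact that $\coev_{X_i}$ factors through $\one_{k_i}$ and $\ev_{X_i}$ through $\one_{l_i}$.

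The key computation is comparing the two pairings. Given $e \in \Hom(X_i, W)$ and $f \in \Hom(W, X_i)$, forming the diagrammatic pairing of the corresponding $\widetilde{e} \in \eval{W, X_i^*}$ and $\widetilde{f} \in \eval{X_i, W^*}$ as prescribed by \eqref{e:pairing} amounts to contracting all the $V_k$--$V_k^*$ legs through nested evaluations (leaving only a closed $X_i$--loop containing the compositions $f \circ e$) and then closing the loop via the appropriate $\ev \circ \coev$ cycle. Since $f \circ e \in \End(X_i) = \kk \cdot \id_{X_i}$, the trace of this closed loop is $d_i^R \cdot (f \circ e)$, with the $d_i^R$ arising from the definition of the right dimension via the pivotal structure. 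Thus $(\widetilde{e}, \widetilde{f}) = d_i^R \cdot \la$ whenever $f \circ e = \la\, \id_{X_i}$. Consequently, if $\{\ph_\al \in \eval{W, X_i^*}_l,\, \ph^\al \in \eval{X_i, W^*}_l\}$ are dual bases in the diagrammatic pairing, then the corresponding morphisms satisfy $f^\al \circ e_\be = \frac{\de_{\al\be}}{d_i^R}\,\id_{X_i}$, so the dual bases for composition are $\{e_\al,\, d_i^R\, f^\al\}$.

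Substituting into the resolution of identity yields
\[
  \id_W \;=\; \sum_{i \in \Irr(\cC)} d_i^R \sum_{\al, l} e_\al \circ f^\al,
\]
which, after translating back to diagrams using Notation~\ref{n:summation}, is exactly the first equality of the lemma. The dashed-line equality is then immediate from Notation~\ref{n:dashed}. The final pair of equalities with $d_i^L$ and the semicircles rotated by $180^\circ$ follows by running the same argument with the roles of $\coev$ and $\ev$ swapped (equivalently, performing a $180^\circ$ rotation using the pivotal structure), which exchanges the right and left trace conventions and produces $d_i^L$ in place of $d_i^R$. The main technical obstacle is the careful bookkeeping in the trace computation above: since $\cC$ is pivotal but not assumed spherical, one must track carefully which side of the $X_i$--loop the $\ev \circ \coev$ closure is performed on in order to extract $d_i^R$ rather than $d_i^L$, and one must verify the $\Irr_0(\cC)$-component labels match so that the summation over $l$ in Notation~\ref{n:summation} produces the correct element of $\End(\one) = \bigoplus_l \End(\one_l)$.
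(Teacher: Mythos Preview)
Your proposal is correct and follows essentially the same approach as the paper's sketch: the paper says ``first show it for simple $X$, then for direct sums,'' and your resolution of identity via semisimplicity together with the pairing-comparison on a simple $X_i$ is exactly that, just phrased more explicitly. One small correction: in your identification $\Hom(W,X_i)\simeq\eval{X_i,V_n^*,\ldots,V_1^*}_{l_i}$ the subscript should be $k_i$ rather than $l_i$ (since $X_i\otimes W^*\in\cC_{k_i *}$, the space $\eval{X_i,W^*}_l$ vanishes unless $l=k_i$), so that both $\tilde e$ and $\tilde f$ lie in the same $\one_{k_i}$-component and the pairing is nonzero; this does not affect the rest of your argument.
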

Proof of this lemma is straightforward:
first show it for simple $X$, then for direct sums;
interested reader can find a proof
for spherical $\cC$ in \ocite{kirillov-stringnet}.

\begin{lemma}
  \label{lem:sliding}
The following is a generalization of the ``sliding lemma'':
\[
\begin{tikzpicture}
  \draw[regular, midarrow={0.5}] (0,0) circle(0.4cm);
 \path[subgraph] (0,0) circle(0.3cm); 
 \draw (0,1.2)..controls +(-90:0.8cm) and +(90:0.4cm) ..
                 (-0.6, 0) ..controls +(-90:0.4cm) and +(90:0.8cm) ..
                 (0,-1.2);
\end{tikzpicture}
%%%%%%%%%%%%%%%%%%
\quad=\quad
%%%%%%%%%%%%%%%%%%
\begin{tikzpicture}
  \draw[regular, midarrow={0.5}] (0,0) circle(0.4cm);
  \path[subgraph] (0,0) circle(0.3cm); 
  \draw (0,1.2)..controls +(-90:0.8cm) and +(90:0.4cm) ..
                 (0.6, 0) ..controls +(-90:0.4cm) and +(90:0.8cm) ..
                 (0,-1.2);
\end{tikzpicture}
\]
These relations hold regardless of the contents of the shaded region.
\end{lemma}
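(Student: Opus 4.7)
The plan is to prove the sliding lemma by exhibiting an explicit ambient isotopy in three dimensions that moves the strand from one side of the dashed loop to the other, and then invoking the fact that skein-theoretic relations already include all ambient isotopies of ribbon graphs.

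First, I would interpret both diagrams as planar projections of framed ribbon graphs in a small 3-ball $B$ inside the ambient 3-manifold $M$. Under this interpretation, the dashed circle is a framed loop labeled by the formal sum $\sum_i d_i^R X_i$ and bounds a planar disk; the shaded region represents an arbitrary framed ribbon graph contained in a smaller sub-ball inside this disk; and the vertical strand is a framed arc with fixed framed endpoints above and below, the middle of which either bulges to the left (LHS) or to the right (RHS) of the dashed loop in projection. The two diagrams differ only in this bulge, while the dashed loop and shaded region are identical.

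Next, I would construct an ambient isotopy $\Phi_t$ of $B$ taking the LHS strand to the RHS strand, supported away from the dashed loop and the shaded region. Concretely, one lifts the middle of the strand out of the plane of the dashed loop in the normal direction, slides it across to the opposite side, and sets it back down into the plane. By choosing $\Phi_t$ to be compactly supported in the exterior of the sub-ball containing the shaded-region content, the isotopy never touches the shaded region; it also never crosses the dashed loop, since the strand can be lifted far enough in the normal direction. Framings are carried along smoothly by the isotopy. By \lemref{lem:null_isotopy}, ambient isotopy of ribbon graphs is among the relations defining null graphs, hence LHS and RHS represent the same morphism. The argument is uniform in the shaded region because $\Phi_t$ never interacts with its content.

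The only potential subtlety is whether such an isotopy exists in an arbitrary ambient 3-manifold $M$. Since the claim is entirely local, it suffices to work inside a 3-ball $B \subset M$ that is diffeomorphic to an open subset of $\R^3$; by the paper's standing conventions on manifolds, every point admits such a neighborhood, and inside $\R^3$ there is ample room to lift the strand out of the plane and swing it around. No algebraic identities involving the Kirby-color structure $\sum_i d_i^R X_i$ are needed: the labeling of the dashed loop enters the argument only as the abstract content of a framed ribbon graph that is preserved by $\Phi_t$.
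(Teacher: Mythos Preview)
Your argument proves the wrong statement. \lemref{lem:sliding} lives in the Appendix, whose standing hypothesis is that $\cC$ is a \emph{pivotal multifusion} category; the diagrams there are planar string diagrams computing morphisms in $\cC$, not projections of ribbon graphs in a 3-manifold. There is no ambient $M$, no braiding, and \lemref{lem:null_isotopy} (which concerns skeins in $(n{+}1)$-manifolds) simply does not apply. In this 2D setting the closed subpicture (dashed loop together with the shaded content) is an element $c\in\End_\cC(\one)$, and the two sides of the lemma are $\id_V\otimes c$ versus $c\otimes\id_V$. When $\one$ is not simple these are \emph{different} endomorphisms of $V$ in general: for $c=\id_{\one_l}$ and $V=X_i\in\cC_{k_i l_i}$ one gets $\delta_{l,k_i}\,\id_{X_i}$ on one side and $\delta_{l,l_i}\,\id_{X_i}$ on the other. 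So your claim that ``no algebraic identities involving the Kirby-color structure are needed'' is exactly backwards: the specific weighting $\sum_i d_i^R\,X_i$ is what makes the two sides agree in the multifusion case.

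The paper's proof is genuinely algebraic: it uses \lemref{l:summation} to absorb the $V$-strand into the dashed circle via a pair of dual-basis nodes $\alpha$, obtaining a symmetric intermediate picture from which the strand can be re-extracted on either side. This works for any pivotal multifusion $\cC$ and is sensitive to the orientation of the dashed circle---as the paper explicitly remarks, the argument fails with the opposite orientation unless $\cC$ is spherical. Your isotopy, by contrast, would be indifferent to the orientation of the loop, which is another sign it cannot be the right mechanism here. (Your argument \emph{is} a valid proof of the special case where $\cC=\cA$ is premodular and the diagrams are read as skeins in a $3$-ball; in that setting $\End(\one)=\kk$ and the identity is essentially trivial. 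But that is strictly weaker than what the lemma asserts.)
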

\begin{proof}
\[
 \begin{tikzpicture}
   \draw[regular, midarrow={0.5}] (0,0) circle(0.4cm);
   \path[subgraph] (0,0) circle(0.3cm); 
   \draw (0,1.2)..controls +(-90:0.8cm) and +(90:0.4cm) ..
                   (-0.6, 0) ..controls +(-90:0.4cm) and +(90:0.8cm) ..
                   (0,-1.2);
 \end{tikzpicture}
%%%%%%%%%%%%%
\quad=\quad
%%%%%%%%%%%%%
\begin{tikzpicture}
  \draw[regular, midarrow={0.53}] (0,0) circle(0.45cm);
  % want to get second arrow, weird things happen...
  % hack: draw circle again
  \draw[regular, midarrow={0.03}] (0,0) circle(0.45cm);
  %\node[dotnode, label=45:$\al$] (top) at (0,0.4) {};
  \node[semi_morphism={0,}] (top) at (0,0.4) {$\al$};
  %\node[dotnode, label=-45:$\al$] (bot) at (0,-0.4) {};
  \node[semi_morphism={180,}] (bot) at (0,-0.4) {$\al$};
  \path[subgraph] circle(0.3cm); 
  \draw (0,1.2)--(top) (bot)--(0,-1.2);    
\end{tikzpicture}
%%%%%%%%%%%%%%
\quad=\quad 
%%%%%%%%%%%%%
  \begin{tikzpicture}
    \draw[regular, midarrow={0.55}] (0,0) circle(0.4cm);
    \path[subgraph] (0,0) circle(0.3cm); 
    \draw (0,1.2)..controls +(-90:0.8cm) and +(90:0.4cm) ..
                   (0.6, 0) ..controls +(-90:0.4cm) and +(90:0.8cm) ..
                   (0,-1.2);
  \end{tikzpicture}
\]
where we use \lemref{l:summation} in the equalities.
See also \ocite{kirillov-stringnet}*{Corollary 3.5}.
Note this trick doesn't work when the circle is oriented the other way
(unless of course if $\cC$ is spherical).
\end{proof}

\begin{lemma} \label{l:dashed_circle}
\[
\frac{1}{|\Irr_0(\cC)|\cD}
\begin{tikzpicture}
\draw[regular, midarrow] (0,0) circle(0.4cm);
\end{tikzpicture}
=
\id_\one
=
\frac{1}{|\Irr_0(\cC)|\cD}
\sum_{i\in \Irr(\cC)} d_i^L
\begin{tikzpicture}
\draw[midarrow_rev={0.1}] (0,0) circle(0.4cm);
\node at (0.5,0.3) {\small $i$};
\end{tikzpicture}
\]
\end{lemma}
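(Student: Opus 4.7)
The plan is to just compute: unfold the dashed loop using Notation~\ref{n:dashed}, evaluate each colored loop as a dimension, and recognize the resulting sum.

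First, by Notation~\ref{n:dashed}, the dashed loop with the chosen orientation equals
\[
\sum_{i \in \Irr(\cC)} d_i^R \cdot \bigl(\text{loop colored by } X_i\bigr).
\]
Next, I would observe that a closed loop in the plane colored by a simple $X_i$, oriented so that closing it up uses $\coev_{X_i}$ at the top and $\ev_{X_i}$ at the bottom (through the pivotal morphism), evaluates by definition to $\ldim{X_i} \in \End(\one)$. Using the analysis made right after the definition of $\ldim{}$ in the appendix, this morphism factors through $\one_{k_i}$ and coincides, via the identification $\End(\one_{k_i}) \cong \kk$, with the scalar $d_i^L$ times $\id_{\one_{k_i}}$.

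Combining these two observations, the dashed loop evaluates to
\[
\sum_{i \in \Irr(\cC)} d_i^R d_i^L \cdot \id_{\one_{k_i}}
= \sum_{k \in \Irr_0(\cC)} \Bigl(\sum_{i \in \Irr(\cC): k_i = k} d_i^R d_i^L\Bigr) \id_{\one_k}.
\]
The inner sum regroups by the second index: for fixed $k$, the simples with $k_i = k$ are exactly $\bigsqcup_{l \in \Irr_0(\cC)} \Irr(\cC_{kl})$. By equation \eqref{e:dimC}, for each such $l$ one has $\sum_{i \in \Irr(\cC_{kl})} d_i^R d_i^L = \cD$, independent of $k,l$. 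Therefore the inner sum equals $|\Irr_0(\cC)| \cdot \cD$, and the whole expression becomes $|\Irr_0(\cC)| \cdot \cD \cdot \sum_k \id_{\one_k} = |\Irr_0(\cC)| \cdot \cD \cdot \id_\one$. Dividing by the prefactor yields the first equality.

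The second equality is proved identically with the roles of $L$ and $R$ swapped: the reversed orientation turns each colored loop into $\rdim{X_i} = d_i^R \cdot \id_{\one_{l_i}}$, so multiplying by $d_i^L$ and summing gives $\sum_i d_i^L d_i^R \cdot \id_{\one_{l_i}}$, which by the same regrouping (now over $l = l_i$ first) again equals $|\Irr_0(\cC)| \cdot \cD \cdot \id_\one$. There is no serious obstacle; the only thing to be careful about is keeping track of whether the loop factors through $\one_{k_i}$ or $\one_{l_i}$, and noticing that summing $\cD$ over $\Irr_0(\cC)$ produces the extra factor $|\Irr_0(\cC)|$ in the normalization.
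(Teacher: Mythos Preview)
Your proof is correct and follows essentially the same approach as the paper: unfold the dashed loop via Notation~\ref{n:dashed}, evaluate each simple loop as $d_i^L\,\id_{\one_{k_i}}$, regroup over the blocks $\cC_{kl}$, and use that each block contributes $\cD$ to produce the factor $|\Irr_0(\cC)|\cD$. The paper's proof is organized identically, just with the regrouping written out using the notation $\Irr^{k*}$ for the simples with fixed $k_i=k$.
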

\begin{proof}
Let $\Irr^{kl} = \Irr(\cC_{kl})$,
and let $\Irr^{k*} := \bigcup_l \Irr(\cC_{kl})$,
i.e. the set of simples $X_i$ such that $\one_k \tnsr X_i = X_i$.
Then
\begin{gather*}
\begin{tikzpicture}
\draw[regular, midarrow] (0,0) circle(0.4cm);
\end{tikzpicture}
=
\sum_{k\in \Irr_0(\cC)} \sum_{i \in \Irr^{k*}(\cC)} d_i^R
\hspace{3pt}
\begin{tikzpicture}
\draw[midarrow={0.5}] (0,0) circle(0.4cm);
\node at (0.5,-0.2) {$i$};
\end{tikzpicture}
=
\sum_{k\in \Irr_0(\cC)} \sum_{i \in \Irr^{k*}} d_i^R d_i^L \id_{\one_k} \\
=
\sum_{k\in \Irr_0(\cC)} \sum_{l \in \Irr_0(\cC)}
\sum_{i \in \Irr^{kl}} d_i^R d_i^L \id_{\one_k}
=
\sum_{k\in \Irr_0(\cC)} \sum_{l \in \Irr_0(\cC)} \cD \id_{\one_k}
=
|\Irr_0(\cC)|\cD \id_\one
\end{gather*}
The second equality is proved in a similar manner.
\end{proof}

The following lemma is used to prove that \firef{f:I(M)} is a half-braiding
and the functor $G$ in the proof of \thmref{t:htr} respects composition:

\begin{lemma}
\label{l:halfbrd}
For $X \in \Obj \cC$, define the following element $\Gamma_X$ of
$\Hom(X \tnsr X_i, X_j) \tnsr \Hom(X_i^*, X_j^* \tnsr X)$:
\begin{equation}
\Gamma_X :=
\begin{tikzpicture}
%%%% left side
\node[semi_morphism={90,}] (lf) at (0,0) {$\ov{\al}$};
\draw[midarrow_rev] (lf) -- +(90:1cm) node[pos=0.5, right] {$i$};
\draw[midarrow] (lf) -- +(270:1cm) node[pos=0.5, right] {$j$};
\draw[midarrow_rev] (lf) -- +(150:1cm) node[above left] {$X$};
%%%% right side
\node[semi_morphism={270,}] (rt) at (2,0) {$\ov{\al}$};
\draw[midarrow] (rt) -- +(90:1cm) node[pos=0.5, left] {$i$};
\draw[midarrow_rev] (rt) -- +(270:1cm) node[pos=0.5, left] {$j$};
\draw[midarrow] (rt) -- +(-30:1cm) node[below right] {$X$};
\end{tikzpicture}
%%%%%%%%%%%%
\quad := \quad
%%%%%%%%%%%%
\sum_{i,j \in \Irr(\cC)} \sqrt{d_i^R}\sqrt{d_j^R}
\begin{tikzpicture}
%%%% left side
\node[semi_morphism={90,}] (lf) at (0,0) {$\al$};
\draw[midarrow_rev] (lf) -- +(90:1cm) node[pos=0.5, right] {$i$};
\draw[midarrow] (lf) -- +(270:1cm) node[pos=0.5, right] {$j$};
\draw[midarrow_rev] (lf) -- +(150:1cm) node[above left] {$X$};
%%%% right side
\node[semi_morphism={270,}] (rt) at (2,0) {$\al$};
\draw[midarrow] (rt) -- +(90:1cm) node[pos=0.5, left] {$i$};
\draw[midarrow_rev] (rt) -- +(270:1cm) node[pos=0.5, left] {$j$};
\draw[midarrow] (rt) -- +(-30:1cm) node[below right] {$X$};
\end{tikzpicture}
\end{equation}

$\Gamma_X$ satisfies the following properties:

\begin{enumerate}
\item $\Gamma_-$ respects tensor products:
\begin{equation}
\begin{tikzpicture}
%%%% left side
\node[semi_morphism={90,}] (lf) at (0,0) {$\ov{\al}$};
\draw[midarrow_rev] (lf) -- +(90:1cm) node[pos=0.5, right] {$i$};
\draw[midarrow] (lf) -- +(270:1cm) node[pos=0.5, right] {$j$};
\draw[midarrow_rev] (lf) -- +(150:1cm) node[above left] {$X$};
\draw[midarrow_rev] (lf) -- +(120:1cm) node[above] {$Y$};
%%%% right side
\node[semi_morphism={270,}] (rt) at (2,0) {$\ov{\al}$};
\draw[midarrow] (rt) -- +(90:1cm) node[pos=0.5, left] {$i$};
\draw[midarrow_rev] (rt) -- +(270:1cm) node[pos=0.5, left] {$j$};
\draw[midarrow] (rt) -- +(-60:1cm) node[below] {$X$};
\draw[midarrow] (rt) -- +(-30:1cm) node[below right] {$Y$};
\end{tikzpicture}
%%%%%%%%%%%%%%%%
=
%%%%%%%%%%%%%%%%
%%%% left side
\begin{tikzpicture}
\node[semi_morphism={90,}] (lf) at (0,0.4) {\tiny $\ov{\al}$};
\node[semi_morphism={90,}] (lf2) at (0,-0.4) {\tiny $\ov{\beta}$};
\draw[midarrow_rev] (lf) -- +(90:0.7cm) node[pos=0.5, right] {$i$};
\draw[midarrow={0.7}] (lf) -- (lf2) node[pos=0.5, right] {$j$};
\draw[midarrow] (lf2) -- +(270:0.7cm) node[pos=0.5, right] {$k$};
\draw[midarrow_rev] (lf) -- +(150:1cm) node[above left] {$Y$};
\draw[midarrow_rev] (lf2) -- +(150:1cm) node[above left] {$X$};
%%%% right side
\node[semi_morphism={270,}] (rt) at (2,0.4) {\tiny $\ov{\al}$};
\node[semi_morphism={270,}] (rt2) at (2,-0.4) {\tiny $\ov{\beta}$};
\draw[midarrow] (rt) -- +(90:0.7cm) node[pos=0.5, left] {$i$};
\draw[midarrow_rev={0.7}] (rt) -- (rt2) node[pos=0.5, left] {$j$};
\draw[midarrow_rev] (rt2) -- +(270:0.7cm) node[pos=0.5, left] {$k$};
\draw[midarrow_rev] (rt) -- +(-30:1cm) node[below right] {$Y$};
\draw[midarrow_rev] (rt2) -- +(-30:1cm) node[below right] {$X$};
\end{tikzpicture}
\end{equation}

\item $\Gamma_X$ is natural in $X$: for $f: X \to Y$,
\begin{equation}
\begin{tikzpicture}
%%%% left side
\node[semi_morphism={90,}] (lf) at (0,0) {$\ov{\al}$};
\draw[midarrow_rev] (lf) -- +(90:1cm) node[pos=0.5, right] {$i$};
\draw[midarrow] (lf) -- +(270:1cm) node[pos=0.5, right] {$j$};
\node[small_morphism] (f) at (-0.5,0.3) {\tiny $f$};
\draw[midarrow_rev={0.7}] (lf) -- (f);
\draw[midarrow_rev] (f) -- (-1,0.6) node[left] {$X$};
%%%% right side
\node[semi_morphism={270,}] (rt) at (2,0) {$\ov{\al}$};
\draw[midarrow] (rt) -- +(90:1cm) node[pos=0.5, left] {$i$};
\draw[midarrow_rev] (rt) -- +(270:1cm) node[pos=0.5, left] {$j$};
\draw[midarrow] (rt) -- +(-30:1cm) node[below right] {$Y$};
\end{tikzpicture}
%%%%%%%%%%%%%%%%
\quad = \quad
%%%%%%%%%%%%%%%%
\begin{tikzpicture}
%%%% left side
\node[semi_morphism={90,}] (lf) at (0,0) {$\ov{\beta}$};
\draw[midarrow_rev] (lf) -- +(90:1cm) node[pos=0.5, right] {$i$};
\draw[midarrow] (lf) -- +(270:1cm) node[pos=0.5, right] {$j$};
\draw[midarrow_rev] (lf) -- +(150:1cm) node[above left] {$X$};
%%%% right side
\node[semi_morphism={270,}] (rt) at (2,0) {$\ov{\beta}$};
\draw[midarrow] (rt) -- +(90:1cm) node[pos=0.5, left] {$i$};
\draw[midarrow_rev] (rt) -- +(270:1cm) node[pos=0.5, left] {$j$};
\node[small_morphism] (f) at (2.5,-0.3) {\tiny $f$};
\draw[midarrow={0.7}] (rt) -- (f);
\draw[midarrow] (f) -- (3,-0.6) node[right] {$Y$};
\end{tikzpicture}
\end{equation}

\end{enumerate}
\end{lemma}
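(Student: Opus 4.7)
The strategy for both parts is to recognize $\Gamma_X$ as (a rescaled form of) the canonical copairing element for the non-degenerate pairing \eqref{e:pairing} of \lemref{l:pairing_property}, and then reduce everything to two tools: the resolution-of-identity formula of \lemref{l:summation} for part (1), and the adjointness of the pairing ($(f\circ\ph_1,\ph_2) = (\ph_1, f^*\circ\ph_2)$) for part (2). The weights $\sqrt{d_i^R}\sqrt{d_j^R}$ are chosen precisely so that these identifications work cleanly.

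For part (1), I would start from the right-hand side, which after expanding the two $\ov{\al}$-pairs is a sum over $i,j,k\in\Irr(\cC)$ with weight $\sqrt{d_i^R}\,d_j^R\,\sqrt{d_k^R}$ of a stacked configuration with an intermediate $X_j$-strand. The inner sum over $j$ (with weight $d_j^R$ on an $X_j$-loop between a basis element in $\Hom(Y\tnsr X_i, X_j)$ composed with one in $\Hom(X\tnsr X_j, X_k)$) is exactly the right-hand side of \lemref{l:summation}; applying that lemma collapses the intermediate $X_j$ into a single unlabeled identity strand carrying $Y\tnsr X_i$, and reorganizing turns the product of two dual-basis pairings into a dual-basis pairing for $\Hom(X\tnsr Y\tnsr X_i, X_k)$ against $\Hom(X_k^*, X_i^*\tnsr Y^* \tnsr X^*)$. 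That is the definition of $\Gamma_{X\tnsr Y}$.

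For part (2), given $f\colon X\to Y$, both sides of the claimed identity are elements of $\bigoplus_{i,j}\Hom(Y\tnsr X_i,X_j)\tnsr \Hom(X_i^*, X_j^*\tnsr X)$ (or, via the pivotal isomorphism, of $\bigoplus_{i,j}\Hom(Y\tnsr X_i,X_j)\tnsr \Hom(X_j,X\tnsr X_i)$). To show they agree, I would pair each against an arbitrary test element $\psi\in\Hom(X_j, Y\tnsr X_i)^*\simeq \Hom(Y\tnsr X_i,X_j)$ on the first tensor factor; by non-degeneracy of \eqref{e:pairing_sub} it suffices to show equality of the resulting elements of $\Hom(X_i^*, X_j^*\tnsr X)$. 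On the left, the pairing absorbs $f$ via $(\psi,\ph_\al\circ(f\tnsr\id)) = (\psi\circ(f\tnsr\id), \ph_\al)$ by \lemref{l:pairing_property}; on the right, it absorbs it directly into the second tensor factor. Both collapse to the same morphism by the defining property of dual bases ($\sum_\al(\psi',\ph_\al)\ph^\al = \psi'$ transported by $(\id_{X_j^*}\tnsr f)$).

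I expect the main obstacle to be purely bookkeeping: in the pivotal multifusion (non-spherical, non-simple-unit) setting one must track which $\one_l$-component each hom space lies in, ensure the square roots $\sqrt{d^R}$ on the two $\al$-nodes combine into the $d^R$-weight appearing on the dashed line in \lemref{l:summation}, and verify that the orientation conventions in the $\al$ vs.\ $\ov{\al}$ semicircles match the orientations in the stacked picture without spurious insertions of the pivotal isomorphism $\delta$. Once those combinatorics are checked, both statements reduce to the two lemmas quoted above.
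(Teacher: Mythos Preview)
Your proposal is correct and matches the paper's own proof: property (2) follows directly from the adjointness in \lemref{l:pairing_property}, and property (1) is obtained by combining \lemref{l:pairing_property} (to move one $\ov{\al}$-pair past the other) with \lemref{l:summation} (to contract the intermediate simple strand). Your write-up simply unpacks the bookkeeping of the $\sqrt{d^R}$ weights more explicitly than the paper does.
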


\begin{proof}
The second property follows from \lemref{l:pairing_property}.
The first property follows from using \lemref{l:pairing_property}
to ``pull'' $\ov{\al}$ through $\ov{\beta}$,
then use \lemref{l:summation}
to contract the $k$ strand.
\end{proof}

Finally, we give a proof of \thmref{t:center}:

\begin{proof}[Proof of \thmref{t:center}]
\label{pf:t:center}
This is essentially the same as when $\cC$ is spherical,
but we provide it to assuage any doubts that the non-sphericality,
manifested in requiring semicircular morphisms $\al$,
does not lead to problems.

Let us check that the morphism on the left side of \eqnref{e:adj_isom_2}
intertwines half-braidings:
\begin{equation*}
\sum_{j\in \Irr(\cC)} \sqrt{d_j^R}
\begin{tikzpicture}
\node[small_morphism] (ga2) at (0,0.9) {$\ga$};
\node[small_morphism] (ga) at (0,0.3) {$\ga$};
\node[small_morphism] (f) at (0,-0.3) {\small $f$};
%% vertical lines
\draw (ga) -- (ga2);
\draw (ga2) -- (0,1.5) node[above] {$M_1$};
\draw (ga) -- (f);
\draw (f) -- (0,-1.5) node[below] {$M_2$};
%% half-brd lines
\draw (ga2) to[out=0,in=-90] (1.5,1.5) node[above] {$X$};
\draw (ga2) to[out=180,in=90] (-1.5,-1.5) node[below] {$X$};
%% adjunction cup lines
\draw[midarrow] (ga) to[out=180,in=90] (-1,-1.5);
\draw[midarrow_rev] (ga) to[out=0,in=90] (1,-1.5);
\node at (0.9,0.1) {$j$};
\end{tikzpicture}
%%%%%%%%%%%%%%%%%%
=
%%%%%%%%%%%%%%%%%%
\sum_{i,j\in \Irr(\cC)} d_i^R \sqrt{d_j^R}
\begin{tikzpicture}
\node[small_morphism] (ga2) at (0,0.9) {$\ga$};
\node[small_morphism] (ga) at (0,0.3) {$\ga$};
\node[small_morphism] (f) at (0,-0.3) {\small $f$};
%% vertical lines
\draw (ga) -- (ga2);
\draw (ga2) -- (0,1.5) node[above] {$M_1$};
\draw (ga) -- (f);
\draw (f) -- (0,-1.5) node[below] {$M_2$};
%% half-brd line
\draw (ga2) to[out=0,in=-90] (1.5,1.5) node[above] {$X$};
%% adjunction cup line right side
\draw[midarrow_rev] (ga) to[out=0,in=90] (1,-1.5);
\node at (0.9,0.1) {$j$};
%% alpha's
\node[semi_morphism={90,}] (al1) at (-1,-0.3) {\tiny $\al$};
\node[semi_morphism={90,}] (al2) at (-1,-0.9) {\tiny $\al$};
%% lines ga's to alpha's
\draw (ga) to[out=180,in=80] (al1);
\draw (ga2) to[out=180,in=110] (al1);
%% line between alpha's
\draw[midarrow={0.7}] (al1) to[out=-110,in=110] (al2);
\node at (-1.3,-0.6) {$i$};
%% alpha to bottom
\draw (al2) -- (-1,-1.5);
\draw (al2) to[out=-150,in=90] (-1.5,-1.5) node[below] {$X$};
\end{tikzpicture}
%%%%%%%%%%%%%%%%%%
=
%%%%%%%%%%%%%%%%%%
\sum_{i\in \Irr(\cC)} \sqrt{d_i^R}
\begin{tikzpicture}
\node[small_morphism] (ga) at (0,0.5) {$\ga$};
\node[small_morphism] (f) at (0,-0.3) {\small $f$};
%% vertical lines
\draw (ga) -- (0,1.5) node[above] {$M_1$};
\draw (ga) -- (f);
\draw (f) -- (0,-1.5) node[below] {$M_2$};
%% alpha's
\node[semi_morphism={90,}] (al1) at (-1,-0.3) {\tiny $\albar$};
\node[semi_morphism={270,}] (al2) at (1,-0.3) {\tiny $\albar$};
%% half-brd line
\draw (al2) to[out=0,in=-90] (1.5,1.5) node[above] {$X$};
\draw (al1) to[out=180,in=90] (-1.5,-1.5) node[below] {$X$};
%% lines ga to alpha's
\draw[midarrow] (ga) to[out=180,in=90] (al1);
\node at (-0.8, 0.6) {$i$};
\draw (ga) to[out=0,in=90] (al2);
%% alpha to bottom
\draw (al1) -- (-1,-1.5);
\draw (al2) -- (1,-1.5);
\end{tikzpicture}
\end{equation*}

In the first equality, we use \lemref{l:summation};
in the second equality, we use the naturality of $\ga$
to pull the top $\al$ to the right,
and absorb the $\sqrt{d_i^R}\sqrt{d_j^R}$ factor
into $\al$ to get $\albar$.

Next we check that applying \eqnref{e:adj_isom} then \eqnref{e:adj_isom_2}
is the identity map.
Let $m_i = 1$ if $i\in \Irr_0(\cC)$, 0 otherwise.
In the following diagrams,
we implicitly sum lowercase latin alphabets
over $\Irr(\cC)$.
Then the composition is the map

\begin{gather*}
\begin{tikzpicture}
\node[morphism] (ph) at (0,0) {\small $\ph_i$};
\draw (ph) -- (0,1.5) node[above] {$M_1$};
\draw (ph) -- (0,-1.5) node[below] {$M_2$};
\draw[midarrow] (ph) to[out=180,in=90] (-1,-1.5);
\node at (-0.9,-0.1) {$i$};
\draw[midarrow_rev] (ph) to[out=0,in=90] (1,-1.5);
\node at (0.9,-0.1) {$i$};
\end{tikzpicture}
%%%%%%%%%%%%
\mapsto
%%%%%%%%%%%%
m_i\sqrt{d_j^R}
\begin{tikzpicture}
%% draw circle first
  \draw[midarrow_rev={0.6}] (0,0) circle(0.4cm);
\node at (-0.4,-0.4) {$i$};
%% morphisms
\node[small_morphism] (ph) at (0,-0.4) {\small $\ph_i$};
\node[small_morphism] (ga1) at (0,0.9) {$\ga$};
\node[small_morphism] (ga2) at (0,0.4) {$\ga$};
%% vertical lines
\draw (ga1) -- (0,1.5) node[above] {$M_1$};
\draw (ga1) -- (ga2);
\draw (ga2) -- (ph);
\draw (ph) -- (0,-1.5) node[below] {$M_2$};
%% ga to bottom
\draw[midarrow] (ga1) to[out=180,in=90] (-1,-1.5);
\node at (1.15,0) {$j$};
\draw[midarrow_rev] (ga1) to[out=0,in=90] (1,-1.5);
\end{tikzpicture}
%%%%%%%%%%%%%%
=
%%%%%%%%%%%%%%
m_i \sqrt{d_j^R} d_k^R
\begin{tikzpicture}
\node[semi_morphism={90,}] (al1) at (-1,0) {\tiny $\al$};
\node[semi_morphism={270,}] (al2) at (1,0) {\tiny $\al$};
\node[small_morphism] (ga) at (0,0.9) {$\ga$};
\node[small_morphism] (ph) at (0,0) {\small $\ph_i$};
%% ph to alpha's
\draw[midarrow] (ph) to[out=180,in=-80] (al1);
\node at (-0.5,-0.3) {$i$};
\draw (ph) to[out=0,in=-100] (al2);
%% ga to alpha's
\draw[midarrow] (ga) to[out=180,in=90] (al1);
\node at (-0.7,1) {$k$};
\draw (ga) to[out=0,in=90] (al2);
%% vertical lines
\draw (ga) -- (0,1.5) node[above] {$M_1$}; 
\draw (ga) -- (ph);
\draw (ph) -- (0,-1.5) node[below] {$M_2$};
%% alpha to bottom
\draw[midarrow] (al1) to[out=-100,in=90] (-1,-1.5);
\node at (-0.9,-0.8) {$j$};
\draw (al2) to[out=-80,in=90] (1,-1.5);
\end{tikzpicture}
%%%%%%%%%%%%%%
=
%%%%%%%%%%%%%%
m_i \sqrt{d_j^R} d_k^R
\sqrt{d_i^R} \sqrt{d_l^R}
\begin{tikzpicture}
\node[semi_morphism={90,}] (al1) at (-1,-0.5) {\tiny $\al$};
\node[semi_morphism={270,}] (al2) at (1,-0.5) {\tiny $\al$};
\node[small_morphism] (ph) at (0,0.6) {\small $\ph_l$};
\node[semi_morphism={90,}] (bt1) at (-0.5,0) {\tiny $\beta$};
\node[semi_morphism={270,}] (bt2) at (0.5,0) {\tiny $\beta$};
%% ph to beta's
\draw[midarrow] (ph) to[out=180,in=90] (bt1);
\node at (-0.5,0.6) {$l$};
\draw (ph) to[out=0,in=90] (bt2);
%% beta to alpha's - old ga to alpha's
\draw[midarrow] (bt1) to[out=180,in=90] (al1);
\node at (-0.8,0.2) {$k$};
\draw (bt2) to[out=0,in=90] (al2);
%% beta to alpha's - part of old ph to alpha's
\draw[midarrow] (bt1) to[out=-90,in=-80] (al1);
\node at (-0.5,-0.6) {$i$};
\draw (bt2) to[out=-90,in=-100] (al2);
%% vertical lines
\draw (ph) -- (0,1.5) node[above] {$M_1$}; 
\draw (ph) -- (0,-1.5) node[below] {$M_2$};
%% alpha to bottom
\draw[midarrow] (al1) to[out=-100,in=90] (-1,-1.5);
\node at (-0.9,-1.1) {$j$};
\draw (al2) to[out=-80,in=90] (1,-1.5);
\end{tikzpicture}
%%%%%%%%%%%%%%
\\
=
%%%%%%%%%%%%%%
m_i \sqrt{d_j^R} \sqrt{d_i^R} \sqrt{d_l^R}
\begin{tikzpicture}
\node[small_morphism] (ph) at (0,0.6) {\small $\ph_l$};
\node[semi_morphism={90,}] (bt1) at (-0.5,0) {\tiny $\beta$};
\node[semi_morphism={270,}] (bt2) at (0.5,0) {\tiny $\beta$};
%% ph to beta's
\draw[midarrow] (ph) to[out=180,in=90] (bt1);
\node at (-0.5,0.6) {$l$};
\draw (ph) to[out=0,in=90] (bt2);
%% beta to beta loop;
\draw[midarrow] (bt1)
  ..controls (-0.4,-0.5) and (-0.6,-0.8) .. (-0.8,-0.6)
  ..controls (-1,-0.4) and (-0.7,-0.2) .. (bt1);
\draw[midarrow_rev] (bt2)
  ..controls (0.4,-0.5) and (0.6,-0.8) .. (0.8,-0.6)
  ..controls (1,-0.4) and (0.7,-0.2) .. (bt2);
\node at (-0.5,-0.7) {$i$};
%% vertical lines
\draw (ph) -- (0,1.5) node[above] {$M_1$}; 
\draw (ph) -- (0,-1.5) node[below] {$M_2$};
%% beta's to bottom
\draw[midarrow] (bt1) to[out=180,in=90] (-1,-1.5);
\node at (-0.9,-1.1) {$j$};
\draw (bt2) to[out=0,in=90] (1,-1.5);
\end{tikzpicture}
%%%%%%%%%%%%%%
=
%%%%%%%%%%%%%%
m_i \sqrt{d_j^R} \sqrt{d_i^R} \sqrt{d_l^R}
\begin{tikzpicture}
\node[small_morphism] (ph) at (0,0.6) {\small $\ph_l$};
\node[semi_morphism={90,}] (bt1) at (-1,0.1) {\tiny $\beta$};
\node[semi_morphism={270,}] (bt2) at (1,0.1) {\tiny $\beta$};
%% ph to beta's
\draw[midarrow] (ph) to[out=180,in=90] (bt1);
\node at (-0.5,0.8) {$l$};
\draw (ph) to[out=0,in=90] (bt2);
%% i circle
\draw[midarrow_rev={0.75}] (-0.5,-0.4) circle(0.3cm);
\node at (-0.5,-0.9) {$i$};
%% vertical lines
\draw (ph) -- (0,1.5) node[above] {$M_1$}; 
\draw (ph) -- (0,-1.5) node[below] {$M_2$};
%% beta's to bottom
\draw[midarrow] (bt1) -- (-1,-1.5);
\node at (-1.1,-1.1) {$j$};
\draw (bt2) -- (1,-1.5);
\end{tikzpicture}
%%%%%%%%%%%%%
=
%%%%%%%%%%%%%
\begin{tikzpicture}
\node[morphism] (ph) at (0,0) {\small $\ph_l$};
\draw (ph) -- (0,1.5) node[above] {$M_1$};
\draw (ph) -- (0,-1.5) node[below] {$M_2$};
\draw[midarrow] (ph) to[out=180,in=90] (-1,-1.5);
\node at (-0.9,-0.1) {$l$};
\draw[midarrow_rev] (ph) to[out=0,in=90] (1,-1.5);
\end{tikzpicture}
\end{gather*}

The first equality is the same as the previous computation.
The second equality uses the fact that $\sum \ph_i$
intertwines half-braidings,
so that we ``pull'' the $k$ strand through $\ph_i$
The third equality comes from ``pulling'' $\al$
through $\beta$.
The fourth equality comes from ``pulling''
the $i$ loop through $\beta$.
Finally, for the last equality,
we observe that (1) only $j=k$ terms in the sum contribute,
and so we have a $d_j^R$ coefficient,
and we may apply \lemref{l:summation};
(2) since
$d_i^R = 1$ for $i\in \Irr_0(\cC)$,
\[
\sum_i m_i \sqrt{d_i^R}
\begin{tikzpicture}
\draw[midarrow_rev={0}] (0,0) circle(0.3cm);
\node at (0.4,-0.2) {$i$};
\end{tikzpicture}
= \sum_{l\in \Irr_0(\cC)} \id_{\one_l}
= \id_\one
\]
\end{proof}

The following is a lemma used in \secref{s:module}:
\begin{lemma} \label{l:Mga_proj}
Let $(M,\ga) \in \cZ_\cC(\cM)$. The morphism
\begin{equation}
\label{e:P_Mga}
P_{(M,\ga)} :=
\frac{1}{|\Irr_0(\cC)| \cD} G(\sum d_i^R \ga_{X_i})
=  
%%%%%%%%
\sum_{i,j,k \in \Irr(\cC)} \frac{\sqrt{d_i^R}\sqrt{d_j^R}d_k^R}{|\Irr_0(\cC)| \cD}
\begin{tikzpicture}
\node[small_morphism] (ga) at (0,0) {$\ga$}; 
\node[semi_morphism={90,}] (L) at (-1,0) {$\al$};
\node[semi_morphism={270,}] (R) at (1,0) {$\al$};
\draw (ga)-- +(0,1.5) node[above] {$M$}; \draw (ga)-- +(0,-1.5) node[below] {$M$}; 
\draw[midarrow_rev] (L)-- +(0,1.5) node[pos=0.5,left] {$i$};
\draw[midarrow] (L) -- +(0,-1.5) node[pos=0.5,left] {$j$}; 
\draw[midarrow] (R) -- +(0,1.5) node[pos=0.5,right] {$i$};
\draw[midarrow_rev] (R)-- +(0,-1.5) node[pos=0.5,right] {$j$};
\draw[midarrow={0.6}] (L) to[out=-80,in=180] (ga);
\node at (-0.5, -0.5) {$k$};
\draw[midarrow_rev={0.6}] (R) to[out=-100,in=0] (ga);
\node at (0.5, -0.5) {$k$};
\end{tikzpicture}
%%%%%%%%
=  
%%%%%%%%
\sum_{i,j \in \Irr(\cC)} \frac{\sqrt{d_i^R}\sqrt{d_j^R}}{|\Irr_0(\cC)| \cD}
\begin{tikzpicture}
\node[small_morphism] (ga) at (0,0.4) {$\ga$}; 
\node[small_morphism] (ga2) at (0,-0.4) {$\ga$};
%% vertical lines
\draw (ga) -- (0,1.5) node[above] {$M$};
\draw (ga) -- (ga2);
\draw (ga2) -- (0,-1.5) node[below] {$M$}; 
%% cup branches
\draw[midarrow_rev] (ga) to[out=180,in=-90] (-1,1.5);
\node at (-0.7,1) {$i$};
\draw[midarrow] (ga) to[out=0,in=-90] (1,1.5);
%% cap branches
\draw[midarrow] (ga2) to[out=180,in=90] (-1,-1.5);
\node at (-0.7,-1) {$j$};
\draw[midarrow_rev] (ga2) to[out=0,in=90] (1,-1.5);
\end{tikzpicture}
%%%%%%%%
\end{equation}
is a projection in $\End_{\cZ_\cC(\cM)}(I(M))$.
Furthermore, it can be written as a composition
$P_M = \hat{P}_M \circ \check{P}_M$,
where
\begin{equation}
\check{P}_M :=
\sum_{i \in \Irr(\cC)} \frac{\sqrt{d_i^R}}{\sqrt{|\Irr_0(\cC)| \cD}}
\;
\begin{tikzpicture}
\node[small_morphism] (ga) at (0,0) {$\ga$}; 
%% vertical lines
\draw (ga) -- (0,1) node[above] {$M$};
\draw (ga) -- (0,-1) node[below] {$M$}; 
%% cup branches
\draw[midarrow_rev] (ga) to[out=180,in=-90] (-1,1);
\node at (-0.7,0.6) {$i$};
\draw[midarrow] (ga) to[out=0,in=-90] (1,1);
\end{tikzpicture}
%%%%%%%%%%%%%%%%%%%%%%%%%%%%%%%%%
\;\;\;
,
\;\;\;
%%%%%%%%%%%%%%%%%%%%%%%%%%%%%%%%%
\hat{P}_M :=
\sum_{j \in \Irr(\cC)} \frac{\sqrt{d_j^R}}{\sqrt{|\Irr_0(\cC)| \cD}}
\;
\begin{tikzpicture}
\node[small_morphism] (ga) at (0,0) {$\ga$}; 
%% vertical lines
\draw (ga) -- (0,1) node[above] {$M$};
\draw (ga) -- (0,-1) node[below] {$M$}; 
%% cap branches
\draw[midarrow] (ga) to[out=180,in=90] (-1,-1);
\node at (-0.7,-0.6) {$j$};
\draw[midarrow_rev] (ga) to[out=0,in=90] (1,-1);
\end{tikzpicture}
%%%%%%%%
\end{equation}
such that $\check{P}_M \circ \hat{P}_M = \id_{(M,\ga)}$,
thus exhibiting $(M,\ga)$ as a direct summand of $I(M)$.
\end{lemma}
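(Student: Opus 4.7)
The plan is to prove this lemma in four steps, combining the functoriality of $G: \htr(\cM) \to \cZ_\cC(\cM)$ from \thmref{t:htr} with the adjunction formulas of \thmref{t:center} and the contraction identities in the appendix.

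First, I would observe that the first diagram in \eqnref{e:P_Mga} is precisely $\frac{1}{|\Irr_0(\cC)|\cD}\, G\bigl(\sum_k d_k^R \ga_{X_k}\bigr)$, where $\sum_k d_k^R \ga_{X_k} \in \bigoplus_k \ihom{\cM}{X_k}(M,M) = \Hom_{\htr(\cM)}(M,M)$. Since $G$ is a functor into $\cZ_\cC(\cM)$, this exhibits $P_{(M,\ga)}$ as an endomorphism of $I(M)$ in $\cZ_\cC(\cM)$ with no need to verify half-braiding compatibility by hand.

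Second, to establish the equality of the two graphical forms in \eqnref{e:P_Mga}, I would start from the first form and invoke the naturality of $\ga_{X_k}$ to pull the $k$-strand through $\ga$. This leaves the two $\al$-vertices connected by a $k$-strand, weighted by $\sum_k d_k^R$; by \lemref{l:summation}, this collapses to a direct $\al$-$\al$ adjacency. The $i$ and $j$ summations then decouple, and the remaining diagram contains two separate half-braidings stacked vertically, which is exactly the second form.

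Third, the identity $P_{(M,\ga)} = \hat{P}_M \circ \check{P}_M$ is essentially a bookkeeping check: stacking the defining formulas for $\check{P}_M$ (on top) and $\hat{P}_M$ (below) manifestly reproduces the double sum $\sum_{i,j}\sqrt{d_i^R}\sqrt{d_j^R}/(|\Irr_0(\cC)|\cD)$ of two vertically arranged $\ga$'s, matching the second form.

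The main obstacle is the fourth step, showing $\check{P}_M \circ \hat{P}_M = \id_{(M,\ga)}$. Conceptually this is an instance of the triangle identities for the two adjunctions in \thmref{t:center}: $\hat{P}_M$ is, up to the normalization $\sqrt{|\Irr_0(\cC)|\cD}$, the image of $\id_M$ under the adjunction $\Hom_\cM(M,M) \cong \Hom_{\cZ_\cC(\cM)}((M,\ga), I(M))$, while $\check{P}_M$ is the image of $\id_M$ under \eqnref{e:adj_isom_2}, and chasing $\id_M$ through both adjunctions should yield $\id_{(M,\ga)}$. Concretely, stacking $\hat{P}_M$ on top of $\check{P}_M$ produces a sum $\sum_i d_i^R/(|\Irr_0(\cC)|\cD)$ of diagrams with two $\ga$'s joined by matching $X_i$-strands on both sides; the task is to argue that this sum collapses to $\id_M$. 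One first uses naturality of $\ga$ to untangle the connecting strands into a free loop, then applies \lemref{l:summation} together with \lemref{l:dashed_circle} to convert the loop into the scalar $|\Irr_0(\cC)|\cD$, cancelling the normalization and leaving the remaining $\ga$'s to simplify to the identity. The delicate point is carefully tracking strand orientations and dimension factors through these moves, and ensuring that the half-braiding axioms for $\ga$ allow the two half-braidings to cancel rather than compose into something nontrivial.
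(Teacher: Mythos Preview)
Your proposal is correct and follows essentially the same approach as the paper: both obtain the second equality in \eqnref{e:P_Mga} by using naturality of $\ga$ to pull the $\al$-vertices through and then collapsing the $k$-sum via \lemref{l:summation}, and both verify $\check{P}_M \circ \hat{P}_M = \id_{(M,\ga)}$ by using naturality of $\ga$ to free the closed $X_i$-loop from the $M$-strand and then invoking \lemref{l:dashed_circle}. One small caveat on your step four: the phrase ``an instance of the triangle identities'' is imprecise, since $\hat{P}_M$ and $\check{P}_M$ arise as (rescaled) unit and counit of \emph{different} adjunctions ($F \dashv I$ and $I \dashv F$ respectively), so no single triangle identity applies and the concrete computation you then describe is genuinely required; also, \lemref{l:summation} is not needed in that step, only \lemref{l:dashed_circle}.
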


\begin{proof}
The second equality in \eqnref{e:P_Mga}
follows from pulling $\al$ through $\ga$
and using \lemref{l:summation}.
$\hat{P}_M$ was shown to be a morphism
in $\Hom_{\cZ_\cC(\cM)}((M,\ga), I(M))$
in the proof of \thmref{t:center},
and one shows $\check{P}_M \in \Hom_{\cZ_\cC(\cM)}(I(M), (M,\ga))$
in a similar fashion.
The following computation shows that
$\check{P}_M \circ \hat{P}_M = \id_{(M,\ga)}$:
\begin{align*}
\check{P}_M \circ \hat{P}_M =
%%%%%%%%%%%%%%
\sum_{i\in \Irr(\cC)} \frac{d_i^R}{|\Irr_0(\cC)| \cD}
\begin{tikzpicture}
\draw (0,-0.8) -- (0,0.8) node[above] {$M$} node[pos=0,below] {$M$};
\draw[midarrow] (0,0) circle(0.4cm);
\node at (-0.6,0) {\small $i$};
%% draw nodes over circle
\node[small_morphism] (ga2) at (0,0.4) {\tiny $\ga$};
\node[small_morphism] (ga3) at (0,-0.4) {\tiny $\ga$};
\end{tikzpicture}
%%%%%%%%%%%%%%
=
%%%%%%%%%%%%%%
\frac{1}{|\Irr_0(\cC)| \cD}
\begin{tikzpicture}
\draw[regular, midarrow] (0,0) circle(0.4cm);
\draw (0.6,-0.8) -- (0.6,0.8) node[above] {$M$} node[pos=0,below] {$M$};
\end{tikzpicture}
=
\id_{(M,\ga)}
\end{align*}
The second equality comes from ``pulling'' the $j$ loop
out to the left,
and the last equality follows from \lemref{l:dashed_circle}.
\end{proof}

The following is a similar result, used in the proof of
\thmref{thm:modular_module}:

\begin{lemma}
\label{l:IM_htr_proj}
Let $M \in \cM$. The morphism
\begin{equation}
P'_M :=
%%%%%%%%
\sum_{i,j,k \in \Irr(\cC)} \frac{\sqrt{d_i^L}\sqrt{d_j^L}d_k^R}{|\Irr_0(\cC)| \cD}
\;
\begin{tikzpicture}
\node[semi_morphism={90,}] (L) at (0.5,0) {$\al$};
\node[semi_morphism={270,}] (R) at (-0.5,0) {$\al$};
\draw (0,1) -- (0,-1) node[below] {$M$} node[above,pos=0] {$M$};
\draw[midarrow_rev] (L)-- +(0,1) node[pos=0.5,right] {$i$};
\draw[midarrow] (L) -- +(0,-1) node[pos=0.5,right] {$j$}; 
\draw[midarrow] (R) -- +(0,1) node[pos=0.5,left] {$i$};
\draw[midarrow_rev] (R)-- +(0,-1) node[pos=0.5,left] {$j$};
\draw[midarrow={0.6}] (L) to[out=-80,in=180] (1.2,0);
\node at (1, -0.5) {$k$};
\draw[midarrow_rev={0.6}] (R) to[out=-100,in=0] (-1.2,0);
\node at (-1, -0.5) {$k$};
\end{tikzpicture}
%%%%%%%%
=  
%%%%%%%%
\sum_{i,j \in \Irr(\cC)} \frac{\sqrt{d_i^L}\sqrt{d_j^L}}{|\Irr_0(\cC)| \cD}
\;
\begin{tikzpicture}
\draw (0,1) -- (0,-1) node[below] {$M$} node[above,pos=0] {$M$};
%% top branches
\draw[midarrow_rev] (0.4,1) to[out=-90,in=180] (1,0.1);
\draw[midarrow] (-0.4,1) to[out=-90,in=0] (-1,0.1);
\node at (-0.7,0.7) {$i$};
%% bottom branches
\draw[midarrow] (0.4,-1) to[out=90,in=180] (1,-0.1);
\draw[midarrow_rev] (-0.4,-1) to[out=90,in=0] (-1,-0.1);
\node at (-0.7,-0.7) {$j$};
\end{tikzpicture}
%%%%%%%%
\end{equation}
is a projection in $\End_{\htr(\cM)}(\bigoplus X_i \lact M \ract X_i^*)$.
Furthermore, it can be written as a composition
$P'_M = \hat{P}'_M \circ \check{P}'_M$,
where
\begin{equation}
\check{P}'_M :=
\sum_{i \in \Irr(\cC)} \frac{\sqrt{d_i^L}}{\sqrt{|\Irr_0(\cC)| \cD}}
\;
\begin{tikzpicture}
\draw (0,0.6) -- (0,-0.6) node[below] {$M$} node[above,pos=0] {$M$};
%% top branches
\draw[midarrow_rev] (0.3,0.6) to[out=-90,in=180] (0.8,0);
\draw[midarrow] (-0.3,0.6) to[out=-90,in=0] (-0.8,0);
\node at (-0.6,0.4) {$i$};
\end{tikzpicture}
%%%%%%%%%%%%%%%%%%%%%%%%%%%%%%%%%
\;\;\;
,
\;\;\;
%%%%%%%%%%%%%%%%%%%%%%%%%%%%%%%%%
\hat{P}'_M :=
\sum_{j \in \Irr(\cC)} \frac{\sqrt{d_j^L}}{\sqrt{|\Irr_0(\cC)| \cD}}
\;
\begin{tikzpicture}
\draw (0,0.6) -- (0,-0.6) node[below] {$M$} node[above,pos=0] {$M$};
%% bottom branches
\draw[midarrow] (0.3,-0.6) to[out=90,in=180] (0.8,0);
\draw[midarrow_rev] (-0.3,-0.6) to[out=90,in=0] (-0.8,0);
\node at (-0.6,-0.4) {$j$};
\end{tikzpicture}
%%%%%%%%
\end{equation}
such that $\check{P}'_M \circ \hat{P}'_M = \id_M$,
thus as objects
in $\Kar(\htr(\cM))$, we have
$M \simeq (\bigoplus X_i \lact M \ract X_i^*, P_M')$.
\end{lemma}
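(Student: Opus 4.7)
The proof mirrors that of \lemref{l:Mga_proj} just above it, but in the horizontal trace rather than the center. I would organize it in three steps: equality of the two displayed formulas for $P'_M$, factorization $P'_M = \hat{P}'_M \circ \check{P}'_M$, and the identity $\check{P}'_M \circ \hat{P}'_M = \id_M$ in $\htr(\cM)$.

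For the first step, the triple sum over $i,j,k$ in the first expression for $P'_M$ contains a $k$-labeled strand connecting the two $\al$-semicircles, weighted by $d_k^R$; applying \lemref{l:summation} (in the form $\sum_k d_k^R \cdot \al_k \otimes \al_k = \id$) contracts this $k$-strand, producing precisely the double sum in the second expression. For the factorization, one simply stacks the diagrams: composing $\check{P}'_M$ after $\hat{P}'_M$ places the two $j$-arcs of $\hat{P}'_M$ above the two $i$-arcs of $\check{P}'_M$ along a shared central $M$-strand, giving the four-arc diagram for $P'_M$, and the scalar prefactors multiply correctly to $\sqrt{d_i^L}\sqrt{d_j^L}/(|\Irr_0(\cC)|\cD)$.

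The main obstacle is the third step. Composing $\check{P}'_M \circ \hat{P}'_M$ in $\htr(\cM)$ produces a diagram in which each of the four arcs of $\hat{P}'_M$ and $\check{P}'_M$ is paired with an arc of the other factor via the coend/horizontal trace identification on either side of the central $M$-strand; the result is a pair of closed loops (labeled $X_i$ and $X_j$) threading around the $M$-strand. By the sliding lemma (\lemref{lem:sliding}), each loop can be pulled clear of the $M$-strand, becoming an independent scalar multiplication. The coend relations, together with Schur's lemma applied to the simple objects $X_i$, further identify the $X_i$ and $X_j$ loops so that only $i=j$ contributes, converting the product $\sqrt{d_i^L}\sqrt{d_j^L}$ of square-root weights into the single dimensional factor $d_i^L$ needed for \lemref{l:dashed_circle}. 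That lemma then yields $\frac{1}{|\Irr_0(\cC)|\cD}\sum_i d_i^L\,[\text{loop of }X_i] = \id_\one$, and tensoring with the surviving $M$-strand gives $\id_M$.

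The subtle point is bookkeeping: verifying that the double sum really collapses under the coend relation to the single weighted sum required by \lemref{l:dashed_circle}, rather than overcounting. Should a direct calculation prove awkward, an alternative route is to transport the statement through the equivalence $\Kar(\htr(\cM)) \simeq \cZ_\cC(\cM)$ of \thmref{t:htr}: one checks that the functor $G$ carries $\hat{P}'_M, \check{P}'_M$ to (suitable rescalings of) the morphisms $\hat{P}_{I(M)}, \check{P}_{I(M)}$ from \lemref{l:Mga_proj} applied to the object $I(M) \in \cZ_\cC(\cM)$, and then the required identities follow from that lemma together with the adjunction formulas in \thmref{t:center}.
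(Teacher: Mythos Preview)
Your approach matches the paper's, which simply says the proof is ``essentially the same as \lemref{l:Mga_proj}, noting the use of left dimensions $d_i^L$ instead of right dimensions $d_i^R$.'' Steps 1 and 2 are right. Step 3 is correct in spirit but the mechanics are slightly off.

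The confusion is in how the indices collapse. When you compose $\check{P}'_M \circ \hat{P}'_M$, the intermediate object is the direct sum $\bigoplus_i X_i \lact M \ract X_i^*$; the $X_j$-strands at the bottom of $\hat{P}'_M$ must match the $X_i$-strands at the top of $\check{P}'_M$ \emph{as strands of that intermediate object}, so $i=j$ is forced immediately by the direct-sum bookkeeping, not by Schur's lemma or a later coend relation. After this identification there is a \emph{single} $X_i$-configuration, not ``a pair of closed loops labeled $X_i$ and $X_j$.'' The horizontal-trace ends of the two factors then combine to give a representative in $\ihom{\cM}{X_i\otimes X_i^*}(M,M)$ (or the dual ordering), whose underlying $\cM$-morphism is just $\id_M$ decorated with an $\ev$/$\coev$ cap. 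The defining coend relation of $\htr$ (moving that cap from one side to the other) turns this into the class of $d_i^L\cdot\id_M$ in $\ihom{\cM}{\one}(M,M)$, and summing over $i$ with the prefactor reproduces exactly the scalar $\frac{1}{|\Irr_0(\cC)|\cD}\sum_i d_i^L\cdot d_i^R$-type identity handled by \lemref{l:dashed_circle}. The sliding lemma \lemref{lem:sliding} is a statement about planar graphical calculus in $\cC$; here its role is played by the $\htr$ relation itself, so you should cite that rather than \lemref{lem:sliding}.

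Your alternative route via the functor $G$ of \thmref{t:htr} would also work, but is unnecessary once the direct computation is set up correctly.
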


\begin{proof}
Essentially the same as \lemref{l:Mga_proj}.
(Note the use of left dimensions $d_i^L$ instead of right dimensions
$d_i^R$.)
\end{proof}

%%%%%%%%%%%%%%%%%%%%%%%%%%%%%%%%%%%%%%%%%%%%%%%%%%%%%%%%%%%%%%
\begin{bibdiv}
\begin{biblist}

%%%%%%%%%%%%%%%%%%%%%%%%%%%%%%%%%%%%%%%%%%%%%
\bib{AF15}{article}{
   author={Ayala, David},
   author={Francis, John},
   title={Factorization homology of topological manifolds},
   journal={J. Topol.},
   volume={8},
   date={2015},
   number={4},
   pages={1045--1084},
   issn={1753-8416},
   review={\MR{3431668}},
   doi={10.1112/jtopol/jtv028},
}
%%%%%%%%%%%%%%%%%%%%%%%%%%%%%%%%%%%%%%%%%%%%%
\bib{AF19}{article}{
   author={Ayala, David},
   author={Francis, John},
   title={A factorization homology primer},
   date={2019},
   eprint={arXiv:1903.1096},
}
%%%%%%%%%%%%%%%%%%%%%%%%%%%%%%%%%%%%%%%%%%%%%
\bib{AFT17}{article}{
   author={Ayala, David},
   author={Francis, John},
   author={Tanaka, Hiro Lee},
   title={Factorization homology of stratified spaces},
   journal={Selecta Math. (N.S.)},
   volume={23},
   date={2017},
   number={1},
   pages={293--362},
   issn={1022-1824},
   review={\MR{3595895}},
   doi={10.1007/s00029-016-0242-1},
}
%%%%%%%%%%%%%%%%%%%%%%%%%%%%%%%%%%%%%%%%%%%%%
\bib{AFR}{article}{
   author={Ayala, David},
   author={Francis, John},
   author={Rozenblyum, Nick},
   title={Factorization homology I: Higher categories},
   journal={Adv. Math.},
   volume={333},
   date={2018},
   pages={1042--1177},
   issn={0001-8708},
   %review={\MR{3818096}},
   %doi={10.1016/j.aim.2018.05.031},
}

%%%%%%%%%%%%%%%%%%%%%%%%%%%%%%%%%%%%%%%%%%%%%
\bib{BK}{book}{
   label={BakK2001},
   author={Bakalov, Bojko},
   author={Kirillov, Alexander, Jr.},
   title={Lectures on tensor categories and modular functors},
   series={University Lecture Series},
   volume={21},
   publisher={American Mathematical Society},
   place={Providence, RI},
   date={2001},
   pages={x+221},
   isbn={0-8218-2686-7},
   %review={\MR{1797619 (2002d:18003)}},
}
%%%%%%%%%%%%%%%%%%%%%%%%%%%%%%%%%%%%%%%%%%%%%%%%%%%%
%\bib{balmer}{article}{
%   author={Balmer, Paul},
%   author={Schlichting, Marco},
%   title={Idempotent completion of triangulated categories},
%   journal={J. Algebra},
%   volume={236},
%   date={2001},
%   number={2},
%   pages={819--834},
%   issn={0021-8693},
%   review={\MR{1813503 (2002a:18013)}},
%   doi={10.1006/jabr.2000.8529},
%}
%%%%%%%%%%%%%%%%%%%%%%%%%%%%%%%%%%%%%%%%%%%%%
\bib{balsam-kirillov}{article}{ 
  author={Balsam, Benjamin },
  author={Kirillov, Alexander, Jr},
  title={Turaev-Viro invariants as an extended TQFT},
  eprint={arXiv:1004.1533},
}

%%%%%%%%%%%%%%%%%%%%%%%%%%%%%%%%%%%%%%%%%%%%%%%%%%%%
\bib{balsam2}{article}{ 
  author={Balsam, Benjamin},
  title={Turaev-Viro invariants as an extended TQFT II},
  date={2010-10},
}

%%%%%%%%%%%%%%%%%%%%%%%%%%%%%%%%%%%%%%%%%%%%%%%%%%%%
\bib{BPW}{article}{
   author={Beliakova, Anna},
   author={Putyra, Krzysztof K.},
   author={Wehrli, Stephan M.},
   title={Quantum link homology via trace functor I},
   journal={Invent. Math.},
   volume={215},
   date={2019},
   number={2},
   pages={383--492},
   issn={0020-9910},
   %review={\MR{3910068}},
   %doi={10.1007/s00222-018-0830-0},
}
		
%%%%%%%%%%%%%%%%%%%%%%%%%%%%%%%%%%%%%%%%%%%%%%%%%%%%
\bib{BHLZ}{article}{
   author={Beliakova, Anna},
   author={Habiro, Kazuo},
   author={Lauda, Aaron D.},
   author={\v{Z}ivkovi\'{c}, Marko},
   title={Trace decategorification of categorified quantum $\mathfrak{sl}_2$},
   journal={Math. Ann.},
   volume={367},
   date={2017},
   number={1-2},
   pages={397--440},
   issn={0025-5831},
   %review={\MR{3606445}},
   %doi={10.1007/s00208-016-1389-y},
   eprint={arXiv:1404.1806},
}

%%%%%%%%%%%%%%%%%%%%%%%%%%%%%%%%%%%%%%%%%%%%%%%%%%%%
\bib{BBJ1}{article}{
   author={Ben-Zvi, David},
   author={Brochier, Adrien},
   author={Jordan, David},
   title={Integrating quantum groups over surfaces},
   journal={J. Topol.},
   volume={11},
   date={2018},
   number={4},
   pages={874--917},
   issn={1753-8416},
   %review={\MR{3847209}},
   %doi={10.1112/topo.12072},
}

%%%%%%%%%%%%%%%%%%%%%%%%%%%%%%%%%%%%%%%%%%%%%%%%%%%%
\bib{BBJ2}{article}{
   author={Ben-Zvi, David},
   author={Brochier, Adrien},
   author={Jordan, David},
   title={Quantum character varieties and braided module categories},
   journal={Selecta Math. (N.S.)},
   volume={24},
   date={2018},
   number={5},
   pages={4711--4748},
   issn={1022-1824},
   %review={\MR{3874702}},
   %doi={10.1007/s00029-018-0426-y},
}

%%%%%%%%%%%%%%%%%%%%%%%%%%%%%%%%%%%%%%%%%%%%%
\bib{cooke}{article}{
    author={Cooke, Juliet},
    title={Excision of Skein Categories and Factorisation Homology},
    date={2019},
    eprint={arXiv:1910.02630},
}
%%%%%%%%%%%%%%%%%%%%%%%%%%%%%%%%%%%%%%%%%%%%%
\bib{CY}{article}{
    author={Crane, Louis},
    author={Yetter, David},
    title={A categorical construction of $4$D topological quantum field
        theories},
    conference={
        title={Quantum topology},
    },
    book={
        series={Ser. Knots Everything},
        volume={3},
        publisher={World Sci. Publ., River Edge, NJ},
    },
    date={1993},
    pages={120--130},
    %review={\MR{1273569}},
    %doi={10.1142/9789812796387_0005},
}

%%%%%%%%%%%%%%%%%%%%%%%%%%%%%%%%%%%%%%%%%%%%%
\bib{DSSP}{article}{
    title={Dualizable tensor categories},
    author={Douglas, Christopher},
    author={Schommer-Pries, Christopher},
    author={Snyder, Noah},
    year={2013},
    eprint={ arXiv:1312.7188}
}

%%%%%%%%%%%%%%%%%%%%%%%%%%%%%%%%%%%%%%%%%%%%%
\bib{edwards-kirby}{article}{
    author={Edwards, Robert D.},
    author={Kirby, Robion C.},
    title={Deformations of spaces of imbeddings},
    journal={Ann. Math. (2)},
    volume={93},
    date={1971},
    pages={63--88},
    review={\MR{0283802}},
    doi={10.2307/1970753},
}
%%%%%%%%%%%%%%%%%%%%%%%%%%%%%%%%%%%%%%%%%%%%%
\bib{ENO2005}{article}{
   label={ENO2005},
   author={Etingof, Pavel},
   author={Nikshych, Dmitri},
   author={Ostrik, Viktor},
   title={On fusion categories},
   journal={Ann. of Math. (2)},
   volume={162},
   date={2005},
   number={2},
   pages={581--642},
   issn={0003-486X},
   %review={\MR{2183279 (2006m:16051)}},
   %doi={10.4007/annals.2005.162.581},
}

%%%%%%%%%%%%%%%%%%%%%%%%%%%%%%%%%%%%%%%%%%%%%
\bib{ENO10}{article}{
   author={Etingof, Pavel},
   author={Nikshych, Dmitri},
   author={Ostrik, Victor},
   title={Fusion categories and homotopy theory},
   note={With an appendix by Ehud Meir},
   journal={Quantum Topol.},
   volume={1},
   date={2010},
   number={3},
   pages={209--273},
   issn={1663-487X},
   %review={\MR{2677836}},
   %doi={10.4171/QT/6},
}
%%%%%%%%%%%%%%%%%%%%%%%%%%%%%%%%%%%%%%%%%%%%%
\bib{EGNO}{book}{
   author={Etingof, Pavel},
   author={Gelaki, Shlomo},
   author={Nikshych, Dmitri},
   author={Ostrik, Victor},
   title={Tensor categories},
   series={Mathematical Surveys and Monographs},
   volume={205},
   publisher={American Mathematical Society, Providence, RI},
   date={2015},
   pages={xvi+343},
   isbn={978-1-4704-2024-6},
   %review={\MR{3242743}},
   %doi={10.1090/surv/205},
}

%%%%%%%%%%%%%%%%%%%%%%%%%%%%%%%%%%%%%%%%%%%%%
\bib{freed1}{article}{
    title={3-dimensional TQFTs through the lens of
        the cobordism hypothesis},
    author={Freed, Dan},
    year={2012},
    eprint={https://www.ma.utexas.edu/users/dafr/StanfordLecture.pdf},
}
%%%%%%%%%%%%%%%%%%%%%%%%%%%%%%%%%%%%%%%%%%%%%
\bib{freed2}{article}{
    title={ 4-3-2-8-7-6},
    author={Freed, Dan},
    year={2012},
    eprint={https://web.ma.utexas.edu/users/dafr/Aspects.pdf},
}
%%%%%%%%%%%%%%%%%%%%%%%%%%%%%%%%%%%%%%%%%%%%%
\bib{GNN}{article}{
    author={Gelaki, Shlomo},
    author={Naidu, Deepak},
    author={Nikshych, Dmitri},
    title={Centers of graded fusion categories},
    journal={Algebra Number Theory},
    volume={3},
    date={2009},
    number={8},
    pages={959--990},
    issn={1937-0652},
    %review={\MR{2587410}},
    %doi={10.2140/ant.2009.3.959},
}
%%%%%%%%%%%%%%%%%%%%%%%%%%%%%%%%%%%%%%%%%%%%%
\bib{freyd}{article}{
   author={Johnson-Freyd, Theo},
   title={Heisenberg-picture quantum field theory},
   date={2015},
   eprint={arXiv:1508.05908},
}

%%%%%%%%%%%%%%%%%%%%%%%%%%%%%%%%%%%%%%%%%%%%%
\bib{joyce}{article}{
   author={Joyce, Dominic},
   title={On manifolds with corners},
   conference={
      title={Advances in geometric analysis},
   },
   book={
      series={Adv. Lect. Math. (ALM)},
      volume={21},
      publisher={Int. Press, Somerville, MA},
   },
   date={2012},
   pages={225--258},
   review={\MR{3077259}},
}
%%%%%%%%%%%%%%%%%%%%%%%%%%%%%%%%%%%%%%%%%%%%%
\bib{kirillov-stringnet}{article}{ 
  author={Kirillov, Alexander, Jr},
  title={String-net model of Turaev-Viro invariants},
  eprint={arXiv:1106.6033},
}

%%%%%%%%%%%%%%%%%%%%%%%%%%%%%%%%%%%%%%%%%%%%%
\bib{levin-wen}{article}{ 
   label={LW2005},
  author={Levin, Michael},
  author={Wen, Xiao-Gang},
  title={String-net condensation: A physical mechanism for topological
phases},  journal={Phys. Rev. B},
  volume={71},
  number={4},
  date={2005},
  doi={10.1103/PhysRevB.71.045110},
}

%%%%%%%%%%%%%%%%%%%%%%%%%%%%%%%%%%%%%%%%%%%%%
\bib{lurie}{article}{
   author={Lurie, Jacob},
   title={On the classification of topological field theories},
   conference={
      title={Current developments in mathematics, 2008},
   },
   book={
      publisher={Int. Press, Somerville, MA},
   },
   date={2009},
   pages={129--280},
   review={\MR{2555928}},
}
%%%%%%%%%%%%%%%%%%%%%%%%%%%%%%%%%%%%%%%%%%%%%
\bib{blob}{article}{
   author={Morrison, Scott},
   author={Walker, Kevin},
   title={Blob homology},
   journal={Geom. Topol.},
   volume={16},
   date={2012},
   number={3},
   pages={1481--1607},
   issn={1465-3060},
   %review={\MR{2978449}},
   %doi={10.2140/gt.2012.16.1481},
}
%%%%%%%%%%%%%%%%%%%%%%%%%%%%%%%%%%%%%%%%%%%%%
\bib{maclane}{book}{
   author={Mac Lane, Saunders},
   title={Categories for the working mathematician},
   series={Graduate Texts in Mathematics},
   volume={5},
   edition={2},
   publisher={Springer-Verlag, New York},
   date={1998},
   pages={xii+314},
   isbn={0-387-98403-8},
   review={\MR{1712872}},
}
%%%%%%%%%%%%%%%%%%%%%%%%%%%%%%%%%%%%%%%%%%%%%
\bib{muger}{article}{
    author={M\"{u}ger, Michael},
    title={On the structure of modular categories},
    journal={Proc. London Math. Soc. (3)},
    volume={87},
    date={2003},
    number={2},
    pages={291--308},
    issn={0024-6115},
    review={\MR{1990929}},
    doi={10.1112/S0024611503014187},
}

%%%%%%%%%%%%%%%%%%%%%%%%%%%%%%%%%%%%%%%%%%%%%
\bib{rt}{article}{
   author={Reshetikhin, N. Yu.},
   author={Turaev, V. G.},
   title={Ribbon graphs and their invariants derived from quantum groups},
   journal={Comm. Math. Phys.},
   volume={127},
   date={1990},
   number={1},
   pages={1--26},
   issn={0010-3616},
   review={\MR{1036112}},
}
%%%%%%%%%%%%%%%%%%%%%%%%%%%%%%%%%%%%%%%%%%%%%
\bib{stasheff}{article}{
    author={Stasheff, James Dillon},
    title={Homotopy associativity of $H$-spaces. I, II},
    journal={Trans. Amer. Math. Soc. 108 (1963), 275-292; ibid.},
    volume={108},
    date={1963},
    pages={293--312},
    issn={0002-9947},
    review={\MR{0158400}},
    doi={10.1090/s0002-9947-1963-0158400-5},
}
%%%%%%%%%%%%%%%%%%%%%%%%%%%%%%%%%%%%%%%%%%%%%
\bib{tham_elliptic}{article}{
    title={The Elliptic Drinfeld Center of a Premodular Category},
    author={Ying Hong Tham},
    year={2019},
    eprint={arxiv:1904.09511}
}
%%%%%%%%%%%%%%%%%%%%%%%%%%%%%%%%%%%%%%%%%%%%%
\bib{tham_reduced}{article}{
    title={Reduced Tensor Product on the Drinfeld Center},
    author={Ying Hong Tham},
    year={2020},
    eprint={arxiv:2004.09611}
}
%%%%%%%%%%%%%%%%%%%%%%%%%%%%%%%%%%%%%%%%%%%%%
\bib{walker}{article}{
    title={TQFTs: early incomplete draft},
    author={Walker, Kevin},
    year={2006},
    eprint={https://canyon23.net/math/tc.pdf}
}

\end{biblist}
\end{bibdiv}

%}
\end{document}